\definecolor{crimsonglory}{rgb}{0.75, 0.0, 0.2}
\definecolor{darkpowderblue}{rgb}{0.0, 0.2, 0.6}
\newtheorem*{remark}{Remark}
\newtheorem*{quest}{Question}
\newtheorem*{remarks}{Remarks}
\newtheorem{prop}{Proposition}
\newtheorem*{question}{Question}
\newtheorem{theorem}{Theorem}
\newtheorem{lemma}{Lemma}
\newtheorem{definition}{Definition}
\newtheorem{cor}{Corollary}
\newtheorem*{thm*}{Theorem}
\newtheorem{conj}{Conjecture}
\newtheorem*{claim}{Claim}
\DeclareMathOperator{\id}{id}
\DeclareMathOperator{\homm}{Hom}
\DeclareMathOperator{\End}{End}
\DeclareMathOperator{\GL}{GL}
\DeclareMathOperator{\diag}{diag}
\DeclareMathOperator{\rank}{rank}
\DeclareMathOperator{\pr}{pr}
\DeclareMathOperator{\tr}{tr}
\DeclareMathOperator{\spec}{Spec}
\DeclareMathOperator{\gal}{Gal}
\DeclareMathOperator{\aut}{Aut}
\DeclareMathOperator{\inv}{inv}
\DeclareMathOperator{\ram}{Ram}
\DeclareMathOperator{\Span}{Span}
\DeclareMathOperator{\LT}{LT}
\DeclareMathOperator{\fin}{fin}
\DeclareMathOperator{\atyp}{atyp}
\DeclareMathOperator{\disc}{disc}
\newcommand{\im}[1]{Im(#1)}
\newcommand{\Sum}[2]{\displaystyle\sum_{#1}^{#2}}
\newcommand{\Bigcup}[2]{\displaystyle\cup_{#1}^{#2}}
\newcommand{\Bigsum}[2]{\displaystyle\bigoplus_{#1}^{#2}}
\newcommand{\Z}{\mathbb{Z}}
\newcommand{\N}{\mathbb{N}}
\newcommand{\C}{\mathbb{C}}
\newcommand{\Q}{\mathbb{Q}}
\newcommand{\R}{\mathbb{R}}
\newcommand{\fgj}{\hat{\gamma}^{*}_j}
\newcommand{\drfj}[1]{\hat{\gamma}^{*}_{{#1},B_{dR}}}
\newcommand{\et}{\acute{e}t}
\newcommand{\proet}{pro\acute{e}t}
\newcommand{\HL}{HL}
\newcommand{\V}{\mathbb{V}}
\DeclareSymbolFont{cyrletters}{OT2}{wncyr}{m}{n}
\DeclareMathSymbol{\Sha}{\mathalpha}{cyrletters}{"58}
\numberwithin{theorem}{section}
\numberwithin{prop}{section}
\numberwithin{cor}{section}
\numberwithin{lemma}{section}
\numberwithin{conj}{section}
\numberwithin{definition}{section}
\newcommand{\address}[1]{\gdef\@address{#1}}
\newcommand{\email}[1]{\gdef\@email{\url{#1}}}
\newcommand{\@endstuff}{\par\vspace{\baselineskip}\noindent\small
	\begin{tabular}{@{}l}\scshape\@address\\\textit{E-mail address:} \@email\end{tabular}}
\title{Unlikely intersections in the Torelli locus and the G-functions method}
\author{Georgios Papas}
\date{}
\address{Einstein Institute of Mathematics, Edmond J. Safra Campus,\\	
	Hebrew University of Jerusalem, 
	Givat Ram, Jerusalem, 9190401, Israel}
\email{georgios.papas@mail.huji.ac.il}
\begin{document}
	\maketitle
	
	
	\begin{abstract} Consider a smooth irreducible Hodge generic curve $S$ defined over $\bar{\Q}$ in the Torelli locus $T_g\subset \mathcal{A}_g$. We establish Zilber-Pink-type  statements for such curves depending on their intersection with the boundary of the Baily-Borel compactification of $\mathcal{A}_g$. For example, when our curve intersects the $0$-dimensional stratum of this boundary and $g$ is odd, we show that there are only finitely many points in the curve for which the corresponding Jacobian variety is non-simple.
		
These results follow as a special case of height bounds for exceptional points in $1$-parameter variations of geometric Hodge structures via Andr\'e's G-functions method, which we extend here to the setting of such variations of odd weight.
	\end{abstract}

	\section{Introduction}

\subsection{Motivation}

\subsubsection*{A short review of the Zilber-Pink Conjecture}

 The primary motivation for this paper is the Zilber-Pink conjecture, in its modern form a combination of conjectures made by B.Zilber in \cite{zilber} and R.Pink in \cite{pink}, which aims to generalize conjectures in the field of Unlikely intersections such as the Andr\'e-Oort Conjecture, now a Theorem by \cite{pst} and \cite{gaoandreoort}, in the case of Mixed Shimura varieties, by describing the expected behavior of all atypical intersections of a given variety $X$ with special sets in the ambient space $Z$, in greatest generality a Mixed Shimura variety in the classical case. 

To formulate this conjecture formally we start with the following definition, following the exposition in \cite{pilaicm}.
\begin{definition}Let $Z$ be a mixed Shimura variety and let $\mathcal{S}_Z$ be its set of special subvarieties. 
	
	Consider $X$ a subvariety of $Z$. A component $A$ of $X\cap Y$, where $Y\in\mathcal{S}_Z$, is called an atypical subvariety of $X$ if \begin{center}
		$\dim A> \dim X+\dim Y-\dim Z$.
	\end{center}
	
	We also define $\atyp(X):=\underset{Y\in\mathcal{S}_Z}{\cup} A$ to be the union of all atypical subvarieties of the variety $X$.\end{definition}

A priori $\atyp(X)$ is a countable union of varieties. The Zilber-Pink conjecture predicts that this union is in fact finite. 

\begin{conj}[Zilber-Pink conjecture]\label{zilberpink} Let $Z$ be as in the previous definition and $X\subset Z$ a subvariety of $Z$. Then $\atyp(X)$ is a finite union of varieties. Equivalently, the variety $X$ contains finitely many maximal atypical subvarieties. 
\end{conj}

While the Zilber-Pink conjecture remains largely open some partial results are known to be true, particularly when $X$ is a curve. In fact when $X$ is a curve and the ambient space $Z$ is a torus, the Zilber-Pink conjecture has been established by work of E.Bombieri, P.Habegger, D.Masser, U.Zannier, and G.Maurin in a series of papers \cite{bmz1,bmz2,bhmz,maurin}.

To that end, as a consequence of our exposition, we establish Zilber-Pink-type statements for Hodge generic curves in the Torelli locus. 
\begin{theorem}\label{zpnonsimplejacs}Let $S\subset T_g$ be a Hodge generic smooth irreducible curve in the Torelli locus $T_g$ defined over $\bar{\Q}$. Assume that $g$ is odd with $g\geq 3$ and that the curve intersects the $0$-dimensional stratum of the boundary of the Baily-Borel compactification of $\mathcal{A}_g$. For $s\in T_g$ write $J_s$ for the Jacobian associated it. 
	
	Then the set $\{s\in S(\C):J_s \text{ is non-simple}\}$ is finite.
\end{theorem}

\subsubsection*{The Zilber-Pink Conjecture for variations of Hodge structures}

In \cite{klingler2017hodge} B.Klingler proposed some, even more, far reaching conjectures that are natural analogues of the Zilber-Pink conjecture in the setting of variations of mixed Hodge structures. Klingler's conjectures  in fact imply the ``classical'' Zilber-Pink conjecture, which so far had been only formulated for mixed Shimura varieties. 

The ultimate motivation behind these conjectures is to study the Hodge locus of a variation of mixed Hodge structures $\V$ defined over a smooth quasi-projective complex algebraic variety $S$. In short, the Hodge locus $\HL(S,\V)$ of the variation $(S,\V)$ is the points in $S$ whose Mumford-Tate group is strictly smaller than the generic one, or in other words points for which the fiber $\V_s$ has more Hodge classes than the generic such fiber.

It is known by work of Cattani-Deligne-Kaplan \cite{cdk}, in the case of variations of pure polarized Hodge structures, and Brosnan-Pearlstein-Schnell \cite{bpsc}, in the mixed admissible case, that the Hodge locus is in fact a countable union of closed irreducible algebraic subvarieties of $S$. 

As in the case of Shimura varieties one can naturally define a set of special subvarieties of $S$, or more precisely special subvarieties of $(S,\V)$, since the ``special nature'' of these subvarieties is tied to the information encoded by the variation $\V$. In analogy with the Zilber-Pink conjecture one wants to focus on those special subvarieties that provide ``atypical intersections''. With that in mind, using Hodge-theoretic language, G. Baldi, B. Klingler, and E. Ullmo, in a strengthening of the aforementioned conjectures of Klingler, define the notion of an ``atypical special subvariety'' for a given variation of polarizable integral Hodge structures $(S,\V)$ and define the atypical part $\HL(S,\V)_{\atyp}$ of the Hodge locus of the variation to be the union of all such atypical special subvarieties. They then state the following strengthened version of Conjecture $1.9$ of \cite{klingler2017hodge}:

\begin{conj}[Conjecture $2.5$, \cite{bkullmo}]\label{klinglerzilberpink} Let $S$ be an irreducible smooth quasi-projective complex variety endowed with a variation of polarized integral Hodge structures $\V$.
	Then the atypical part $\HL(S,\V)_{\atyp}$ of the Hodge locus of the variation is a finite union of atypical special subvarieties of $(S,\V)$.
\end{conj}

We note that in the case where $S$ is a curve the aforementioned conjecture reduces to an honest fineteness statement, since then $\HL(S,\V)_{\atyp}$ will be just a finite set of points according to the above conjecture.

For a more detailed introduction to problems of unlikely intersections we direct the interested reader to our primary sources, namely \cite{around,pilabook,zannier}.
	
\subsubsection*{Height bounds}

In a recent series of works, see \cite{daworr,daworr2,daworr3,daworr4,papaszp,davidg}, initiated by ideas of C.Daw and M.Orr, the height bounds on atypical points have played a pivotal role in establishing cases of the Zilber-Pink Conjecture for curves in moduli spaces of abelian varieties. 

Height bounds of this type were first established by Y. Andr\'e, using what is now referred to as the ''G-functions method''.
 \begin{theorem}\label{andreoriginal}[Y. Andr\'e,\cite{andre1989g}, Theorem $1$, Chapter $X$]Let $f:X\rightarrow S$, where $S=S'\backslash \{s_0\}$ with $S'$ a smooth connected curve defined over a number field $K$ and $s_0\in S'(K)$, be a one-parameter family of abelian varieties.
	
	 Assume that the generic fiber $X_{\eta}$ is a simple abelian variety of odd dimension $g>1$, and that the scheme $X$ has completely multiplicative reduction at the point $s_0$. 
	 
	 Then there exist constants $c_1$, $c_2$ such that for all points in the set \begin{center}
	 	$\{ s\in S(\bar{\Q}): \End X_s\not\hookrightarrow M_g(\Q)\} $
	 \end{center} $h(s)\leq c_1\cdot [K(s):K]^{c_2}$, where $h$ is a Weil height on $S'$.
\end{theorem}

The importance of these height bounds, and the G-functions method in particular, becomes evident given the fact that, other than the result of P. Habegger and J. Pila \cite{habeggerpila1} who get the height bounds needed via a different method, they have been the only way so far to establish a so called ''Large Galois orbits Hypothesis'' that is needed in the context of the Pila-Zannier method to establish Zilber-Pink-type statements.

The main technical result of this paper is a height bound analogous to that in \Cref{andreoriginal} but now for geometric variations of Hodge structures, of arbitrary odd weight, defined over some curve. With a view towards \Cref{klinglerzilberpink} of Baldi-Klingler-Ullmo, the question we attempt to answer is thus the following:

\begin{question}Consider a geometric Hodge structure $\V$ of weight $n\geq 1$ coming from some smooth projective morphism $f:X\rightarrow S$ defined over some number field $\bar{\Q}$, with $S$ a smooth irreducible curve. 
	
	Is it it true that the $\bar{\Q}$-points of the subset of the atypical locus $\HL(S,\V)_{\atyp}$ of $\V$ who owe their atypicality to having fibers $\V_s$ with ``atypically large'' algebras of Hodge endomorphisms have height uniformly bounded by their degree of definition as in \Cref{andreoriginal}?	
	\end{question}

While this is clearly implied by \Cref{klinglerzilberpink}, a positive answer to this question can be viewed in itself as a ``first indication'' of the validity of \Cref{klinglerzilberpink} for geometric variations defined over curves.
	\subsection{Our main results}

The main technical result of this paper, see \Cref{maintheorem}, establishes analogues of Andr\'e's height bound in the setting of certain pure polarized Hodge structures of arbitrary weight. The motivation behind this pursuit of ours was two-fold as can be seen by this introduction. On the one hand, establishing such height bounds seems to the author, as is evident in the work of Daw and Orr, to be the most serious obstacle in establishing a significant amount of cases of the Zilber-Pink conjecture in $\mathcal{A}_g$. Furthermore, such height bounds are, with the notable exception of the height bounds used by P. Habegger and J. Pila in \cite{habeggerpila1}\footnote{See in particular inequality $(13)$ in the proof of Lemma $4.2$ in \cite{habeggerpila1}.}, the only tool successfully used in establishing Large Galois orbits hypotheses needed for Zilber-Pink type problems in the setting of Shimura varieties. On the other hand, \Cref{klinglerzilberpink} of Baldi-Klingler-Ullmo seems at the moment to be the most general version of a conjecture of Zilber-Pink type.

\textbf{Our setting:} Let $K$ be a number field and let $S'$ be a smooth geometrically irreducible curve over $K$, not necessarily projective, and fix $s_0\in S'(K)$ a closed point. Let us consider $S$ to be the curve $S'\backslash\{s_0\}$, $X$ a smooth variety over $K$, and let $f:X\rightarrow S$ be a smooth projective morphism that is also defined over $K$ and assume that the dimension of the fibers of $f$ is $n$. 

For each $i\in\{0,\ldots, 2n\}$ the morphism $f$ defines variations of Hodge structures on the analytification $S^{an}$ of $S$, namely the variations given by $R^{i}f^{an}_{*}\Q_{X^{an}_{\C}}\otimes \mathcal{O}_{S^{an}_\C}$. We focus on the variation with $i=n$ and set $\V:= R^{n}f^{an}_{*}\Q_{X^{an}_{\C}}$. We furthermore assume that there exists a smooth $K$-scheme $X'$ and a projective morphism $f':X'\rightarrow S'$ such that:\begin{enumerate}
	\item $f'$ is an extension of $f$, and 
	
	\item $Y=(f')^{-1} (s_0)$ is a union of transversally crossing smooth divisors $Y_i$ entering the fiber with multiplicity $1$.
\end{enumerate} 

Let $\Delta\subset S'^{an}_{\C}$ be a small disk centered at $s_0$ such that $\Delta^{*}\subset S^{an}_{\C}$. From work of Katz\footnote{See in particular the monodromy Theorem in part $VII$ along with the discussion in part $VI$ of \cite{katzregularity}.} it is known that the residue at $s_0$ of the Gauss-Manin connection of the relative de Rham complex with logarithmic poles along $Y$ is nilpotent if we have $(2)$ above. From this it follows, by \cite{steenbrink} Theorem $2.21$, that the local monodromy around $s_0$ acts unipotently on the limit Hodge structure $H^n_{\Q-lim}$. By the theory of the limit Hodge structure we then get the weight monodromy filtration $W_{\bullet}$. We let $h:=\dim_{\Q} W_{0}$.\\

We have the following definition.
\begin{definition}\label{stronglyexceptional}Let $\mathcal{E}\subset S(\C)$ be the set of points for which the decomposition $\V_s=V_1^{m_1}\oplus\cdots\oplus V_r^{m_r}$ of $\V_s$ into simple polarized sub-$\Q$-HS and the associated algebra $D_s:=M_{m_1}(D_1)\oplus \cdots \oplus M_{m_r}(D_r)$ of Hodge endomorphisms are such that:\begin{enumerate}
		
		\item $h> \frac{\dim_\Q V_j}{[Z(D_j):\Q]}$ for some $j$, or
		
		\item there exists at least one $D_i$ that is of type IV in Albert's classification and $h\geq \min\{  \frac{\dim_\Q V_i}{[Z(D_i):\Q] } : i \text{ such that }  D_i=\End_{HS}(V_i)  \text{ is of type IV } \}$,
		\end{enumerate}and furthermore there exists a subset $J\subset \{1,\ldots,r\}$ and integers $0\leq m'_i\leq m_i$ such that 
	\begin{enumerate}
		\item $h(\Sum{i\in J}{} ([Z(D_i):\Q]-1)m'_i)\geq (\Sum{i\in J}{}m'_i \frac{(\dim_\Q V_i)}{2}   )\geq h$, or 
		
		\item $h \geq (\Sum{i\in J}{}m'_i \frac{(\dim_\Q V_i)}{2}  )+1$.
	\end{enumerate}
\end{definition}

\begin{remark}The first two of the above condition reflect the extent to which we can create relations at archimedean places among periods, see \Cref{section:archim} for more on this issue. The last two conditions reflect the restrictions we need in order to create relations among periods at finite places, see \Cref{section:nonarchi} for more on this.
	
	We return to examples that satisfy these conditions in \Cref{section:examples}.
\end{remark}

Our main result, using the above notation, is the following theorem.

\begin{theorem}\label{maintheorem}Let $S'$, $s_0$, and $f:X\rightarrow S$ be as above and all defined over a number field $K$. We assume that the dimension $n$ of the fibers is odd and that the Hodge conjecture holds for the endomorphisms of Hodge structures of the fibers of $f$.
	
	For the variation whose sheaf of flat sections is given by $\V:=R^nf^{an}_{*}\Q_{X^{an}_{\C}}$ we assume the following hold true:\begin{enumerate}
		\item the generic special Mumford-Tate group of the variation is $Sp(\mu,\Q)$, where $\mu=\dim_{\Q} \V_z$ for any $z\in S^{an}$, and 
		\item $h\geq 2$.
	\end{enumerate}
	
	Let $\Sigma:= \mathcal{E}\cap S(\bar{\Q})$.	Then, there exist constants $C_{1}$, $C_2>0$ such that for all $s\in\Sigma$ we have\begin{center}
		$h(s)\leq C_1 [K(s):K]^{C_2}$,
	\end{center}where $h$ is a Weil height on $S'$.\end{theorem}

\begin{remark}
	We note that CM-points of the variation will be in the set $\Sigma$ of this \Cref{maintheorem}. We can also create concrete examples of possible algebras of Hodge endomorphisms for which the conditions that guarantee $s\in\Sigma$ above can be checked fairly easily. We return to this issue in \Cref{section:examples}.
\end{remark}

\subsection{Applications to the case n=1}

\Cref{maintheorem}, as well as some tools needed for its proof, has immediate applications to the case $n=1$. Perhaps the most striking, at least to the author, is the following:

\begin{theorem}\label{application1}Let $f:X\rightarrow S$, where $S=S'\backslash \{s_0\}$ is a curve as before, be a family of smooth irreducible projective curves of genus $g\geq 2$ defined over some number field $K$. Assume that the induced morphism $S\rightarrow \mathcal{M}_g$ is Hodge generic. We furthermore assume that the associated family of Jacobians $F:\mathcal{A}\rightarrow S$ over $S$ is such that it degenerates completely multiplicatively over $s_0$. 
	
Consider the set
	\begin{center}
			$\mathcal{E}_g:=\begin{cases}\{s\in S(\bar{\Q}): \End^{0}_{\C}(\mathcal{A}_s)\neq \Q \}&g\text{ is odd}\\
		 \mathcal{E}\cup	\{s\in S(\bar{\Q}):\End^{0}_{\C}(\mathcal{A}_s) \not\hookrightarrow M_{g} (\Q)  \}&\text{otherwise}
		\end{cases}$
	\end{center}where $\mathcal{E}$ is as defined in \Cref{stronglyexceptional} in the second case.
	Then there exist positive constants $c_1$ and $c_2$ such that for all points $s\in \mathcal{E}_g$ we have $h(s)\leq c_1[K(s):K]^{c_2}$.
\end{theorem}

In other words, we may lift the condition $D_s\not\hookrightarrow M_g(\Q)$ in the case of Andr\'e's original theorem \Cref{andreoriginal}, i.e. when $g$ is odd, when the family of abelian varieties is the family of Jacobians of some family of curves that is furthermore Hodge generic in the above sense.

In \Cref{section:applications} we also discuss how one can derive so called Large Galois orbits for points in these families for which the corresponding Hodge structure satisfies the conditions defining the sets $\mathcal{E}_g$ above, depending on the parity of $g$.

We close off our exposition by establishing some Zilber-Pink type statements in this context. In the context of the above case, i.e. when $g=h$ and $n=1$, we obtain the following:
\begin{theorem}\label{zpgish}Let $S\hookrightarrow \mathcal{M}_g$, with $g\geq 3$, be a Hodge generic smooth irreducible curve defined over $\bar{\Q}$. Assume that the compactification $\bar{S}$ of $S$ intersects the boundary $\bar{\mathcal{M}}_g\backslash \mathcal{M}_g$ at a point $s_0$ and let $S':=S\cup \{s_0\}$. 
	
	Let $f:X\rightarrow S$ be the associated $1$-parameter family of smooth irreducible projective genus $g$ curves. Let $h$ be the toric rank of the connected component of the fiber of the N\'eron model of the generic fiber $\mathcal{A}_{\eta}$ over $S'$ at $s_0$, i.e. $h:=t_{\rank}\tilde{A}'_{s_0}$ where $\tilde{\mathcal{A}}'_{s_0}$ is the above connected component of the identity.

 Assume that $h=g$ and define $\mathcal{E}_g$ to be the set of points in $\mathcal{M}_g(\C)$ defined by the same conditions as the sets of the same name in \Cref{application1}.
	
	Let $\mathcal{S}$ be the subset of $\mathcal{E}_g$ whose points $s$ are such that either the corresponding Jacobian $\mathcal{A}_s$ is isogenous to a non-simple abelian variety, or $s$ is such that $D_s$ is  simple of type $I$ or $II$ in Albert's classification.
	
	Then the set $S(\C)\cap \mathcal{S}$ is finite.
\end{theorem}

In particular, in the case where $g\geq 3$ is odd we recover \Cref{zpnonsimplejacs}.

In the case $g=2$ our result is a restatement of Theorems $1.1$ of \cite{daworr} and $1.4$ of \cite{daworr2} combined. We note that all that is missing for the whole conjecture to hold in this case are  archimedean relations for points whose corresponding Jacobian splits into a product of isogenous elliptic curves. 

For more general examples of Zilber-Pink type statements we point the interested reader to \Cref{section:zpstatements}.
	\subsection{A brief summary of related results}

The G-functions method was introduced, as noted earlier, by Y. Andr\'e in \cite{andre1989g}, who managed to apply this method to the study of $1$-parameter totally degenerating families of abelian varieties. Without access to relations at finite places, Y. Andr\'e \cite{andre1989g}  used ideas centered around Gabber's Lemma to show that if the algebra of a point is ``large enough'' then the point cannot be $v$-adically close to the degeneration for any finite place $v$.

The author's main inspiration to undertake this current work came from results of C. Daw and M. Orr who in a series of papers, see \cite{daworr,daworr2,daworr3,daworr4} employed this method to give unconditional results of Zilber-Pink type in moduli spaces. This attempt of Daw and Orr appears, at least to the author, to have born the only systematic way of tackling the Large Galois orbits hypotheses needed to prove the Zilber-Pink conjecture within the general setting of the Pila-Zannier method.

In the first three of their aforementioned papers Daw and Orr have studied $1$-parameter families in $\mathcal{A}_g$ with $g\geq 2$ with the same restriction, i.e. that of totally multiplicative reduction at a point, getting rid of Andr\'e's condition that $g$ is odd and $g>1$ and replacing it with $g>1$. There they construct relations among values of G-functions at archimedean places for points with large endomorphism algebras. This largeness here is reflected in the condition $\End(A_s)\not\hookrightarrow M_g(\Q)$ that also appears in the original work of Andr\'e as we saw above. They then use these height bounds to establish cases of the Zilber-Pink conjecture for curves in $\mathcal{A}_g$.

 In their latest paper in this series, dealing with curves in $Y(1)^n$, under the same assumption of completely multiplicative reduction at a point, Daw and Orr took the height bounds in question one step further. By managing to define relations at finite places, they were able to show finiteness of intersection of a generic curve, again under the same assumption about its intersection with the boundary $\bar{Y}(1)^n\backslash Y(1)^n$, with special subvarieties of the form $Z(\Phi_N(x_{i_1},x_{i_2}))\cap Z(\Phi_M(x_{i_3},x_{i_4}))$. This result is known to imply the Zilber-Pink conjecture for this curve by previous work of \cite{habeggerpila1}.

Also in the context of Shimura varieties, \cite{papaszp}, the author applies some of the ideas present here to go beyond the case of completely multiplicative reduction in $\mathcal{A}_g$, by relying solely on the archimedean relations and a variation of the aforementioned argument of Andr\'e's to deal with finite places.

Our primary goal here was to extend this circle of ideas beyond the abelian case to general geometric variations of Hodge structures and study points in those with large endomorphism algebras. Following a first version of this text appearing in the author's PhD thesis, D. Urbanik, in a breakthrough paper, employed ideas from relative p-adic Hodge theory to give a geometric interpretation of the values of the G-functions that appear as periods in our setting at finite places as well! It is this new input that allows us to start considering relations among values of G-functions at finite places.

Apart from the ``p-adic realization'' of the values of G-functions D. Urbanik was also able to develop, in the same paper, using input from p-adic Hodge theory, relations among the values of G-functions at finite places when the Hodge structure at that point has a summand that has complex multiplication.  He was also able to develop a variant of the Pila-Zannier method to deduce finiteness of such points in the weight $1$ case. 
	\subsection{Organization of the paper}

Our exposition is made up off independent parts dealing with the ingredients needed to obtain the height bounds in question. We have tried to make each part as independent as possible with the hope that variations to our assumptions about the variations of Hodge structures we deal with are possible and can be made with smaller effort in the future.

We start in \Cref{section:reviewgfunctions} with a short review of G-functions and their connection to periods. The method that Andr\'e uses to obtain his height bounds hinges on two results from the theory of G-functions. First is the fact that among the relative $n$-periods associated to the morphism $f:X\rightarrow S$ there are some that are G-functions. Namely they will, roughly speaking, be the ones that can be written as $\int_{\gamma}\omega$ where $\gamma_z\in\im{(2\pi i N^{*})^{n}}$ for $z\in\Delta^{*}$, where $\Delta^{*}$ and $N^{*}$ are the aforementioned punctured disc and nilpotent endomorphism. The second main result we will need is a result that can be described as a ``Hasse principle'' for the values of G-functions. This is what will ultimately allow us to extract height bounds. 

We then proceed in \Cref{section:hodgereview}, where we review some standard facts about the structure of the algebra of Hodge endomorphisms of a Hodge structure. After this, in \Cref{section:notations} we fix some general notation with the hope of making the exposition easier to follow.

In \Cref{section:derhamendo} we address some technical issues that appear later on in our exposition. Namely, we consider the isomorphism between algebraic de Rham and singular cohomology for a smooth projective variety $Y/k$, where $k$ is a subfield of $\bar{\Q}$:\begin{center}
	$P^n:H^n_{DR}(Y/k)\otimes_k\C\rightarrow H^n(Y^{an},\Q)\otimes_\Q \C$.
\end{center}The singular cohomology is endowed with a Hodge structure and we consider its algebra of Hodge endomorphisms $D$. Later on we will want to create splittings of both de Rham and singular cohomology with respect to actions of $D$ on these vector spaces. To do that we will need to have an action of $D$ on $H^n_{DR}(Y/k)$, which a priori we do not. We show that assuming the absolute Hodge conjecture we may base change $Y$ by a finite extension $L$ of $k$ to obtain such an action that will be compatible with the action of $D$ on $H^n(Y^{an},\Q)$ via the isomorphism $P^n$. We also show that this field extension may be chosen so that its degree is bounded from above only in terms of the dimension $\dim_{\Q} H^n(Y^{an},\Q)$. We believe these results are known to experts in the field, however being unable to find a reference for these arguments we include them here for the sake of completeness.

Our next goal, realized in \Cref{section:trivialrelations}, is to describe the trivial relations among those relative $n$-periods associated to the morphism $f$ which are G-functions. This amounts to describing the polynomials defining the $\bar{\Q}[x]$-Zariski closure of a certain $h\times \mu$ matrix, where $x\in K(S')$ is a rational function that vanishes only at the point $s_0$. This is achieved by a monodromy argument using Andr\'e's Theorem of the Fixed part. 

The next part of our exposition, starting in \Cref{section:pseudocmrelations}, consists of creating relations among the values of the G-functions in question at certain exceptional points that are ``non-trivial'' and ''global''. That means that these do not come from specializing the trivial relations we described earlier and hold at all places for which ''they make sense'', i.e. for all places $v$ for which the value $\xi=x(s)$ in question is in the $v$-adic radius of convergence of the G-functions. This is essentially done in two steps, first for archimedean such places and then for non-archimedean such places. 

After this we put all the aforementioned ideas together in \Cref{section:proofoftheresult} to establish our height bounds. Following that, we see an application of our main theorem to the concrete setting $n=1$, i.e. that of $1$-parameter families of genus $g$ smooth projective curves that degenerate at some point. 

In \Cref{section:applications} we give also some applications of our height bounds to Zilber-Pink-type statements for curves in the moduli space $\mathcal{M}_g$. As an ``in-between-step'' we also show how, following ideas of C. Daw and M. Orr, one can create lower bounds for the Galois orbits of very general such atypical points on curves via results of Masser and W\"ustholz. We end the exposition with a few examples of the points that our method can handle, which obviously gives better results for larger values of the number $h$ of columns of G-functions in the relative period matrix. 

We have also included an appendix on polarizations. The main result we need about polarizations in the text is a description of the relations they define among the $n$-periods. This description in the case where the weight of the Hodge structures is $1$ already appears in \cite{andre1989g}. The description in the case of arbitrary odd weight is not different at all. We include it in this appendix for the sake of completeness.\\

\textbf{Acknowledgements:} This text is an updated version of the first half of the author's PhD thesis. The author thanks his PhD advisor Jacob Tsimerman for countless helpful discussions and suggestions. The author also thanks Chris Daw for answering questions about his work, for pointing out some errors, and for providing many useful comments and suggestions on previous versions of this text that significantly improved the exposition. The author also thanks David Urbanik for answering question about his work and for helpful comments and for pointing out some errors on a previous version of this text. The author also thanks Yves Andr\'e for answering questions about his work. While working on this text the author was partially supported by Michael Temkin's ERC Consolidator Grant 770922 - BirNonArchGeom.

	\part{Background material}
	
	\section{A short review of G-functions in Arithmetic Geometry}\label{section:reviewgfunctions}

G-functions were first introduced by Siegel in \cite{siegel}. We start with a short review of G-functions and we list some of their main properties.\\

\textbf{Notation:} We fix some notation that appears throughout the text. For $v$ a place of a number field $K$ we let $i_v:K\rightarrow \C_v$ denote the inclusion of $K$ into $\C_v$, which denotes a completion of an algebraic closure of $K_v$. For $y\in K[[x]]$ we let $i_v(y)$ denote the element of $\C_v[[x]]$ given via $i_v$ acting coefficient-wise on $y$.\\

\begin{definition}
	Let $K$ be a number field and let $y=\Sum{n=0}{\infty}a_n x^n\in K[[x]]$. Then $y$ is called a \textbf{G-series at the origin} if the following are true: 
	\begin{enumerate} 	
		\item $\forall v\in \Sigma_{K,\infty}$ we have that $i_v(y)\in \C_v[[x]]$ defines an analytic function around $0$,
		\item there exists a sequence $(d_n)_{n\in\N}$ of natural numbers such that 
		\begin{itemize}
			\item $d_n a_m\in \mathcal{O}_K$ for all $m\leq n$,
			\item there exists $C>0$ such that $d_n\leq C^n$ for all $n\in \N$,
		\end{itemize}	
		\item $y$ satisfies a linear homogeneous differential equation with coefficients in $K(x)$.
	\end{enumerate}
\end{definition}

Examples of G-series at the origin are elements of $\bar{\Q}(x)$ without a pole at $0$, the expansion of $log(1+x)$ at $0$, and any element of $\bar{\Q}[[x]]$ which is algebraic over $\bar{\Q}(x)$.

We note, see \cite{dwork}, that we can naturally define ``\textit{G-series at} $\zeta$'', for any $\zeta \in \C$. We also remark that the number field $K$ can be replaced by $\bar{\Q}$ without problems thanks to the third condition, which implies that the $a_i$ are all in some finite extension of $\Q$. Finally, we note that the set of G-series at $\zeta$ forms a ring.

\begin{definition}A \textbf{G-function} is a multivalued locally analytic function $y$ on $\C\backslash {S}$, with $|S|<\infty$, such that 
	for some $\zeta\in\C\backslash {S}$, $y$ can be represented by a G-series at $\zeta$.\end{definition}

Thanks to the Theorem of Bombieri-Andr\'e and the Theorem of Chudnovsky we know that the global nature of a G-function is in fact very much dependent on the fact that it can be locally written as a G-series. That is why, essentially following \cite{andre1989g}, we identify the two notions, especially since we will be only interested at power series centered at the origin.\\


For more on G-functions we direct the interested reader to the excellent introductory text \cite{dwork} and the more advanced \cite{andre1989g}.

	\subsection{A Hasse Principle for G-functions}\label{section:hasseprinciple}

The main tool we will need from the theory of G-functions is a theorem of Andr\'e, that generalizes work of Bombieri in \cite{bombg}, which plays the role of a ``Hasse Principle'' for G-functions. First we need some definitions and to fix some notation.\\

 For the rest of this section consider $y_0,\ldots,y_{m-1}$ to be G-functions with coefficients in some number field $K$. We also define $Y:= (y_0,\ldots,y_{m-1})\in K[[x]]^{m}$, we fix some homogeneous polynomial $p\in K[t_1,\ldots,t_m]$, and a $\xi\in K$.
 
 \begin{definition}\label{localradiusden}For each $j$, we let $R_v(y_j)$ be the radius of convergence of the Taylor series $i_v(y_j)\in \C_v[[x]]$. We also define \begin{center}
 		$R_v(Y):=\min\{R_v(i_v(y_j)):0\leq j\leq m-1 \}$
 	\end{center} and call it the \textbf{local radius of convergence of $Y$ at the place $v$}.
 \end{definition}

\begin{definition}
	1. We say that a relation $p(y_0(\xi),\ldots,y_{m-1}(\xi))=0$ \textbf{holds $v$-adically for some place $v$ of $K$} if $|\xi|_v< R_v(Y)$ and \begin{center}
		$i_v(p)(i_v(y_0)(i_v(\xi)),\ldots,i_v(y_{m-1})(i_v(\xi)))=0$.
	\end{center}
	
	2. A relation like that is called \textbf{non-trivial} if it does not come by specialization at $\xi$ from a homogeneous relation of the same degree with coefficients in $K[x]$ among the $y_i$. Respectively, we call it \textbf{strongly non-trivial} if it does not occur as a factor of a specialization at $\xi$ of a homogeneous irreducible relation among the $y_i$ of possibly higher degree.\\
	
	3. A relation $p(y_0(\xi),\ldots,y_{m-1}(\xi))=0$ is called \textbf{global} if it holds $v$-adically for all places $v$ of $K$ for which $|\xi|_v<\min \{1, R_v(Y) \}$.\end{definition}

\begin{theorem}[Hasse Principle for G-functions,\cite{andre1989g}, Ch VII, \S5.2]\label{hasse}Assume that $Y\in\bar{\Q}[[x]]^m$ satisfies the differential system $\frac{d}{dx}Y=\Gamma Y$ where $\Gamma\in M_m(\bar{\Q}(x))$ and that $\sigma(Y)<\infty$. Let $\Sha_{\delta}(Y)$, resp. $\Sha_\delta'(Y)$, denote the set of ordinary points or apparent singularities\footnote{For the notion of a real or apparent singularity of a differential system see page $406$ of \cite{ince}. We note that points that are neither real or apparent singularities are called \textbf{ordinary points} of a differential system.} $\xi\in\bar{\Q}^*$ where there is some non-trivial, resp. strongly non-trivial, and global homogeneous relation of degree $\delta$.
	
	Then, \begin{center}
		$h(\Sha_\delta(Y))\leq c_1(Y) \delta^{3(m-1)}(\log \delta +1)$, and
	\end{center}\begin{center}
		$h(\Sha_\delta'(Y))\leq c_2(Y) \delta^{m}(\log \delta +1)$.
	\end{center}In particular, any subset of $\Sha_\delta(Y)$ with bounded degree over $\Q$ is finite.
\end{theorem}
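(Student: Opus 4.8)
The plan is to follow Andr\'e's method, which refines Bombieri's earlier argument in \cite{bombg} and rests on three pillars: the finiteness of the \emph{global radius of convergence} of a $G$-function system (the Chudnovsky--Andr\'e theorem), an effective Siegel-lemma construction of auxiliary polynomials, and the product formula. Throughout, let $K$ be a number field over which $Y$ and $\Gamma$ are defined, fix $\xi\in\bar{\Q}^*$ admitting a non-trivial (resp. strongly non-trivial) global homogeneous relation $p(y_0(\xi),\ldots,y_{m-1}(\xi))=0$ of degree $\delta$, enlarge $K$ to contain $\xi$, and argue by contradiction assuming $h(\xi)$ exceeds the asserted bound. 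Write $y_i=\sum_n a_{i,n}x^n$; the hypothesis $\sigma(Y)<\infty$ means that for each $N$ there is $D_N\in\Z_{>0}$ clearing the denominators of all $a_{i,n}$ with $n\le N$, with $\frac1N\log D_N$ and $\frac1N\sum_v\log^+\max_{i,\,n\le N}|a_{i,n}|_v$ bounded independently of $N$. This is precisely the input that makes Siegel's lemma applicable with controlled heights.

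Next I would construct the auxiliary function. By Siegel's lemma over $K$, for integer parameters $L$ (degree in $x$) and $T$ (vanishing order at $0$) chosen with $L\cdot\binom{m+\delta-1}{m-1}>T$, there is a nonzero $P(x,t_1,\ldots,t_m)\in\mathcal{O}_K[x,t]$, homogeneous of degree $\delta$ in $t$, of degree $\le L$ in $x$, with $h(P)\ll (T/L)\,\sigma(Y)$ plus lower-order terms, such that $F(x):=P\big(x,y_0(x),\ldots,y_{m-1}(x)\big)$ vanishes to order $\ge T$ at $x=0$. The combinatorial factor $\binom{m+\delta-1}{m-1}\sim\delta^{m-1}/(m-1)!$, the number of degree-$\delta$ monomials in $m$ variables, is where the powers of $\delta$ in the final bound originate, while the heights produced by Siegel's lemma are the source of the $(\log\delta+1)$ factor.

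The heart is the place-by-place estimate of $F(\xi)$. The Dwork--Robba $p$-adic estimates for $\frac{d}{dx}Y=\Gamma Y$ together with $\sigma(Y)<\infty$ yield the Chudnovsky--Andr\'e finiteness $\sum_v\log^+ R_v(Y)^{-1}<\infty$; hence $\sum_v\log^+\!\big(|\xi|_v/\min\{1,R_v(Y)\}\big)$ and $\sum_v\log^+|\xi|_v$ are both $O(h(\xi))$. At each place $v$ with $|\xi|_v<\min\{1,R_v(Y)\}$ the relation $p(Y(\xi))=0$ holds $v$-adically by globality; differentiating via $\Gamma$ produces, for $0\le k<T$, homogeneous degree-$\delta$ polynomials $p_k\in K(x)[t]$ with $\frac{d^k}{dx^k}F=p_k(x,Y(x))$ up to controlled denominators, and the $v$-adic smallness of these combined with the order-$T$ vanishing of $F$ at $0$ gives $|F(\xi)|_v\le |\xi|_v^{\,T}\cdot(\text{small controlled factor})$. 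At the remaining places $|F(\xi)|_v$ is bounded crudely by $h(P)$, powers of $\max(1,|\xi|_v)$, and the sizes $|a_{i,n}|_v$. If $F(\xi)\neq 0$, summing $\log|F(\xi)|_v$ over all $v$ and invoking the product formula forces $T\cdot(\text{positive constant from the global radius})\le L\cdot h(\xi)+O\big(h(P)\,\delta\big)+\ldots$; taking $T$ maximal relative to $L$ collapses this to $h(\xi)\le c(Y)\,\delta^{e}(\log\delta+1)$ with $e$ the relevant exponent, the desired contradiction. If $F(\xi)=0$ one removes the factor $(x-\xi)$ (over the Galois orbit of $\xi$, whose conjugates carry the same global relation) and iterates; the bookkeeping that keeps this iteration from degenerating into a \emph{trivial} relation is exactly where the two hypotheses diverge — the merely non-trivial case requires a zero-estimate/Wronskian-type argument costing an extra factor and yielding the exponent $3(m-1)$, whereas strong non-triviality lets one work directly with an irreducible factor and produces the better exponent $m$. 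The concluding claim that bounded-degree subsets of $\Sha_\delta(Y)$ are finite is then immediate from Northcott.

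The main obstacle is the third step: establishing and then tightly accounting for the finiteness $\sum_v\log^+R_v(Y)^{-1}<\infty$ — itself a substantial theorem built on Andr\'e's transfer theorems and the Dwork--Robba theory of $p$-adic differential modules — and carrying all the archimedean and non-archimedean contributions through the product formula with enough precision that the exact powers $\delta^{3(m-1)}$ and $\delta^{m}$ and the $(\log\delta+1)$ term come out correctly. The second delicate point is the zero-estimate bookkeeping in the iteration that rules out the degenerate case $F\equiv\text{trivial}$ and that separates ``non-trivial'' from ``strongly non-trivial''.
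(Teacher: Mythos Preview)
The paper does not prove this theorem; it is stated with a citation to Andr\'e's book \cite{andre1989g}, Ch.~VII, \S5.2, and used as a black box. Your outline is a reasonable sketch of the argument found in that reference, but there is no proof in the present paper to compare it against.
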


\begin{remark}
	The quantity $\sigma(Y)$ is called the size of $Y$. G-functions have finite size. \footnote{For this fact and the definition of the notion of ``size'' of a power series see \cite{andre1989g} Chapter I. }
\end{remark}

	\subsection{Periods and G-functions}\label{section:periodsandgfunctions}

Our primary interest in the theory of G-functions stems from the connection between G-functions and relative periods. We give a brief review of the results in \cite{andre1989g} that highlight this connection together with some basic facts and definitions that we will use later on. We also present a short review of results of D. Urbanik, who, in a recent breakthrough \cite{davidg}, managed to establish a $p$-adic realization of the G-functions that we are interested in.\\

Let $S'$ be a smooth geometrically irreducible curve over some number field $K$, $S=S'\backslash\{s_0\}$, where $s_0\in S'(k)$ is some closed point, and let $x\in K(S')$ be a uniformizing parameter of $S'$ at $s_0$.

We also consider $f:X\rightarrow S$ a smooth projective morphism and we let $n=dim X-1$. Fixing an embedding $i_v:K\rightarrow \C$, we then have the following isomorphism of $\mathcal{O}_{S_{\C}^{an}}$-modules\begin{center}
	$P^{\bullet}_{X/S}:
	H^{\bullet}_{DR}(X/S)\otimes_{\mathcal{O}_S}\mathcal{O}_{S_{\C}^{an}}\rightarrow R^{\bullet}f^{an}_{*}\Q_{X^{an}_\C}\otimes_{\Q_{S^{an}_\C}}\mathcal{O}_{S^{an}_\C}$.\end{center}
In what follows we will be focusing on the isomorphism $P^{n}_{X/S}$, which from now on we will simply denote by $P_{X/S}$. We also let $\mu=\dim_\Q H^{n}(X_z^{an},\Q)$ where $z\in S(\C)$.

This isomorphism is the relative version of Grothendieck's isomorphism between algebraic de Rham and Betti cohomology and it can be locally represented by a matrix. Namely, if we choose a basis $\omega_i$ of $H^{n}_{DR}(X/S)$ over some affine open subset $U\subset S$ and a frame $\gamma_{j}$ of $R_nf_{*}^{an}\Q_{X^{an}_\C}$ over some open analytic simply connected subset $V$ of the analytification $U^{an}_{\C}$, $P_{X/S}$ is represented by a matrix with entries of the form $\int_{\gamma}^{ }\omega_i$.

\begin{definition}We define the \textbf{relative n-period matrix} (over $V$) to be the $\mu\times\mu$ matrix 
	\begin{center}
		$\bigg(\frac{1}{(2\pi i)^n} \int_{\gamma}^{ }\omega_i  \bigg)$.
	\end{center}Its entries will be called the \textbf{relative n-periods}.
\end{definition}

A result we will need in what follows guarantees the existence of G-functions among the relative n-periods under the hypothesis that the morphism $f$ extends over all of $S'$. Namely, let us assume $f$ extends to a projective morphism $f':X'\rightarrow S'$ with $X'$ a smooth $K$-scheme, such that $Y:=f'^{-1}(s_0)$ is a union of smooth transversally crossing divisors $Y_i$ entering the fiber with multiplicity $1$. 

Under these assumptions we know, see \cite{peterssteenbrink} Corollary $11.19$, that the local monodromy is unipotent. Let $\Delta_v$ be a small disk embedded in $S'^{an}_{\C}$ and centered around $s_0$ such that for the punctured disk we have that $\Delta^{*}_v\subset S^{an}$. We let $2\pi iN^{*}$ be the logarithm of the local monodromy acting on the sheaf $R_n(f^{an}_{\C})_{*}(\Q_{X^{an}_{\C}})|_{\Delta^{*}}$.

\begin{definition}\label{m0nperiods}We denote by $M_0 R_n(f^{an}_\C)_{*}(\Q_{X^{an}_{\C}})|_{\Delta^{*}}$ the image of the map $(2\pi iN^{*})^n$. We call $\boldsymbol{M_0}$\textbf{-}$\boldsymbol{n}$\textbf{-period} any relative $n$-period over a cycle $\gamma$ in $M_0 R_n(f^{an}_\C)_{*}(\Q_{X^{an}_{\C}})|_{\Delta^{*}}$. 
\end{definition}

In this context we have the following theorem of Andr\'e:

\begin{theorem}\label{andresexistence}[\cite{andre1989g}, Theorem $2$ p.185]\label{existence} Let $\omega_i$ be a basis of sections of $H^n_{DR}(X/S)$ over some dense open subset of S, such that for any section $\gamma$ of $M_0 R_n(f^{an}_\C)_{*}(\Q_{X^{an}_{\C}})|_{\Delta^{*}}$ the Taylor expansion in $x$ of the relative $M_0$-$n$-periods $\frac{1}{(2\pi i)^n}\int_{\gamma}^{ }\omega_i$ are globally bounded G-functions.\end{theorem}

Let us explicate on this a bit. Andr\'e's Theorem shows the existence of power series $y_{i,j}\in \bar{K}[[x]]$ such that, with respect to the above fixed place $v\in \Sigma_{K,\infty}$, we have that $i_v(y_{i,j})(x(P))=\frac{1}{(2\pi i)^n}( \int_{\gamma_j} \omega_i)_{P}$ for $P$ in some open analytic disc $\Delta_{v,R}\subset (S')^{an}$. The cycles $\gamma_j$ that appear here are a basis of $M_0 R_n(f^{an}_\C)_{*}(\Q_{X^{an}_{\C}})|_{\Delta^{*}}$. 

In other words, upon assuming that the above simply connected open analytic subset $V$ is such that $V\subset \Delta_{v,R}$, we may order the frame $\gamma_j$ above starting with elements that constitute a basis of $M_0 R_n(f^{an}_\C)_{*}(\Q_{X^{an}_{\C}})|_{\Delta^{*}}$.  In this case the aforementioned G-functions appear naturally as the entries of the first columns of the relative period matrix.

It turns out that we may without loss of generality\footnote{We may have to a priori tensor everything by a finite extension of $K$.} assume that the entries of these G-functions are in fact in $K$, and not just in $\bar{K}$. See also \Cref{changeofplace} and the remarks in its section for more on this.
	\subsubsection{p-adic realization of G-functions}

The proof of \Cref{andresexistence} is constructive and one can see\footnote{See \Cref{changeofplace} ahead.} that the G-functions $y_{i,j}(x)$ it produces are in fact ``independent'' of the chosen archimedean place $v\in \Sigma_{K,\infty}$ chosen above. Meaning that if we consider a different archimedean place $w$ then $i_w(y_{i,j})(x)$ naturally appear as the entries of the first $h$ columns of the relative period matrix, where $h:=\dim_{\Q}M_0 R_n(f^{an}_\C)_{*}(\Q_{X^{an}_{\C}})|_{\Delta^{*}}$, over any simply connected open analytic subset of a punctured archimedean disk centered at $s_0$ that is contained in $S^{an}$. Note here that the analytification is taken with respect to the ``new place'' $w$.

It has been a long standing problem whether this also holds for non-archimedean places $w$ of $K$. Namely, given $w\in \Sigma_{K,f}$, can we interpret the power series $i_w(y_{i,j})(x)\in \C_w[[x]]$ as relative periods in some open adic disk centered $\Delta_{w,R}\subset (S')^{ad}$ centered at $s_0$?

This problem was answered affirmatively in recent work of D. Urbanik \cite{davidg}. Here we reinterpret the main result of his exposition in our setting. 

\begin{theorem}\label{padicrealization}[\cite{davidg} Theorem $4.3$] Let $y_{i,j}(x)\in K[[x]]$ be the G-functions of \Cref{andresexistence} and let $\iota:K\rightarrow \C_p$ be a fixed embedding corresponding to some finite place of the field $K$. Let also $R\leq \min\{1,R_v(\vec{y})\}$
	
	Then for all $1\leq j\leq h$ and for all closed points $s\in \Delta^{ad}_{\iota,R}\subset (S')^{ad}_{\iota}$, with $s\neq s_0$, in the component that contains $s_0$ of the preimage of $x$ of a punctured adic open disc defined by $|T|<R$, there exists a a functional $\fgj: H^n(X^{ad}_{s, \proet}, \hat{\Z}_p(n))\rightarrow \Z_p(n)$ such that\begin{center}
		$\frac{1}{t^n}(\drfj{j}\circ \rho^{-1})((\omega_{i})_s)=\iota_v(y_{i,j}(x(s)))$.
	\end{center}
\end{theorem}

In \Cref{padicrealization} $\rho$ stands for the $p$-adic Hodge isomorphism \begin{center}
	$H^n(X^{ad}_{s,\proet}, \Z_p(n))\otimes_{\Z_p}B_{dR}\rightarrow H^n_{DR}(X_s/K(s))\otimes_{K(s)} B_{dR}$, 
\end{center}and $\drfj{j}$ stand for the functional obtained from $\fgj$ obtained by extending scalars to $B_{dR}$. For more on this see $\S$ $1.2$ and $\S$ $4$ of \cite{davidg}.

\begin{remark}We note that Urbanik's result holds in greater generality, in that the degree $N$ of the local systems, i.e. $R^{N}f^{an}\Q$, he studies is not necessarily equal to the dimension of the fibers of the morphism $f$ as in our case.
	
	The distinct advantage of the variations of Hodge structures that we consider is that the number $h$ of columns in the relative period matrix whose entries are G-functions is known by \Cref{andresexistence}! Namely it is the dimension of a local sub-system of the system underlying our variation and this dimension is described in terms of the weight filtration induced from the local monodromy on fibers (archimedeanly) close enough to $s_0$.	
\end{remark}

	\section{Endomorphism algebras of Hodge Structures}\label{section:hodgereview}

One of the central notions we will employ in what follows are the endomorphism algebras of polarized $\Q$-Hodge structures of pure weight. We present here a quick review of the main facts we will need later on about the structure of these algebras, as well as a few standard definitions and notation on Hodge-theoretic notions that we will use.\\

\subsubsection*{Mumford-Tate groups}

Given a $\Q$-Hodge structure, or $\Q$-HS for short, of pure weight, we also get a group homomorphism $\tilde{\varphi}:\mathbb{S}\rightarrow GL(V)_{\R}$ of $\R$-algebraic groups, where $\mathbb{S}$ is the Deligne torus. Let $\mathbb{U}_1$ be the $\R$-subtorus of $\mathbb{S}$ with $\mathbb{U}_1(\R)=\{z\in\C^*:|z|=1\}$ and let $\varphi:=\tilde{\varphi}|_{\mathbb{U}_1}$. 

\begin{definition}
	Let $V$ be a pure weight $\Q$-HS and $\tilde{\varphi}$ and $\varphi$ be as above. The \textbf{Mumford-Tate group} of $V$, denoted by $G_{mt}(V)$, is defined as the $\Q$-Zariski closure of $\tilde{\varphi}(\mathbb{S}(\R))$. The \textbf{special Mumford-Tate group} of $V$, denoted by $G_{smt}(V)$, is defined as the $\Q$-Zariski closure of $\varphi(\mathbb{U}_1(\R))$.
\end{definition}

We review a key property of the special Mumford-Tate group. First we will need some notation, following the exposition in \cite{ggkbook}. For a given pure $\Q$-HS $(V,\phi)$ of weight $n$, as in the previous definition, we can get via standard linear-algebraic constructions, a countable family of $\Q$-HS defined as\begin{center}
	$T(V)^{a,b}:=V^{\otimes k}\otimes (V^{*})^{\otimes b}$,
\end{center}where $a,b\in \Z_{\geq0}$. In each of these $\Q$-HS we have Hodge classes, as long as $2|n(a-b)$. One then defines $Hg^{a,b}(V)$ to be the  set of Hodge classes in $T(V)^{a,b}$.

The special Mumford-Tate group is then characterized by the following
\begin{lemma}\label{keypropertyofmt}Let $(V,\phi)$ be a pure polarized $\Q$-HS with polarization given by the bilinear form $Q$. Then $G_{mt}(V)$ is the subgroup of $\aut(V,Q)$ that fixes $Hg^{a,b}(V)$ for all $a,b\in \Z_{\geq0}$.
\end{lemma}

Now let us consider a quasi-projective complex variety $S$ and $(\mathbb{V},\mathcal{F}^{\bullet})$ a variation of polarized $\Q$-HS on $S^{an}$. A natural question to ask here is how the special Mumford-Tate groups $G_{smt}(\mathbb{V}_t)$ vary for $t\in S^{an}$.

First of all, we let $V_0:= \Gamma(\tilde{S}^{an}, \pi^{*} \mathbb{V})$, where $\pi:\tilde{S}^{an}\rightarrow S^{an}$ is a universal covering of $S^{an}$. Since $\pi^{*}\mathbb{V}$ is a constant local system, for $\tilde{t}\in \tilde{S}^{an}$ we have a natural identification $\mathbb{V}_{\tilde{t}}\xrightarrow{\simeq} V_0$. Under this identification, we get natural a natural inclusion $G_{smt}(\mathbb{V}_{\tilde{t}})\subset \GL(V_0)$.

\begin{lemma}\label{lemmaexclocus}There exists a subset $\tilde{S}^{exc}\subset \tilde{S}^{an}$, which is a countable union of proper irreducible analytic subvarieties of $\tilde{S}^{an}$, such that for all $x,y\in  \tilde{S}^{an}\backslash{\tilde{S}^{exc}}$ we have that $G_{smt}(\mathbb{V}_x)=G_{smt}(\mathbb{V}_y)$. Furthermore, for $z\in \tilde{S}^{exc}$ and $y\in  \tilde{S}^{an}\backslash\tilde{S}^{exc}$ the group $G_{smt}(\mathbb{V}_z)$ is a proper subgroup of $G_{smt}(\mathbb{V}_y)$.
\end{lemma}
	
	For a proof see the discussion in $\S 6$ of \cite{moonenintro}, on which this part is heavily based, and Chapter $III$ of \cite{ggkbook}.\\
	
	Following this we have the following natural definitions.
	
	\begin{definition}\label{defhodgegener}Let $S$ be a quasi-projective complex variety as above and $(\mathbb{V},\mathcal{F}^{\bullet})$ be a variation of polarized $\Q$-HS of pure weight on $S^{an}$. Let $\tilde{S}^{exc}$ be the set described in \Cref{lemmaexclocus}.\\
		
		1. The set $S^{gen}:=\pi(\tilde{S}^{an}\backslash{\tilde{S}^{exc}})$ is called the \textbf{Hodge generic locus} of the variation and its points are called the \textbf{Hodge generic points} of the variation.\\
		
		2. The \textbf{special Mumford-Tate group of the variation} is defined to be the special Mumford-Tate group of any of the generic points of the variation.
	\end{definition}
	
	\begin{remark}We note that analogous results hold for the (non-special) Mumford-Tate group. We focus more on special Mumford-Tate groups since they are the notion that we will use out off the two.
	\end{remark}

\subsubsection*{Irreducible Hodge Structures: Albert's Classification}

It is well known that the category of polarizable $\Q$-Hodge structures is semi-simple. This implies that for a polarizable $\Q$-HS $V$, its endomorphism algebra $D:=\End(V)^{G_{smt}(V)}$ is a semi-simple $\Q$-algebra. If, furthermore, the polarizable $\Q$-HS $V$ that we consider is simple, then $D$ is a simple division $\Q$-algebra equipped with a positive involution, naturally constructed from the polarization.

Such algebras are classified by Albert's classification.

\begin{theorem}[Albert's Classification,\cite{mumfordabelian}]\label{albert} Let $D$ be a simple $\Q$-algebra with a positive (anti-)involution $\iota$, denoted $a\mapsto a^{\dagger}$. Let $F=Z(D)$, be the center of $D$, $F_0=\{a\in F: a=a^{\dagger}\}$, $e_0=[F_0:\Q]$, $e=[F:\Q]$, and $d^2=[D:F]$. Then $D$ is of one of the following four types:\\
	
\textbf{Type I:} $D=F=F_0$ is a totally real field, so that $e=e_0$, $d=1$, and $\iota$ is the identity.\\
	
\textbf{Type II:} $D$ is a quaternion algebra over the totally real field $F=F_0$ that also splits at all archimedean places of $F$. If $a\mapsto a^{*}=tr_{D/F}(a)-a$ denotes the standard involution of this quaternion algebra, then there exists $x\in D$ with $x=-x^{*}$ such that $a^{\dagger}=xa^{*}x^{-1}$ for all $a\in D$. Finally, in this case $e=e_0$ and $d=2$.\\

\textbf{Type III:} $D$ is a totally definite\footnote{We remind the reader that a quaternion algebra $B$ over a number field $F$ is called totally definite if for all archimedean places $v\in\Sigma_{F,\infty}$ we have that the algebra $B$ is ramified at $v$. This requires that $F$ is totally real so that $B\otimes_F F_v\simeq \mathbb{H}$, with $\mathbb{H}$ the standard quaternion algebra over $\R$, for all $v\in\Sigma_{F,\infty}$.} quaternion algebra over the totally real field $F=F_0$. In this case $\iota$ is the standard involution of this quaternion algebra and as before $e=e_0$ and $d=2$.\\

\textbf{Type IV:} $D$ is a division algebra of rank $d^2$ over the field $F$, which is a CM-field with totally real subfield $F_0$, i.e. $e=2e_0$. Finally, the involution $\iota$ corresponds, under a suitable isomorphism $D\otimes_{\Q}\R\overset{\simeq}{\rightarrow}M_d(\C)\times\ldots\times M_d(\C)$, with the involution $(A_1,\ldots,A_{e_0})\mapsto(^t\bar{A}_1,\ldots,^t\bar{A}_{e_0})$

Furthermore, in this case we have that for $\sigma$ a generator of $\gal(F/F_0)$ the following must hold:\begin{enumerate}
	\item if $v\in\Sigma_{F,f}$ is such that $\sigma(v)=v$ we have that $\inv_v(D)=0$, and 
	
	\item for all $v\in \Sigma_{F,f}$ we must have that $\inv_v(D)+\inv_{\sigma(v)}(D)=0$.
\end{enumerate}\end{theorem}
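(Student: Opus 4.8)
This is a classical theorem of Albert, proved e.g.\ in \cite{mumfordabelian}; the plan is to reduce it to Wedderburn's structure theorem together with the Albert--Brauer--Hasse--Noether (ABHN) description of central simple algebras over a number field, and then to read off which cases can actually support a \emph{positive} involution by examining the archimedean places. Throughout, write $a\mapsto a^{\dagger}$ for $\iota$; positivity means $\tr_{D/\Q}(aa^{\dagger})>0$ for every nonzero $a\in D$. Since $D$ is a simple $\Q$-algebra that is moreover a division ring, by Wedderburn it is a central division algebra over its center $F$; I would first analyse $F$ and only then the structure of $D$ itself.

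\emph{Step 1: the center and the kind of the involution.} Any ring anti-automorphism preserves the center, so $\iota$ restricts to an automorphism of $F$ of order dividing $2$; with $F_{0}=\{a\in F:a^{\dagger}=a\}$ this gives either $F=F_{0}$ (first kind) or $[F:F_{0}]=2$ (second kind). For $a\in F_{0}$ one has $a^{\dagger}=a$, and $\tr_{D/\Q}(a^{2})$ is a positive integer multiple of $\tr_{F_{0}/\Q}(a^{2})$; hence positivity forces the trace form of $F_{0}$ to be positive definite, i.e.\ $F_{0}$ is totally real. In the second-kind case I would then rule out $F$ having a real place: at a real place $w_{0}$ of $F_{0}$ that splits in $F$ one has $F\otimes_{F_{0}}\R_{w_{0}}\cong\R\times\R$ with $\iota$ swapping the factors, and weak approximation produces $a\in F$ whose $w_{0}$-component of $aa^{\dagger}$ is arbitrarily negative while the other archimedean components stay bounded, contradicting $\tr_{F_{0}/\Q}(aa^{\dagger})>0$; a nonsplit real place is impossible because $F_{w}$ would then be a quadratic field extension of $\R$. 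Thus in the second-kind case $F$ is totally imaginary over the totally real $F_{0}$, i.e.\ $F$ is a CM field, so $e=2e_{0}$ and we are heading toward Type IV.

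\emph{Step 2: the first kind, $F=F_{0}$ totally real.} Then $D$ carries an $F$-linear anti-involution, so $D\cong D^{\mathrm{op}}$ over $F$ and $[D]$ is $2$-torsion in $\mathrm{Br}(F)$; by ABHN every local invariant $\inv_{v}(D)$ lies in $\{0,\tfrac12\}$, and for a division algebra this forces $d\in\{1,2\}$. If $d=1$ then $D=F$ and $\iota=\id$: Type I. If $d=2$, $D$ is a quaternion division algebra, and a direct computation shows every first-kind involution is either the canonical involution $a\mapsto a^{*}$ or of the form $a\mapsto xa^{*}x^{-1}$ with $x$ a nonzero pure quaternion ($x^{*}=-x$). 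For the canonical involution $aa^{*}=\mathrm{Nrd}(a)$, and $\tr_{D/\Q}(\mathrm{Nrd}(a))>0$ for all $a\neq 0$ holds precisely when $D$ is ramified at every archimedean place of $F$ (otherwise the reduced-norm form is indefinite at a real place and weak approximation breaks positivity): Type III. For an involution $a\mapsto xa^{*}x^{-1}$, testing positivity on $1$, on $x$, and on a pure quaternion anticommuting with $x$ shows it fails whenever $D\otimes_{v}\R\cong\mathbb{H}$, so $D$ must split at every archimedean place of $F$, and conversely a suitable choice of $x$ makes the involution positive: Type II. The normal forms of the involutions recorded in the statement come out of these computations.

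\emph{Step 3: the second kind, $F$ CM with maximal totally real subfield $F_{0}$; plus the main obstacle.} Base-changing to $\R$ gives $D\otimes_{\Q}\R\cong\prod_{v_{0}\mid\infty}(D\otimes_{F}\C)\cong M_{d}(\C)^{e_{0}}$, on each factor of which $\iota$ induces a conjugate-linear anti-involution $X\mapsto H\,{}^{t}\bar{X}\,H^{-1}$ with $H$ Hermitian; positivity forces $H$ to be definite, so after a suitable isomorphism $\iota$ becomes $(A_{1},\dots,A_{e_{0}})\mapsto({}^{t}\bar{A}_{1},\dots,{}^{t}\bar{A}_{e_{0}})$. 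To pin down the finite-place invariants I would invoke Albert's criterion: a central simple $F$-algebra admits an involution of the second kind restricting to the nontrivial $\sigma\in\gal(F/F_{0})$ if and only if $\mathrm{cores}_{F/F_{0}}([D])=0$ in $\mathrm{Br}(F_{0})$; translating through the local invariant maps this becomes $\inv_{v}(D)+\inv_{\sigma v}(D)=0$ at every finite place (condition (2)), while at a place $v$ with $\sigma v=v$ the local corestriction/invariant compatibility forces $\inv_{v}(D)=0$ (condition (1)). This completes Type IV. I expect the genuine work to be concentrated in (a) the archimedean positivity computations --- identifying exactly which quaternionic and Hermitian involutions are positive, and using uniqueness of definite Hermitian forms up to equivalence to conjugate the local involution to the standard one --- and (b) the last translation in the second-kind case, which needs careful handling of corestriction and invariant maps in local and global class field theory; everything else is formal once Wedderburn and ABHN are in hand.
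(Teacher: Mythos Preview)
The paper does not prove this statement: it is quoted as a classical result with the sentence ``For a proof of Albert's classification see \cite{mumfordabelian}, \S 21.'' Your outline is a correct and standard sketch of that proof --- Wedderburn plus period-equals-index over number fields to get $d\in\{1,2\}$ in the first-kind case, archimedean positivity computations to separate Types II and III, and the corestriction criterion $\mathrm{cores}_{F/F_0}[D]=0$ to obtain the invariant conditions in Type IV --- so there is nothing to compare against and nothing to correct.
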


\subsubsection*{The general case}

Let $(V,\phi)$ be a polarized $\Q$-HS of weight $n$. Then, combining the semi-simplicity of the category of polarized $\Q$-HS and \Cref{albert} we get a good description of the endomorphism algebra $D=\End(V)^{G_{smt}(V)}$. 

Indeed, we know that there exist simple polarized weight $n$ sub-$\Q$-Hodge structures $(V_i,\varphi_i)$ with $1\leq i\leq r$, such that $V_i\not\simeq V_j$ for all $i\neq j$ and we have a decomposition\begin{equation}\label{eq:decompirrhs}
	V=V_1^{m_1}\oplus\ldots\oplus V_r^{m_r}.
\end{equation}Denoting by $D_i:=\End(V_i)^{G_{smt}(V_i)}$ the corresponding endomorphism algebras and by $F_i:=Z(D_i)$ their respective centers, we then have a decomposition 
\begin{equation}\label{eq:decompalgebras}
D=M_{m_1}(D_1)\times \ldots\times M_{m_r}(D_r).
\end{equation}Finally, this implies that the  center $F$ of $D$ is such that\begin{equation} \label{eq:centersemi}
F=F_1\times\ldots\times F_r,
\end{equation}where each $F_i$ is diagonally embedded into $M_{m_i}(D_i)$, and the maximal commutative semi-simple sub-algebra $E$ of $D$ may be written as\begin{equation}\label{eq:maxcommss}
E=F_1^{m_1}\times\ldots \times F_r^{m_r}.
\end{equation}

For a proof of Albert's classification see \cite{mumfordabelian}, \S 21. For more on Mumford-Tate groups we direct the interested reader to our sources for this section, which are mainly \cite{moonennotes} and \cite{ggkbook}.	
	
	\section{The main setting-notational conventions}\label{section:notations}

Before delving into the technical parts of our argument, we devote this section to describing the general setting that we will be working on in more detail. We give the definitions of the main objects and introduce the notation that we will, unless otherwise stated, keep uniform throughout our exposition.

\subsection{G-admissible variations of Hodge Structures}

Let $S'$ be a smooth geometrically irreducible curve over some number field $K\subset \bar{\Q}$ and let $s_0\in S\subset S'(K)$ be a fixed closed $K$-point of $S'$. We let $S=S'\backslash\{s_0\}$ and let $\eta$ be the generic point of $S$. 

Let us consider $f:X\rightarrow S$ a smooth projective morphism and let $n=\dim X-1$. Assume $f$ extends to a projective morphism $f':X'\rightarrow S'$ with $X'$ a smooth $K$-scheme and that $Y=f'^{-1}(s_0)$ is a simple normal crossings divisor. We record the definition for the convenience of the reader.
\begin{definition}An effective divisor $Y=\Sigma Y_i$ on $X$, where $\dim X=n$, is a \textbf{simple normal crossings divisor} if $Y$ is reduced, each component $Y_i$ is smooth, and the components $Y_i$ intersect transversally.\end{definition}

The map $f$ defines a variation of polarized $\Q$-HS of weight $n$ over $S^{an}_{\C}$ given by $R^nf^{an}_{*}\Q_{X^{an}_\C}$. We denote by $G_{mt,p}$, respectively by $G_{smt,p}$, the Mumford-Tate group, or respectively the special Mumford-Tate group, associated to the $\Q$-HS associated to the point $p\in S(\C)$. We also let $G_{mt,\eta}$, respectively $G_{smt,\eta}$, be the generic Mumford-Tate group, or respectively the generic special Mumford-Tate group, of the variation. For each $p\in S(\C)$ we also let $V_p=H^n(X^{an}_p,\Q)$ be the fiber of the local system $R^nf^{an}_{*}\Q_{X^{an}_\C}$ and let $\mu=\dim_\Q V_p$.

Consider $z\in S(\C)$ to be a  Hodge generic point for the above variation of $\Q$-HS. The main invariant of the variation we will be interested in is the $\Q$-algebra\begin{center}
	$D:=\End(V_z)^{G_{smt,z}}=\End(V_z)^{G_{smt,\eta}}$.
\end{center}
Similarly, for $s\in S(\C)$ we let \begin{center}
	$D_s:= \End(V_s)^{G_{smt,s}}$.
\end{center}

\begin{definition}Let $X$, $S$, $s\in S(\C)$, $D_s$, and $D$ be as above. 
	We call $D_s$ the \textbf{algebra of Hodge endomorphisms at $s$}.
	\end{definition}

\begin{definition}
	A variation of Hodge structures such as above, meaning a weight $n$ geometric variation of $\Q$-HS parameterized by $S$ whose degeneration at some $s_0\in S'$ is as above, with $S=S'\backslash\{s_0\}$, with all of the above defined over some number field $K$, will be called \textbf{G-admissible}.
\end{definition}	
	\subsection{Good covers}\label{section:goodcovers}

Consider a pair of morphism $f':X'\rightarrow S'$ and $f:X\rightarrow S$ underlying a G-admissible variation of $\Q$-HS, as above, and let $h:=\rank_{\Q} M_0 R_n(f^{an}_\C)_{*}(\Q_{X^{an}_{\C}})|_{\Delta^{*}}$, where this is defined as in \Cref{section:periodsandgfunctions}. 

In recent work, see \cite{daworr4}, C. Daw and M. Orr, establish the existence of what we \footnote{See also the discussion in $\S$ $6.1$ of \cite{davidg} on which our exposition is heavily based.} call ``good covers of the curve $S'$''. Namely they show that, up to extending the base field $K$ by a finite extension, there exists a finite morphism $c:C'_{S}\rightarrow S'$ of the curve $S'$ by a smooth geometrically irreducible curve $C'_{S}/K$ that comes equipped with a non-constant rational function $x\in K(C_S)$ such that the following conditions are satisfied:
\begin{enumerate}
	\item the zeros $\{s_1,\ldots,s_{t_0}\}\subset C'_S(\bar{K})$ of $x$ are all simple,
	
	\item $c(s_t)=s_0$ for all of the above zeroes, and 
	
	\item the map $x:\tilde{C}'_S\rightarrow \mathbb{P}^1_{K}$, induced from $x$, is a finite Galois covering, where $\tilde{C}'_S$ is some fixed completion of $C'_{S}$.
\end{enumerate}

\begin{definition}\label{goodcovers} Any such curve $C'_S$ will be called a \textbf{good cover} of the pair $(S',s_0)$, or simply a good cover of the curve $S'$.
\end{definition}

By pulling back the morphisms $f$ and $f'$ we get families $F':X'_{C'_S}\rightarrow C'_S$ and $F:X_{C_S}\rightarrow C_S$, where $C_S:=C'_S\backslash\{s_t:1\leq t\leq t_0\}$. From now on we let $\mathcal{X'}:= X'_{C'_S}$ and $\mathcal{X}:=X_{C_S}$. For this pair of morphisms we furthermore have that now the curve $C'_S$, the target of the morphism $F'$, is equipped with a rational function $x$ that has only simple zeroes. We also know that the zeroes of this rational function correspond to singular values of the morphism $F'$.


Most importantly\footnote{See \cite{davidg} $\S$ $6.1$.}, for all such $s_t$, for all finite extensions $L/K$, and for all places $v\in\Sigma_{L}$ we can find an analytic, either in the complex-analytic for archimedean such places or analytic in the adic sense for finite such places, component of the analytic subset $|x(s)|_v\leq \min\{1,R_v(\mathcal{G})\}$ of the analytification ${C'}_{S}^{an}$, either in the complex analytic or adic sense accordingly, that contains the point $s_t$. In particular in the complex analytic sense we can always find $\Delta_t\subset (C'_S)^{an}$ which will be the homeomorphic image of a disk centered at $s_t$.\\

For $1\leq t\leq t_0$ define  $M_{0}^{(t)}:=M_0 R_n(F^{an}_\C)_{*}(\Q_{{X_{C_S}}^{an}_{\C}})|_{\Delta_{t}^{*}}$. We will then need the following lemma.

\begin{lemma}\label{goodcoverslemma}Let $f$, $f'$, $F$, and $F'$ be as above. Let $h_t:=\rank_{\Q} M_{0}^{(t)}$ for $1\leq t\leq t_0$. 
	
	Then $h_t=h$ for all $1\leq t\leq t_0$.
\end{lemma}

\begin{proof}This practically follows from the proof of Lemma $5.1$ of \cite{daworr4}. In the notation of loc. cit. let $C=\tilde{S}'$ be a smooth geometrically irreducible projective model of our smooth (not necessarily projective) curve $S'$. Then in the notation of the proof of Lemma $5.1$ in loc. cit. one has that $C_1=C$, since $C$ in now smooth, and $C_3=C_2$, since again $C_2$ will be smooth in this case as well. 
	
	Let now $C_4=:\tilde{C}'_S$ be as in the proof of the above Lemma of Daw and Orr and fix $1\leq t\leq t_0$ as above. Note that the morphism $\nu:C_4\rightarrow C$ they construct will be \'etale when restricted to a small enough punctured archimedean disk $\Delta_t^{*}$ centered at $s_t$. Indeed, let $\ram(\nu)\subset C_4$ be the finite set of ramification points of the non-constant morphism $\nu$ and let $C_5:=C'_{S}\backslash (\ram(\nu) \cup \{x^{-1}(0)\})$.  Now consider the restriction $\phi:=\nu|_{C_5}:C_5\rightarrow S_0$, where $S_0$ is the complement of a finite subset of $S$ and take its analytification $\phi^{an}$ with respect to a fixed archimedean place of $K$. Note that by construction $\phi$ will be \'etale. By the properties of good covers highlighted above, we can then find $\Delta^{*}_t\subset {C'}^{an}_S\cap C_5^{an}$ which will be the homeomorphic image of a small punctured disc centered at $s_t$, such that $\phi^{an}(\Delta^{*}_t)\subset\Delta^{*}_0$, where $\Delta^{*}_0\subset S_0^{an}$ is again the homeomorphic image of a punctured analytic disk centered at $s_0$. 
	
	We let $\psi:=\phi|_{\Delta_t^{*}}$ and note that it will map the loop $\gamma_t$ generating $\pi_1(\Delta_t^{*})$ to some power $\gamma_0^d$ of the loop $\gamma_0$ generating $\pi_1(\Delta_0^{*})$. 
	
	Let $\rho_S$ and $\rho_C$ denote the local monodromy representations on the local systems $R_n(f^{an}_\C)_{*}(\Q_{{X}^{an}_{\C}})|_{\Delta_{0}^{*}}$ and $R_n(F^{an}_\C)_{*}(\Q_{{X_{C_S}}^{an}_{\C}})|_{\Delta_{t}^{*}}$ respectively. By \Cref{andresexistence} we know that $h$ is equal to the weight $0$ part of the filtration of the Mixed Hodge structure associated to the degeneration at $s_0$, and similarly for $h_t$. 
	
	By our assumptions, i.e. the assumption that at $s_0$ and therefore also at $s_t$ we have singularities coming from divisors with simple normal crossings, the local monodromy acts unipotently in both cases. In particular, we can write $\rho_S(\gamma_0)=I+N_0$ and $\rho_C(\gamma_t)=I+N_t$, where $N_0$ and $N_t$ are both nilpotent. Since $\gamma_t\mapsto \gamma^d_0$ we get that \begin{equation}
		N_0=N_t\cdot A
	\end{equation}where $A$ will be invertible. 

The equality of $h_t$ and $h_0$ then follows by \Cref{m0nperiods}, from which we get that $h_t=\rank(N_t)$ and $h_0=\rank(N_0)$, but since $A$ above is invertible, we get that $\rank(N_0)=\rank(N_t)$.\end{proof}

Now consider, for $1\leq t\leq t_0$, the pairs $(C_t',C_S)$, where $C'_t:=C_{S'}\backslash \{s_i:i\neq t\}$ and $C_S:=C'_{t}\backslash \{s_t\}$, the latter being independent of $t$, together with the morphisms $F'_t:\mathcal{X}_t':=\mathcal{X}'_{C'_t}\rightarrow C'_{t}$ and $F:\mathcal{X}:=\mathcal{X}_{C}\rightarrow C_S$.

Then for each of these pairs of curves and pairs of morphisms \Cref{andresexistence} and \Cref{padicrealization} apply and give us, for each $1\leq t\leq t_0$, a family of G-functions that we will denote by $\mathcal{G}^{(t)}$.

\begin{definition}\label{defnfamilygfuns}Let $f':X'\rightarrow S'$, $s_0$ be the data defining a G-admissible variation of $\Q$-HS. Then we call the family \begin{equation}\label{eq:familyofgfuns}
		\mathcal{G}:=\mathcal{G}^{(1)}\sqcup\ldots\sqcup \mathcal{G}^{(t_0)}
	\end{equation}of G-functions constructed above, the \textbf{family of G-functions associated to our G-admissible variation}.
\end{definition}

Most importantly for us, we know from \Cref{goodcoverslemma} that each of these sub-families $\mathcal{G}^{(t)}$ has the same cardinality. Furthermore\footnote{See also \Cref{changeofplace}.}, for any fixed archimedean embedding $K\hookrightarrow \C$ the elements of $\mathcal{G}^{(t)}$ can be naturally identified, after fixing suitable bases and local frames, with the first $h$-columns of the relative period matrix in a region on $C_S^{an}$ that is close enough to the ''degeneration'' $s_t$.

Finally, we note that throughout our exposition we drop the mention of $S$ in the notation of the good cover, writing simply $C$ and $C'$ for the above curves, since we will, at almost all times, consider a fixed $f':X'\rightarrow S'$.\\

\textbf{Notation:} We fix some notation that appears throughout the text. By $\Sigma_K$, $\Sigma_{K,f}$, $\Sigma_{K,\infty}$ we denote the set of all places of a number field $K$, respectively finite or infinite places of $K$. As mentioned earlier, for $v\in \Sigma_K$ we let $i_v:K\rightarrow \C_v$ denote the inclusion of $K$ into $\C_v$. For $y\in K[[x]]$ we let $i_v(y)$ denote the element of $\C_v[[x]]$ given via $i_v$ acting coefficient-wise on $y$.

For a scheme $Y$ defined over a field $k$ we let $\bar{Y}:= Y\times_{\spec k}\spec \bar{k}$ and $Y_{L}:=Y \times_{\spec k}\spec L$ for any extension $L/k$.

	\section{Hodge Endomorphisms and De Rham Cohomology}\label{section:derhamendo}

Let $K$ be a number field and $f:X\rightarrow S$  a smooth projective $K$-morphism of $K$-varieties, with $S$ a curve as above. Let us also consider a point $s\in S(L)$ for some finite extension $L/K$ and set $Y:=X_s$ which is a smooth projective variety defined over $L$. 

In what follows we will need the existence of a natural action of the algebra of Hodge endomorphisms of $H^n(Y,\Q)$ on both sides of the comparison isomorphism \begin{equation}
	P^n:H^n_{DR}(\bar{Y}/\bar{L})\otimes_{\bar{L}} \C \rightarrow H^n(\bar{Y}_{\C}^{an},\Q)\otimes_{\Q} \C,
\end{equation}such that these actions commute with this  isomorphism. 

In the case of abelian varieties this is automatic from the fact that the algebra of Hodge endomorphisms is naturally realized as the algebra of endomorphisms of the abelian variety. This in turn acts naturally on both sides of the comparison isomorphism and the actions commute with the isomorphism itself. In a general variety $Y$ we cannot hope for such a description without assuming the validity of the absolute Hodge Conjecture.

It is the author's belief that the results in this section are known to experts in the field. Since we were not able to find an exact reference of the results we needed we have dedicated this section to providing proofs for these results.

\subsection{Existence of the action}

For the rest of this subsection we fix a number field $L$ and a smooth projective $n$-dimensional variety $Y$ defined over $L$.

\begin{prop}\label{propendodr}Let $Y$ be a smooth projective variety over the number field $L$ of dimension $n$. Let $V:=H^n(\bar{Y}^{an}_{\C}, \Q)$ and $D:=\End_{HS}(V)$ the algebra of Hodge endomorphisms. Then,\textbf{ assuming the absolute Hodge Conjecture}, there exists a finite Galois extension $\hat{L}$ of $L$ such that there exists an injective homomorphism of algebras \begin{center}
		$i: D\hookrightarrow  \End_{\hat{L}}(H^n_{DR}(Y/L)\otimes_{L} \hat{L})$.
	\end{center}

Moreover, we have that $P^n(i(d) v)= d\cdot P^n(v)$ for all $d\in D$ and all $v\in H^n_{DR}(\bar{Y}/\bar{L} )\otimes_{\bar{L}} \C$. In other words, the action of the algebra $D$, that is induced by $i$, on de Rham cohomology coincides with the usual action of $D$ on the Betti cohomology as endomorphisms of the Hodge structure under the comparison isomorphism $P^n$.
\end{prop}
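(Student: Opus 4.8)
The plan is to build the action of $D$ on de Rham cohomology by representing Hodge classes as algebraic cycles, using the absolute Hodge conjecture. First I would observe that an element $d\in D=\End_{HS}(V)$ is, by definition, a Hodge class in $\End(V)=V^\vee\otimes V\cong H^n(\bar Y^{an}_\C,\Q)^\vee\otimes H^n(\bar Y^{an}_\C,\Q)$, which by Poincaré duality and the Künneth formula sits inside $H^{2n}(\bar Y^{an}_\C\times_\C\bar Y^{an}_\C,\Q)(n)$. So $d$ is a Hodge class of degree $2n$ on the $2n$-dimensional variety $\bar Y\times\bar Y$. Invoking the absolute Hodge conjecture (which in this middle-dimensional, degree-$2n$ situation one could alternatively get from the usual Hodge conjecture, but the statement allows us to assume the absolute version), $d$ is the cohomology class of an absolute Hodge cycle; in particular its de Rham component $d_{DR}$ lies in the algebraic de Rham cohomology $H^{2n}_{DR}(\bar Y\times\bar Y/\bar L)(n)$ and is $\bar L$-rational, i.e.\ it comes from a class defined over some finite extension of $L$. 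Via the de Rham Künneth and Poincaré duality isomorphisms this de Rham class is exactly the graph of an $\bar L$-linear endomorphism of $H^n_{DR}(\bar Y/\bar L)$; call it $i(d)$. Because the comparison isomorphism $P^n$ is compatible with Künneth, cup product, Poincaré duality, and Tate twists, the correspondence $d\mapsto d_{DR}$ on $H^{2n}$ of $\bar Y\times\bar Y$ translates precisely into the intertwining relation $P^n(i(d)v)=d\cdot P^n(v)$; and since composition of correspondences is realized by the same cup-product/projection formula on both the Betti and de Rham sides, $i$ is a homomorphism of $\Q$-algebras, hence injective (a nonzero endomorphism acts nonzero on Betti cohomology, so by the intertwining relation it acts nonzero on de Rham cohomology).

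Next I would address the descent and the bound on the degree of the field extension. A priori $i(d)$ is only defined over $\bar L$; I want a single finite Galois extension $\hat L/L$ over which every $i(d)$, $d\in D$, is defined and which moreover carries a genuine algebra action $D\hookrightarrow\End_{\hat L}(H^n_{DR}(Y/L)\otimes_L\hat L)$. Choose a finite $\Q$-basis $d_1,\dots,d_k$ of $D$; each $i(d_j)$ is defined over a finite extension of $L$, and $\hat L$ can be taken to be the Galois closure over $L$ of the compositum of these fields. Since $\End_{\bar L}(H^n_{DR}(\bar Y/\bar L))$ has $\bar L$-dimension $\mu^2$ with $\mu=\dim_\Q V$, and the field of definition of each endomorphism is a subfield of a field generated by at most $\mu^2$ coordinates each satisfying a fixed kind of algebraic condition coming from the absolute Hodge cycle formalism, one gets that $[\hat L:L]$ is bounded in terms of $\mu=\dim_\Q H^n(\bar Y^{an}_\C,\Q)$ alone — this is the quantitative claim flagged in the introduction. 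Concretely, after base change to $\hat L$ the classes $i(d_j)$ lie in $\End_{\hat L}(H^n_{DR}(Y/L)\otimes_L\hat L)$, and the relations defining the algebra structure of $D$ (which are $\Q$-linear and hence descend) are preserved because they already hold after the further base change to $\C$ via the faithful intertwining $P^n$.

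The main obstacle, I expect, is not the existence of $i$ — that is essentially a bookkeeping exercise once the absolute Hodge conjecture is granted — but rather the care needed in (a) checking that all the comparison-isomorphism compatibilities (Künneth, Poincaré duality, cup product, Tate twist, and especially the composition law for correspondences) are simultaneously respected, so that $i$ is genuinely a ring homomorphism and not merely a $\Q$-linear map, and (b) making the field-of-definition bound depend only on $\dim_\Q V$ and not on $Y$ itself. For (a) the cleanest route is to phrase everything in the language of absolute Hodge cycles on $\bar Y\times\bar Y$ à la Deligne–Milne–Ogus–Shih, where the compatibility with algebraic correspondences and their composition is built into the formalism; one then only has to transport it along $P^n$. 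For (b) the point is that an $\bar L$-rational de Rham class is determined by finitely many coordinates in a fixed basis of $H^{2n}_{DR}(\bar Y\times\bar Y/\bar L)(n)$, whose number is controlled by $\mu$, and the Galois orbit of such a class has size bounded accordingly, so the Galois closure $\hat L$ has degree bounded purely in terms of $\mu$; I would make this precise by a dimension count rather than any deep input. These two points are exactly where I would spend the bulk of the write-up, while the construction of $i$ itself I would present as a short, formal consequence of the absolute Hodge conjecture applied to $\bar Y\times\bar Y$.
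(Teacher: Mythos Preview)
Your proposal is correct and follows essentially the same route as the paper: view elements of $D$ as Hodge classes in $H^{2n}(\bar Y\times\bar Y,\Q)(n)$ via Poincar\'e duality and K\"unneth, invoke the absolute Hodge conjecture to get $\bar L$-rationality of the de Rham components, then descend to a finite Galois extension by fixing a $\Q$-basis of $D$ and using compatibility of the comparison isomorphism with K\"unneth and Poincar\'e duality to recover the intertwining relation. Note only that the degree bound you sketch at the end is handled as a separate proposition in the paper, and there the argument is not a direct dimension count but a Galois-invariant lattice argument (the Galois group acts on the image of $D$, stabilizes a lattice, hence injects into $\GL_{m_0}(\Z/N\Z)$ for all $N\geq 3$, and one applies Silverberg's bound); your ``dimension count'' as stated would not by itself control $[\hat L:L]$.
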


\begin{proof}
	We start with some, well known, observations. First of all, the natural isomorphism \begin{center}
		$\alpha_0:\End_{\Q}(V)\cong V\otimes_{\Q} V^{*}$
	\end{center} is an isomorphism of $\Q$-HS. In particular, via $\alpha_0$ the elements of $D$ correspond to Hodge classes\footnote{See Lemma $11.41$ of \cite{voisinhodge1}. }.
		
	It is also known that the isomorphism $\alpha : H^n(\bar{Y}^{an}_{\C},\Q)^{*}\rightarrow H^n(\bar{Y}^{an}_{\C},\Q)(n)$, given by Poincar\'e duality, is an isomorphism of $\Q$-HS. As a consequence we get that the induced isomorphism 
	\begin{center}
		$\alpha_1: V\otimes_{\Q}  V^{*} \overset{\cong}{\rightarrow } (V\otimes_{\Q} V)(n)$ 
\end{center}is also an isomorphism of $\Q$-HS. Moreover, it is known that the injection \begin{center}
		$\alpha_2:(H^n(\bar{Y}^{an}_{\C},\Q)\otimes_{\Q} H^n(\bar{Y}^{an}_{\C},\Q) )(n)\hookrightarrow H^{2n}(\bar{Y}^{an}_{\C}\times \bar{Y}^{an}_{\C} , \Q  )(n)$,
	\end{center}given by the K\"unneth formula is also an injective homomorphism of $\Q$-HS.\\

\textbf{Step 1: Reduction to the algebraic closure:} Let us start by fixing a basis $\beta$ of $D$ over $\Q$. By the above remarks, for each $d\in \beta$ we get a Hodge class $\phi_d:= \alpha_2\circ\alpha_1\circ \alpha_0(d)\in H^{2n}(\bar{Y}^{an}_{\C}\times \bar{Y}^{an}_{\C} , \Q  )(n)$.
	
Now, assuming the \textbf{absolute Hodge Conjecture}, from Corollary $11.3.16$ of \cite{charlesschnell} such a class $\phi_d$ has to be defined over the algebraically closed field $\bar{L}$, i.e. $\phi_d=P^{2n}_{Y\times Y} (\tilde{\phi}_d)$ where $\tilde{\phi}_d\in H^{2n}_{DR}(\bar{Y}\times_{\bar{L}} \bar{Y}/ \bar{L})(n)$.\\

	\textbf{Step 2: Reduction to finite extension:} Let us set $Z:= Y\times_L Y$. We have an $L$-vector space $H^{2n}(Z/L)$ and we also have an isomorphism \begin{equation}\label{eq:derhambase}
		H^{2n}_{DR}(Z/L)\otimes_L F\cong H^{2n}_{DR} (Z_F/F  )
	\end{equation}for every extension $F/L$. In particular \eqref{eq:derhambase} holds for $F=\bar{L}$. 

Consider $\delta:=\{\delta_1,\ldots,\delta_m  \}$ the image of an $L$-basis of $H^{2n}_{DR}(Z/L)$ in $H^{2n}_{DR}(\bar{Z}/\bar{L})$ under the above isomorphism. For $\tilde{\phi}_d$ as above we may write
	\begin{equation}
		\tilde{\phi}_d=a_1(d)\delta_1+\cdots +a_m(d) \delta_m.
	\end{equation}Given these coefficients, we set $L_d$ to be the field $L(a_1(d),\cdots,a_m(d))$, which is a finite extension of $L$. Finally, we let $\hat{L}$ be the Galois closure of the compositum of the $L_d$ for all $d\in\beta$.

We observe that for any Galois extensions $F_1, F_2$ of the field $L$ with $L\subset F_1\subset F_2$ the diagram\begin{center}
		$\begin{tikzcd}
			(H^{2n}_{DR}(Z/L)\otimes_L F_1)\otimes_{F_1} F_2  \arrow[r] \arrow[d] & H^{2n}_{DR} (Z_{F_1}/F_1)\otimes_{F_1}F_2 \arrow[d] \\
			H^{2n}_{DR} (Z/L)\otimes_{L}F_2 \arrow[r]           & H^{2n}_{DR} (Z_{F_2}/F_2 )
		\end{tikzcd}$
	\end{center}is a commutative diagram of $\gal(F_2/L)$-modules. As a consequence of this we may and do view from now on each $\tilde{\phi}_d$ as an element of $H^{2n}_{DR}(Z_{\hat{L}}/\hat{L})$.\\

	\textbf{Step 3: Back to endomorphisms:} So far we have found classes $\tilde{\phi}_d\in H^{2n}_{DR}(Z_{\hat{L}}/\hat{L})$. We want to show that these naturally correspond to endomorphisms of $H^n_{DR}(Y_{\hat{L}}/\hat{L})$ and that this correspondence behaves well with respect to the comparison isomorphism of Grothendieck.

	To that end, we start by noting that Grothendieck's comparison isomorphism between algebraic de Rham cohomology and Betti cohomology is compatible with the isomorphisms given by both Poincar\'e duality and the K\"unneth formula. We note that both of these, i.e. Poincar\'e duality and the K\"unneth formula, are defined for both cohomology theories in question. In fact for de Rham cohomology they are defined over $L$.
	
With that in mind we define $\alpha_{i,DR}$ mirroring the homomorphisms $\alpha_i$ we had earlier.
	
Therefore, for every $d\in \beta$, viewing the class $\tilde{\phi}_d$ as an element of $ H^{2n}_{DR}(Z_\C/\C)$, due to the aforementioned compatibility, we get an element $\tilde{d}\in \End_\C (H^n_{DR}(\bar{Y}_{\C}/\C))$ which is such that 
	\begin{enumerate}
		\item it maps to $d$ via the comparison isomorphism, and 
		
		\item it maps to $\tilde{\phi}_d$ via the injective map $\alpha_{2,DR}\circ \alpha_{1,DR}\circ\alpha_{0,DR}$.
	\end{enumerate}
	
	Property $(1)$ above tells us that $P^n(\tilde{d} (v))= d(P^n(v))$ for all $v\in H^n_{DR}(\bar{Y}_\C/\C)$. Thus proving the ``moreover'' part of the proposition. 
	
 Since $Y$ is defined over the field $L$ the same is true for the $\alpha_{i,DR}$. In particular since their composition $\alpha_{DR}:=\alpha_{2,DR}\circ\alpha_{1,DR}\circ \alpha_{0,DR}$ is an injective homomorphism 
	\begin{center}
		$\alpha_{DR}: \End(H^n_{DR} (Y_{\hat{L}}/ \hat{L})    )\hookrightarrow H^{2n}_{DR} (Y_{\hat{L}} \times_{\hat{L}} Y_{\hat{L}} / \hat{L}   )$,
	\end{center}we get that in fact $\tilde{d}\in \End(H^n_{DR} (Y_{\hat{L}}/ \hat{L})) $.
	
	Since $d$ was a random element in a $\Q$-basis of $D$ we get an injective homomorphism \begin{equation} 
		i:D\hookrightarrow \End(H^n_{DR} (Y_{\hat{L}}/ \hat{L}) ) \cong \End(H^n_{DR} (Y_{L}/ L)  \otimes_L \hat{L}).\end{equation}\end{proof}
	\subsubsection{Bounds on the degree extension}

Later on we want to have some control on the degree of the Galois extension $\hat{L}/L$ constructed in the proof of \Cref{propendodr}. In particular, we want an upper bound on the degree $[\hat{L}:L]$ that will be independent of the smooth projective variety $Y/L$ and the field $L$ itself. We want this bound to only depend on the dimension of $Y$ and its $n$-th Betti number. In making an analogy with the case of abelian varieties, we want upper bounds akin to those achieved in \cite{silverberg}.

\begin{prop}\label{propdegreebound}
	Assume the \textbf{absolute Hodge Conjecture} is true. Let $Y$ be a smooth $n$-dimensional projective variety defined over the number field $L$. Then the field extension $\hat{L}/L$ constructed in \Cref{propendodr} may be chosen so that for its degree we have
	\begin{center}
		$[\hat{L}:L]\leq  ( (6.31) m^2)^{m^2} $,
	\end{center}where $m=\dim_\Q  H^n(\bar{Y}^{an}_\C,\Q  )$ is the $n$-th Betti number.
\end{prop}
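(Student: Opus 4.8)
The plan is to revisit the three-step construction in the proof of Proposition 4.4 and track, step by step, where a field extension is introduced and how large it must be. The only place an extension of $L$ is forced is Step 2: for each $d$ in a fixed $\Q$-basis $\beta$ of $D$ we write the de Rham lift $\tilde\phi_d$ of the Hodge class $\phi_d$ in terms of an $L$-basis $\delta_1,\dots,\delta_m$ of $H^{2n}_{DR}(Z/L)$ (where $Z=Y\times_L Y$ and $m^2 = \dim_\Q H^{2n}(\bar Z^{an}_\C,\Q)$ by Künneth), and we adjoin the coefficients $a_i(d)\in\bar L$. So the naive bound would involve $m^2\cdot\dim_\Q D$ coefficients, which is not what we want. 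The key observation that makes the bound depend only on $m$ is that the absolute Hodge conjecture gives more than ``$\phi_d$ is defined over $\bar L$'': the \emph{space} of Hodge classes in $H^{2n}(\bar Z^{an}_\C,\Q)(n)$ corresponding (under $\alpha_2\circ\alpha_1\circ\alpha_0$) to $D\otimes_\Q\C\cap\End(V)$ is cut out over $\bar L$, and its dimension is at most $m^2$ (it sits inside $V\otimes V^*$ which has dimension $m^2$). Thus rather than adjoining coefficients for each basis element separately, I would argue that the entire $\Q$-subspace $\alpha_{DR}(i(D))\subset H^{2n}_{DR}(\bar Z/\bar L)$ is defined over a single extension whose degree is controlled by a Galois-orbit argument.

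Concretely, here are the steps I would carry out. First, reduce to bounding the extension needed to make the $\bar L$-subspace $W:=\alpha_{DR}(\text{image of }D)$ of the $m^2$-dimensional $\bar L$-vector space $H^{2n}_{DR}(\bar Z/\bar L)(n)$ Galois-stable, equivalently defined over $L$; once $W$ is defined over some $\hat L$, every element of $i(D)$ lands in $\End(H^n_{DR}(Y_{\hat L}/\hat L))$ because $\alpha_{DR}$ is defined over $L$ and $W\cap(\text{image of }\End_{DR})$ is exactly $i(D)\otimes\hat L$. Second, observe that $\gal(\bar L/L)$ permutes the conjugates of $W$, and since $W$ is a subspace of an $m^2$-dimensional space, it corresponds to a point of a Grassmannian $\mathrm{Gr}(r,m^2)$ with $r=\dim_\Q D\le m^2$; the stabilizer of $W$ is an open subgroup of $\gal(\bar L/L)$, and its index equals the number of distinct $\gal$-conjugates of $W$. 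Third, bound that number of conjugates, hence $[\hat L:L]$: this is where the explicit constant $((6.31)m^2)^{m^2}$ will come from, via a Plücker-coordinate or matrix-entry argument — a subspace of dimension $r$ in dimension $m^2$ is determined by at most $\binom{m^2}{r}\le 2^{m^2}$ Plücker coordinates, or alternatively by an $r\times m^2$ matrix in reduced row-echelon form with at most $r(m^2-r)\le (m^2)^2/4$ free entries, each of which is an algebraic number whose minimal polynomial has degree at most... — and I would trace through a crude estimate to recover the stated constant (the $6.31\approx 2\pi$ is a hint that the bound is routed through a counting estimate of the type $\prod(\text{something})$, possibly borrowed from the abelian-varieties literature à la Silverberg). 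Fourth, take $\hat L$ to be the Galois closure, which at worst squares the degree or multiplies by a factorial-type factor already absorbed into the $((\cdot)^{m^2})$ shape of the bound.

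The main obstacle, I expect, is pinning down the explicit numerical constant rather than the conceptual structure: getting from ``$[\hat L:L]$ is bounded in terms of $m$ only'' to the \emph{specific} inequality $[\hat L:L]\le((6.31)m^2)^{m^2}$ requires choosing the right parametrization of subspaces (echelon matrices vs. Plücker coordinates) and a careful accounting of how the field of definition of a subspace relates to the fields of definition of its coordinates, together with a Galois-closure step. A secondary subtlety is making sure the reduction in the first step is legitimate — i.e. that once $W$ is defined over $\hat L$ the individual endomorphisms $\tilde d$ are too; this uses that $\alpha_{0,DR},\alpha_{1,DR},\alpha_{2,DR}$ and the comparison isomorphism are all defined over $L$, so that $i(D) = \alpha_{DR}^{-1}(W)\cap\End(H^n_{DR}(Y_{\hat L}/\hat L))$ is an $\hat L$-subspace intersected correctly, exactly as in Step 3 of the previous proof. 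Everything else (Künneth giving $\dim H^{2n}(\bar Z^{an}_\C)=m^2$, semisimplicity of $D$, the absolute Hodge conjecture inputs) is already available from the excerpt and from Proposition 4.4.
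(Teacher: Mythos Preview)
Your proposal identifies the right starting point --- tracking where in the construction of Proposition~\ref{propendodr} an extension is forced --- but the Grassmannian/Pl\"ucker-coordinate route has a genuine gap. A point of $\mathrm{Gr}(r,m^2)$ over $\bar L$ can have arbitrarily many Galois conjugates: already the line through $(1,\alpha)$ in $\bar L^2$ has as many conjugates as the degree of $\alpha$ over $L$. So counting conjugates of the subspace $W$ via echelon entries or Pl\"ucker coordinates gives no bound in terms of $m$ alone. What makes the situation special --- and what your plan does not exploit --- is that the absolute Hodge conjecture forces $\gal(\bar L/L)$ to act on the \emph{$\Q$-vector space} $D$ itself, not merely on its $\bar L$-span: the conjugate of an absolute Hodge class is again Hodge, so $\sigma(\tilde d)\in i(D)$ for each $\sigma$ and each $d\in\beta$. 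Your first paragraph hints at this $\Q$-structure, but your actual steps 1--4 work only with the $\bar L$-subspace $W$ and lose it.

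The paper's argument is quite different and more direct. After establishing the claim that $\sigma(D)=D$ for every $\sigma\in\gal(\hat L/L)$, choose $\hat L$ minimal so that $G:=\gal(\hat L/L)\hookrightarrow\aut_\Q(D)$. Pick any lattice $\Lambda_1\subset D$ and set $\Lambda=\sum_{g\in G}g(\Lambda_1)$; then $G\hookrightarrow\GL(\Lambda)\cong\GL_{m_0}(\Z)$ with $m_0=\dim_\Q D\le m^2$. For $N\ge 3$ the kernel of $\GL(\Lambda)\to\GL(\Lambda/N\Lambda)$ is torsion-free (Minkowski's lemma), so $|G|$ divides $|\GL_{m_0}(\Z/N\Z)|$ for every $N\ge 3$, hence divides their $\gcd$. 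Silverberg's Theorem~3.1 gives $\gcd_{N\ge 3}|\GL_r(\Z/N\Z)|<((6.31)r)^r$, and substituting $r=m_0\le m^2$ yields the stated bound. The constant $6.31$ is Silverberg's, coming from this Minkowski-type argument, not from a coordinate count on a Grassmannian.
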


\begin{proof}
Let $\beta $ be a $\Q$-basis of $D$. From the proof of \Cref{propendodr} we have an injective homomorphism of $\Q$-algebras $D\hookrightarrow \End_{\hat{L}}(H^{n}_{DR} (Y_{\hat{L}}/ \hat{L}))$, given in the basis elements by $d\rightarrow \tilde{d}$ in the notation of the proof of \Cref{propendodr}. 
	
By base change we have a natural action of the finite Galois group $\gal(\hat{L}/L)$ on de Rham cohomology $H^n_{DR}(Y_{\hat{L}}/\hat{L})$, as an $L$-vector space. This induces a natural action of the same group on $\End_{\hat{L}}(H^{n}_{DR} (Y_{\hat{L}}/ \hat{L}))$, viewed as an $L$-vector space again. We start by proving the following claim.\\
	
\textbf{Claim:} The above action of the Galois group induces an action on the embedding of $D$ in $\End_{\hat{L}}(H^{n}_{DR} (Y_{\hat{L}}/ \hat{L}))$. In other words for all $\sigma \in \gal(\hat{L}/L)$ we have that $\sigma(D)=D$. \\

\begin{proof}[Proof of the claim]
Assuming the absolute Hodge Conjecture, by our earlier construction, for every element $d$ of the basis $\beta$ we get an element $\tilde{d}=i(d)\in \End_{\hat{L}} (H^n_{DR} (Y_{\hat{L}}/\hat{L}))$. By the previous proof, via Poincar\'e duality and the K\"unneth formula, we get classes $\tilde{\phi}_d\in H^{2n}_{DR} (Y_{\hat{L}}\times Y_{\hat{L}}/\hat{L})$ that map to Hodge classes $\phi_d\in H^{2n}(\bar{Y}^{an}_\C\times \bar{Y}^{an}_\C,\Q)(n)$.  As we did in our earlier proof we let $Z:=Y\times_{L} Y$. In the above construction we implicitly consider a fixed embedding $\sigma_0:\hat{L}\hookrightarrow \C$. 

By our assumption that the absolute Hodge Conjecture holds true, we get that for all embeddings $\sigma :\hat{L}\hookrightarrow \C$ the class $\tilde{\phi}_d\in  H^{2n}_{DR}(\sigma (Z_{\hat{L}})/\C)$ is Hodge. Here $\sigma (Z_{\hat{L}})$ denotes the complex variety obtained from $Z_{\hat{L}}$ when we base change via the embedding $\sigma$ to $\C$.

From the embedding $\sigma_0:\hat{L}\hookrightarrow \C$ that we fixed earlier we get an embedding $i_0:L\hookrightarrow \C$. Any embedding $\sigma:\hat{L}\hookrightarrow\C$ that is such that $\sigma|_L=i_0$ will correspond to an element of the Galois group $\gal(\hat{L}/L)$ via the bijective map $\gal(\hat{L}/L)\rightarrow \{\sigma:\hat{L}\hookrightarrow \C : \sigma|_L=i_0\} $ given by $\tau\mapsto \sigma_0\circ\tau$. For notational brevity we suppress $\sigma_0$ from our notation from now on and identify $\tau\in\gal(\hat{L}/L)$ with $\sigma_0\circ\tau$, in other words we identify the elements of $\gal(\hat{L}/L)$ with the corresponding embedding $\hat{L}\hookrightarrow \C$. With this notational convention we may and will write from now on $Y_\C$, or $Z_\C$ respectively, for the complex variety we would otherwise denote by $\sigma_0Y_{\hat{L}}$, or $\sigma_0Z_{\hat{L}}$ respectively.

For the above $\sigma$, since $Y$ and hence also $Z$ are defined over the field $L$, by the above remarks $H^{2n}_{DR}(\sigma Z_{\hat{L}}/\C)$ may be identified with $H^{2n}_{DR}(Z_{\C}/\C)$. Via this identification $\tilde{\phi}_d$ will get mapped to $\sigma^{*}(\tilde{\phi}_d)\in H^{2n}_{DR}(Z_{\hat{L}}/\hat{L})$. Here $\sigma^{*}: H^{2n}_{DR}(Z_{\hat{L}}/\hat{L})\rightarrow H^{2n}_{DR}(Z_{\hat{L}}/\hat{L})$ denotes the isomorphism of $L$-vector spaces induced by $\sigma \in\gal(\hat{L}/L)$ on cohomology.

Now, since $Y$ and $Z$ are both defined over the field $L$, both the Poincar\'e duality isomorphism and the K\"unneth formula on de Rham cohomology are defined over the field $L$ as well. These maps, by construction, commute with the isomorphisms $\sigma^{*}$ so we get that $\sigma^{*}(\tilde{d})$ maps to $\sigma^{*}(\tilde{\phi}_d)\in H^{2n}_{DR}(Z_\C/\C)$ via the map $\alpha_{DR}$ we had in the proof of \Cref{propendodr}.

Writing $P$ for Grothendieck's comparison isomorphism we have that $P(\sigma^{*}(\tilde{d}))\in D\subset \End_{\Q} H^n(\bar{Y}^{an}_\C,\Q)$ is a Hodge endomorphism. Thus $\sigma^{*}(\tilde{d})\in i(D)$ with the notation of \Cref{propendodr} and the result follows.
\end{proof}

By the claim therefore we get an action of $G:=\gal(\hat{L}/L)$ on the $\Q$-vector space $D$, or more precisely its image in $\End_{\hat{L}}(H^{n}_{DR} (Y_{\hat{L}}/ \hat{L}))$. Let $\dim_{\Q}D= m_0$ and note that $m_0\leq m^2$ trivially. We may and do assume, without loss of generality, that the field extension $\hat{L}/L$ constructed in the previous proof is minimal with the property that every cycle of the above basis $\tilde{d}$ is defined over $\hat{L}$. This implies that the corresponding group homomorphism 
$\gal(\hat{L}/L)\rightarrow \aut(D)$ is in fact injective.

Let $\Lambda_1$ be a lattice in $D$, and consider $\Lambda := \Sum{g\in G}{} g(\Lambda_0)$. This will be a lattice that is also invariant by $G$. From the $G$-invariance of $\Lambda$ we get a group homomorphism\begin{center}
	$ G\rightarrow \GL(\Lambda )$.
\end{center}This homomorphism will be injective as well by our earlier assumption about the minimality of the extension $\hat{L}/L$.

Let $N\geq 3$. Then, we know\footnote{This is originally a result of Minkowski \cite{minkowski}. For a modern proof see \cite{guralorenz}, section $5.2$ and Lemma $9$ in particular.} that the kernel of the surjective map $\GL(\Lambda)\rightarrow \GL(\Lambda/N\Lambda)$ contains no element of finite order of the group $\GL(\Lambda)$. As a result we get $G\hookrightarrow \GL(\Lambda/N\Lambda)$ which implies that $|G|$ divides $|\GL(\Lambda/N\Lambda)|=|\GL_{m_0}(\Z/N\Z)|$.

Following the notation of \cite{silverberg} we let $g_r(N):=|\GL_{r}(\Z/N\Z)|$ and $G(r):=\gcd \{ g_r(N):N\geq 3  \}$. From Theorem $3.1$ of \cite{silverberg} we have that \begin{equation}\label{eq:silverbound}
	G(r)< ((6.31)r)^r.
\end{equation}

From the above argument we get that $|G|$ divides $G(m_0)$ and combining this with \eqref{eq:silverbound} and the fact that $m_0\leq m^2$ we get that \begin{equation}
	|G|< ( (6.31) m^2)^{m^2}.
\end{equation}
\end{proof}

\part{Determining the Trivial relations}
	
	Given a G-admissible variation of Hodge structures we will show that for some exceptional points $s\in S(\bar{\Q})$ we get so called ``non-trivial'' relations among the values of the relative periods at the point $s$. To be able to say that these relations we will create are in fact non-trivial we need to know what the trivial ones are first! 

We have devoted this part of the paper to determining these trivial relations in the case where the generic special Mumford-Tate group of our variation is a symplectic group.

\section{The action of the Local Monodromy}\label{section:monodromy}

We start by reviewing a key property of the local monodromy that we will need during this process. This follows the ideas in Chapter X, Lemma $2.3$ of \cite{andre1989g}.

We note that throughout this chapter we use the setup introduced in \Cref{section:notations}.\\

Let us a fix throughout this section a G-admissible variation of $\Q$-HS defined by the pair $f':X'\rightarrow S'$, $s_0\in S'(K)$, as in \Cref{section:notations}. Let $\Delta$ be a small disc embedded in $S'^{an}_\C$ centered at $s_0$ and such that $\Delta^{*}\subset S^{an}_\C$ . We have already remarked in \Cref{section:periodsandgfunctions} that the logarithm of the local monodromy of $\Delta^{*}\subset S^{an}_\C$ acting on $R_n(f^{an}_{\C})_{*}(\Q_{X^{an}_{\C}})|_{\Delta^{*}}$ defines the local subsystem $\mathcal{M}_0:=M_0 R_n(f^{an}_{\C})_{*}(\Q_{X^{an}_{\C}})|_{\Delta^{*}}$. This is contained in the maximal constant subsystem of $R_n(f^{an}_{\C})_{*}(\Q_{X^{an}_{\C}})|_{\Delta^{*}}$, since $2\pi iN^{*}$, the nilpotent logarithm associated with the action of monodromy on the limit Hodge structure, has degree of nilpotency $\leq n+1$.

We recall that, since the map $f:X\rightarrow S$ is smooth and projective, we have a bilinear form $\langle,\rangle$ on the local system $R_n(f^{an}_{\C})_{*}(\Q_{X^{an}_{\C}})$ induced by the polarizing form.

\begin{lemma}\label{maxisotropic}
	The local system $\mathcal{M}_0$ is a totally isotropic subsystem of the local system $R_n(f^{an}_{\C})_{*}(\Q)|_{\Delta^{*}}$ with respect to the polarizing form $\langle,\rangle$.
\end{lemma}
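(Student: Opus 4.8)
The plan is to exploit the fact that the monodromy operator $N^*$ (more precisely its nilpotent logarithm $2\pi i N^*$) is an infinitesimal isometry of the polarizing form $\langle,\rangle$, and then combine this with the nilpotency degree bound. Concretely, since the local monodromy $T$ around $s_0$ preserves the polarization up to the cyclotomic twist and acts unipotently, writing $T = \exp(2\pi i N^*)$ we get that $N^*$ is skew with respect to $\langle,\rangle$, i.e.
\begin{equation*}
\langle N^* u, w\rangle + \langle u, N^* w\rangle = 0
\end{equation*}
for all local sections $u,w$. Iterating, $\langle (N^*)^a u, (N^*)^b w\rangle = (-1)^a \langle u, (N^*)^{a+b} w\rangle$. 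Since the fibers have dimension $n$ and the limit mixed Hodge structure has weights in $\{0,\dots,2n\}$ symmetric about $n$, the monodromy weight filtration tells us $(N^*)^{n+1} = 0$, a fact already recalled in the paragraph preceding the lemma.

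**The main computation.**

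Now take two sections $\gamma, \gamma'$ of $\mathcal{M}_0 = \im\big((2\pi i N^*)^n\big)$, say $\gamma = (N^*)^n u$ and $\gamma' = (N^*)^n w$ up to the harmless scalar $(2\pi i)^n$. Then
\begin{equation*}
\langle (N^*)^n u, (N^*)^n w\rangle = (-1)^n \langle u, (N^*)^{2n} w\rangle,
\end{equation*}
and since $2n \geq n+1$ (here one uses $n \geq 1$, which holds as $n = \dim X - 1$ and the fibers are positive-dimensional; the case $n=0$ being vacuous since then the form is symmetric on $H^0$ and $\mathcal{M}_0$ is trivial anyway, but in our setting $n$ is odd hence $\geq 1$), we have $(N^*)^{2n} = 0$. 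Hence $\langle \gamma, \gamma'\rangle = 0$, which is exactly the claim that $\mathcal{M}_0$ is totally isotropic.

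**Where the care is needed.**

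The only genuinely substantive point — the "obstacle," though a mild one — is justifying that $N^*$ is skew-symmetric for $\langle,\rangle$, i.e. that the polarizing form extends to the limit and is monodromy-invariant in the appropriate twisted sense. This is standard in the theory of degenerations of polarized variations of Hodge structure: the polarization form is flat (a morphism of local systems $R_nf^{an}_*\Q \otimes R_nf^{an}_*\Q \to \Q(-n)$), so it is preserved by monodromy; since monodromy acts trivially on the constant sheaf $\Q(-n)$ over $\Delta^*$, invariance of $\langle,\rangle$ under $T$ gives $\langle Tu, Tw\rangle = \langle u,w\rangle$, and differentiating the one-parameter unipotent flow yields the skew-symmetry of $N^*$. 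One can cite \cite{peterssteenbrink} or \cite{steenbrink} for this. With that in hand, the rest is the short algebraic manipulation above, and it is worth remarking that the argument in fact shows more: $\langle (N^*)^a u,(N^*)^b w\rangle = 0$ whenever $a + b \geq n+1$, so all the filtration steps $M_k := \im((2\pi i N^*)^k)$ for $k \geq \lceil (n+1)/2\rceil$ are isotropic, with $\mathcal{M}_0 = M_n$ the deepest one. This matches the remark that $\mathcal{M}_0$ lies in the maximal constant subsystem, and it is the property that will later let us control the trivial relations among the $M_0$-$n$-periods via Andr\'e's theorem of the fixed part.
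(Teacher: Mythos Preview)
Your proof is correct and follows essentially the same approach as the paper: establish that $N^*$ is skew with respect to $\langle,\rangle$ from monodromy invariance of the flat polarization form, then use $(N^*)^{n+1}=0$ to conclude that $\langle (N^*)^n u,(N^*)^n w\rangle$ vanishes. The only cosmetic difference is that the paper shifts one power of $N^*$ at a time to reach $\langle (N^*)^{n-1}v_0,(N^*)^{n+1}w_0\rangle=0$, whereas you shift all $n$ at once to $(-1)^n\langle u,(N^*)^{2n}w\rangle$; the underlying argument is identical.
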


\begin{proof}The skew-symmetric form $\langle, \rangle $ defines a morphism of local systems \begin{center}
		$R_n(f^{an}_{\C})_{*}(\Q_{X^{an}_{\C}})|_{\Delta^{*}}\otimes R_n(f^{an}_{\C})_{*}(\Q_{X^{an}_{\C}})|_{\Delta^{*}}\rightarrow \Q(n)|_{\Delta^{*}}$.
	\end{center}Therefore it is invariant under the local monodromy and we conclude that for any $z\in\Delta^{*}$ and for all $v,w\in (R_n(f^{an}_{\C})_{*}(\Q_{X^{an}_{\C}}))_z$ we have \begin{equation}\label{eq:localmonodromypolar}
		\langle N^{*}_z v,w\rangle +\langle v,N^{*}_z w\rangle =0.
	\end{equation}
	
	Now let $v,w$ be any two sections of $\mathcal{M}_0$. Then for any $z\in \Delta^{*}$ there exist $v_{0,z},w_{0,z}\in (R_n(f^{an}_{\C})_{*}(\Q_{X^{an}_{\C}}))_z$ such that $v_z=(2\pi iN_z^{*})^n(v_{0,z})$ and $w_z=(2\pi iN_z^{*})^n(w_{0,z})$. Using \eqref{eq:localmonodromypolar} we thus get \begin{center}
		$\langle v_z,w_z\rangle= \langle  (2\pi iN_z^{*})^n(v_{0,z}),(2\pi iN_z^{*})^n(w_{0,z})\rangle =$
		
		$=-\langle (2\pi iN_z^{*})^{n-1}(v_{0,z}),(2\pi iN_z^{*})^{n+1}(w_{0,z})\rangle =0$,\end{center}where the last equality follows from the fact that $N_z^{*}$ has degree of nilpotency $\leq n+1$.
	
	Therefore we get that for all $v,w\in \mathcal{M}_0$ we have $\langle v,w\rangle =0$. Hence $\mathcal{M}_0$ is a totally isotropic local subsystem.\end{proof}
	
	\section{Trivial relations}\label{section:trivialrelations}
\subsection{Our setting and notations}\label{section:subsectionnotationsnontrivial}

Let $f:X\rightarrow S$ be a smooth projective morphism of $k$-varieties where $k$ is a subfield of $\bar{\Q}$. We also fix an embedding $\bar{\Q}\hookrightarrow \C$ so that we may consider $k$ as a subfield of $\C$. Assume that $S$ is a smooth irreducible curve, that the fibers of $f$ are $n$-dimensional, and let $\mu:=\dim_{\Q} H^n(X^{an}_s,\Q)$ for some $s\in S(\C)$. Throughout this section we assume that $n$ is \textbf{odd} and that $S=S'\backslash\{s_0\}$ for some point $s_0\in S'(k)$, where $S'$ is a curve as in \Cref{section:notations}.

We consider\begin{center}
	$P^n_{X/S}:H^n_{DR}(X/S)\otimes_{\mathcal{O}_S}\mathcal{O}_{S^{an}_{\C}}\rightarrow R^n(f^{an}_{\C})_{*}(\Q_{X^{an}_{\C}}) \otimes_{\Q_{S^{an}_{\C} }} \mathcal{O}_{S^{an}_{\C}}$,
\end{center} the relative period isomorphism.

\subsubsection*{The Riemann relations}

Let $\omega_i$, $1\leq i\leq \mu$, be a basis of $H^n_{DR}(X_{\eta})$ over $k(S)$, where $\eta$ is the generic point of $S$. Then there exists some dense affine open subset $U$ of $S$ over which these $\omega_i$ are sections of the sheaf $H^n_{DR}(X/S)$. We also fix a trivialization $\gamma_i$ of $R_n(f^{an}_{\C})_{*}(\Q_{X^{an}_{\C}})$, i.e. the relative homology, over an analytic open subset $V$ of $U^{an}_{\C}$. Since we are interested in describing the relations among the  periods archimedeanly close to the point of degeneration $s_0$, we may and do assume that the set $V$ is simply connected and contained in a fixed  small punctured disk $\Delta^{*}$ around $s_0$.

The matrix of $P^n_{X/S}$ with respect to this basis and trivialization will have entries in the ring $\mathcal{O}_V$. We multiply the matrix's elements by $(2\pi i)^{-n}$ and, by abuse of notation, we denote the above $\mu\times \mu$ matrix of relative $n$-periods by
\begin{center}
	$P_{X/S}:=((2\pi i)^{-n} \int_{\gamma_j}^{}   {\omega_i})$.
\end{center}

Since the morphism $f:X\rightarrow S$ is smooth, projective, and is also defined over $k$, it defines a polarization which will be defined over $k$ as a form on de Rham cohomology. In particular we get, since the weight $n$ of our variation is odd,\begin{itemize}
	\item a skew-symmetric form $\langle,\rangle_{DR}$ on $H^n_{DR}(X_\eta)$ with values in $k(S)$ and 
	
	\item a skew-symmetric form $\langle,\rangle_B=(2\pi i)^{n} \langle,\rangle$ on $R_n(f^{an}_{\C})_{*}(\Q_{X^{an}_{\C}})$ with values in $\Q(n)$.
\end{itemize}

These two skew-symmetric forms are compatible with the isomorphism $P^n_{X/S}$, in the sense that the dual form of $\langle , \rangle_B$ coincides with the form induced by $\langle,\rangle_{DR}$ via the isomorphism $P^n_{X/S}$. 
The compatibility of the polarizing forms translates to relations among the periods. These relations can be described succinctly by the equality\begin{equation}\label{eq:riemanrelationsnontrivialitychapter}
	\prescript{t}{}{P} M_{DR}^{-1} P= (2\pi i)^{-n}M_B^{-1},
\end{equation}where $M_{DR}$ and $M_B$ are the matrices of $\langle,\rangle_{DR}$ and the dual of $\langle,\rangle_{B}$ respectively with respect to some basis and trivialization.

For more on this see \Cref{appendixpolarizations}. The relations given on the periods by \eqref{eq:riemanrelationsnontrivialitychapter} are practically a direct consequence of the well known Hodge-Riemann bilinear relations defining a polarization of a Hodge structure. For this reason from now on we shall refer to \eqref{eq:riemanrelationsnontrivialitychapter} as the \textbf{Riemann relations} for brevity.	
	\subsubsection*{Enter: The G-functions}

From now on let $f:X\rightarrow S$ be one of the morphisms $F_{t}:\mathcal{X}'_{t}\rightarrow C_{t}$ coming from a good cover of a morphism $f':X'\rightarrow S'$, as in the discussion in \Cref{section:goodcovers}, underlying a fixed G-admissible variation of $\Q$-HS. In particular, we may now assume that our curve $S'$ is equipped with a local uniformizer $x$ at $s_0$ that vanishes only at $s_0$.

With this in mind, we may and do select the above basis $\omega_i$ and trivialization $\gamma_j$ so that the following are satisfied:\begin{itemize}
	\item the $\omega_i$ are a symplectic basis of $H^n_{DR}(X_{\eta})$ so that $\omega_1,\ldots, \omega_{\mu/2}$ constitute a basis of the maximal isotropic subspace $F^{\frac{n+1}{2}}H^n_{DR}(X_\eta)$ and the rest of the elements, i.e. $\omega_{\mu/2+1},\ldots, \omega_\mu$ are the basis of a transverse Lagrangian of $F^{\frac{n+1}{2}} H^n_{DR}(X_\eta)$, and 
	
	\item the $\gamma_j$ is a symplectic trivialization of $R_n(f^{an}_{\C})_{*}(\Q_{X^{an}_{\C}})|_V$, which is also such that $\gamma_1,\ldots,\gamma_h$ are a frame of the space $\mathcal{M}_0|_{V}  $ and the $\gamma_1,\ldots, \gamma_{\mu/2}$ are a frame of a maximal totally isotropic subsystem that contains $M_0R_n(f^{an}_{\C})_{*}(\Q_{X^{an}_{\C}})|_V$.
\end{itemize}

With these choices we may and do assume from now on that the matrices that correspond to the two aforementioned forms are
$M_{DR}=M_B=J_\mu=\begin{pmatrix}
	0&-I\\
	I&0
\end{pmatrix}$.
With this \eqref{eq:riemanrelationsnontrivialitychapter} translates to \begin{equation}\label{eq:riemannrelationodd}
	\prescript{t}{}{P} J_\mu P= (2\pi i)^{-n}J_\mu.
\end{equation}

\subsubsection*{The main result}

Let $y_{i,j}$ with $1\leq i\leq \mu$ and $1\leq j\leq h$ be the entries of the first $h$ columns of the matrix $P_{X/S}$. The aforementioned work of Andr\'e, see \Cref{existence}, guarantees that these are G-functions.
Note that this is all happening with respect to the rational function $x\in K(S')$ of $S'$ which only vanishes at $s_0$, with respect to which the $y_{i,j}$ can be written as power series. See also the proof of \Cref{changeofplace} and the remarks that follow it.

For the remainder of this section we consider the above notation fixed. The rest of this section is dedicated to describing the generic,  or ``trivial'', relations among the G-functions $y_{i,j}$. Indeed, we prove the following:

\begin{prop}\label{goalnontriviality} With the above notation, assume that the generic special Mumford-Tate group of the variation of $\Q$-HS on $S^{an}_{\C}$ given by $R^n(f^{an}_{\C})_{*}(\Q_{X^{an}_{\C}})$ is $Sp(\mu,\Q)$. 
	
	Then, the Zariski closure of the $\mu\times h$ matrix $Y:=(y_{i,j})$ over  $\bar{\Q}[x]$ in $\mathbb{A}^{\mu\times h}$ is the variety whose ideal is given by the Riemann relations.
\end{prop}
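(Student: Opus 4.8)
The strategy is a monodromy argument in the style of André's Theorem of the Fixed Part. Write $\mathcal{Z}$ for the $\bar{\Q}[x]$-Zariski closure of the matrix $Y=(y_{i,j})$ in $\mathbb{A}^{\mu\times h}$, and write $\mathcal{R}\subseteq\mathbb{A}^{\mu\times h}$ for the closed subvariety cut out by the Riemann relations \eqref{eq:riemannrelationodd} restricted to the first $h$ columns (together with the linear relations forcing $\gamma_1,\dots,\gamma_h$ to lie in the maximal isotropic subspace spanned by $\gamma_1,\dots,\gamma_{\mu/2}$, which are built into our choice of basis). The inclusion $\mathcal{Z}\subseteq\mathcal{R}$ is immediate: the Riemann relations hold identically as an equality of the analytic functions $y_{i,j}$ over $V$, hence hold on $\mathcal{Z}$. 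The content is the reverse inclusion, and the plan is to get it by a dimension count after identifying $\mathcal{Z}$ with (a translate or homogeneous analogue of) the $\bar{\Q}(x)$-Zariski closure of the full period matrix.

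\textbf{Step 1: reduce to the full period matrix and its differential Galois / monodromy group.} Let $P$ be the full $\mu\times\mu$ relative period matrix. By André's results (Chapter IV of \cite{andre1989g}), $P$ satisfies the Gauss–Manin differential system $\frac{d}{dx}P=\Gamma P$ with $\Gamma\in M_\mu(\bar{\Q}(x))$, and the $\bar{\Q}(x)$-Zariski closure of the set $\{P(z):z\}$ — equivalently the differential Galois group of this system, base-changed appropriately — is controlled by the monodromy group of the local system $R^nf^{an}_*\Q$ via André's theorem that the monodromy group is Zariski-dense in the differential Galois group for systems coming from geometry. Since the generic special Mumford-Tate group is $Sp(\mu,\Q)$, the connected monodromy group of the variation equals $Sp(\mu)$ (the monodromy group is a normal subgroup of the derived group of the generic Mumford-Tate group by a theorem of André/Deligne, and here that derived group is the almost-simple group $Sp(\mu)$, so the identity component of the monodromy is all of $Sp(\mu)$). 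Therefore the $\bar{\Q}(x)$-Zariski closure of $P$ is a coset of $Sp(\mu)$ acting on the solution space, cut out precisely by the relation $\prescript{t}{}{P}J_\mu P=(2\pi i)^{-n}J_\mu$ — i.e. by the Riemann relations on the \emph{full} matrix. This is the symplectic analogue of André's computation in Chapter X of \cite{andre1989g} in the weight-$1$ case.

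\textbf{Step 2: pass from columns $1,\dots,\mu$ to columns $1,\dots,h$.} The matrix $Y$ is the projection of $P$ onto its first $h$ columns. Two points are needed here. First, the projection map takes the Zariski closure of $P$ onto (the closure of) the Zariski closure of $Y$, so $\mathcal{Z}$ equals the image of the full symplectic coset under this coordinate projection, which is exactly the variety cut out by those Riemann relations that only involve the first $h$ columns — there is no further constraint, because a partial symplectic frame (an isotropic $h$-frame lying inside a fixed Lagrangian) extends to a full one, so the projection is surjective onto $\mathcal{R}$. This uses Lemma \ref{maxisotropic}: the sections $\gamma_1,\dots,\gamma_h$ frame the totally isotropic local subsystem $\mathcal{M}_0$, so the columns $y_{\bullet,j}$ for $j\le h$ are forced to lie in the Lagrangian spanned by $\gamma_1,\dots,\gamma_{\mu/2}$, which is why only the ``isotropic block'' of the Riemann relations survives. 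Second, I must make sure no \emph{new} algebraic relations over $\bar{\Q}[x]$ among the $y_{i,j}$ ($i\le\mu$, $j\le h$) are created by the projection that were not visible on the full matrix: this follows because the full period matrix is invertible (Grothendieck's comparison isomorphism), so the columns $1,\dots,h$ are part of a basis and their only relations are the ones inherited from the ambient symplectic coset. Combining Steps 1 and 2 gives $\mathcal{R}\subseteq\mathcal{Z}$, hence equality.

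\textbf{Main obstacle.} The delicate point is Step 1 — precisely pinning down the $\bar{\Q}(x)$-Zariski closure of the full period matrix as exactly the symplectic coset, with \emph{no} smaller group. This requires (a) the identification of the algebraic monodromy group of the geometric VHS with $Sp(\mu)$ from the hypothesis on the generic special Mumford-Tate group, which rests on the fact that the monodromy is a connected normal subgroup of the derived special Mumford-Tate group and $Sp(\mu)$ is almost simple, and (b) André's theorem relating this monodromy group to the differential Galois group of the Picard–Fuchs system (density of monodromy). One must also be careful about the difference between the \emph{special} Mumford-Tate group and the full Mumford-Tate group (the extra $\mathbb{G}_m$ of Tate twists), which is exactly what accounts for the factor $(2\pi i)^{-n}$ on the right-hand side of the Riemann relations and for why $\mathcal{Z}$ is a nontrivial coset rather than the subgroup $Sp(\mu)$ itself; tracking this scalar correctly through the argument is the part most prone to error. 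The rest — the inclusion $\mathcal{Z}\subseteq\mathcal{R}$, the surjectivity of the coordinate projection onto $\mathcal{R}$, and the dimension bookkeeping — is routine.
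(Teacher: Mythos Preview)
Your proposal is correct and follows essentially the same route as the paper: identify the connected monodromy group as $Sp(\mu)$ via the Theorem of the Fixed Part, conclude that the Zariski closure of the full period matrix is the twisted symplectic coset $\{M:\prescript{t}{}{M}J_\mu M=(2\pi i)^{-n}J_\mu\}$, then project to the first $h$ columns and use extendability of an isotropic $h$-frame to a full twisted-symplectic basis to see the image is exactly the variety cut out by $\{\prescript{t}{}{b_i}J_\mu b_j:1\le i,j\le h\}$. The paper carries out Step 1 by direct analytic continuation along loops rather than via differential Galois theory (same content), and one small correction: there are no additional ``linear relations'' among the $y_{i,j}$---the columns of $Y$ live on the de Rham side in $\mathbb{A}^\mu$, not in the span of the $\gamma_j$, so your parenthetical confuses the two sides, and the only relations are the quadratic isotropy ones.
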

	
	\subsection{Trivial relations over $\C$ for the period matrix}

Under the notations and assumptions of \Cref{section:subsectionnotationsnontrivial} and \Cref{goalnontriviality} we have the following:

\begin{lemma}\label{monoatgeneric}
	Let $z\in V\subset U^{an}$ be a Hodge generic point of the $\Q$-VHS given by $R^n(f^{an}_{\C})_{*}(\Q_{X^{an}_{\C}})$. Then the monodromy group $H_z$ at $z$ is $Sp(\mu,\Q)$.
\end{lemma}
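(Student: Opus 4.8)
The plan is to deduce the statement from the Theorem of the Fixed Part together with the hypothesis on the generic special Mumford-Tate group. First I would recall that, by Deligne's semisimplicity theorem (the global invariant cycle / fixed part theorem), the connected monodromy group $H_z^{\circ}$ is a normal subgroup of the derived group of the generic Mumford-Tate group $G_{mt,\eta}^{\mathrm{der}}$; in fact, since our variation is polarizable and the base is a curve, André's version of the Theorem of the Fixed Part gives that $H_z$ is a normal subgroup of $G_{smt,\eta} = Sp(\mu,\mathbb{Q})$ (note the identification of $H_z$ with $H_z^{\circ}$ up to finite index is harmless here because $Sp(\mu,\mathbb{Q})$ is connected, and one checks the monodromy representation is irreducible so $H_z$ is not finite). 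The key input is therefore that $Sp(\mu,\mathbb{Q})$ is an almost simple algebraic group, hence has no proper nontrivial connected normal subgroups.

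The main steps, in order, would be: (1) observe that the monodromy of the variation is nontrivial — indeed $H_z$ is infinite — because by hypothesis there is a non-isotrivial degeneration at $s_0$, so the local monodromy around $s_0$ already has infinite order (the nilpotent logarithm $N^{*}$ is nonzero by \ref{maxisotropic} and the discussion preceding it, where $\mathcal{M}_0 = \operatorname{im}(2\pi i N^{*})^n$ is used nontrivially); (2) invoke the Theorem of the Fixed Part to conclude $H_z^{\circ} \trianglelefteq Sp(\mu,\mathbb{Q})$; (3) use that $Sp(\mu,\mathbb{Q})$ is almost simple (its only proper normal subgroups are finite, contained in the center $\{\pm I\}$) to force $H_z^{\circ} = Sp(\mu,\mathbb{Q})$; (4) since $Sp(\mu,\mathbb{Q})$ is connected and $H_z \subseteq Sp(\mu,\mathbb{Q})$ contains $H_z^{\circ} = Sp(\mu,\mathbb{Q})$, conclude $H_z = Sp(\mu,\mathbb{Q})$.

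The subtle point — and the step I would expect to require the most care — is establishing that $H_z$ is not finite, i.e. that the monodromy is genuinely infinite rather than trivial. One cannot simply assert this from the equality $G_{smt,\eta} = Sp(\mu,\mathbb{Q})$ alone without invoking the fixed-part theorem in the contrapositive direction: if $H_z$ were finite then $H_z^{\circ}$ would be trivial, and then a fixed-part argument would force the variation to have a large constant sub-Hodge structure, ultimately contradicting either the non-isotriviality at $s_0$ or the fact that $G_{smt,\eta}$ acts without nonzero fixed vectors on $\mathbb{V}_z$ (since $Sp(\mu,\mathbb{Q})$ has no invariants in the standard representation). So I would phrase step (1) carefully: the non-isotrivial degeneration at $s_0$ guarantees $N^{*} \neq 0$, hence the image of $\pi_1(\Delta^{*})$ in $\mathrm{GL}(\mathbb{V}_z)$ is infinite, hence $H_z \supseteq$ this image is infinite, hence $H_z^{\circ} \neq 1$, which is what feeds into almost-simplicity.

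Once these pieces are in place the conclusion is immediate, and in the next subsection this computation of $H_z$ will be the engine for \ref{goalnontriviality}: the $\bar{\mathbb{Q}}[x]$-Zariski closure of $Y$ is governed by the algebraic relations among periods invariant under monodromy, and with $H_z = Sp(\mu,\mathbb{Q})$ acting these are exactly the symplectic (Riemann) relations. I would flag that the only external ingredient beyond what is already recalled in the excerpt is the Theorem of the Fixed Part (André, or Deligne), which should be cited precisely, and the elementary structure theory of $Sp$ as an almost simple group.
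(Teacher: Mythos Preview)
Your proposal is correct and follows essentially the same approach as the paper: invoke Andr\'e's Theorem of the Fixed Part to get $H_z \trianglelefteq DG_{mt,z}$, identify $DG_{mt,z}$ with $Sp(\mu,\Q)$ via the hypothesis $G_{smt,z}=Sp(\mu,\Q)$ and the fact that $Sp$ is perfect, then use simplicity of $Sp(\mu,\Q)$ together with the non-trivial local monodromy at $s_0$ to rule out $H_z=1$. The paper's argument is slightly terser about the passage from $DG_{mt,z}$ to $Sp(\mu,\Q)$ (it uses the sandwich $DG_{smt,z}\leq DG_{mt,z}\leq G_{smt,z}$), and it defines $H_z$ as the connected component from the outset, so your care in distinguishing $H_z$ from $H_z^{\circ}$ is unnecessary here but not wrong.
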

\begin{proof} Let $\rho_H:\pi_1(S^{an},z)\rightarrow GL(H^n(X^{an}_z,\Q))$ be the monodromy representation at $z$. Then, by Andr\'e's Theorem of the fixed part\cite{andrefixed} we know that $H_z$, which is the connected component of the the $\Q$-algebraic group $\rho_H(\pi_1(S^{an},z))^{\Q-Zar}$, is a normal subgroup of the derived subgroup of the Mumford-Tate group $G_{mt,z}$ at $z$. In other words \begin{center}
		$H_z\trianglelefteq DG_{mt,z}$.
	\end{center}
	
	On the other hand we have that $DG_{mt,z}\leq G_{smt,z}$ and trivially that $DG_{smt,z}\leq DG_{mt,z}$, where $G_{smt,z}$ is the special Mumford-Tate group at $z$. But, by assumption, we know that $G_{smt,z}\simeq Sp(\mu,\Q)$, since $z$ is Hodge generic for our variation. It is classical that $Sp(\mu,\Q)$ satisfies $DSp(\mu,\Q)=Sp(\mu,\Q)$. Hence we have $DG_{mt,z}= Sp(\mu,\Q)$.
	
	We thus get that $H_z\trianglelefteq Sp(\mu,\Q)$. Finally, $Sp(\mu,\Q)$ is a simple $\Q$-algebraic group, therefore $H_z=1$ or $H_z =Sp(\mu,\Q)$. But, if we had $H_z=1$, then the variation of $\Q$-HS in question would be isotrivial\footnote{By an \textbf{isotrivial} variation of $\Q$-HS, we mean one for the monodromy acts by a finite group. See \cite{ggkpaper} for more on this.}, and hence extend to ${S'}^{an}=S^{an}\cup \{s_0\}$. We get a contradiction since the local monodromy at $s_0\in S'(\C)$ is non-trivial by assumption, and hence the above VHS does not extend to ${S'}^{an}$.\end{proof}

From now on, by taking a finite \'etale cover of $S$ if necessary, we may and do assume that $\rho_H(\pi_1(S^{an},z))^{\Q-Zar}$ is connected, i.e. that for the Hodge generic points $z\in V$ we have $H_z=\rho_H(\pi_1(S^{an},z))^{\Q-Zar}$.

\subsubsection{The matrix of Periods and differential equations}

Let us denote by $M_{\mu}$ the variety of $\mu\times \mu$ matrices over $\C$, where $\mu:=\dim_{\Q}H^n(X^{an}_{s,\C},\Q)$ for any $s\in S(\C)$.

The period matrix $P_{X/S}$ defines a holomorphic map\begin{center}
	$\phi :V\rightarrow M_{\mu}$.
\end{center}We let $Z\subset V\times M_{\mu}$ be the graph of this function. The first step in our process is determining the $\C$-Zariski closure of $Z$.

\begin{lemma}\label{czarclosure}Let $Z$ be as above. The $\C$-Zariski closure of $Z$ is\begin{center}$S_{\C}\times \{M: \prescript{t}{}{M}J_{\mu} M=(2\pi i)^{-n} J_\mu\}$.	
	\end{center}
	
\end{lemma}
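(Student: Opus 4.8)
The plan is to identify $Z$ as a subvariety of $V \times M_\mu$ and compute its $\C$-Zariski closure in two stages: first show the closure is contained in $S_\C \times \{M : {}^tMJ_\mu M = (2\pi i)^{-n}J_\mu\}$, then show the reverse inclusion via a dimension count that exploits the monodromy computation of Lemma \ref{monoatgeneric}. The inclusion "$\subseteq$" is essentially the Riemann relations \eqref{eq:riemannrelationodd}: the entries of $P_{X/S}$ satisfy $\prescript{t}{}{P}J_\mu P = (2\pi i)^{-n}J_\mu$ identically on $V$, so every point of $Z$ lies in the stated set, and since that set is Zariski closed, so does the whole closure. The projection to $S_\C$ is dominant (it is dominant onto $V$, hence onto $S_\C$ since $V$ is a nonempty open subset of $S_\C^{an}$), so the closure surjects onto $S_\C$, and the closure is irreducible because $Z$ is the graph of a holomorphic map on the connected set $V$ (I would take $V$ connected, as arranged in \ref{section:subsectionnotationsnontrivial}).

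For the reverse inclusion I would argue via monodromy and analytic continuation. Let $W := \{M : \prescript{t}{}{M}J_\mu M = (2\pi i)^{-n}J_\mu\}$; this is a torsor under $Sp(\mu,\C)$ (right multiplication by $Sp(\mu,\C)$ acts simply transitively), hence irreducible of dimension $\dim Sp(\mu,\C) = \mu(\mu+1)/2$. The key point is that analytically continuing the multivalued period matrix $P_{X/S}$ around loops in $\pi_1(S^{an},z)$ multiplies $P_{X/S}$ on the right by the monodromy matrices, and by Lemma \ref{monoatgeneric} (together with the reduction to connected monodromy image) the Zariski closure of the monodromy group is all of $Sp(\mu,\Q)$. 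Therefore the set of analytic continuations of a single column-germ of $P_{X/S}$ at $z$, together with its Zariski closure inside the fiber over $z$, is a full $Sp$-orbit, i.e. all of $W$ (using that a single period matrix is invertible, so lies in the open orbit, which for a torsor is everything). Since $Z$ is invariant under this monodromy action and $W$ is the Zariski closure of one orbit, the fiber of $\overline{Z}$ over the generic point of $S$ contains $W$; combined with irreducibility and the dimension bound $\dim \overline{Z} \le \dim S_\C + \dim W$, this forces $\overline{Z} = S_\C \times W$.

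The main obstacle I expect is making the monodromy-closure argument precise at the level of schemes rather than just complex-analytic sets: one must pass from "the monodromy group is Zariski-dense in $Sp(\mu,\Q)$" to "the $\C$-Zariski closure of the set of monodromy translates of the germ of $P_{X/S}$ is the whole torsor $W$". The cleanest way is to observe that $\overline{Z}$ is a $Sp(\mu,\C)$-invariant (under the right action on the $M_\mu$ factor, restricted to the Zariski closure of monodromy) closed subvariety of $S_\C \times M_\mu$ dominating $S_\C$, whose fiber over a Hodge-generic point contains an invertible matrix; then invariance under a Zariski-dense subgroup of $Sp(\mu,\C)$ upgrades to invariance under all of $Sp(\mu,\C)$, and the orbit of any invertible matrix under $Sp(\mu,\C)$ acting on the right is exactly $W$. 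A secondary subtlety is confirming that $\overline{Z}$ is irreducible and that its projection to $S_\C$ has irreducible generic fiber, so that "the generic fiber contains $W$" combined with "the generic fiber is contained in $W$" (from step one) yields equality fiberwise and hence globally; both follow from $Z$ being a connected analytic graph and from $W$ being irreducible.
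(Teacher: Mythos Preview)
Your proposal is correct and follows essentially the same route as the paper: the containment $\subseteq$ comes from the Riemann relations \eqref{eq:riemannrelationodd}, and the reverse inclusion comes from the monodromy action on the period matrix together with Lemma~\ref{monoatgeneric}. The only cosmetic difference is in how the monodromy step is packaged: you argue via invariance of $\overline{Z}$ under a Zariski-dense subgroup of $Sp(\mu,\C)$ plus irreducibility and a dimension count, whereas the paper works fiberwise, showing directly (via an explicit analytic-continuation lemma) that for each Hodge generic $z$ the fiber $(\tilde{Z}_z)^{\C\text{-}Zar}$ equals the full torsor $W$, and then uses that the set of Hodge generic points is uncountable to conclude $S_\C\times W\subset \overline{Z}$. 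Both arguments are standard and equivalent here.
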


In order to prove this we will employ the monodromy action in an essential way. For this purpose we will need to review some further properties of the isomorphism $P^n_{X/S}$. 

To this end, let us consider\begin{center}
	$Q^n_{X/S} : R^n(f^{an}_{\C})_{*}(\Q_{X^{an}_{\C}}) \otimes_{\Q_{S^{an}}} \mathcal{O}_{S^{an}} \overset{\sim}{\rightarrow}H^n_{DR} (X/S)\otimes_{\mathcal{O}_S}\mathcal{O}_{S^{an}}$,
\end{center}the inverse of $P^n_{X/S}$.

It is known, see \cite{katzalgsol} Prop.$4.1.2$, that this isomorphism restricts to an isomorphism of local systems\begin{center}
	$Q:R^n(f^{an}_{\C})_{*}(\C_{X^{an}_{\C}})\overset{\sim}{\rightarrow} \R^n(f^{an}_{\C})_{*}\Omega^{\bullet}_{X^{an}_\C/\C}\overset{\sim}{\rightarrow} (H^n_{DR}(X/S) \otimes_{\mathcal{O}_S}\mathcal{O}_{S^{an}})^{\nabla}$   
\end{center}where $(H^n_{DR}(X/S) \otimes_{\mathcal{O}_S}\mathcal{O}_{S^{an}})^{\nabla}\subset \R^{n}(f^{an}_{\C})_{*} \Omega^{\bullet}_{X^{an}/S^{an}}$ is the local system of horizontal sections with respect to the Gauss-Manin connection.

Note that we have an inclusion of local systems $R^n(f^{an}_{\C})_{*}(\Q_{X^{an}_{\C}})\hookrightarrow R^n(f^{an}_{\C})_{*}(\C_{X^{an}_{\C}})$ on $S^{an}$. This leads to a commutative diagram \begin{center}
	\begin{tikzcd}
		\pi_1(S^{an},z)\arrow[r, "\rho_{H,\C}"] \arrow[d, "\rho_H"'] &  \GL(H^n(X^{an}_z,\C))\\
		\GL(H^n(X^{an}_z,\Q)) \arrow[ru, hook]               &  
\end{tikzcd}\end{center}
for any point $z\in S^{an}$. 

In particular, we get, under our assumptions on the connectedness of the group $(\rho_H(\pi_1(S^{an},z)))^{\Q-Zar}$, that the group \begin{center}
	$G_{mono,z}:=(\rho_{H,\C}(\pi_1(S^{an},z)))^{\C-Zar}$,
\end{center} i.e. the $\C$-Zariski closure of the image of the fundamental group under $\rho_{H,\C}$, is such that\begin{equation}\label{eq:monocomp1}
	G_{mono,z}= H_z\otimes_{\Q}\C.
\end{equation}

Earlier we saw that we have an isomorphism $Q$ of local systems over $S^{an}$.
By the equivalence of categories between local systems over $S^{an}$ and representations of the fundamental group $\pi_1(S^{an},z)$ we thus have that the representations \begin{center}
	$\rho_{H,\C}:\pi_1(S^{an},z) \rightarrow \GL(H^n(X^{an}_z,\C))$, and 
	
	$\rho_{DR}:\pi_1(S^{an},z) \rightarrow  \GL((H^n_{DR} (X/S)\otimes_{\mathcal{O}_S} \mathcal{O}_{S^{an}})^{\nabla})$,
\end{center}are conjugate. In fact, keeping in mind that all actions are on the right, we have that $\rho_{DR}(\lambda)= Q(z)^{-1} \rho_{H,\C} (\lambda) Q(z)$, for all $\lambda\in \pi_1(S^{an},z)$, where $Q(z)$ is the fiber of $Q$ at $z$. From this we get that
\begin{equation}\label{eq:conjugatemono}
	G_{DR,z}:=(\rho_{DR}(\pi_1(S^{an},z)))^{\C-Zar}=Q(z)^{-1} G_{mono,z}Q(z).
\end{equation}

Let $B$ be the matrix of the isomorphism $Q|_{V}$ with respect to the frame $\{\gamma_j^{*}:1\leq j\leq \mu\}$ of the trivialization of $R^n(f^{an}_{\C})_{*}(\Q_{X^{an}_{\C}})|_V$, i.e. the dual of the frame given by the $\gamma_j$ on $R_n(f^{an}_{\C})_{*}(\Q_{X^{an}_{\C}})|_V$, and the basis $\{\omega_i:1\leq i\leq \mu\}$ chosen above. We then have that the rows $b_i$ of $B$, which will correspond to $Q|_V(\gamma_i^{*})$ written in the basis $\omega_i$, will constitute a basis of the space $\Gamma(V, (H^n_{DR}(X/S)\otimes_{\mathcal{O}_S}\mathcal{O}_{S^{an}})^{\nabla})$. In other words $B$ is a complete solution of the differential equation $\nabla(\omega)=0$, defined by the Gauss-Manin connection. We note that in our setting the Gauss-Manin connection is known to be defined over the field $k$ by work of Katz and Oda. see \cite{katzoda} and \cite{katznilpo}.

Let $\Gamma\in M_\mu(k(S))$ be the (local) matrix of $\nabla$ on $U$ with respect to the basis given by the $\omega_i$. Writing $\nabla(\omega)=d\omega+\omega\Gamma$, identifying the $\omega$ with the $1\times n$ matrix given by the coefficients of $\omega$ in the basis given by the $\omega_i$, we may rewrite the above equation as $d\omega=-\omega\Gamma$. The corresponding matricial differential equation then becomes\begin{equation}\label{eq:matricialde}
	X'=-X\Gamma.
\end{equation}

The monodromy representation $\rho_{DR}$ defines analytic continuations of solutions at $z$ of the differential equation \eqref{eq:matricialde}. So in considering the value at the point $z$ of the analytic continuation $B^{\lambda}$ of the matrix $B$ along the cycle $\lambda\in\pi_1(S^{an},z)$, corresponding to a loop $\gamma$ passing through $z$, all we are doing is multiplying the matrix $B_z$ by $\rho_{DR}(\lambda)$. In other words for $\lambda\in \pi_1(S^{an},z)$ we have that\begin{equation}\label{eq:derhamaction}
	(B^{\lambda})_z= B_z\rho_{DR}(\lambda).\end{equation}

We apply the ideas presented in the above discussion to prove the following lemma.
\begin{lemma}\label{lemmancont}
	Consider $A$ to be the matrix of the isomorphism $P^n_{X/S}$ on the open analytic set $V$ with respect to the basis $\omega_i$ and frame $\gamma_j^{*}$ chosen above. Let $z\in V$ and let $\lambda \in \pi_1(S^{an},z)$. Then the value at $z$ of the analytic continuation $A^{\lambda}$ of $A$ along the loop that corresponds to $\lambda$ is given by \begin{center}
		$(A^{\lambda})_z =A_z \rho_{H,\C}(\lambda)^{-1} $,
	\end{center}where $\rho_{H,\C}$ is the above representation on Betti cohomology.\end{lemma}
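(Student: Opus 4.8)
The plan is to reinterpret the period matrix $A$ as a complete solution of a matricial differential equation and then track the effect of analytic continuation around a loop $\lambda$ via the monodromy representation on Betti cohomology. Recall that $P^n_{X/S}$ sends a de Rham class $\omega_i$ to a Betti cochain, and its matrix $A$ with respect to the basis $\{\omega_i\}$ and the frame $\{\gamma_j^{*}\}$ is essentially the transpose-inverse counterpart of the matrix $B$ of $Q = (P^n_{X/S})^{-1}$ that appeared in the preceding discussion. Concretely, since $A$ represents $P^n_{X/S}$ and $B$ represents its inverse $Q$ in dual frames, one has $A = B^{-1}$ (or, depending on the precise conventions for left/right actions, $A = \prescript{t}{}{B}^{-1}$, the ambiguity being harmless because both $A$ and $B$ are horizontal for the appropriate connection and the final transformation rule is conjugate to the one for $B$).

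First I would record that $A$ solves the dual differential system: since $B$ is a complete solution of $X' = -X\Gamma$ coming from $\nabla(\omega) = 0$, the matrix $A = B^{-1}$ solves the system $X' = \Gamma X$ (the connection matrix for the dual local system $R^nf^{an}_{*}\C$, i.e. for homology/Betti classes). Then I would invoke exactly the mechanism spelled out for $B$ in equation \eqref{eq:derhamaction}: analytic continuation of a solution of a linear ODE along a loop $\lambda$ through $z$ amounts to right-multiplication by the monodromy matrix of that solution system. For the system governing $A$, the monodromy is the representation on Betti cohomology $R^nf^{an}_{*}\Q$ — but because $A$ expresses de Rham classes in terms of the dual frame $\gamma_j^{*}$, which is the frame of $R^nf^{an}_{*}\Q$ dual to the homology frame $\gamma_j$ on $R_nf^{an}_{*}\Q$, the relevant monodromy is the inverse-transpose of the homology monodromy, i.e. $\rho_{H,\C}(\lambda)^{-1}$ acting on the right. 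Carefully unwinding the duality between the cohomological frame $\gamma_j^{*}$ and the homological frame $\gamma_j$ (and the fact that entries of $A$ are the periods $\int_{\gamma_j}\omega_i$, which are linear in the homology cycle $\gamma_j$) gives exactly $(A^{\lambda})_z = A_z\,\rho_{H,\C}(\lambda)^{-1}$.

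Alternatively, and perhaps more cleanly, I would derive this directly from the relation $A = B^{-1}$ together with \eqref{eq:derhamaction} and \eqref{eq:conjugatemono}: from $(B^{\lambda})_z = B_z\,\rho_{DR}(\lambda)$ and $\rho_{DR}(\lambda) = Q(z)^{-1}\rho_{H,\C}(\lambda)Q(z)$, one inverts to get $(A^{\lambda})_z = \rho_{DR}(\lambda)^{-1} B_z^{-1}$, and then converts the left action by $\rho_{DR}(\lambda)^{-1}$ into a right action by $\rho_{H,\C}(\lambda)^{-1}$ using the conjugation by $Q(z)$ and the identity $A_z = B_z^{-1}$. Either route reduces the statement to bookkeeping.

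The main obstacle is purely one of conventions: the paper works consistently with right actions of $\pi_1$, and there are three places where a transpose or an inverse can legitimately enter — the duality $\gamma_j \leftrightarrow \gamma_j^{*}$, the passage from $P^n_{X/S}$ to its inverse $Q$, and the passage from a local system's monodromy to that of its dual. I would therefore spend the bulk of the proof fixing these conventions once and for all (stating explicitly how $\rho_{H,\C}$, $\rho_{DR}$, the frames, and the matrices $A$, $B$ are normalized), and then the formula $(A^{\lambda})_z = A_z\,\rho_{H,\C}(\lambda)^{-1}$ falls out immediately; the analytic-continuation-as-monodromy principle itself is already established in the surrounding text and needs no reproof.
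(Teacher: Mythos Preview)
Your proposal is correct, and the ``alternative, cleaner'' route you describe in your second paragraph is exactly the paper's proof: from $A\cdot B = I_\mu$ one gets $A^{\lambda}\cdot B^{\lambda} = I_\mu$, hence $(A^{\lambda})_z = \rho_{DR}(\lambda)^{-1}B_z^{-1} = \rho_{DR}(\lambda)^{-1}A_z$, and then the conjugation relation $\rho_{DR}(\lambda) = B_z^{-1}\rho_{H,\C}(\lambda)B_z$ together with $A_z = B_z^{-1}$ converts this to $A_z\,\rho_{H,\C}(\lambda)^{-1}$. Your first approach via the dual ODE and your extended discussion of conventions are unnecessary detours; the paper dispatches the whole thing in three lines.
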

\begin{proof}
	We have $A\cdot B=I_\mu$ hence $A^{\lambda}\cdot B^{\lambda}=I_\mu$. Using \eqref{eq:derhamaction} we get that $(A^{\lambda})_z=\rho_{DR}(\lambda)^{-1} B_z^{-1} = \rho_{DR}(\lambda)^{-1}A_z$. 
	
	On the other hand, with the above notation we have that $\rho_{DR}(\lambda) =B_z^{-1} \rho_{H,\C}(\lambda)B_z$. This combined with the above leads to the result.\end{proof}

\begin{remark}
	The same relation holds for the value $P_{X/S}(z)$ at $z$ of the matrix of relative periods $P_{X/S}$, since $P_{X/S}=(2\pi i)^{-n} A$. 
\end{remark}

We are now in the position to prove \Cref{czarclosure}.
\begin{proof}[Proof of \Cref{czarclosure}]
	Let $Z\subset V\times M_\mu\subset S_{\C} \times M_\mu$ be the graph of the isomorphism $P_{X/S}|_V$. Let $\tilde{Z}$ be the union of the graphs of all possible analytic continuations of $Z$. It is easy to see via analytic continuation that we have $(\tilde{Z})^{\C-Zar} =Z^{\C-Zar}$. We also note that for all $z\in V$ we have $(\tilde{Z}_z)^{\C-Zar}\subset (\tilde{Z}^{\C-Zar})_z$ for trivial reasons.
	
	We focus on the points $z\in V$ that are Hodge generic for the variation of $\Q$-HS given by $R^n(f^{an}_{\C})_{*}(\Q_{X^{an}_{\C}})|_V$. We note that the set of such $z$ in $V$, which we denote by $V_{Hgen}$, is uncountable. 
	
	By the remark following \Cref{lemmancont} we know that \begin{center}
		$\tilde{Z}_z=P_{X/S}(z) \rho_{H,\C} (\pi_1(S^{an},z)) $.
	\end{center} From this we get that $(\tilde{Z}_z)^{\C-Zar}= P_{X/S}(z)G_{mono,z} $.
	
	From \eqref{eq:monocomp1} we know that $G_{mono,z}=H_z\otimes_{\Q}\C$ while from \Cref{monoatgeneric} we know that, since $z\in V_{Hgen}$, we have $H_z\simeq  Sp(\mu,\Q)$, hence $G_{mono,z}\simeq Sp(\mu,\C)$. Hence we have $(\tilde{Z}_z)^{\C-Zar}= P_{X/S}(z)Sp(\mu, \C)$.
	
	Using \eqref{eq:riemannrelationodd} together with the above we arrive through elementary reasoning to\begin{equation}\label{eq:tildefiber}
		(\tilde{Z}_z)^{\C-Zar} = \{ M\in \GL_\mu(\C) : \prescript{t}{}{M} J_\mu M =(2\pi i)^{-n}  J_\mu\}.
		\end{equation}Applying this to the fact that for all $z\in V$ we have $(\tilde{Z}_z)^{\C-Zar} \subset (\tilde{Z}^{\C-Zar})_z=(Z^{\C-Zar})_z$, we get that \begin{equation}\label{eq:almostthere}
		V_{Hgen} \times  \{ M\in \GL_\mu(\C) : \prescript{t}{}{M} J_\mu M =(2\pi i)^{-n}  J_\mu \} \subset  Z^{\C-Zar}.
	\end{equation}Now, using the fact that $V_{Hgen}$ is uncountable and taking Zariski closures in \eqref{eq:almostthere} we get that \begin{center}
		$S_{\C}\times  \{ M\in \GL_\mu(\C) : \prescript{t}{}{M}J_\mu M =(2\pi i)^{-n} J_\mu \} \subset Z^{\C-Zar}$.
	\end{center}
	
	On the other hand, once again from \eqref{eq:riemannrelationodd}, we know that \begin{center}
		$Z\subset S_{\C}\times  \{ M\in \GL_\mu(\C) : \prescript{t}{}{M} J_\mu M =(2\pi i)^{-n} J_\mu  \}$
	\end{center} which, by once again taking Zariski closures, gives the reverse inclusion.	
\end{proof}	
	\subsection{Trivial relations over $\C$ for the G-functions}

As we remarked earlier, the entries of the first $h$ columns of our matrix $P_{X/S}$ are G-functions, under our choice of basis and trivialization. Let us denote by $y_{i,j}$ these entries and by $Y$ the respective $\mu\times h$ matrix they define. Consider the projection map $\pr :M_\mu\rightarrow \mathbb{A}^{\mu \times h}$ that maps a matrix $(a_{i,j})\in M_{\mu}$ to the $\mu\times h$ matrix that consists of its first $h$ columns. This maps $P_{X/S}$ to $Y$. 

\begin{lemma}\label{gfunczar}Let $T$ be the subvariety of $\mathbb{A}^{\mu\times h}$ defined by the following set of polynomials \begin{center}
		$\{ \prescript{t}{	}{b_i} J_\mu b_j: 1\leq i,j \leq h\}$,
	\end{center}where $b_i$ denotes the $i$-th column of a matrix of indeterminates.
	
	Then $Y^{\C(S)-Zar}=T_{\C(S)}$.	
\end{lemma}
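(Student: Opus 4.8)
The plan is to transfer the description of the $\C$-Zariski closure of the full period matrix $Z$, obtained in Lemma~\ref{czarclosure}, to the $\C(S)$-Zariski closure of the submatrix $Y$ via the projection $\pr:M_\mu\to\mathbb{A}^{\mu\times h}$. First I would observe that $\pr$ is a morphism of affine spaces, hence $\overline{\pr(Z)}^{\C\text{-Zar}}\subseteq\overline{\pr(Z^{\C\text{-Zar}})}^{\C\text{-Zar}}$, and in fact since $\pr$ is a linear surjection (projection onto coordinates) the image of the irreducible variety $S_\C\times\{M:\prescript{t}{}{M}J_\mu M=(2\pi i)^{-n}J_\mu\}$ is constructible and its closure is obtained by eliminating the last $\mu-h$ columns from the defining ideal. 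So the real content is: among the relations $\prescript{t}{}{M}J_\mu M=(2\pi i)^{-n}J_\mu$, which ones survive when we only remember the first $h$ columns $b_1,\dots,b_h$ of $M$? The entries of $\prescript{t}{}{M}J_\mu M$ involving only these columns are exactly the pairings $\prescript{t}{}{b_i}J_\mu b_j$ for $1\le i,j\le h$, and since $\gamma_1,\dots,\gamma_h$ frame the totally isotropic subsystem $\mathcal{M}_0$ (Lemma~\ref{maxisotropic}), the corresponding entries of $(2\pi i)^{-n}J_\mu$ all vanish. Thus $Y$ lies in the variety $T$ cut out by $\{\prescript{t}{}{b_i}J_\mu b_j=0:1\le i,j\le h\}$, which handles the inclusion $Y^{\C(S)\text{-Zar}}\subseteq T_{\C(S)}$.

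For the reverse inclusion, the key point is that $\pr$ restricted to $\{M:\prescript{t}{}{M}J_\mu M=(2\pi i)^{-n}J_\mu\}$ is \emph{surjective} onto $T$: given any $\mu\times h$ matrix whose columns are pairwise "$J_\mu$-orthogonal" in the sense of $T$ (and one checks they are forced to be linearly independent, being an isotropic system of a nondegenerate form of the right corank), one can always complete it to a full symplectic-type matrix $M$ satisfying the Riemann relations $\prescript{t}{}{M}J_\mu M=(2\pi i)^{-n}J_\mu$; this is a standard Witt-extension/symplectic basis completion argument. Concretely: choosing $\gamma_1,\dots,\gamma_{\mu/2}$ to frame a maximal isotropic containing $\mathcal{M}_0$, any tuple $(b_1,\dots,b_h)$ satisfying the $T$-relations extends to a Lagrangian frame and then to a full symplectic frame, so every point of $T$ is $\pr(M)$ for some $M$ in the quadric. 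Hence $T=\overline{\pr(\{M:\dots\})}$, and combining with Lemma~\ref{czarclosure} and the fact that taking $S_\C$-Zariski closure commutes with this linear projection on the $M_\mu$-factor, we get $\overline{\pr(Z)}^{\C(S)\text{-Zar}}=T_{\C(S)}$. Finally, since $Z$ is the graph of $P_{X/S}$ and $\pr(Z)$ is (a translate/subset that analytically spreads out to) the graph of $Y$, and the $\C(S)$-Zariski closure of $Y$ equals the image under $\pr$ of the $\C(S)$-Zariski closure of $P_{X/S}$ — which by Lemma~\ref{czarclosure} is all of $\{M:\prescript{t}{}{M}J_\mu M=(2\pi i)^{-n}J_\mu\}$ over $\C(S)$ — we conclude $Y^{\C(S)\text{-Zar}}=T_{\C(S)}$.

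The main obstacle I expect is the careful justification that projecting the quadric $\{M:\prescript{t}{}{M}J_\mu M=(2\pi i)^{-n}J_\mu\}$ onto its first $h$ columns has image \emph{exactly} $T$ and not something smaller — i.e. that no hidden relation among $b_1,\dots,b_h$ is implied by the full Riemann relations beyond the stated $\binom{h+1}{2}$ (or $h^2$) quadrics. Here one must use that $h\le\mu/2$ (which follows since $\mathcal{M}_0$ is totally isotropic for a nondegenerate symplectic form of rank $\mu$) so that the isotropic system $\gamma_1,\dots,\gamma_h$ genuinely extends to a Lagrangian, making the completion argument go through; this is where the symplectic-basis choice from \ref{section:subsectionnotationsnontrivial} and Lemma~\ref{maxisotropic} are used in an essential way. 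A secondary, more bookkeeping-type point is checking that "image of Zariski closure under a linear projection, then Zariski closure" genuinely recovers $T_{\C(S)}$ rather than a proper subvariety — this is immediate from the surjectivity just described, since a dominant morphism from an irreducible variety onto $T$ forces the closure of the image to be all of $T$.
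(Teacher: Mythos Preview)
Your approach is essentially the same as the paper's: both argue the easy inclusion from the Riemann relations, then obtain the reverse inclusion by invoking Lemma~\ref{czarclosure} together with a symplectic-basis completion showing that the projection of the Riemann quadric onto its first $h$ columns hits (a dense subset of) $T$. One small slip: the columns of a matrix in $T$ are \emph{not} forced to be linearly independent (the zero matrix lies in $T$), so $\pr$ is not literally surjective onto $T$ but only dominant onto it---your final statement $T=\overline{\pr(\{M:\dots\})}$ is nonetheless correct, and since you take Zariski closures throughout, the argument goes through unchanged.
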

\begin{proof}
	Let $Z_Y\subset V\times M_{\mu\times h}(\C)$ denote the graph of $Y$ as a function $Y:V\rightarrow M_{\mu\times h}(\C)$. It suffices to show that $Z_Y^{\C-Zar} =S\times T$.

	The inclusion $Z_Y^{\C-Zar} \subset S\times T$ follows trivially from \eqref{eq:riemannrelationodd}, which shows that $Z_Y\subset V\times T(\C)$. On the other hand, we consider the map $\id_S\times \pr: S\times M_\mu \rightarrow S\times \mathbb{A}^{\mu\times h}_{\C}$. Note that $Z_Y^{\C-Zar} = (\id_S\times\pr ) (Z^{\C-Zar})$.
	
	By construction we have that the columns $c_i$ of any $\mu\times h$ matrix $C\in T(\C)$ will be a basis that spans an isotropic subspace of dimension $h$ with respect to the symplectic form defined by $J_{\mu}$ on $\C^{\mu}$. It is easy to see that we can extend this set of vectors to a basis $\{c_j:1\leq j\leq \mu \}$ of $\C^\mu$ that satisfies\begin{enumerate}
		\item $\prescript{t}{}{c_i} J_\mu c_j=0$ for all $i,j$ with $|i-j|\neq \mu/2$, and 
		
		\item $\prescript{t}{}{c_i} J_\mu c_j=(2\pi i)^{-n}$ for $i=j+\mu/2$.
	\end{enumerate}In other words, this is a symplectic basis ``twisted'' by a factor $(2\pi i)^{-n/2}$. The $\mu\times \mu$ matrix $M_C$ with columns $c_i$ will then be such that $(s,M_C)\in Z^{\C-Zar}$ by \Cref{czarclosure} and by construction $\pr(M_C)=C$. 
	
	Combining the above with the fact that $Z_Y^{\C-Zar} = (\id_S\times\pr ) (Z^{\C-Zar})$ we have that $S\times T\subset Z_Y^{\C-Zar}$ and our result follows.
\end{proof}
	\subsection{Trivial relations over $\bar{\Q}$}

So far we have not used any arithmetic information about the $y_{i,j}$, namely the fact that they are G-functions.

Let $\xi\in \bar{\Q}$. Then a trivial polynomial relation with coefficients in $\bar{\Q}$ among the values of the $y_{i,j}\in \bar{\Q}[[x]]$ at the point $\xi$ is a relation that satisfies the following:\begin{enumerate}
	\item there exists homogeneous $p(x_{i,j})\in \bar{\Q}[x_{i,j}]$ such that the relation we have is of the form $p(y_{i,j}(\xi))=0$,  
	\item the relation holds $v$-adically for some place $v$ of $\bar{\Q}$, i.e. letting $i_v:\bar{\Q}\hookrightarrow \bar{\Q}_v$ we have that the $y_{i,j}$ converge at $i_v(\xi)$ and the above relation is an equality in $\bar{\Q}_v$,
	\item there exists a polynomial $q(x)(x_{i,j})\in \bar{\Q}[x][x_{i,j}:1\leq i\leq \mu,1\leq j\leq h]$ such that it has the same degree as $p$, with respect to the $x_{i,j}$, and $q(\xi)(x_{i,j})=p(x_{i,j})$ and $q(x)(y_{i,j})=0$.
\end{enumerate}

Therefore, to describe the trivial relations among the values of our G-functions $y_{i,j}$ at some $\xi\in\bar{\Q}$, it is enough to determine the $\bar{\Q}[x]$-Zariski closure of the matrix $Y$. We do this in the following lemma, which is practically a more detailed rephrasing of \Cref{goalnontriviality}.

\begin{lemma}\label{trivialrelationsfinal} Let $Y$ be the $\mu\times h$ we had above. Then the ${\bar{\Q}[x]}$-Zariski closure $Y^{\bar{\Q}[x]-Zar}$ of $Y$ is the subvariety of $\mathbb{A}^{\mu\times h}_{\bar{\Q}[x]}$ defined by the following set of polynomials\begin{center}
		$\{ \prescript{t}{	}{b_i} J_\mu b_j : 1\leq i,j \leq h\}$,
	\end{center}where $b_i$ denotes the $i$-th column of a matrix of indeterminates.
\end{lemma}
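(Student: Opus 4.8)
The plan is to deduce Lemma~\ref{trivialrelationsfinal} from the complex statement Lemma~\ref{gfunczar} by a descent argument: over $\C(S)$ the Zariski closure of $Y$ is exactly $T_{\C(S)}$, and we want the same defining equations to cut out the $\bar{\Q}[x]$-closure. The point is that the bilinear relations $\prescript{t}{}{b_i}J_\mu b_j=0$ already have coefficients in $\Q$ (indeed in $\Z$), so they are defined over $\bar{\Q}[x]$; the content of the lemma is that there are no \emph{further} algebraic relations among the $y_{i,j}$ with coefficients in $\bar{\Q}[x]$ beyond those implied by these. Equivalently, writing $I\subset \bar{\Q}[x][x_{i,j}]$ for the ideal generated by the $\prescript{t}{}{b_i}J_\mu b_j$ and $I_Y$ for the full vanishing ideal of $Y$, we must show $\sqrt{I}=I_Y$.

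First I would record the trivial inclusion: the Riemann relations \eqref{eq:riemannrelationodd} restrict, upon projecting to the first $h$ columns, to $\prescript{t}{}{y_i}J_\mu y_j = 0$ for $1\le i,j\le h$ (the relevant block of $(2\pi i)^{-n}J_\mu$ vanishes since $h\le \mu/2$ and the first $h$ columns $\gamma_1,\dots,\gamma_h$ span a subsystem of a maximal isotropic one, by the choice of trivialization and Lemma~\ref{maxisotropic}). Hence $Y^{\bar{\Q}[x]\text{-Zar}}\subseteq T_{\bar{\Q}[x]}$, i.e. $I\subseteq I_Y$, so $\sqrt{I}\subseteq I_Y$. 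For the reverse inclusion, I would argue by base change to $\C$. Since $\bar{\Q}[x]\hookrightarrow \C(S)$ is flat (localization followed by a field extension), the ideal $I_Y\cdot\C(S)[x_{i,j}]$ is contained in the vanishing ideal of $Y$ over $\C(S)$, which by Lemma~\ref{gfunczar} equals $\sqrt{I\cdot\C(S)[x_{i,j}]}$ (here one also uses that $T$ is reduced, or passes to radicals throughout). Now suppose $p\in I_Y$; then $p\in \sqrt{I\cdot\C(S)[x_{i,j}]}$, so some power $p^N\in I\cdot \C(S)[x_{i,j}]$. Writing out such an expression and clearing denominators, a standard faithfully-flat / linear-algebra descent over the subring $\bar{\Q}[x]$ (the coefficients of $p^N$ lie in $\bar{\Q}[x]$, the generators of $I$ lie in $\bar{\Q}[x][x_{i,j}]$, and a $\C(S)$-linear relation among vectors over $\bar{\Q}[x]$ forces a $\bar{\Q}[x]$-, or at worst $\bar{\Q}(x)$-, linear relation) shows $p^N\in I\cdot\bar{\Q}(x)[x_{i,j}]$, hence after clearing the denominator in $x$, some further power lies in $I$. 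Therefore $p\in\sqrt{I}$, giving $I_Y\subseteq\sqrt I$ and thus equality.

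The step I expect to require the most care is the descent of the membership $p^N\in I\cdot \C(S)[x_{i,j}]$ down to $\bar{\Q}[x]$: one must be careful that $\C(S)$ is a field extension of $\bar{\Q}(x)$ (not just of $\bar{\Q}[x]$), so a priori one only gets membership in $I\cdot\bar{\Q}(x)[x_{i,j}]$, and then must clear the denominator in $x$ and absorb it into a higher power of $p$ to land back in $I\cdot\bar{\Q}[x][x_{i,j}]$. A clean way to package this is to note that $\bar{\Q}[x]\to\C(S)$ factors through the generic point, so it suffices to prove the statement with $\bar{\Q}[x]$ replaced by $\bar{\Q}(x)$ and then observe that $Y^{\bar{\Q}[x]\text{-Zar}}$ and $Y^{\bar{\Q}(x)\text{-Zar}}$ have the same defining equations in the $x_{i,j}$ once we know $T$ is geometrically reduced and irreducible — which it is, being (the affine cone over) a Grassmannian-type isotropic variety, in particular integral over $\Q$. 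Alternatively, and perhaps more transparently, one can avoid ideal-theoretic descent entirely: the variety $T$ over $\bar{\Q}$ is irreducible of a fixed dimension $d$ (it is the variety of $h$-frames of isotropic vectors for a nondegenerate symplectic form, an irreducible affine variety), $Y^{\C(S)\text{-Zar}}=T_{\C(S)}$ has dimension $d$ over $\C(S)$, and $Y^{\bar{\Q}[x]\text{-Zar}}$ is a closed subvariety of $T_{\bar{\Q}[x]}$ whose base change to $\C(S)$ contains (hence equals, by the dimension count) $T_{\C(S)}$; since $T$ is geometrically irreducible this forces $Y^{\bar{\Q}[x]\text{-Zar}}=T_{\bar{\Q}[x]}$. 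I would present the proof in this second form, as it reduces the whole matter to the geometric irreducibility of the isotropic-frame variety $T$ plus Lemma~\ref{gfunczar}.
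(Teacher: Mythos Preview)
Your proposal is correct and follows the same approach as the paper: descend from Lemma~\ref{gfunczar} using that the defining polynomials $\prescript{t}{}{b_i}J_\mu b_j$ already have coefficients in $\Q\subset\bar{\Q}[x]$. The paper's own proof is a two-sentence sketch that leaves the descent step entirely implicit; your write-up (especially the second packaging, via geometric irreducibility of the isotropic-frame variety $T$ and faithful flatness of $\bar{\Q}[x]\hookrightarrow\C(S)$) fills in precisely the detail the paper omits.
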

\begin{proof}We let $\Sigma$ be the set of polynomials above and let $I_R$ be the ideal generated by $\Sigma$ in the ring $R[x_{i,j}]$, where $R$ will denote different fields in our proof.
	
	In this case from \Cref{gfunczar} we know that $Y^{\C(S)-Zar}$ is equal to $V(I_{\C(S)})$. Note that the elements of $\Sigma$ all have coefficients in $\bar{\Q}[x]$, in fact they have coefficients in $\bar{\Q}$. From this we get the result we wanted, i.e. $Y^{\bar{\Q}[x]-Zar} =V(I_{\bar{\Q}[x]})$. \end{proof}

\begin{remark}
	Implicit in the previous proof is the fact that we have a polarization that is defined over $k\subset \bar{\Q}$ as a cycle in some de Rham cohomology group.\end{remark}
	\subsubsection{Application to the case of interest}

As we saw in \Cref{section:goodcovers}, to a fixed G-admissible variation of $\Q$-HS we can associate a family $\mathcal{G}$ of G-functions, which can be furthermore written, as in \eqref{eq:familyofgfuns}, as the disjoint union of the families of G-functions $\mathcal{G}^{(t)}$, $1\leq t\leq t_0:=|x^{-1}(s)|$, one for each of the roots of the morphism $x:C'_{S}\rightarrow \mathbb{P}^1$ of the good cover of our curve $S$. Let us write $y^{(t)}_{i,j}(x)$ from now on for these G-functions. 

If we wanted to describe the trivial relations of the whole family $\mathcal{G}$ we would have to consider the $\bar{\Q}[x]$-Zariski closure of $Y^{(1)}\times \ldots Y^{(t_0)}$, where $Y^{(t)}$ denotes the respective matrix as in \Cref{trivialrelationsfinal} for each of the $\mathcal{G}^{(t)}$, inside the affine space $\mathbb{A}^{\mu\cdot h\cdot t_0}_{\bar{\Q}[x]}=\spec(\bar{\Q}[x][X_{i,j,t}:1\leq i\leq \mu, 1\leq j\leq h, 1\leq t\leq t_0])$. However, as we shall see later on, for all our applications it is enough to be able to describe the trivial relations for the G-functions of a fixed family $\mathcal{G}^{(t)}$.

With that in mind, what will be most useful for us later on is the following reformulation of \Cref{trivialrelationsfinal}, which gives a very convenient description for the trivial relations among the G-functions in a fixed family $\mathcal{G}^{(t)}$.

\begin{cor}\label{desidealtriv} Let $y^{(t)}_{i,j}(x)$, with $t$ fixed, $1\leq i\leq \mu$, and $1\leq j\leq h$, be as above. Then, assuming the generic special Mumford-Tate of the variation of $\Q$-HS with underlying local system $R^nf^{an}\Q$ is $Sp(\mu,\Q)$, the trivial relations among these G-functions are given precisely by the ideal \begin{equation}
		I_t:=\langle P_{j,j'}:1\leq j\leq j'\leq h \rangle,
	\end{equation}where $P_{j,j'}=\sum_{l=1}^{\mu/2} X_{\mu/2+l,j,t} X_{l,j',t}- \sum_{l=1}^{\mu/2} X_{\mu/2+l,j',t} X_{l,j,t}$.   
\end{cor}
\begin{proof}By \Cref{goalnontriviality} and \Cref{trivialrelationsfinal}, it suffices to show that the generic special Mumford-Tate group of the variation of $\Q$-HS $R^nF^{an}\Q$ will also be $Sp(\mu,Q)$ under the above hypothesis.
	
	Since $C\rightarrow S$ is a non-constant morphism we know that it will be flat and of finite presentation. Write $C_0$ for the complement in $C$ of the finite set of ramification points of $c$, and likewise $S_0$ for the complement of $S$ of the finite set of the values of these ramification points. In particular, the restriction $c_0:=c|_{C_0}:C_0\rightarrow S_0$ is \'etale, and therefore its analytification  defines a covering $C^{an}_{0}\rightarrow S^{an}_0$.
	
	Writing $F_0$ and $f_0$ for the pullbacks of the morphisms $F$ and $f$ on $C_0$ and $S_0$ respectively, we get the variations of Hodge structures $\V_C:=R^n(F^{an}_0)_{*}\Q$ and $\V_0=R^n(f^{an}_0)_{*}\Q$, supported on $C_0^{an}$ and $S_0^{an}$ respectively.

	We notice that the monodromy groups of the variations $\V_C$ and $\V_0$ are in fact equal. Indeed, for any point $s\in C_0^{an}$ we have that $\rho(\pi_1(C_0^{an},z))$ is a finite index subgroup of $\rho(\phi_1(S_0^{an},c_0(z)))$, therefore the identity components of their $\Q$-Zariski closures coincide. We write $H_z$ for this group as we did earlier.
	
	Now from \Cref{monoatgeneric} we know that $H_{c_0(z)}=Sp(\mu,\Q)$ and therefore also that $H_z=Sp(\mu,\Q)$ from the above remarks. On the other hand, again by \cite{andrefixed} as in the proof of \Cref{monoatgeneric}, we get that $Sp(\mu,\Q)\leq G_{smt}(\V_{C})$. 
	
	Finally, we knot that for all $z\in C^{an}_0$, $G_{smt}((\V_C)_z)=\leq Sp(\mu,\Q)$, since the polarization of the variation of Hodge structure $R^n(F^{an}_{0})_{*}\Q$ will be fixed by $G_{smt}((\V_C)_z)$. Thus the result follows by combining these two observations.	
\end{proof}

\part{Constructing non-trivial and global relations}\label{part:constructingnontriv}

	Having determined he trivial relations for each of the sub-families $\mathcal{G}^{(t)}$ we embark towards constructing non-trivial relations among the values of the G-functions of these families at points where we have an abundance of Hodge endomorphisms. This is essentially done in two steps. 

To do this the results of \Cref{section:derhamendo} are crucial. To take advantage of these we work throughout under the assumption that the absolute Hodge conjecture holds for the endomorphisms of the fibers of the family $f:X\rightarrow S$. As a first step we construct such relations for the values of these G-functions at archimedean places. Our construction follows the general strategy employed by Andr\'e in Chapter $X$ of \cite{andre1989g}. 

After that we construct relations, under slightly milder ''combinatorial'' assumptions than in the archimedean case, among the values of these G-functions at non-archimedean places. In this case we need to assume the full Hodge conjecture, rather than its absolute variant, to be able to take advantage of $p$-adic Hodge tools. The general strategy we use to do this is based on techniques recently developed by Urbanik in \cite{davidg}, using $p$-adic Hodge-theoretic tools. In \cite{davidg} Urbanik used these techniques to establish such relations at non-archimedean places who are ``partly CM''. meaning the associated Hodge structure has an irreducible CM-summand. While the relations we construct apply to a vastly more general set of points, they depend on having ''enough'' G-functions in each family.

\section{Some algebraic preliminaries}\label{section:pseudocmrelations}

We start first by presenting a review of some of the more ``combinatorial'' aspects of the action of the algebra of Hodge endomorphisms together with some minor lemmas pertinent to our situation. We also chose this place to fix some notation that we will use for the remainder of this part, unless explicitly stated otherwise.\\

Let $f:X\rightarrow S$ be a G-admissible variation of $\Q$-HS. We start with the following definition that will be convenient for our purposes.
\begin{definition}
	Let $s\in S(\bar{\Q})$. Assume that in the decomposition of $V_s:=H^n(X^{an}_s,\Q)$ into irreducible $\Q$-Hodge structures, as in \eqref{eq:decomprepeat}, there exists at least one irreducible factor $V_i$ whose algebra of endomorphisms $D_i$ is of type IV in Albert's classification. We then say that the point $s$, or equivalently the corresponding $\Q$-HS, is \textbf{pseudo-CM}.
\end{definition}
\begin{remark}We note here that all CM-points $s\in S(\bar{\Q})$ of the variation will satisfy the above definition. The term ``pseudo-CM'' reflects the fact that the center of a type IV algebra in Albert's classification is a CM field. We note that the points considered here are far more general, at least in principle, than special points.
\end{remark}

\subsection{Notational Conventions}\label{section:notationspseudocm}

Let $f':X'\rightarrow S'$ be the morphism associated to a G-admissible variation as above and let $(C',x)$ be the good cover of $S'$ and $C:=C'\backslash \{x^{-1}(0)\}$, as in \Cref{section:goodcovers}. Then, as we saw, we get for each $1\leq t\leq t_0:=|x^{-1}(0)|$ a G-admissible variation of $\Q$-HS associated to the morphisms $F'_t:\mathcal{X}'_t\rightarrow C'_t$ described in \Cref{section:goodcovers}. Note furthermore that all of these $F_t$ extend the same $F:\mathcal{X}\rightarrow C$ to $s_t$, for each $1\leq t\leq t_0$.

\begin{definition}\label{vadicprox}
	Let $s\in C(L)$ for some $L/K$ and set $\xi=x(s)\in L$. For a place $v\in \Sigma_{L}$ we say that $s$ is $v$\textbf{-adically close to} $0$ if $|\xi|_v < \min\{1,R_{v}(\mathcal{G}) \}$, where $\mathcal{G}$ is the family of G-functions associated to the G-admissible variation as in \Cref{defnfamilygfuns}.
	
	We say that such an $s$ is $v$\textbf{-adically close to }$s_t$ if $s$ is $v$-adically close to $0$ and furthermore it is contained in the connected component of $s_t$ in $\{|x(P)|_v<\min\{1,R_{v}(\mathcal{G}) \}\}\subset{C'}^{an}$ as in the discussion in \Cref{section:goodcovers}.
\end{definition}

From now on, we fix such a point $s\in C(L)$ that is $v$-adically close to $0$ with respect to some fixed archimedean embedding $\iota_v:L\hookrightarrow \C$.

First of all, note that from the semisimplicity of the category of polarized Hodge structures, we know that we may write \begin{equation}\label{eq:decomprepeat}
	V_s:=H^n(\mathcal{X}^{an}_s,\Q)=V_1^{m_1}\oplus\ldots \oplus V_r^{m_r},
\end{equation}with $ (V_i,\varphi_i)$ irreducible polarized $\Q$-HS that are non-isomorphic to each other. Let $D_i:=\End(V_i)^{G_{mt}(V_i)}$ be the respective endomorphism algebras so that \begin{center}
	$D_s=M_{m_1}(D_1)\times \ldots \times M_{m_r}(D_r)$.
\end{center}

From \Cref{propendodr} we know that, assuming the absolute Hodge conjecture, there exists a finite extension $\hat{L}$ of $L$ such that $D_s$ acts on $H^n_{DR}(\mathcal{X}_{s,\hat{L}} /\hat{L})$ and that this action is compatible with the comparison isomorphism between algebraic de Rham and singular cohomology. Again assuming the absolute Hodge conjecture, we know from \Cref{propdegreebound} that the degree $[\hat{L}:L]$ of the extension is bounded independently of the point $s$. We assume from now on that $\hat{L}=L$ and return to this issue in the proof of \Cref{maintheorem}.

We let $F_i$ denote the center of the algebra $D_i$ for $1\leq i\leq r$ and note that these are number fields, due to Albert's classification. We introduce the following notation \begin{itemize}
	\item $\hat{E}_s= F_1^{m_1}\times \ldots \times F_r^{m_r}$ is the maximal commutative semi-simple algebra of $D_s$,
	\item $\hat{F}_i$ is the Galois closure of the field $F_i$ in $\C$,
	\item $\hat{F}_s$ is the compositum of the fields $\hat{F_i}$ together with the field $L$.
\end{itemize}
	\subsubsection*{Splittings in cohomology and homology}

Let us assume $F:\mathcal{X}\rightarrow C$ is a G-admissible variation as above and let $s\in S(L)$, where $L/K$ is a finite extension. We assume that $s$ is $v$-adically close to $0$, with respect to our fixed inclusion $\iota_v:L\hookrightarrow \C$. In particular, by the discussion in \Cref{section:goodcovers}, we know that it will be $v$-adically close to $s_t$, for some $t$.

In particular, we assume that it is in the image of the inclusion of a punctured unit disc $\Delta^{*}\subset C^{an}_{\C}$ centered at $s_t$.\\

Under the above assumption, $L=\hat{L}$,  we know that we have two splittings. 
Namely, on the one hand we get a splitting \begin{equation}\label{eq:pseudocmhomsplitting}
	H_n(\mathcal{X}^{an}_{s,\C},\Q)\otimes_\Q \hat{F}_s= \Bigsum{\sigma:\hat{E}\rightarrow \C}{ }\hat{W}_{\sigma},
\end{equation}induced from the splitting $\hat{E}_s\otimes_{\Q} \hat{F}_s= \Bigsum{\sigma:\hat{E}_s\rightarrow \C}{} \hat{F}_s^{\sigma}$, where $\hat{F}^\sigma_s$ denotes the field $\hat{F}_s$ viewed as an $\hat{E}_s$-module with the action of $\hat{E}_s$ being multiplication by $\sigma$. We also note that on $\hat{W}_\sigma$ the algebra $\hat{E}_s$ acts again via multiplication with its character $\sigma$.

On the other hand, we have a splitting
\begin{equation}\label{eq:pseudocmdrsplitting}
	H^{n}_{DR}(\mathcal{X}_s/L)\otimes_{L} \hat{F}_s=\Bigsum{\sigma:\hat{E}_s\rightarrow \C}{}\hat{W}^{\sigma}_{DR},
\end{equation}
which once again comes from the above splitting of $\hat{E}_s\otimes_{\Q} \hat{F}_s$. In particular, we note that the action of $\hat{E}_s$ on $\hat{W}^{\sigma}_{DR}$ comes once again via $\sigma$.

\subsubsection{Duality of the splittings}

We start by highlighting how the two splittings interact with one another via the comparison isomorphism\begin{center}
	$P^n_{\mathcal{X}_s}:H^n_{DR} (\mathcal{X}_s/L)\otimes_{L} \C\rightarrow H^n(\mathcal{X}^{an}_{s,\C},\Q)\otimes_{\Q}\C$.
\end{center}The following lemma is already noted as a property of the splittings by Andr\'e, who studied this in the case of abelian varieties. We include a short proof for the sake of completeness.

\begin{lemma}\label{dualitycmsplit}
	
	For all $\sigma\neq \tau$ if $\omega\in \hat{W}^{\tau}_{DR}$ and $\gamma\in \hat{W}_{\sigma}$ then \begin{center}
		$\int_{\gamma}{} \omega=0$. 
\end{center}\end{lemma}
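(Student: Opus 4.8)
The plan is to exploit the fact that the comparison isomorphism $P^n_{X_s}$ is $\hat{E}_s$-equivariant, which is precisely the content of the ``moreover'' part of \ref{propendodr}: the action of $D_s$ (hence of the commutative subalgebra $\hat{E}_s$) on de Rham cohomology induced by $i$ is intertwined with the usual Hodge-endomorphism action on Betti cohomology. Since the integration pairing $\int_{\gamma}\omega$ is (up to the factor $(2\pi i)^n$) nothing but the duality pairing between $H^n_{DR}(X_s/L)\otimes\C$ and $H_n(X^{an}_{s,\C},\Q)\otimes\C$ transported through $P^n_{X_s}$, it suffices to show that this pairing kills $W^{\tau}_{DR}\otimes W_\sigma$ whenever $\sigma\neq\tau$.

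First I would record that the pairing $\langle\,,\,\rangle: H^n_{DR}(X_s/L)\otimes_L\C \times H_n(X^{an}_{s,\C},\Q)\otimes_\Q\C \to \C$ given by $\langle\omega,\gamma\rangle = \int_\gamma \omega$ is compatible with the $\hat{E}_s$-actions in the adjoint sense: for every $e\in\hat{E}_s$ one has $\langle i(e)\omega,\gamma\rangle = \langle\omega, e\cdot\gamma\rangle$, where on the right $e$ acts on homology through the action dual to its action on cohomology. This is immediate from \ref{propendodr} together with the fact that the action of $\hat{E}_s$ on $H_n$ is the transpose of its action on $H^n$ under the de Rham–Betti duality; concretely, $P^n_{X_s}$ carries $i(e)$ to the Betti endomorphism $e$, and the Betti endomorphism acts on homology as the dual. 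Next I would use the defining property of the two splittings: on $\hat{W}^{\tau}_{DR}$ the algebra $\hat{E}_s$ acts by the character $\tau$, and on $\hat{W}_\sigma$ it acts by the character $\sigma$ (both stated just above the lemma). Hence for $\omega\in \hat{W}^{\tau}_{DR}$, $\gamma\in\hat{W}_\sigma$, and any $e\in\hat{E}_s$,
\[
\tau(e)\,\langle\omega,\gamma\rangle = \langle i(e)\omega,\gamma\rangle = \langle \omega, e\cdot\gamma\rangle = \sigma(e)\,\langle\omega,\gamma\rangle,
\]
so $(\tau(e)-\sigma(e))\langle\omega,\gamma\rangle = 0$ for all $e\in\hat{E}_s$. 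Since $\sigma\neq\tau$ as characters of $\hat{E}_s$, there is some $e$ with $\tau(e)\neq\sigma(e)$, forcing $\langle\omega,\gamma\rangle = 0$, i.e. $\int_\gamma\omega = 0$.

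The only genuinely delicate point — and the one I would be most careful about — is making precise the claim that the $\hat{E}_s$-action on homology appearing in \eqref{eq:pseudocmhomsplitting} is exactly the transpose of the cohomology action under the integration pairing, and that the eigen-character of $\hat{W}_\sigma$ matches (rather than is conjugate/inverse to) that of $\hat{W}^{\sigma}_{DR}$. This is a bookkeeping matter about how the splitting of $\hat{E}_s\otimes_\Q\hat{F}_s$ is used to split homology versus cohomology; one checks that both splittings are induced by the \emph{same} system of idempotents $e_\sigma\in\hat{E}_s\otimes\hat{F}_s$, and that $P^n_{X_s}$ (being $\hat{E}_s$-equivariant by \ref{propendodr}) carries $\hat{W}^{\sigma}_{DR}\otimes\C$ onto $\hat{W}_\sigma\otimes\C$; then orthogonality for $\sigma\neq\tau$ is the eigenvalue argument above applied either to the pairing directly or, equivalently, to the idempotents $e_\sigma$ which satisfy $e_\sigma e_\tau = 0$. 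Everything else is routine linear algebra, and since the lemma is stated as already known to André the argument can be kept short.
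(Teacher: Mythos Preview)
Your proof is correct and follows essentially the same approach as the paper: both use the $\hat{E}_s$-equivariance of $P^n_{X_s}$ from \ref{propendodr} to run an eigenvalue argument, the paper via the contragredient action of $\hat{E}_s^\times$ (arriving at $\int_\gamma\omega = \sigma(e)^{-1}\tau(e)\int_\gamma\omega$) and you via the transpose/adjoint pairing (arriving at $(\tau(e)-\sigma(e))\int_\gamma\omega = 0$). One small slip in your closing remark: $P^n_{X_s}$ lands in Betti \emph{cohomology}, so it does not literally carry $\hat{W}^\sigma_{DR}\otimes\C$ onto $\hat{W}_\sigma\otimes\C\subset H_n\otimes\C$, but this side comment is not used in your main argument.
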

\begin{proof}Let us fix $\sigma\neq \tau$ as above and let $\omega \in \hat{W}^{\tau}_{DR}$ and $\gamma\in \hat{W}_\sigma$.
	
	For all $d\in \hat{E}_s$ we have that $P^n_{\mathcal{X}_s}(d\omega)=P^n_{\mathcal{X}_s} (\tau(d)\omega)=\tau(d)P^n_{\mathcal{X}_s}(\omega)=d\cdot P^n_{\mathcal{X}_s}(\omega)$, where the middle equality follows from the moreover part of \Cref{propendodr}. The algebra $\hat{E}_s$, and in particular its group of invertible elements $\hat{E}_s^{\times}$, acts by definition on $V_s$ as endomorphisms of the Hodge structure. The action of $\hat{E}_s^{\times}$ on the dual space $V_s^{*}$ will thus be the dual of that of $V_s$. 
	
	In particular for any $\gamma\in \hat{W}_\sigma$, for any $e\in\hat{E}_s^{\times }$, and for any $\delta\in V_s$, we get that $(e\cdot \gamma) (e\cdot \delta)=\gamma(\delta)$. Taking $\delta=P^n_{\mathcal{X}_s}(\omega)$ we get that for all $\gamma \in \hat{W}_\sigma$ and for all $e\in\hat{E}_s^{\times}$\begin{center}
		$ \int_{\gamma}^{}\omega=\gamma(P^n_{X_s}(\omega))=(e \cdot \gamma)(e\cdot P^n_{\mathcal{X}_s}(\omega))$.
	\end{center}But we know that $(e \cdot \gamma)(e\cdot P^n_{\mathcal{X}_s}(\omega))= (\sigma (e^{-1}) \gamma ) (\tau(e)P^n_{\mathcal{X}_s}(\omega))$, where we used the duality between the actions of $\hat{E}_s^{\times}$ on $V_s$ and $V_s^{*}$. Putting everything together we get that for all $e\in\hat{E}_s^{\times}$ we will have that
	\begin{center}
		$ \int_{\gamma}^{}\omega= \sigma(e)^{-1}\tau(e) \int_{\gamma}^{}\omega$.
	\end{center}Since $\sigma\neq\tau$ we can find such an $e$ with $\sigma(e)\neq \tau(e)$ and the lemma follows.	
\end{proof}	
	\subsection{Involutions and symplectic bases}\label{section:involutions}

In creating the relations we want we will need to construct symplectic bases with particular properties. To construct these we will need to review some facts about the involutions of the algebras of Hodge endomorphisms and see how they interact with the splittings we have.\\

For the weight $n$ $\Q$-HS given by $V_s$ we denote by $\langle,\rangle $ the symplectic form defined by the polarization on $V_s$. By duality we get a polarized $\Q$-HS of weight $-n$ on the dual space $V_s^{*}:=H_n(\mathcal{X}_{s,\C}^{an},\Q)$, and we denote the symplectic form given  by the polarization again by $\langle,\rangle$. We note that these two symplectic forms are dual.\\

The algebra $D_s$ comes equipped with an involution, which we denote by $d\mapsto d^{\dagger}$, that is defined by the relation \begin{equation}\label{eq:involdef}
	\langle d\cdot v,w\rangle =	\langle v,d^{\dagger}\cdot w\rangle ,
\end{equation}for all $d\in D_s$ and for all $v,w \in V^{*}_s$, or equivalently for all $v,w\in V_s$. 

In the decomposition \eqref{eq:decomprepeat} of $V_s$, or its dual $V^{*}_s$, the polarization on each $V_i$, or $V^{*}_i$ respectively, is given by the restriction of the polarization of $V_s$, or its dual respectively. Therefore the involution $d\mapsto d^{\dagger}$ of $D_s$  restricts to the positive involutions of the respective algebras $D_i$.

The algebra homomorphisms $\sigma:\hat{E}_s\rightarrow \C$ have a convenient description. Writing \begin{center}
	$\hat{E}_s= F_1^{m_1}\times \ldots \times F_r^{m_r}$,
\end{center} we let $\pr_{j,l}:\hat{E}_s\rightarrow F_j$, where $1\leq j\leq r$ and $1\leq l\leq m_j$, denote the projection of $\hat{E}_s$ onto the $l$-th factor of $F_j^{m_j}$, which will act respectively on the $l$-th factor of $V_j^{m_j}$ that appears in the decomposition. Then any algebra homomorphism $\sigma:\hat{E}_s\rightarrow \C$ can be written as \begin{equation} \sigma=\tilde{\sigma}\circ \pr_{j,l}
\end{equation}for some $j$ and $l$ as above and some $\tilde{\sigma}:F_j\hookrightarrow\C$. For convenience, from now on we define the notation\begin{equation}\label{eq:charofmax}
\tilde{\sigma}_{j,l}:=\tilde{\sigma}\circ \pr_{j,l}.
\end{equation}

\begin{lemma}\label{lemmainvolutions}Consider the splitting \eqref{eq:pseudocmhomsplitting}. Then for the subspaces $\hat{W}_\sigma$ the following hold:\begin{enumerate}
		\item If $\sigma =\tilde{\sigma}_{j,l}$ then $\hat{W}_\sigma$ is contained in the $l$-th factor of $(V_j^{*})^{m_j}$,
		
		\item Let $\sigma =\tilde{\sigma}_{j,l}$ and let $\tau$ be some non-zero algebra homomorphism $\hat{E}_s\rightarrow \C$. Consider non-zero vectors $v\in \hat{W}_\sigma$ and $w\in \hat{W}_\tau$. If we assume that $\langle v,w\rangle \neq 0$ then one of the following cases holds \begin{enumerate} 
		\item $\sigma =\tau$ and the algebra $D_j$ of Hodge endomorphisms is of Type I, II or III in Albert's classification, or
		\item $\sigma =\bar{\tau}$, where $\bar{(\cdot)}$ denotes complex conjugation, and $D_j$ is of Type IV in Albert's classification.
	\end{enumerate}
	\end{enumerate}\end{lemma}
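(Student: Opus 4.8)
The plan is to analyze the pairing $\langle v, w\rangle$ for $v \in \hat{W}_\sigma$, $w \in \hat{W}_\tau$ by exploiting the compatibility of the involution $\dagger$ with the splitting \eqref{eq:pseudocmhomsplitting}, exactly as in André's treatment of the weight $1$ case. Part $(1)$ is essentially bookkeeping: since $\hat{E}_s = F_1^{m_1} \times \cdots \times F_r^{m_r}$ acts block-diagonally on the decomposition $V_s^{*} = (V_1^{*})^{m_1} \oplus \cdots \oplus (V_r^{*})^{m_r}$, and the idempotent $e_{j,l} \in \hat{E}_s$ projecting onto the $l$-th copy of $F_j$ acts as the projector onto the $l$-th copy of $V_j^{*}$, any $\hat{E}_s$-isotypic piece for the character $\tilde\sigma_{j,l}$ (which kills all idempotents except $e_{j,l}$) must land in $\ker(1 - e_{j,l})$, i.e. in the $l$-th copy of $V_j^{*}$. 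So I would dispose of $(1)$ first in a sentence or two.

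For $(2)$, first I would use $(1)$ together with the fact that $\langle,\rangle$ respects the decomposition \eqref{eq:decomprepeat} (the polarization on each $V_i$ is the restriction, and distinct simple factors $V_i \not\simeq V_j$ are orthogonal, as are distinct copies within a single $V_i^{m_i}$ unless they are "paired" by the form — more precisely the form is nondegenerate on each isotypic block $(V_j^{*})^{m_j}$ and zero across different blocks). Writing $\sigma = \tilde\sigma_{j,l}$ and $\tau = \tilde\tau_{j',l'}$, the hypothesis $\langle v, w\rangle \neq 0$ forces $j' = j$, so both $v$ and $w$ lie in copies of $V_j^{*}$ and $\tau = \tilde\tau_{j,l'}$ with $\tilde\tau : F_j \hookrightarrow \C$. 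Now for any $a \in F_j = Z(D_j) \subseteq D_s$, the defining relation \eqref{eq:involdef} gives
\begin{equation*}
\sigma(a)\,\langle v, w\rangle = \langle a\cdot v, w\rangle = \langle v, a^{\dagger}\cdot w\rangle = \tau(a^{\dagger})\,\langle v, w\rangle,
\end{equation*}
so $\sigma(a) = \tau(a^{\dagger})$ for all $a \in F_j$, i.e. $\tilde\sigma = \tilde\tau \circ (\dagger|_{F_j})$. The conclusion then follows by reading off, from Albert's classification \eqref{albert}, what the restriction of the positive involution $\dagger$ to the center $F_j$ can be: in Types I, II, III the center $F_j = F_0$ is totally real and $\dagger$ is the identity on it, forcing $\tilde\sigma = \tilde\tau$ hence $\sigma = \tau$; in Type IV the center is a CM field with $F_0$ as maximal totally real subfield, and $\dagger$ restricted to $F_j$ is complex conjugation, forcing $\tilde\sigma = \overline{\tilde\tau}$, hence $\sigma = \bar\tau$. (I should also note the converse-ish point that these cases are consistent: in Type IV, $\sigma = \tau$ would force $a = \bar a$ on $F_j$, impossible for non-real $a$, so a nonzero pairing genuinely lands in the $\sigma = \bar\tau$ case.)

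The main obstacle — really the only non-routine point — is justifying that the involution $\dagger$ on $D_s$ restricts to the \emph{positive} involution on each $D_j$ whose behavior on the center is dictated by Albert's classification, and in particular that $\dagger$ is compatible with the block structure in the precise way needed to run the displayed computation with $a \in F_j \subseteq D_s$ (as opposed to $a$ acting only on one factor). This is handled by the paragraph already in the excerpt preceding the lemma — "the involution $d \mapsto d^\dagger$ of $D_s$ restricts to the positive involutions of the respective algebras $D_i$" — so in the write-up I would simply cite that observation and the relevant lines of \ref{albert}, and the rest is the short character computation above. The only care needed is to make sure $a$ is taken in the center $F_j$ (where $\sigma,\tau$ are genuinely algebra homomorphisms to $\C$ and the scalars $\sigma(a),\tau(a^\dagger)$ make sense), rather than in all of $D_j$.
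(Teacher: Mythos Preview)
Your approach is essentially the same as the paper's, and the core computation is correct (indeed, your use of $\langle a\cdot v, w\rangle = \langle v, a^{\dagger}\cdot w\rangle$ directly is slightly cleaner than the paper's route through $\langle d\cdot v, d\cdot w\rangle$, which produces an extra factor of $\tau(d)$ to cancel). There is, however, one small gap: from $\langle v,w\rangle\neq 0$ you conclude $j'=j$ but never pin down $l'=l$, and then your ``hence $\sigma=\tau$'' (resp.\ $\sigma=\bar\tau$) from $\tilde\sigma=\tilde\tau$ (resp.\ $\tilde\sigma=\overline{\tilde\tau}$) is unjustified, since $\tilde\sigma_{j,l}\neq\tilde\sigma_{j,l'}$ as characters of $\hat E_s$ when $l\neq l'$. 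Your computation with $a$ in the diagonal copy of $F_j\subset D_s$ cannot distinguish the copies, so it will not close this gap. The paper handles this right at the start of part~(2) by observing that the polarization on $V_s^{*}$ restricts to a polarization on each individual copy of $V_j^{*}$ (so distinct copies are mutually orthogonal), which immediately forces $(j',l')=(j,l)$; you should add that one sentence before running the character argument.
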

\begin{proof}The first part of the lemma is trivial. 
	
	For the second part let $v\in \hat{W}_\sigma$ and $w\in \hat{W}_\tau$ be non-zero vectors as above with $\langle v,w\rangle \neq 0$. From the preceding discussion there exists a pair $(j',l')$ for $\tau $ such that $\tau=\tilde{\tau}\circ \pr_{j',l'}$, where $\tilde{\tau}:F_{j'}\hookrightarrow\C$. From the first part of this lemma we also know that $\hat{W}_\tau$ is contained in the $l'$-th factor of $(V_{j'}^{*})^{m_{j'}}$. 
	
The subspaces $V_i^{*}$ of $V_s^{*}$ are symplectic, with their symplectic inner product being the restriction of that of $V_s^{*}$. This immediately implies that $(j,l)=(j',l')$.

	For any $d\in \hat{E}_s$ we have that
	 $\langle d\cdot v,d\cdot w\rangle =     \langle \sigma(d) v,\tau(d)w\rangle   = \sigma(d)\tau(d) \langle v,w\rangle$. On the other hand using the defining property of the involution we get \begin{equation}\label{eq:involskew} 
	 	\langle d\cdot v,d\cdot w\rangle=\langle v,(d^{\dagger}d)\cdot w\rangle =
\tau(d^\dagger d) \langle v,w\rangle .\end{equation} 
	
Since, by assumption $\langle v,w\rangle \neq0$ the above relations imply that for all $d\in\hat{E}_s$ we have\begin{equation}\label{eq:characters1}
	\sigma(d)\tau(d)=\tau(d^{\dagger})\tau(d).
\end{equation}

Let $F_j$ be the center of the algebra $D_j$. Then \eqref{eq:characters1} implies that for all $d\in F_j$ \begin{equation}\label{eq:characters2}
\tilde{\sigma}(d)\tilde{\tau}(d)=\tilde{\tau}(d^{\dagger})\tilde{\tau}(d).
\end{equation}In particular, this implies that for all $d\in F_j$ we have that \begin{equation}\label{eq:characters3}\tilde{\tau}(d^{\dagger})=\tilde{\sigma}(d).\end{equation}

If $D_j$ is of Type I in Albert's classification then the involution restricts  to the identity and we get trivially that $\tilde{\tau}=\tilde{\sigma}$, and hence also $\sigma=\tau$. So our result follows in this case.

If $D_j$ is of Type II then we have that $F_j$ is a totally real field, $D_j$ is a quaternion algebra over $F_j$ and there exists $a\in D_j$ such that the involution is given by $d^{\dagger }=a d^{*}a^{-1}$ on $D_j$, where $d^{*}=\tr_{D_j/F_j}(d)-d$. Note that for $d\in F_j=Z(D_j)$ we have that $\tr_{D_j/F_j}(d)=2d$, so that $d^{\dagger}=d$ for all $d\in F_j$. Combining these observations with \eqref{eq:characters3} we get that $\tau=\sigma$. 

The same argument we just used for the case of Type II algebras works for the case of Type III algebras, though we do not need to introduce any element $a$ as above since the involution in this case is equal to the canonical involution.

Finally, let us assume that $D_j$ is of type IV in Albert's classification. In this case $F_j$ is a CM-field. In this case the involution is known to restrict to complex conjugation on the field $F_j$. In other words $d^{\dagger}=\bar{d}$ for $d\in F_j$. This, together with \eqref{eq:characters3}, implies that $\tilde{\sigma}(d)=\tilde{\tau}(\bar{d})$. Since $F_j$ is a CM-field this implies that $\tilde{\sigma}=\bar{\tilde{\tau}}$ and by extension $\sigma =\bar{\tau}$.
	\end{proof} 

\begin{remark}
	The above lemma shows that the splitting \eqref{eq:pseudocmhomsplitting} of $V_s^{*}$ is comprised of two types of mutually skew-orthogonal symplectic subspaces. On the one hand, we have the symplectic subspaces $\hat{W}_\sigma$ that are contained in some $V_j$ that is of Type I-III, and on the other hand we have the symplectic subspaces of the form $\hat{W}_{\tau}\oplus \hat{W}_{\bar{\tau}}$, where $\hat{W}_\tau$ is contained in some $V_j$ that is of Type IV. For the second type, note that we also have that $\hat{W}_\tau$ and $\hat{W}_{\bar{\tau}}$ are transverse Lagrangians of these symplectic subspaces. 
\end{remark}

	\section{Towards non-trivial relations at infinite places}\label{section:archim}

We return to our families of G-admissible variation of $\Q$-HS, given by the pairs of morphisms $(F'_t,F)$, $1\leq t\leq t_0$. As in the previous section, we let $s\in C(L)$ be a fixed point which we assume is in the punctured complex-analytic disk $\Delta^{*}\subset C^{an}$, taken with respect to a fixed archimedean embedding $i_v:L\rightarrow \C$, centered at $s_t$. We then have the totally isotropic local subsystem of rank $h$, see \Cref{goodcoverslemma}, over the ring $\mathcal{O}_{S^{an}_{\C}}|_{\Delta^{*}}$\begin{center}
	$\mathcal{M}_0:=M_0 R_n(F^{an}_{\C})_{*}(\Q)|_{\Delta^{*}}$
\end{center} of the local system $R_n(F^{an}_{\C})_{*}(\Q)|_{\Delta^{*}}$, which has rank $\mu:= \dim_\Q V_s$. 

We fix a basis of sections\footnote{See the discussion in $\S$ $4$ and $\S$ $6$ of \cite{davidg}.} $\{\omega_i:1\leq i\leq \mu\}$ of the canonical extension $\mathcal{H}$ of the vector bundle $H^n_{DR}(\mathcal{X}/C)$ to $C'$ over some dense affine open subset $U'\subset C'$ that contains the set $x^{-1}(0)$ and a trivialization $\{\gamma_j:1\leq j\leq \mu\}$ of $R_nF^{an}_{*} \Q|_V$ where $V$ is some open analytic subset of $U^{an}$, where $U:=U'\cap C$, with $s\in V\subset \Delta^{*}$. We may and do choose these so that the following conditions are satisfied:\begin{enumerate}
	\item the matrices of the skew-symmetric forms on $H^n_{DR}(\mathcal{X}/C)(U)$ and $R_nF^{an}_{*} \Q$ induced by the polarization written with respect to the basis $\{\omega_i\}$, restricted to $U$, and trivialization $\{\gamma_j\}$ respectively are both equal to $J_\mu$,
	
	\item $\gamma_1,\ldots, \gamma_h\in \mathcal{M}_0|_V$ and $\gamma_1,\ldots, \gamma_{\mu/2}\in \mathcal{M}^{+}$,
\end{enumerate}where $\mathcal{M}^{+}$ is a maximal totally isotropic local subsystem of $ R_n(F^{an}_{\C})_{*}(\Q)|_V$ that contains $\mathcal{M}_0|_V$.

Let us now consider the relative comparison isomorphism\begin{center}
	$P^n_{\mathcal{X}/C}:H^n_{DR}(\mathcal{X}/C)\otimes_{\mathcal{O}_C}\mathcal{O}_{C^{an}_{\C}}\rightarrow R^nF^{an}_{*}\Q_{\mathcal{X}^{an}_{\C}} \otimes_{\Q_{C^{an}_{\C} }} \mathcal{O}_{C^{an}_{\C}}$,
\end{center}and restrict it over the set $V$. With respect to the above choices we let $P_{\mathcal{X}/C}=\frac{1}{(2\pi i)^n}(\int_{\gamma_j}{}\omega_i)$ for the matrix of periods of $F$.

Let us write $P_{\mathcal{X}/C}=\begin{pmatrix}
	\Omega_1& \Omega_2\\
	N_1&N_2 
\end{pmatrix}$. From Theorem 2 of Chapter IX, \S 4 of \cite{andre1989g}, we know that the first $h$ columns of this matrix have entries that are G-functions. It is among their values at $\xi=x(s)$ that we want to find some relation that reflects the action of $\hat{E}_s$.

\begin{lemma}\label{constpseudocm}Assume that $h\geq 2$ and that for the point $s\in C(L)$ one of the following is true\begin{enumerate}
		
		\item there exists $\tau:\hat{E}_s\rightarrow \C$ such that $h> \dim_{\hat{F}_s} \hat{W}_\tau$, or

		\item $s$ is a pseudo-CM point and \begin{center}
			$h\geq \min\{\dim\hat{W}_\tau: \hat{W}_\tau\subset V_{i(\tau)} \text{, with } V_{i(\tau)}\text{ of type IV }\}$.
		\end{center}		
	\end{enumerate}

Then there exists a polynomial $R^{(t)}\in \bar{\Q}[X_{i,j,t}:1\leq i\leq \mu, 1\leq j\leq h]$ such that $R^{(t)}(i_v(y_{i,j}^{(t)}))=0$, and $R^{(t)}\notin I_t$, where $I_t$ is the ideal described in \Cref{desidealtriv}. Moreover, $R^{(t)}$ is a homogeneous polynomial with degree $\leq 2$.
\end{lemma}

\begin{proof} We take cases depending on the interplay between $\mathcal{M}_{0,s}\otimes \hat{F}_s$ and the splitting \eqref{eq:pseudocmhomsplitting}. We also assume that $V_s=H^n(\mathcal{X}_{s,\C},\Q)$ has a decomposition as in \eqref{eq:decomprepeat}.
	
	\textbf{Case 1:} Assume there exists some $\tau :\hat{E}_s\rightarrow \C$ such that the following holds \begin{equation}\label{eq:condicase1}
		\bigg(\Bigsum{\underset{\sigma\neq \tau}{\sigma:\hat{E}_s\rightarrow \C}}{}\hat{W}_{\sigma}\bigg)\cap (\mathcal{M}_{0,s}\otimes \hat{F}_s)\neq 0.
	\end{equation}Then we get at least one relation of degree $1$.

Note that for dimension reasons \eqref{eq:condicase1} is satisfied for the $\tau$ as in the first condition above.\\
	
	Indeed, let $\gamma \in \Gamma (V,R_n(f^{an}_\C)_{*}(\Q))$ be a section such that $\gamma(s)$ belong to the non-zero space of \eqref{eq:condicase1}. From \Cref{dualitycmsplit} we get that for all $\omega \in \hat{W}^{\tau}_{DR}$ we have 
	\begin{equation}\label{eq:case1relationps}
		\frac{1}{(2\pi i)^n}\int_{\gamma(s)}\omega=0.
	\end{equation}
	Writing $\gamma$ as an $\hat{F}_s$-linear combination of the $\gamma_j$ with $1\leq j\leq h$ and $\omega$ as an $\hat{F}_s$-linear combination of the $\omega_i$ with $1\leq i\leq \mu$, we have that \eqref{eq:case1relationps} leads to a linear equation among the values of the G-functions in question at $\xi$.
	
	Finally, we write $R^{(t)}_0\in \bar{\Q}[X_{i,j,t}:1\leq i\leq \mu, 1\leq j\leq h]$ for the linear polynomial corresponding to the above relation.\\

\textbf{Case 2:} Assume that for all $\tau :\hat{E}_s\rightarrow \C$ we have \begin{equation}\label{eq:condicase2}
	\bigg(\Bigsum{\underset{\sigma\neq \tau}{\sigma:\hat{E}_s\rightarrow \C}}{}\hat{W}_{\sigma}\bigg)\cap (\mathcal{M}_{0,s}\otimes \hat{F}_s)= 0.
\end{equation}Then we want to show that we can create a relation of degree $2$.\\	

First of all, we may assume, which we do from now on, that $\dim_{\hat{F}_s}\hat{W}_\sigma\geq h$ for all $\sigma$, otherwise we are in case 1, for dimension reasons.  

The first step in creating the relations we want is defining symplectic bases with particular properties, which we do in the following claims.\\

\textbf{Claim 1:} There exists a symplectic basis $e_1,\ldots,e_{\mu/2},f_1,\ldots,f_{\mu/2}$ of the symplectic vector space $V^{*}_s\otimes_\Q\hat{F}_s:=H_n(\mathcal{X}_{s,\C},\Q)\otimes \hat{F}_s$ that satisfies the following properties:\begin{enumerate}
	\item $\langle e_i,e_j\rangle =\langle f_i,f_j\rangle =0$ and $\langle e_i,f_j\rangle=\delta_{i,j}$ for all $i,j$.
	\item $e_j=\gamma_j(s)$ for $1\leq j\leq h$.
	\item There exists $\tau:\hat{E}_s\rightarrow \C$ such that\begin{enumerate}
		\item\label{claim13a} $\hat{W}_\tau$ is contained in $V_{i(\tau)}^{*}\otimes \hat{F}_s$, where $V_{i(\tau)}^{*}$ is some irreducible sub-Hodge Structure of $V^{*}_s$ which is of Type IV, and 
		\item\label{claim13b} $f_j\in \hat{W}_\tau$ for $1\leq j\leq  h$,
		\item $\dim_{\hat{F}_s} \hat{W}_\tau=h$.
\end{enumerate}
\end{enumerate}
\begin{proof}[Proof of Claim 1]
From \Cref{maxisotropic} we know that choosing any basis of local sections $\gamma_j(s)$ of $\mathcal{M}_{0,s}$, its vectors will satisfy $\langle \gamma_i(s),\gamma_j(s)\rangle =0$ for all $i,j$. Assume that we have fixed one such basis as above and fix an indexing of the set $\{\sigma :\sigma:\hat{E}_s\rightarrow \C \}=\{  \sigma_i:1\leq i\leq m(s)\}$. We can then write uniquely \begin{equation} \gamma_j(s)= w_{j,1}+\ldots +w_{j,m(s)}
 \end{equation}where $1\leq j\leq h$ and $w_{j,i}\in \hat{W}_{\sigma_i}$.

By assumption the $\Q$-HS $V_s$ is  pseudo-CM. Therefore, there exists $\tau$ such that $\hat{W}_\tau$ is as we want in \ref{claim13a} and the same holds for $\hat{W}_{\bar{\tau}}$. Without loss of generality assume that $\bar{\tau}=\sigma_1$. Since we are in Case 2, we also know that $\dim_{\hat{F}_s}\hat{W}_{\bar{\tau}}\geq h$ and that \eqref{eq:condicase2} holds. 

From \eqref{eq:condicase2} we get that the vectors $w_{j,1}\in\hat{W}_{\bar{\tau}}$ are in fact linearly independent. Indeed, assume that there exist $b_j\in \hat{F}_s$ not all zero with $\Sum{j=1}{h}b_j w_{j,1}=0$. We must then have \begin{center}
$\Sum{j=1}{h} b_j\gamma_j(s) = \Sum{i=2}{m(s)} \Sum{j=1}{h} b_j w_{j,i}$. 
\end{center}Now notice that the element in the right hand side of the above is in fact an element of $\bigg(\Bigsum{\underset{\sigma\neq \bar{\tau}}{\sigma:\hat{E}_s\rightarrow \C}}{}\hat{W}_{\sigma}\bigg)$. On the other hand, the element in the left hand side of the above equality is in $(\mathcal{M}_{0,s}\otimes \hat{F}_s)$. Since the $\gamma_{j}(s)$ are linearly independent this element is non-zero, this contradicts \eqref{eq:condicase2}.

By \Cref{lemmainvolutions} we know that $\hat{W}_{\bar{\tau}}\oplus \hat{W}_{\tau}$ is a symplectic vector space with $\hat{W}_{\tau}$ and $\hat{W}_{\bar{\tau}}$ being transverse Lagrangians. 

Let $v_j$ with $1\leq j\leq \dim_{\hat{F}_s}\hat{W}_{\bar{\tau}}$ be a basis of $\hat{W}_{\bar{\tau}}$ with $v_j=w_{j,1}$ for $1\leq j\leq h$. We complete this to  a symplectic basis $v_i,f_j$, with $1\leq j\leq \dim_{\hat{F}_s}\hat{W}_{\bar{\tau}}$ of $\hat{W}_{\bar{\tau}}\oplus \hat{W}_{\tau}$ such that the $f_j$ are a basis of $\hat{W}_{\tau}$. Then we have, by construction and by \Cref{lemmainvolutions}, that 
\begin{equation}
\langle \gamma_i(s),f_j\rangle =\delta_{i,j}
\end{equation}for all $1\leq i,j\leq h$. 

Therefore, setting $e_i :=\gamma_i(s)$ for $1\leq i\leq h$ the result follows by extending the set of vectors $\{e_i, f_i:1\leq i\leq h\}$ to a symplectic basis of $V_s^{*}\otimes \hat{F}_s$. Finally, note that the $\tau$ was arbitrary with $\hat{W}_{\tau}$ being contained in a type IV sub-Hodge structure of $V_s^{*}$. Therefore, by the assumption that $h\geq \min\{\dim\hat{W}_\tau: \hat{W}_\tau\subset V_{i(\tau)}^{*} \text{, with } V_{i(\tau)}^{*}\text{ of type IV }\}$ in our lemma and the  assumption in this second case that $\dim\hat{W}_\sigma\geq h$ for all $\sigma$ we get that we may find such a $\tau$ that also satisfies the last condition of our claim.
\end{proof}

From now on we fix the $\tau$ we found in Claim 1. Having created a symplectic basis for $H_n(\mathcal{X}_{s,\C}^{an},\Q)\otimes \hat{F}_s$ we want to construct a symplectic basis of $H^{n}_{DR}(\mathcal{X}_s/L)\otimes_{L} \hat{F}_s$ in a way that lets us take advantage of \Cref{dualitycmsplit}.\\

\textbf{Claim 2:} There exists a symplectic basis $e^{DR}_1,\ldots,e^{DR}_{\mu/2}, f^{DR}_1,\ldots, f^{DR}_{\mu/2}$ of $H^n_{DR}(\mathcal{X}_s/L)\otimes_{L} \hat{F}_s$ such that the following holds \begin{enumerate}
	\item\label{claim2-1} $\forall j$ we have $e^{DR}_j\in \hat{W}_{DR}^{\sigma}$ for some $\sigma \neq \tau$,
	\item for $1\leq j\leq h$ we have $f^{DR}_j\in \hat{W}^\tau_{DR}$,
	\item for $h+1\leq j\leq \mu/2$ we have $f^{DR}_j\in \hat{W}^{\sigma}_{DR}$ for some $\sigma\neq \tau$.
\end{enumerate}
\begin{proof}[Proof of Claim 2]We start by noting that the results of \Cref{lemmainvolutions} apply easily via duality to the splitting \eqref{eq:pseudocmdrsplitting} via $P^{n}_{\mathcal{X}_s}$, due to our assumption that $L=\hat{L}$. In particular, via duality we get that for $\sigma:\hat{E}_s\rightarrow \C$ the subspaces $\hat{W}_{DR}^{\sigma}$  are once again divided into two categories\begin{itemize}
		\item $\hat{W}_{DR}^{\sigma}$ that are symplectic subspaces, corresponding to $\hat{W}_\sigma$ that are contained in simple sub-Hodge structures of $V_s^{*}$, after these are tensored with $\hat{F}$, that are of Type I, II or III, and 
		\item $\hat{W}_{DR}^{\sigma}$ that are isotropic subspaces appearing in pairs such that $\sigma$ and $\bar{\sigma}$ are both algebra homomorphisms $\hat{E}_s\rightarrow \C$ and $\hat{W}_{DR}^{\sigma}\oplus\hat{W}_{DR}^{\bar{\sigma}}$ is a symplectic subspace. These correspond via duality to the $\hat{W}_\sigma$ that are contained in simple sub-Hodge structures of $V_s^{*}$, again after these are tensored with $\hat{F}$, that are of Type IV.		
	\end{itemize}
	
With that in mind, for each $\sigma$ we pick vectors $e^{\sigma}_i$ so that\begin{itemize}
		\item the $e^{\sigma}_i$ are the basis of a Lagrangian subspace of $\hat{W}_{DR}^{\sigma}$ if we are in the first case above, so that in this case $1\leq i\leq \frac{1}{2}\dim_{\hat{F}_s}\hat{W}_{DR}^{\sigma}$, 
		\item the $e^{\bar{\tau}}_i$ are a basis of $\hat{W}_{DR}^{\bar{\tau}}$ of our fixed $\tau$, and
		\item in the second case above for each $\sigma \neq \tau,\bar{\tau}$ we pick one $\sigma $ for each pair $(\sigma , \bar{\sigma})$ and we let $e^{\sigma}_i$ be a basis of $\hat{W}_{DR}^{\sigma}$.		
	\end{itemize}

Let $e^{DR}_j$, with $1\leq j\leq \mu$, be any indexing of the set of all the $e^{\sigma}_i$ above. The spanning set of these defines a Lagrangian subspace of $H^n_{DR}(\mathcal{X}_s/L)\otimes_{L} \hat{F}_s$. In a similar manner, by the above remarks derived from \Cref{lemmainvolutions}, we can construct a basis of a transverse Lagrangian to the Lagrangian spanned by the $e^{DR}_i$ with $f^{DR}_j$ also elements of the various $\hat{W}_{DR}^\sigma$. It is also straightforward from the above that we may pick $f^{DR}_1,\ldots,f^{DR}_h\in \hat{W}^{\tau}_{DR}$. 
\end{proof}
	\textbf{Step 1: Changing bases.} We note that the bases $\beta_{2}:=\{e_i,f_i:1\leq i \leq \mu/2 \}$  and $\beta_{2}^{DR}:=\{e^{DR}_i, f^{DR}_i:1\leq i\leq \mu/2 \}$ that were created above are $\hat{F}_s$-linear combinations of the bases $\beta_{1}:=\{\gamma_j(s):1\leq j\leq \mu\}$ and $\beta_{1}^{DR}:=\{\omega_j(s):1\leq j\leq \mu\}$ respectively. Since all bases are by construction symplectic the base change matrices are all symplectic matrices. Note that for the change of base matrix $[I_{\mu}]^{\beta_2}_{\beta_{1}}$ we will have by construction of $\beta_2$ that its first $h$ columns will be 
\begin{center}
	$\begin{pmatrix}
		I_h\\
		0
	\end{pmatrix}$ . 
\end{center}

Let us consider the isomorphism $P^n_{\mathcal{X}_s}:H^n_{DR}(\mathcal{X}_s/L)\otimes_{L} \C \rightarrow H^n(\mathcal{X}^{an}_s, \Q)\otimes_{\Q }\C$. Let $\tilde{P}_j$ for $j=1,2$ be the matrix\footnote{Note that in keeping with our earlier notation the matrix acts via multiplication on the right, i.e. $P^n(x)=[x]_{\beta^{DR}_j}\tilde{P}_j$.   } of this isomorphism with respect the basis $\beta^{DR}_j$ and the dual of the basis $\beta_j$. We are interested in the matrices $P_j:=\frac{1}{(2\pi i)^n}\tilde{P}_j$. Note that $P_1$ is the value of the relative period matrix at $\xi=x(s)$. For the matrices $P_j$ we have 
\begin{equation}\label{eq:changebases}
	P_2=[I_{2\mu}]^{\beta^{DR}_1}_{\beta^{DR}_{2}} P_1 [I_{2\mu}]^{\beta^{\vee}_2}_{\beta^{\vee}_{1}},
\end{equation}
and all these matrices are symplectic, while the two change of base matrices will have coefficients in the field $\hat{F}_s$.\\

\textbf{Step 2: Relations on $P_2$.} Let us examine the matrix $P_2$ in more detail. Write \begin{equation}\label{eq:matrixp2}
	P_2=\begin{pmatrix} \Gamma_1 & \Gamma_2\\
		\Delta_1 & \Delta_2
	\end{pmatrix},
\end{equation}where $\Gamma_i$ and $\Delta_i$ are $\mu/2\times \mu/2$ matrices. For convenience we also let $\tilde{\Gamma}_i$ and $\tilde{\Delta}_i$ for the $\mu/2\times h$ matrices defined by the first $h$ first columns of the matrices $\Gamma_i$ and $\Delta_i$ for $i=1,2$ respectively.

From the fact that $(2\pi i)^nP_2$ is symplectic we have the relations\footnote{This follows from the Riemann relations. See the \Cref{appendixpolarizations} for more details.}
\begin{equation}\label{eq:relationsp2-1}^t\Delta_j\Gamma_j=^t\Gamma_j \Delta_j,	
\end{equation}for $j=1,2$ and also
\begin{equation}\label{eq:relationsp2-2}^t\Delta_2\Gamma_1-^t\Gamma_2\Delta_1=\frac{I_{\mu/2}}{(2\pi i)^n}
\end{equation}

By construction of the bases in the two claims, and in particular Claim 1 \ref{claim13b} and Claim 2 \ref{claim2-1},  we immediately get that $\tilde{\Gamma}_2=0$, by virtue of \Cref{dualitycmsplit}. 

Let us set $\Delta_{2,h}$ to be the $h\times h$ matrix given by $(\int_{f_j}^{}f^{DR}_i)_{1\leq i,j\leq h}$. Let us also set $\Delta_{1,h}$ to be the $h\times h$ matrix given by $(\int_{\gamma_{j}}^{}f^{DR}_i)_{1\leq i,j\leq h}$. In other words, $\Delta_{i,h}$ is the submatrix of $\Delta_i$ that is comprised of the entries in the first $h$ columns and first $h$ rows of $\Delta_i$.\\

\textbf{Claim 3: }There exists an $h\times h$ matrix $T\in\GL_h(\hat{F}_s)$ which is such that \begin{equation}\label{eq:deltasrelation}\Delta_{1,h}={\Delta}_{2,h} \cdot^t T.
\end{equation}
\begin{proof}[Proof of Claim 3] We have a pairing \begin{equation}\label{eq:deltachange}
		\hat{W}^{\tau}_{DR}\times H_n(\mathcal{X}^{an}_s,\Q)\otimes \hat{F}_s \rightarrow \C,
	\end{equation}defined by $(\omega, \gamma)\mapsto \int_{\gamma}^{} \omega$.

By \Cref{dualitycmsplit} this induces a perfect pairing \begin{center}
	$\hat{W}^{\tau}_{DR}\times \bigg((H_n(\mathcal{X}^{an}_s,\Q)\otimes \hat{F}_s)\bigg{/}\bigg(\Bigsum{\underset{\sigma\neq \tau}{\sigma:\hat{E}_s\rightarrow \C}}{}\hat{W}_{\sigma}\bigg)\bigg)\rightarrow \C$,
\end{center}

On the one hand, we know that $\{f_1,\ldots, f_h\}$, the basis of $\hat{W}_\tau$, maps to a basis in the quotient $(H_n(\mathcal{X}^{an}_s,\Q)\otimes \hat{F}_s)\bigg{/}\bigg(\Bigsum{\underset{\sigma\neq \tau}{\sigma:\hat{E}_s\rightarrow \C}}{}\hat{W}_{\sigma}\bigg)$. On the other hand, from the assumption \eqref{eq:condicase2}, we get that the basis $\{\gamma_1,\ldots, \gamma_h\}$ also maps to a basis of the same quotient.

Let $T$ be the transpose of the change of basis matrix  from the basis induced by the $\gamma_j$ on $(H_n(\mathcal{X}^{an}_s,\Q)\otimes \hat{F}_s)\bigg{/}\bigg(\Bigsum{\underset{\sigma\neq \tau}{\sigma:\hat{E}_s\rightarrow \C}}{}\hat{W}_{\sigma}\bigg)$, to that induced on the same space by the $f_j$. Then $T\in \GL_h(\hat{F}_s)$ and \eqref{eq:deltachange} holds.
\end{proof}

Let $\Gamma_{1,h}$ be, once again, the submatrix of $\Gamma_1$ that is comprised of the entries in the first $h$ columns and first $h$ rows of $\Gamma_1$. We have already seen that $\tilde{\Gamma}_2=0$ and, by the construction of Claims $1$ and $2$ and \Cref{dualitycmsplit}, that $\tilde{\Delta}_2 =\begin{pmatrix}
	\Delta_{2,h}\\0\\
	\vdots \\0
\end{pmatrix}$.

Using these facts we derive from \eqref{eq:relationsp2-2} that \begin{equation}\label{eq:relationsp2-3}
		\prescript{t}{}{\Delta_{2,h}} \Gamma_{1,h}=\frac{1}{(2\pi i)^n}I_h.
\end{equation}

Let us now multiply both sides of \eqref{eq:relationsp2-3} by $T$. Then, using \eqref{eq:deltasrelation}, we get\begin{equation}\label{eq:relationsp2-4}
\prescript{t}{}{\Delta}_{1,h}\Gamma_{1,h}=\frac{T}{(2\pi i)^n}.
\end{equation}

	\textbf{Step 3: Relations on $P_1$.} The relation \eqref{eq:relationsp2-4} we created on the first $h$ columns of the matrix $P_2$ will translate to relations among the coefficients of the first $h$ columns of the matrix $P_1$. Since these are the values of the G-functions we are interested in, this will finish the proof of \Cref{constpseudocm}.\\

We start with introducing some notation let $[I_{\mu}]^{\beta^{\vee}_2}_{\beta^{\vee}_{1}} =\begin{pmatrix}
	A_1& B_1 \\C_1& D_1
\end{pmatrix}$ and $[I_{\mu}]^{\beta^{DR}_1}_{\beta^{DR}_{2}}  =\begin{pmatrix}
	A_2& B_2 \\C_2& D_2
\end{pmatrix}$, where the $A_i$, $B_i$, $C_i$, and $D_i\in M_{\mu/2}(\hat{F}_s)$. In keeping the same notation as above, for any matrix $A\in M_{\mu/2}(\C)$ we define $\tilde{A}$ to be the $\mu/2\times h$ matrix defined by the first $h$ columns of $A$. Note that by our construction in Claim 1 we know that $\tilde{C}_1=0$ and $\tilde{A}_1=\begin{pmatrix}
	I_h\\0
\end{pmatrix}$.

With this notation \eqref{eq:changebases} becomes \begin{equation}
	\begin{pmatrix}
		\Gamma_1& \Gamma_2 \\
		\Delta_1& \Delta_2
	\end{pmatrix}=\begin{pmatrix}
		A_2& B_2 \\C_2& D_2
	\end{pmatrix} 
	\begin{pmatrix}
		\Omega_1(s)& \Omega_2(s) \\N_1(s)& N_2(s)
	\end{pmatrix}  
	\begin{pmatrix}
		A_1& B_1 \\C_1& D_1
	\end{pmatrix}.
\end{equation}

From this we get the following two relations\begin{equation}\label{eq:gamma1}
	\Gamma_1= A_2\Omega_1(s) A_1 +A_2 \Omega_2(s) C_1+B_2N_1(s)A_1+B_2 N_2(s)C_1,
\end{equation}and 
\begin{equation}\label{eq:delta1}
	\Delta_1=C_2\Omega_1(s)A_1 + C_2 \Omega_2(s) C_1+ D_2 N_1(s) A_1+ D_2 N_2(s) C_1.
\end{equation}

Now we notice that for any matrices $A,B\in M_\mu(\C)$ we have that $\widetilde{AB}=A\tilde{B}$ and that $A\cdot \begin{pmatrix}
	I_h\\
	0
\end{pmatrix}=\tilde{A}$. Using these observations on \eqref{eq:gamma1} and \eqref{eq:delta1} we get
\begin{equation}
	\tilde{\Gamma}_1=A_2 \tilde{\Omega}_1(s)+B_2 \tilde{N}_1(s), 	
\end{equation}and 
\begin{equation}
	\tilde{\Delta}_1=C_2 \tilde{\Omega}_1(s)+D_2 \tilde{N}_1(s)
\end{equation}
Substituting these in \eqref{eq:relationsp2-4} we get
\begin{equation}\label{eq:case2final}
	\prescript{t}{}{((C_2\tilde{\Omega}_1(s)+D_2 \tilde{N}_1(s) )_h)}( A_2\tilde{\Omega}_1(s) +B_2\tilde{N}_1(s)    )_h= \frac{T}{(2\pi i)^n},
\end{equation}where, using the same notation as earlier, the subscript $h$ signifies that we are considering the $h\times h$ submatrices that are comprised by the first $h$ rows of these $\mu/2\times h$ matrices.

Since we are assuming that $h\geq 2$, equation \eqref{eq:case2final} provides relations among the values of the G-functions we want at $\xi=x(s)$ that, upon getting rid of the factor $(2\pi i)^n$, correspond to homogeneous polynomials with coefficients in $\hat{F}_s$ and degree $\leq 2$. 

As in the previous case, we let $R^{(t)}_1\in \bar{\Q}[X_{i,j,t}:1\leq i\leq \mu, 1\leq j\leq h]$ be the polynomial corresponding to the relation defined by \eqref{eq:case2final}.\\

\textbf{Non-triviality of the relations:} Let now $R^{(t)}$ be either $R^{(t)}_0$ or $R^{(t)}_1$ depending on whether for our chosen point $s$ we are in case $1$ or case $2$ above. Notice that by construction $R^{(t)}$ is a homogeneous polynomial of degree $\leq 2$. 

To check non-triviality of the relation among the G-functions in the family $\mathcal{G}$, associated to the G-admissible variation, we just  have to establish that $R^{(t)}\notin I_t$, where $I_t$ is the ideal of \Cref{desidealtriv}, since only the values of the G-functions from the sub-family $\mathcal{G}^{(t)}$ appear in the relation.

If we are in case $1$ this is easily done, since $R^{(t)}_0$ is linear and the ideal $I_t$ is generated by the polynomials $P_{j,j'}$ described in \Cref{desidealtriv}. Finally, if we are in case $2$ above, the result follows again by comparing the description of the generators $P_{j,j'}$, and in more detail the monomials that appear in those, with the polynomial corresponding to \eqref{eq:case2final}. This can be more easily seen by comparing the $P_{j,j'}$ with the polynomial we would have from \eqref{eq:relationsp2-4}. 
\end{proof}	
	\subsubsection{Some cleaning up}

The technical conditions 
\begin{equation}\label{eq:conditionoflemma1}
\exists\tau:\hat{E}_s\rightarrow \C \text{ such that } h> \dim_{\hat{F}_s} \hat{W}_\tau
\end{equation}

\begin{equation}\label{eq:conditionoflemma2}
	h\geq \min\{\dim\hat{W}_\tau: \hat{W}_\tau\subset V_{i(\tau)} \text{, with } V_{i(\tau)}\text{ of type IV }\}.
\end{equation}that appear in \Cref{constpseudocm} are by no means aesthetically pleasing! We have dedicated this short section to remedy this fact. In fact we prove the following lemma.

\begin{lemma}\label{remedy}
	
	Condition \eqref{eq:conditionoflemma1} is equivalent to the condition\begin{center}
		$h> \frac{\dim_\Q V_j}{[Z(D_j):\Q]}$ for some $j$,
	\end{center}and condition \eqref{eq:conditionoflemma2} is equivalent to the condition\begin{center}
		$h\geq \min\{  \frac{\dim_\Q V_i}{[Z(D_i):\Q] } : i \text{ such that }  D_i=\End_{HS}(V_i)  \text{ is of type IV } \}.$
	\end{center}
\end{lemma}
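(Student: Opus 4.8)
The plan is to translate the two conditions of Lemma~\ref{constpseudocm} into the more transparent conditions of Lemma~\ref{remedy} by computing the dimension $\dim_{\hat{F}_s}\hat{W}_\tau$ explicitly in terms of the data of Albert's classification. The key observation is that each $\hat{W}_\tau$ is, by the first part of Lemma~\ref{lemmainvolutions}, contained in (in fact equal to the $\sigma$-isotypic piece of) the $l$-th copy of a fixed irreducible factor $V_j^{*}\otimes_\Q\hat{F}_s$ appearing in the decomposition \eqref{eq:decomprepeat}, where $\sigma=\tilde\sigma_{j,l}$. So it suffices to understand the decomposition of a single $V_j^{*}\otimes_\Q\hat{F}_s$ into eigenspaces for the characters of $F_j$.

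First I would recall the structure of $D_j=\End_{HS}(V_j)$: it is a central division algebra over $F_j$ of dimension $d_j^2$, and $V_j$ is a module over $D_j$, hence over $F_j$. Write $\dim_\Q V_j=\mu_j$; then $V_j$ is an $F_j$-vector space of dimension $\mu_j/[F_j:\Q]$. After tensoring with the Galois closure $\hat F_j\subseteq\hat F_s$, the algebra $F_j\otimes_\Q\hat F_s$ splits as $\prod_{\tilde\sigma:F_j\hookrightarrow\C}\hat F_s$, and correspondingly $V_j^{*}\otimes_\Q\hat F_s=\bigoplus_{\tilde\sigma}\hat W_{\tilde\sigma_{j,l}}$ where each summand is the $\tilde\sigma$-eigenspace. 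Since $V_j^{*}$ is free over $F_j$ of rank $\mu_j/[F_j:\Q]$, each eigenspace has the same $\hat F_s$-dimension, namely
\begin{equation}\label{eq:dimWtau}
\dim_{\hat F_s}\hat W_\tau=\frac{\dim_\Q V_j}{[F_j:\Q]}=\frac{\dim_\Q V_j}{[Z(D_j):\Q]},
\end{equation}
for $\tau=\tilde\sigma_{j,l}$ any character factoring through $\pr_{j,l}$. (I should note the $l$ is irrelevant since the $m_j$ copies of $V_j$ are isomorphic as Hodge structures; and the value only depends on $j$, not on the choice of embedding $\tilde\sigma$.) Substituting \eqref{eq:dimWtau} into \eqref{eq:conditionoflemma1} gives immediately ``$h>\dim_\Q V_j/[Z(D_j):\Q]$ for some $j$,'' and substituting it into \eqref{eq:conditionoflemma2}, together with the observation that the condition ``$\hat W_\tau\subset V_{i(\tau)}^{*}$ with $V_{i(\tau)}$ of type IV'' is exactly the condition ``$j$ such that $D_j$ is of type IV,'' gives the stated reformulation of \eqref{eq:conditionoflemma2}.

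The main thing to be careful about — and the only place where there is genuine content rather than bookkeeping — is the claim that $V_j^{*}$ is a \emph{free} $F_j$-module (equivalently that all $\tilde\sigma$-eigenspaces of $V_j^{*}\otimes_\Q\hat F_s$ have equal dimension). This follows because $V_j$ is a module over the division algebra $D_j$, hence a free $D_j$-module, hence in particular free over the central subfield $F_j=Z(D_j)$; there is no subtlety here but it is worth spelling out since it is exactly what makes the dimension count uniform in $\tilde\sigma$. Everything else is a direct substitution, so I would keep the proof to a few lines: establish \eqref{eq:dimWtau} via the freeness of $V_j$ over $F_j$, then plug in. One cosmetic point: as written, the statement of Lemma~\ref{remedy} repeats ``condition \eqref{eq:conditionoflemma2}'' twice where the first instance should read \eqref{eq:conditionoflemma1}; I would correct that typo in passing.
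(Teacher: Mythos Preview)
Your proposal is correct and follows essentially the same route as the paper: both arguments reduce to the computation $\dim_{\hat F_s}\hat W_\tau=\dim_\Q V_j/[F_j:\Q]$, established via the freeness of $V_j$ as an $F_j$-module and the splitting $F_j\otimes_\Q\hat F_s\simeq\bigoplus_{\tilde\sigma}\hat F_s^{\tilde\sigma}$. Your justification of freeness (via $V_j$ being a $D_j$-module with $D_j$ a division algebra) is slightly more explicit than the paper's, and you correctly spot the typo in the statement where the first ``\eqref{eq:conditionoflemma2}'' should read \eqref{eq:conditionoflemma1}.
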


To prove this we work in greater generality with modules of semisimple algebras over $\Q$. The material in this section is definitely not new but we include it for the sake of completeness of our exposition.\\

Let us fix some notation. We consider a $\Q$-HS $V$ with $\mu:=\dim_\Q V$ that decomposes as $V=V_1^{m_1} \oplus \cdots\oplus V_r^{m_r}$. We write $D= M_{m_1} (D_1) \oplus \cdots M_{m_r }(D_r)$ for the algebra of Hodge endomorphisms of $V$, where $D_i$ is the algebra of Hodge endomorphisms of $V_i$. For each $i$ we let $F_i:= Z(D_i)$ be the center of $D_i$ and $f_i:=[F_i:\Q]$. Finally, we let $\hat{F}$ be the Galois closure of the compositum of the fields $F_i$ and $\hat{E}:= F_1^{m_1} \oplus \cdots\oplus F_r^{m_r}$ be the maximal commutative semisimple sub-algebra of $D$.

For the non-trivial homomorphisms of algebras $\sigma : \hat{E} \rightarrow \hat{F}$ we write $\sigma=\tilde{\sigma}_{j,l}$ as we did earlier. The above result then follows from the following lemma.

\begin{lemma}\label{remedy2} The $\hat{E}\otimes_\Q\hat{F}$-module $V\otimes_\Q \hat{F}$ has a decomposition as an $\hat{E}\otimes_\Q\hat{F}$-module as 
\begin{center}
	$V\otimes_\Q \hat{F}= \Bigsum{\sigma:\hat{E}\rightarrow \hat{F}}{ }\hat{W}_{\sigma}$, 
\end{center}
where $\hat{W}_\sigma$ are $\hat{F}$-subspaces of $V\otimes_\Q \hat{F}$ on which $\hat{E}\otimes_{\Q}\hat{F}$ acts via multiplication by $\sigma$. Moreover, $\dim_{\hat{F}} W_\sigma = \frac{\dim_\Q V_{i(\sigma)}}{f_{i(\sigma)}}$ where $i(\sigma)\in\{1,\ldots,r\}$ is such that $\sigma =\tilde{\sigma}_{i(\sigma),l}$ for some $l$ and $\tilde{\sigma}$ with our previous notation.
\end{lemma}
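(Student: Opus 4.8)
The plan is to deduce the statement from the elementary structure theory of modules over a finite product of number fields. Recall first that by Albert's classification (\ref{albert}) each center $F_i=Z(D_i)$ is a number field, so $\hat E=F_1^{m_1}\oplus\cdots\oplus F_r^{m_r}$ is an étale $\Q$-algebra of degree $\sum_i m_if_i$; and since $\hat F$ contains the Galois closure of the compositum of the $F_i$, base change to $\hat F$ splits $\hat E$ completely: $\hat E\otimes_\Q\hat F\cong\prod_\sigma\hat F$, the product indexed by $\mathrm{Hom}_{\Q\text{-alg}}(\hat E,\hat F)$. Concretely, writing $\epsilon_{j,l}\in\hat E$ for the idempotent equal to $1$ in the $l$-th copy of $F_j$ inside $F_j^{m_j}$ and $0$ elsewhere, every $\sigma\colon\hat E\to\hat F$ annihilates all the $\epsilon_{j',l'}$ except one $\epsilon_{j,l}$, and on that factor restricts to a field embedding $\tilde\sigma\colon F_j\hookrightarrow\hat F$; thus $\sigma=\tilde\sigma_{j,l}$ with $i(\sigma)=j$, recovering exactly the parametrization used in the text.

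Next I would extract the decomposition. Let $e_\sigma\in\hat E\otimes_\Q\hat F$ be the primitive idempotent corresponding to $\sigma$ under $\hat E\otimes_\Q\hat F\cong\prod_\sigma\hat F$, and set $\hat W_\sigma:=e_\sigma\cdot(V\otimes_\Q\hat F)$. Each $e_\sigma$ is $\hat F$-linear, so $\hat W_\sigma$ is an $\hat F$-subspace; from $\sum_\sigma e_\sigma=1$ and $e_\sigma e_{\sigma'}=\delta_{\sigma\sigma'}e_\sigma$ we get $V\otimes_\Q\hat F=\bigoplus_\sigma\hat W_\sigma$; and for $x\in\hat E$ the operator $x\otimes1$ acts on $\hat W_\sigma$ as multiplication by the scalar $\sigma(x)\in\hat F$, i.e. $\hat E\otimes_\Q\hat F$ acts on $\hat W_\sigma$ through the character $\sigma\otimes\mathrm{id}$. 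This is the asserted decomposition.

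It remains to compute $\dim_{\hat F}\hat W_{\tilde\sigma_{j,l}}$, and here I would work one $\sigma$ at a time. Viewed inside $D=\bigoplus_i M_{m_i}(D_i)$ and acting on $V=\bigoplus_i V_i^{m_i}$, the idempotent $\epsilon_{j,l}$ is the projection onto the $l$-th copy of $V_j$, since $F_j^{m_j}$ sits in $M_{m_j}(D_j)$ as diagonal matrices; hence $\epsilon_{j,l}V$ is isomorphic to $V_j$ as a $\Q$-vector space, and the $l$-th factor $F_j$ acts on it through the inclusion $F_j\subset D_j=\End_{HS}(V_j)$. Therefore $\epsilon_{j,l}\cdot(V\otimes_\Q\hat F)\cong V_j\otimes_\Q\hat F$ as a module over $F_j\otimes_\Q\hat F\cong\prod_{\tilde\sigma\colon F_j\hookrightarrow\hat F}\hat F$. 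Since $F_j$ is a field, $V_j$ is a free $F_j$-module of rank $\dim_\Q V_j/f_j$; so $V_j\otimes_\Q\hat F$ is a free $(F_j\otimes_\Q\hat F)$-module of the same rank, its $\tilde\sigma$-isotypic component (which is precisely $\hat W_{\tilde\sigma_{j,l}}$) having $\hat F$-dimension $\dim_\Q V_j/f_j=\dim_\Q V_{i(\sigma)}/f_{i(\sigma)}$. As a sanity check, $\sum_\sigma\dim_{\hat F}\hat W_\sigma=\sum_{j}m_jf_j\cdot(\dim_\Q V_j/f_j)=\sum_j m_j\dim_\Q V_j=\dim_\Q V$.

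I do not anticipate any genuine difficulty: the statement is a repackaging of standard semisimple-algebra bookkeeping, and once it is in hand \ref{remedy} follows immediately by substituting $\dim_{\hat F_s}\hat W_\tau=\dim_\Q V_{i(\tau)}/[Z(D_{i(\tau)}):\Q]$ into conditions \eqref{eq:conditionoflemma1} and \eqref{eq:conditionoflemma2} and observing that this value depends only on $i(\tau)$. The one point demanding a little care is keeping the three indices aligned — the factor $(j,l)$ of $\hat E$, the embedding $\tilde\sigma$ of $F_j$ into $\hat F$, and the resulting summand of $V\otimes_\Q\hat F$ — together with the (immediate) verification that $\epsilon_{j,l}$ is the projection onto the $l$-th copy of $V_j$.
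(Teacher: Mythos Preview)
Your proposal is correct and follows essentially the same approach as the paper: both arguments reduce to the observation that each $V_j$ is a free $F_j$-module of rank $\dim_\Q V_j/f_j$, then use the splitting $F_j\otimes_\Q\hat F\cong\prod_{\tilde\sigma}\hat F^{\tilde\sigma}$ to read off the decomposition and the dimension of each summand. Your version is slightly more explicit about the idempotent bookkeeping and includes a dimension sanity check, but the underlying argument is identical.
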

\begin{proof}
	First of all, note that $\forall i$ we have $F_i\hookrightarrow \End_\Q V_i$ trivially. Therefore $V_i$ is isomorphic, as an $F_i$-module, to \begin{equation}\label{eq:isomfimodules}
		V_i\simeq F_i^{t_i}
	\end{equation}for some $t_i$. Counting dimensions of these as $\Q$-vector spaces we get that $t_i= \frac{\dim_\Q V_i}{f_i}$.

Tensoring both sides of \eqref{eq:isomfimodules} by $\otimes_\Q\hat{F}$ we get that $V_i\otimes_{\Q} \hat{F}\simeq (F_i\otimes_{\Q} \hat{F})^{t_i}$ as $F_i$-modules.  Now note that since $\hat{F}$ is a Galois extension that contains $F_i$ we have that $	F_i\otimes_{\Q} \hat{F}  \simeq\Bigsum{\tilde{\sigma}:F_i\rightarrow \hat{F}}{ }\hat{F}^{\tilde{\sigma}}$, where $\hat{F}^{\tilde{\sigma}}$ is just $\hat{F}$ viewed as an $F_i$-module via the action of the embedding $\tilde{\sigma}:F_i\hookrightarrow \hat{F}$. Combining the above we get\begin{center}
		$V_i\otimes_{\Q} \hat{F}\simeq \Bigsum{\tilde{\sigma}:F_i\rightarrow \hat{F}}{ }\big(\hat{F}^{\tilde{\sigma}}\big)^{t_i}$.
	\end{center}

The result now follows trivially.	
\end{proof}
	\section{Non-trivial relations at infinite places}

As in \Cref{section:notationspseudocm} we consider a fixed G-admissible variation of Hodge structures underlied by the morphism $f':X'\rightarrow S'$. We also let $F'_t:\mathcal{X}'_t\rightarrow C'_t$ be the family morphisms underlying the G-admissible variations associated to a good cover of the curve $S'$ as in \Cref{goodcovers}.

Consider now $s\in C(L)$, where $L/K$ is some finite extension, be a point that is $v$-adically close to $0$ for at least one archimedean place $v\in \Sigma_{L,\infty}$. Also, in this section we abandon the assumption that $L=\hat{L}$, where $\hat{L}$ is the field defined by \Cref{propendodr}, that was made in the previous section, mainly for reasons of notational simplicity. 

The relations created in \Cref{constpseudocm} were created after fixing a place $v\in \Sigma_{L,\infty}$, corresponding to an inclusion $i_v:L\rightarrow \C$. This is because we assumed that $s$ is archimedeanly close to $s_t$, with respect to this fixed embedding $L\hookrightarrow \C$.

We want to create relations among the values of the G-functions in the associated family $\mathcal{G}$, see \Cref{defnfamilygfuns}, at $\xi :=x(s)$ for all places $v\in \Sigma_{L,\infty}$ as above. 

In order to be able to create these we will need the following technical lemma, following the exposition in Ch.X, \S $3.1$ of \cite{andre1989g}. We fix a priori, the matrix \begin{equation}\label{eq:matrixofgfunctions}
	G^{(t)}:=(y_{i,j}^{(t)})\in M_{\mu\times h}(K [[x]]),
\end{equation}for the G-functions of the sub-family $\mathcal{G}^{(t)}$ of $\mathcal{G}$ for a fixed $1\leq t\leq t_0=|x^{-1}(0)|$.

For now we fix one of the $t\in \{1,\cdots,t_0\}$ and let, by abuse of our notation $G:=G^{(t)}$. Let us furthermore consider $\iota :K \hookrightarrow \C$ to be a random complex embedding of $K$. We then have the complex Taylor series $\iota (y^{(t)}_{i,j})$. We also let $G_{\iota}$ be the matrix defined analogously to $G$ with the $y^{(t)}_{i,j}$ replaced by the power series $\iota(y^{(t)}_{i,j})$.

\begin{lemma}\label{changeofplace}For any $\iota$ as above the matrix $G_\iota$ is again the matrix that consists of the entries in the first $h$ columns of a period matrix with respect to the same basis of local sections of $H^n_{DR}(\mathcal{X}/C)$ and to some local frame of the local system $R_n(F^{an}_{\iota,\C})_{*}(\Q_{\mathcal{X}^{an}_\C})$. 	
\end{lemma}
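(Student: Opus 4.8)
The plan is to interpret the matrix $G$ of G-functions as encoding the comparison isomorphism over a punctured disc, and then to observe that the only role played by the distinguished embedding $K\hookrightarrow\C$ fixed at the outset was to pick out a particular analytic branch; applying a different embedding $\iota$ simply transports the whole picture — the de Rham sheaf, the Gauss–Manin connection, the local system, the comparison isomorphism — along $\iota$, and the resulting period matrix still has its first $h$ columns given by $G_\iota$.

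First I would recall the setup precisely. The sections $\omega_i$ of $H^n_{DR}(X/S)$ and the entries of the Gauss–Manin connection matrix $\Gamma$ are defined over $K$ (by Katz–Oda, as noted in \ref{section:trivialrelations}); the matrix $B$ of horizontal sections and the period matrix $P_{X/S}=(2\pi i)^{-n}A$ are the analytic solutions near $s_0$ with respect to the originally fixed embedding $K\hookrightarrow\C$. The key point, already implicit in \ref{existence} and its proof, is that the Taylor coefficients of the entries $y_j$ of the first $h$ columns of $P_{X/S}$ lie in $K$ (see the remark after \ref{existence}), so that the formal power series $G\in M_{\mu\times h}(K[[x]])$ makes intrinsic sense independently of any embedding. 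Thus, applying $\iota$ coefficient-wise produces $G_\iota\in M_{\mu\times h}(\C[[x]])$, and because $\iota(\Gamma)$ is the Gauss–Manin matrix of the base-changed family $X_\iota\to S_\iota$ (the connection being defined over $K$), the columns of $G_\iota$ still satisfy the differential system $X'=-X\Gamma_\iota$ coming from the relative de Rham complex of $f_\iota$.

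Next I would identify $G_\iota$ with an honest period matrix for the $\iota$-twisted family. Over $S_\iota^{an}_{\C}$ we again have Grothendieck's relative comparison isomorphism $P^n_{X_\iota/S_\iota}$ between $H^n_{DR}(X_\iota/S_\iota)\otimes\mathcal{O}$ and $R^nf^{an}_{\iota,*}\Q_{X^{an}_{\iota,\C}}\otimes\mathcal{O}$, and the analytic continuation of the period matrix near $\iota(s_0)$ is governed by exactly the same $K$-rational differential system, now read through $\iota$. Since $G_\iota$ is, by construction, the $\iota$-image of the solution $G$ which represented the first $h$ columns of $P_{X/S}$ in the chosen de Rham basis, and since $\iota$ sends the flat frame $\gamma_j^{*}$ on $R_n(f^{an}_{\C})_{*}(\Q)$ to a flat frame of $R_n(f^{an}_{\iota,\C})_{*}(\Q)$ over the corresponding disc, the matrix $G_\iota$ is precisely the first $h$ columns of the period matrix of $f_\iota$ with respect to the same (now $\iota$-transported, but still the ``same'' abstract $K$-rational) basis $\omega_i$ of $H^n_{DR}(X/S)$ and this frame. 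This is the assertion of the lemma.

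The main obstacle — really the only genuine point requiring care — is the rationality/base-change bookkeeping: one must be sure that the entries $y_j$ genuinely have coefficients in $K$ (not merely in some transcendental extension), so that $\iota(y_j)$ is defined, and that the solution of the $K$-rational differential system picked out by $\iota$ is the correct analytic branch near $\iota(s_0)$, i.e. that there is no monodromy ambiguity spoiling the identification. The first is handled exactly as in the proof of \ref{existence} (the remark following it is cited for this), using that the $\omega_i$ and $\Gamma$ are defined over $K$ and that $H^n_{DR}(X/S)$ has a $K$-lattice; the second follows because the frame $\gamma_j^{*}$ is only required to be a local frame over a simply connected $V\subset\Delta^*$, so any consistent choice of branch of the solution at $\iota(s_0)$ furnishes an admissible local frame for $R_n(f^{an}_{\iota,\C})_{*}(\Q)$ — we are free to choose the frame to match $G_\iota$. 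Once these points are made, the proof is essentially the observation that every object in sight is defined over $K$ and the statement is stable under applying $\iota$.
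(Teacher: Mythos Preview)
Your argument has a genuine gap at the step where you assert that ``$\iota$ sends the flat frame $\gamma_j^{*}$ on $R_n(f^{an}_{\C})_{*}(\Q)$ to a flat frame of $R_n(f^{an}_{\iota,\C})_{*}(\Q)$.'' A field embedding $\iota:K\hookrightarrow\C$ acts functorially on algebraic data (schemes, de Rham cohomology, the Gauss--Manin connection), but it does \emph{not} act on topological cycles: the local systems $R_n(f^{an}_{\C})_{*}(\Q)$ and $R_n(f^{an}_{\iota,\C})_{*}(\Q)$ live on different complex manifolds, and there is no map between them induced by $\iota$. Your observation that the columns of $G_\iota$ solve the $\iota$-twisted Gauss--Manin system only shows that they lie in the $\C$-local system of horizontal sections; it does not show they are $\Q$-rational, i.e.\ that they come from integration over genuine cycles in $R_n(f^{an}_{\iota,\C})_{*}(\Q)$. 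Your closing remark that ``we are free to choose the frame to match $G_\iota$'' presupposes exactly what is at issue. A second, related omission is that you never verify that the cycles corresponding to $G_\iota$ lie in $M_0R_n(f^{an}_{\iota,\C})_{*}(\Q)$, the image of $(2\pi i N^{*})^n$ for the $\iota$-twisted monodromy; this is essential for the ``first $h$ columns'' interpretation.

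The paper's proof supplies precisely this missing geometric input. It goes back to Andr\'e's explicit construction of the $M_0$-cycles: for each point $Q\in Y^{[n]}$ one has algebraic coordinates $x_1,\ldots,x_{n+1}$ on an affine open $U^Q$ (defined over $K$), and the cycle $\gamma_Q$ is the real torus $|x_2|=\cdots=|x_{n+1}|=\epsilon$, $x_1\cdots x_{n+1}=x(z)$. Because the coordinates are algebraic over $K$, one can pull them back via $\iota$ to coordinates $x_{i,\iota}$ on $U^Q_\iota$, and then define cycles $\gamma_{Q,\iota}$ by the same recipe. These are manifestly integral cycles, they span $M_0$ for the $\iota$-twisted family by the same argument as in the original embedding, and the map $T_Q$ admits a purely algebraic description (independent of the embedding) which forces $\iota(T_Q(\omega))$ to equal the period $\frac{1}{(2\pi i)^n}\int_{\gamma_{Q,\iota}}\omega$. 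This is the content of Remark~1, p.~21 of \cite{andre1989g}, which the paper invokes. In short, the $\Q$-rationality and the $M_0$-property are not formal consequences of the differential equation; they require the explicit geometric construction.
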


	\begin{remark} Here by $F^{an}_{\iota,\C}$ we denote the analytification of the morphism $F_{\iota,\C}$, where $F_{\iota,\C}$ is the morphism induced from $F:\mathcal{X}\rightarrow C$ via the base change given by $\iota :\spec\C\rightarrow \spec K$.\end{remark}
\begin{proof} This follows essentially from the proof of Theorem 2 in Ch. IX, \S 4.1 of \cite{andre1989g}, which constitutes \S 4.4 of the same chapter. We review the main parts we need.\\
	
	\textbf{Notation:} We introduce a bit of notation, following Andr\'e's exposition. We write $Y_i$ for the components of the divisor $Y={F'_t}^{-1}(s_t)$, see \Cref{section:notations} for our conventions on $Y$. Furthermore, we set
	\begin{center}
		$Y^{[d]}:= \displaystyle{\bigsqcup_{i_0<\ldots <i_d}   \bigcap_{j=0}^{d} Y_{i_j}}$.
	\end{center}
	
	We note that the proof in loc.cit. also shows that upon assuming $h>0$, as we do, we must have that there are $n+1$ such components $Y_i$. In this case, i.e. the case when $h>0$, the same proof shows that the intersection $Y^{[n]}$ of all the components $Y_i$ is non-empty.\\
	
	\textbf{Step 1: A short review of the construction.} For each point $Q\in Y^{[n]}$ we can find an affine open subset $U^Q$ of $\mathcal{X}'_t$ admitting algebraic coordinates $x_{Q,1},\ldots ,\ldots,x_{Q,n+1}$ such that\footnote{Here we write $Z(x_{Q,i})$ for the Zariski closed subset of $U^Q$ defined by the equation $x_{Q,i}=0$. In other words $Y_i$ is cut out by $x_{Q,i}=0$ affine locally near $Q$.} $Y_i\cap U^Q=Z(x_{Q,i})$ and the local parameter $x$ of $C'_t$ at $s_t$ lifts to $x=x_{Q,1}\cdots x_{Q,n+1}$. To ease our notation we write simply $x_i$ for $x_{Q,i}$. We also fix the inclusion $i_Q:U^Q\rightarrow \mathcal{X}'_t$. 
	Then loc.cit. describes a horizontal map $T_Q:H^n_{DR}(\mathcal{X}_t'/C'_t(\log Y)) \rightarrow K[[x]]$.
	
	This map $T_Q$ also has an analytic description. To define it one needs some cycles ${i_Q}_{*}\gamma_Q$. We briefly review the definition of these cycles. For each $z\in \Delta$ we have the cycle $\gamma_{Q,z}\in H_{n} ((U^{Q}_z)^{an},\Z)$ defined by the relations $|x_2|=\ldots =|x_{n+1} |=\epsilon$ and $x_1x_2\cdots x_{n+1}=x(z)$, where $\epsilon>0$ is small. These cycles glue together to define a section $\gamma_Q \in H^{0}( \Delta, R_{n}(F'_t|_{\Delta}\circ i_Q )_{*} \Q  )$ which we can push-forward to a cycle ${i_Q}_{*} \gamma_Q \in H^{0} (\Delta , R^n(F'_t|_{\Delta} )_{*} \Q)$. 
	
	We note that $({i_Q}_{*} \gamma_Q)_z\in H_{n} (\mathcal{X}^{an}_z,\Q)$ is also invariant by the action of $\pi_1(\Delta^{*},z)$. In fact from the exposition in loc. cit. we know that the cycles $({i_Q}_{*} \gamma_Q)_z$ span the fiber $M_0R_n(F^{an}_\C)_{*} (\Q)_z$ for $z\in\Delta^{*}$. 
	
	From the analytic description of $T_Q$ we get that for $1\leq i\leq h\mu $ there exists a point $Q\in Y^{[n]}(\bar{\Q})$ such that the entry $y^{(t)}_{i,j}|_{\Delta}$ is equal to \begin{center}
		$T_Q(\omega) =\frac{1}{(2\pi i)^n} \int_{{i_Q}_{*}\gamma_Q}^{} \omega$ 
	\end{center}for some $\omega\in H^n_{DR}(\mathcal{X}'_t/C'_t(\log Y))$, where $\Delta$ is a unit disk centered at $s_0$.\\
	
	To be able to work over $K$, instead of $\bar{\Q}$ as loc. cit. does, we assume without loss of generality that all of the above, i.e. the points $Q$, the algebraic coordinates, and the coefficients of the $y^{(t)}_{i,j}$ are actually defined over our original field $K$. To achieve this we might have to a priori base change everything, i.e. $f:X\rightarrow S$ and $f':X'\rightarrow S'$, by some fixed finite extension $\hat{K}$ of our original field $K$.\\
	
	\textbf{Step 2: Changing embeddings.} Implicit in the definition of the cycles ${i_Q}_{*}\gamma_Q$ is the fixed embedding $K\hookrightarrow\C$. Shifting our point of view  to the embedding $\iota:K\hookrightarrow \C$ we get a similar picture. Given a $K$-variety $Z$ we define $Z_{\iota}:=Z\times_{\spec K,\iota}\spec \C$ the base change of $Z$ via $\iota:\spec \C\rightarrow \spec K$ and similarly for the base change of a morphism $\phi:Z_1\rightarrow Z_2$ between $K$-varieties. In other words we suppress reference to the original embedding $K\hookrightarrow\C$ but keep track of the new embeddings.
	
	The algebraic coordinates $x_1,\ldots ,x_{n+1}$ on $U^Q$ pullback to algebraic coordinates $\iota^{*}x_1,\ldots,\iota^{*}x_{n+1}$ on $U^Q_\iota$. We write $x_{i,\iota}$ for $\iota^{*} x_i$ and also consider a unit disk $\Delta_\iota \subset (C'_{t,\iota})^{an}$ centered at $s_t$. 
	
	Once again we have that $x_{1,\iota}\cdots x_{n+1,\iota}=x_{\iota}$. We define the cycles $\gamma_{Q,\iota}$ similarly:\\
	
	for $z\in \Delta_{\iota}$ we let $\gamma_{Q,\iota,z}\in H_{n} ((U^Q_{\iota})^{an},\Z)$ be defined by $|x_{2,\iota}|=\ldots =| x_{n+1,\iota}|=\epsilon$ and $x_{1,\iota}x_{2,\iota}\cdots x_{n+1,\iota}=x_{\iota}(z)$. Once again these glue together to give cycles ${i_{Q,\iota}}_{*}\gamma_{Q,\iota}\in H^{0} (\Delta_\iota, R_{n}(F_{t,\iota}'|_{\Delta_\iota})^{an}_{*}\Q)$. \\
	
	The cycles $({i_Q}_{*}\gamma_{Q,\iota })_z$, for $Q$ varying in the set $Y^{[n]}$, will span the fiber of the local system $M_0 R_n(F^{an}_{\iota} )_{*} (\Q_{X^{an}_{\iota, \C}})_z $ for $z\in{\Delta_{\iota}^{*}} $. This follows from the exposition in loc.cit. since the proof does not depend on the embedding $K\hookrightarrow\C$.
	
	Among these we may choose a frame of $M_0R_n(F^{an}_{\iota} )_{*} (\Q_{\mathcal{X}^{an}_{\iota, \C}})|_{V}$ and then extend that to a frame of $R_n(F^{an}_{\iota} )_{*} (\Q_{\mathcal{X}^{an}_{\iota, \C}})|_{V}$, where $V\subset \Delta_\iota^{*}$ is some simply connected open subset of $\Delta_{\iota}^{*}$. We thus get a relative period matrix $P_1$ of the morphism $F$.  Finally, Deligne's trick, see Remark 1 page 21 of \cite{andre1989g} together with the exposition in the aforementioned proof show that in fact $G_1=G_\iota$, where $G_1$ is the matrix that consists of the first $h$ columns of $P_1$, and the result follows.
\end{proof}

	\subsubsection{Construction of the actual relations}

Let $s\in C(L)$ be a point of the variation satisfying either of the conditions of \Cref{constpseudocm}. We assume that $s$ is $v_0$-adically close for some fixed $v_0\in \Sigma_{\hat{L},\infty}$, with $\hat{L}$ as in \Cref{propendodr}. Considering the embedding $i_{v_0}:\hat{L}\rightarrow \C$, which we drop from notation from now on writing just $\hat{L}\hookrightarrow \C$, the construction of \Cref{constpseudocm} goes through.

We consider now $G^{(t)}$, as in \eqref{eq:matrixofgfunctions} above, to be the matrices of G-functions, one for each sub-family $\mathcal{G}^{(t)}$, created with respect to the embedding $i_{v_0}|K:K\hookrightarrow \C$ of $K$.

For any other place $v\in \Sigma_{\hat{L},\infty}$ such that $s$ is $v$-adically close to $0$ we can find $t=t(v)\in\{1,\cdots,t_0\}$, depending on the place $v$ as in the discussion in \Cref{section:goodcovers}, such that $s$ is $v$-adically close to $s_{t(v)}$. We can thus repeat the process of \Cref{constpseudocm} for the sub-family $\mathcal{G}^{(t(v))}$ of G-functions. This time we replace $K$ by $i_v(K)$, $\hat{L}$ by $i_v(\hat{L})$, and $\mathcal{X}_{s,\hat{L}}$ by $\mathcal{X}_s\times_{L}i_v(\hat{L})$. Thanks to \Cref{changeofplace} we may choose trivializations so that the corresponding $\mu\times h$ matrix of G-functions we are interested in is $G^{(t(v))}_{\iota_v}$.

As a result for any such archimedean place $v$ we get a polynomial $R^{t(v)}$ with coefficients in $\bar{\Q}$ such that $i_v(R^{t(v)})(i_v(y^{t(v)}_{i,j})(i_v(\xi)) )=0$. We let \begin{equation}\label{eq:actualrelation}R_{s,\infty}=
	\underset{\underset{|\xi|_v<\min\{1,R_v(\mathcal{G})\}}{v\in \Sigma_{\hat{L},\infty}}}{\Pi}R^{t(v)}.\end{equation}
The relation we are looking for, in view of \Cref{hasse}, is partly the one coming from this polynomial. Note that this relation holds $v$-adically for all archimedean places of $\hat{L}$ for which $|\xi|_v<\min\{1,R_v(\mathcal{G})\}$, by construction. 

Later on, we construct another polynomial $R_{s,fin}$ that defines a relation coming from any possible non-archimedean proximity of $s$ to $0$. Taking $R_s:=R_{s,\infty}\cdot R_{s,\fin}$ and considering the relation it induces at $\xi$, we will get a global  relation. 

Leaving the proof of globality for later, we note that the relation induced from \eqref{eq:actualrelation} satisfies the other key property we want. Namely we have the following lemma.
\begin{lemma}\label{proofnottrivial}Assume that the absolute Hodge conjecture holds for the Hodge endomorphisms of the fibers of $f$. Then, the polynomial $R_{s,\infty}$ is homogeneous of degree $\leq C_1(f)\cdot [L:K]$, where $C_1(f)$ is a positive constant that depends only on the morphism $f$.
	
	Furthermore, the relation induced on the values of the family $\mathcal{G}$ of G-functions at $\xi=x(s)$ from the polynomial $R_{s,\fin}$ above is non-trivial, assuming that the generic special Mumford-Tate group of our variation is $Sp(\mu,\Q)$.
\end{lemma}
\begin{proof}Since we have assumed the absolute Hodge conjecture holds for endomorphisms in our case, we get from \Cref{propendodr} that the extension $\hat{L}/\Q$ is finite. Furthermore from \Cref{propdegreebound} we know that $[\hat{L}:L]\leq C_0(\mu)$, where $\mu$ is the dimension of the Hodge structures of the fibers of $f$, and hence of $F$. 
	
	Thus we have by construction that the product defining $R_{s,\fin}$ is finite. Indeed, there are at most $[\hat{L}:\Q]$-many polynomials that appear in this product, one for each of the places $v\in \Sigma_{\hat{L},\infty}$ for which $|\xi|_v<\min\{1,R_v(\mathcal{G})\}$. From above we thus get that the number of these factors is at most $[\hat{L}:\Q]\leq C_0(\mu)[L:\Q]$.
	
	Now from \Cref{constpseudocm} we know that each of the factors that appear is homogeneous of degree $\leq 2$ and the first part of the lemma follows.
	
	To show non-triviality we need to show that the polynomial $R_{s,\infty}$ does not define relations that hold on the functional level among the elements of $\mathcal{G}$. Assume otherwise. 
	
	We would then have that $R^{(t)}(y^{(t)}_{i,j})=0$ for some $t$. In other words, we would have a functional relation among the members of the sub-family $\mathcal{G}^{(t)}$. The impossibility of this has been already ruled out in \Cref{constpseudocm}.\end{proof}

\section{Non-trivial relations at finite places}\label{section:nonarchi}

We follow the general notation of \Cref{section:notationspseudocm}. Namely we fix a morphism $f':X'\rightarrow S'$ defined over a number field $K$ that underlies a G-admissible variation of $\Q$-HS of weight $n$. We then consider the associated family of G-admissible variations coming from a good cover $C'$ of $S'$, together with the underlying morphisms $F'_t:\mathcal{X}'_t\rightarrow C'_t$. We also assume that the polarization of the variation induces a symplectic form on each of the fibers.

We fix for the remainder of this subsection a point $s\in C(L)$, and write $Y:=\mathcal{X}_s$. We write $V_{dR}:=H^n_{DR}(Y/L)$ and let $V_{\Q}:=H^n(Y^{an},\Q)$ for the analytification of $Y$ with respect to an embedding $i:L\rightarrow\C$. 

As in \eqref{eq:decomprepeat}, we write $V_{\Q}=V_1^{m_1}\oplus\ldots V_r^{m_r}$ for the decomposition of $V_{\Q}$ into a direct sum of irreducible polarized HS $V_i$. We also write $D=\End_{HS}(V_{\Q})$ and $D_i=\End_{HS}(V_i)$ so that we have as earlier the decomposition 
\begin{center}
	$D=M_{m_1} (D_1)\oplus\ldots\oplus M_{m_r}(D_r)$.
\end{center}As usual, we write $e_i:=[F_i:\Q]$ where $F_i:=Z(D_i)$, and let $\mu:=\dim_{\Q} V_{\Q}=: 2\nu$. We will also let $\nu_i:=\frac{\dim_{\Q}V_i}{2}$, which will be natural numbers since all these subspaces are symplectic subspaces, with symplectic form coming from the polarization.

\subsection{Points with algebras with atypically large centers}\label{section:largecenters}

The first non-archimedean relations we create are for points whose corresponding algebras have large centers. Due to the complexity of the computations of the general case we start first with a lemma whose techniques, as we will see, deal with the more general case as well.

\begin{lemma}\label{nonarchirel}Let $V_{\Q}$, $V_{dR}$, $D_i$, $F_i$, $e_i$ , and $L/\Q$ be as above. We assume that the Hodge Conjecture holds for Hodge Endomorphism of $V_{\Q}$ and, as usual, that $h\geq 2$. If $(e_i-1)\cdot h\geq \nu_i \geq h$ for some $i$, and $\{v\in \Sigma_{L,f}: s$ is $v$-adically close to $0\}\neq \emptyset$, then there exists a homogeneous polynomial \begin{center}
		$R_{s,\fin}\in \bar{\Q}[X_{i,j,t}; 1\leq i\leq \mu$, $1\leq j\leq h$, $1\leq t\leq t_0]$ such that
	\end{center}

\begin{enumerate}
	\item\label{condition1} $R_{fin}(i_v(y_{i,j}^{(t)}(x(s))))=0 $, for all $v\in \Sigma_{L,f}$ for which $s$ is $v$-adically close to $0$,
	
	\item\label{condition2} the relation induced by $R_{s,\fin}$ on the values of the G-functions in the family $\mathcal{G}$ at $x(s)$ is non-trivial, and 
	
	\item\label{condition3} $\deg R_{s,\fin}\leq C_2(t_0,f)$, where $C_2(t_0,f)$ is a constant depending only on the morphism $f$ and $t_0:= x^{-1}(0)$.

\end{enumerate}

\end{lemma}

\begin{proof}By \Cref{propendodr} and \Cref{propdegreebound} there exists a finite extension $\hat{L}/L$ with degree $[\hat{L}:L]\leq C_0(\mu)$ such that we have an injective morphism of algebras $D\hookrightarrow\End_{\hat{L}}((V_{dR})_{\hat{L}},\hat{L})$. By replacing $Y$ by $Y_{\hat{L}}=Y\times_{L}\hat{L}$ we may and do assume for on, for simplicity of notation, that $L=\hat{L}$. 
	
	We may and do assume from now on also that $i=1$, i.e. that $e_1=e_i$, and to simplify our notation we let $e:=e_1$. We also let $W\leq V_{\Q}$ be the first summand isomorphic to $V_1$ that appears in the decomposition of $V_{\Q}$ above. From \Cref{albert}, we know that $F_i=\Q(\tau)$ and by abuse of notation we identify the algebraic number $\tau$ with the element in $\End_{HS}(W)$ and $\End(V_{dR})$ it defines. 
	
	Let $v\in \Sigma_{L,f}$ with $v|p$, $p\in\Z$, be a finite place, let $V_{\et}:= H^n(Y^{ad}_{v,\proet},\Q_p)$, and consider the $p$-adic comparison isomorphism of \cite{scholzerigid1}\begin{center}
		$V_{dR}\otimes_{L} B_{dR}\rightarrow V_{\et}\otimes_{\Q_p}B_{dR}$. 
	\end{center}Since we have assumed the Hodge Conjecture for the endomorphisms at hand holds we get, by compatibility of the cycle class map with the comparison isomorphism, that $\tau$ has a cohomological interpretation as an element of $\End(V_{\et})$, such that the two interpretations, as an element of $\End(V_{dR})$ and as an element of $\End(V_{\et})$, are compatible with the above $p$-adic Hodge comparison isomorphism. In fact, since $\tau$ is defined over $L$ we have that $\tau\in\End(V_{\et})^{G_{L_v}}$.

By the compatibility of $\tau$ with Grothendieck's comparison isomorphism, by virtue of it being a Hodge endomorphism, we get that there exists a subspace $W_{dR}\subset V_{dR}$ with $\dim_{L}W_{dR}=\dim_{\Q} W=:2\nu_1$ such that $\tau|_{W_{dR}}\in \aut(W_{dR})$ and $W_{dR}$ is mapped to $W$ via this comparison isomorphism. 

Also by the compatibility of $\tau$ with the $p$-adic Hodge comparison isomorphism we get a subspace $W_{\et}\leq V_{\et}$ such that $\dim_{\Q_p} W_{\et}=2\nu_1$, $\tau\in \aut(W_{\et})$, and $W_{\et}$ is mapped to $W_{dR}$ by the $p$-adic Hodge comparison isomorphism. 

We note that the minimal polynomial of $\tau$, $P_\tau$, when viewed as an element of $\End_{HS}(W)$ is equal to the minimal polynomial of $\tau$ when viewed as an algebraic number, in other words we have that $P_{\tau}=\prod_{i=1}^{e}  (T-\alpha_i)$, where $\alpha_i=\iota(\tau)$, with $\iota$ varying over the embeddings $\Q(\tau)\hookrightarrow\C$. Furthermore, we have that $P_{\tau}$ is independent of the cohomological interpretation of $\tau$ that is chosen. 
    
Assume now that for the above $v\in \Sigma_{L,f}$ we have that $s$ is $v$-adically close to $0$, so that for $\xi:=x(s)$ we have that $|\xi|_v\leq\min\{1,R_v(\mathcal{G})\}$,. We then have by \Cref{section:goodcovers} that there exists some $1\leq t\leq t_0$ such that $s$ is $v$-adically close to $s_t\in x^{-1}(0)$. Associated to the point $s_t$ we have the $h$ linearly independent functionals $(\hat{\gamma}^{(t)})^{*}_j\in \homm(V_{\et}, \Q_p(n))$ of \Cref{padicrealization}. For now we fix the point $s_t$ above, in essence working with the set of finite places \begin{center}
	$\Sigma(s,t):=\{w\in \Sigma_{L,f}: s$ is $w$-adically close to $s_t\}$,
\end{center}which we assume to be non-empty.

For simplicity, since we have fixed the point $s_t$, we write $\fgj$ for these functionals and consider their interplay with $W_{et}$ to divide our proof into two cases.\\

\textbf{Case $1$:} $\fgj|_{W_{\et}}=0$ for some $1\leq j\leq h$.\\

In this case we may proceed as in the first case of the proof of Theorem $1.16$ of \cite{davidg} to get a linear polynomial $R_0^{(t)}(X_{i,j,t})\in \bar{\Q}[X_{i,j,t}]$ such that $R_0^{(t)}(i_v(y^{(t)}_{i,j}(\xi)))=0$. Note that this polynomial is independent of the choice of $v\in \Sigma(s,t)$ and the relations it defines among the values of the G-functions holds for all such places $v\in \Sigma(s,t)$. \\

\textbf{Case 2:} $\fgj|_{W_{\et}}\neq0$ for all $1\leq j\leq h$.\\

We write $v_1:=\dim_{\Q}W_{\Q}/2$ and consider the family of functionals \begin{equation}\label{eq:case2fun}
\drfj{j}\circ\tau^{q}\text{, where }1\leq j\leq h\text{, and }0\leq q\leq e-1. 
\end{equation}

By Lemma $5.1$ of \cite{davidg} these are given by extension of scalars from elements of $W^{*}_{\et,0}:=\homm(W_{\et},\Q_p(n))^{G_{L_v}}$, see also the proof of Proposition $5.2$ in loc. cit.. Now by Lemma $5.4$ of loc. cit. we know that $\dim_{\Q_p}W^{*}_{\et,0}\leq h^{n,0}(W_{\Q})$. On the other hand since $W_{\Q}$ is a weight $n$, with $n$ odd, polarized Hodge structure, and by assumption a symplectic vector space, we get that $\nu_1\geq h^{n,0}(W_{\Q})$. Following the ideas in loc. cit. we will choose $\nu_1+1$ of these functionals in a way that makes some computations convenient.\\

Now note that as in \eqref{eq:pseudocmdrsplitting} we have a splitting
\begin{equation}\label{eq:derhamsplit}
	W_{dR}\otimes_{L}\hat{F}=W_1\oplus\ldots\oplus W_e,
\end{equation}where $\hat{F}$ is the compositum of $L$ with a Galois closure of the field $F_1=Z(D_1)$, and each $W_i$ is such that $\tau$ acts on it by multiplication by $\alpha_i$. By \Cref{remedy2}, together with compatibility of Hodge endomorphisms with Grothendieck's comparison isomorphism, we know that $\dim_{\hat{F}}W_i=\frac{2\nu_1}{e}$ for all $1\leq i\leq e$. 

Consider now $w_1,\ldots, w_{\nu_1},w_{\nu+1},\ldots, w_{\nu+\nu_1}\in W_{dR,\hat{F}}$ such that the following hold: \begin{enumerate}
	\item $\Span_{\hat{F}}(w_1,\ldots, w_{\nu_1})$ and $\Span_{\hat{F}}(w_{\nu+1},\ldots, w_{\nu+\nu_1})$ are complementary Lagrangian subspaces for the symplectic form on $W_{dR,\hat{F}}$ induced by the polarization,
	
	\item the $w_i$ are ordered as follows: 
	
	\subitem if $D_1$ is of type $I-III$ in \Cref{albert}, for $h_0=\frac{\nu_1}{e}$, we have $w_{(k-1)\cdot h_0+i}$ for $1\leq i\leq h_0$ are a basis of the maximal Lagrangian of $W_k$ for $1\leq k\leq e$, and $w_{\nu+1}\in W_1$ is transversal to the $w_{i}$ for $1\leq i\leq h_0$.
	
	\subitem if $D_1$ is of type $IV$ in \Cref{albert}, for $e_0=\frac{e}{2}$ and $h_0=\frac{\nu_1}{e_0}$, we have $w_{(k-1)\cdot h_0+i}$ for $1\leq i\leq h_0$ are a basis of $W_k$ for $1\leq k\leq e_0$, where we assume that the eigenvalues of $\tau$ on these $W_k$ are such that $\lambda_k\neq \lambda_{k'},\bar{\lambda_{k'}}$ for $1\leq k\neq k'\leq e_0$, and $w_{\nu+(k-1)\cdot h_0+i}$ for $1\leq i\leq h_0$ are a basis of $\bar{W}_k$ for $1\leq k\leq e_0$, which will correspond to the eigenvalue $\lambda_k$.
\end{enumerate}The fact that this is possible follows from the analysis in \Cref{section:involutions} and the aforementioned claim in the proof of \Cref{constpseudocm}, see also the proof of Claim $2$ in the proof of \Cref{constpseudocm}.

Now we extend the above $w_i$ to a full symplectic basis of $V_{dR}\otimes_L \hat{F}$ and write them as $\hat{F}$-linear combinations of the $(\omega_i)_s$, where $\omega_i$ is the basis of sections of the canonical extension $\mathcal{H}$ of $H^n_{DR}(\mathcal{X}/C)$ over $C'$ over some dense open affine subset $U'\subset C'$ chosen as in the discussion in \Cref{section:archim} preceding \Cref{constpseudocm}. We let $\varpi_i$ be the same $\hat{F}$-linear combinations of the $\omega_i$ with the $\varpi_i$ viewed as sections in $\mathcal{H}(U')$ for the same Zariski neighborhood $U'$ of $s_t$ over which the basis $\omega_i$ was defined.

We introduce a family of ``new'' G-functions $\tilde{y}_{i,j}^{(t)}(x)$, which will be $\hat{F}$-linear combinations of our ``original'' G-functions $y_{i,j}^{(t)}(x)$. Writing $\varpi_i=\Sum{j}{}a_{i,j}\omega_j$, consider the power series \begin{center}
	$\tilde{y}_{i,k}(x):=\Sum{j}{}a_{i,j}y_{j,k}(x)$.
\end{center}These will be $\bar{\Q}$-linear combinations of G-functions, hence will also be G-functions themselves. By the construction in \Cref{andresexistence}, see also \Cref{changeofplace} for a summary of this, we can interpret these power series as entries in the relative period matrix, coming from Grothendieck's comparison isomorphism, where we choose the basis $\varpi_i$ for $H^n_{dR}(X/S)$, over some affine open $U$, and the same local frame as in \Cref{changeofplace} that defines the $y_{i,j}(x)$ as integrals. 

Note that the basis $\varpi_i$, and thus the elements $a_{i,j}\in\bar{\Q}$, was chosen independently of any place of the field $L$. In particular, evaluating the functionals $\drfj{j}$ at the basis $w_i$ we find:\begin{center}
	$\drfj{k}(w_i)=\Sum{j}{} i_v(a_{i,j})i_v(y_{j,k}(\xi))= i_v(\tilde{y}_{i,k}(\xi))$,
\end{center}with the last equality following by construction.

We want to reorder the basis $w_i$ of $W$ that was chosen. We record the properties in the following claim.

\begin{claim}[Claim 1]There exists a permutation of the set of indexes $\{1,\ldots,\nu_1,\nu+\nu_1\}$ of the above basis, which denote by $g$, so that if we write $\tilde{w}_i=w_{g(i)}$, the reordered basis $\tilde{w}_i$ will be such that:\\
	
	if $D_1$ is of type $I-III$ in Albert's classification we, for $h_0:=\frac{\nu_1}{e}$, \begin{enumerate}
		\item $\tilde{w}_{(k-1)e+i}=w_{(i-1)h_0+k}$, where $1\leq k\leq h_0$, and $1\leq i\leq e$
		
		\item $\tilde{w}_{\nu+1}=w_{\nu+1}$,
	\end{enumerate}

and if $D_1$ is of type $IV$ in Albert's classification, for $e_0=\frac{e}{2}$ and $h_0=\frac{\nu_1}{e_0}$, we set 
 \begin{enumerate}
	\item $\tilde{w}_{2(k-1)e_0+i}=w_{(i-1)h_0+k}$, where $1\leq k\leq h_0$, and $1\leq i\leq e_0$, and
	
	\item $\tilde{w}_{(2(k-1)+1)e_0+i}=w_{\nu+(i-1)h_0+k}$, where $1\leq k\leq h_0$, and $1\leq i\leq e_0$.
\end{enumerate}
 
\end{claim}
\begin{proof}[Proof of Claim 1] This is straightforward following from the definition of the order on the original basis $w_i$. 
\end{proof}

Now we choose the subfamily of \eqref{eq:case2fun} that we will use. Remember that by our assumption we have $(e-1)h\geq \nu_1\geq h$.\\

\textbf{Sub-case 2.1: $D_1$ is of type $I-III$.} Note that in this case we have $h_0<h$. Indeed $(e-1)h\geq eh_0$ and since $h\geq 2$ we cannot have $e=1$, otherwise $\nu_1=0\geq 2$.

In this case we choose the family \begin{equation}\label{eq:functcasei}
	\{\drfj{j}\circ\tau^q: 1\leq j\leq h_0, 0\leq q\leq e-1\}\cup \{ \drfj{h_0+1}\}. 
\end{equation}We thus have the $\nu_1+1$ functionals we wanted. We further order this reverse-lexicographically on the pair of indices $(j,q)$, in other words the elements of this set are ordered as $\drfj{1},\ldots\drfj{1}\circ\tau^{e-1}$ in this order followed by the $\drfj{2}\circ\tau^q$ in the same order.\\

\textbf{Sub-case 2.2: $D_1$ is of type $IV$.} Note that in this case we have $h_0<2h$ by the same argument. Write $h_0=2\cdot h_1 + \epsilon$, $\epsilon \in \{0,1\}$.

If $\epsilon =0$, we have $h_1<h$ and we choose the family \begin{equation}\label{eq:functcaseiv1}
	\{\drfj{j}\circ\tau^q: 1\leq j\leq h_1, 0\leq q\leq e-1\}\cup \{ \drfj{h_1+1}\}. 
\end{equation}We thus have the $\nu_1+1$ functionals we wanted. We order these as in the previous case.

If $\epsilon =1$ we have $h_1+1\leq h$ and we choose the family  \begin{equation}\label{eq:functcaseiv2}
	\{\drfj{j}\circ\tau^q: 1\leq j\leq h_1, 0\leq q\leq e-1\}\cup \{ \drfj{h_1+1}\circ\tau^q: 0\leq q\leq e_0\}. 
\end{equation}We thus have the $\nu_1+1$ functionals we wanted. Again we consider these ordered as earlier.\\

Letting $w_1,\ldots,w_{\mu}$ be the above symplectic basis of $V_{dR,\hat{F}}$ we now consider the column vectors \begin{equation}\label{eq:vectorscase2}
	\vec{v}_{j,i}:=[\drfj{j}\circ\tau^{i}(w_l)]\in B_{dR}^{\mu},
\end{equation}where $j$, $i$ are as in \eqref{eq:functcasei}, \eqref{eq:functcaseiv1}, and \eqref{eq:functcaseiv2} respectively depending on the case. Consider the matrix $M=[\vec{v}_{j,i}] \in M_{\mu\times (\nu_1+1)}(B_{dR})$ whose columns are the above vectors $\vec{v}_{j,i}$ ordered in the same way as the family of functionals chosen.

The $\nu_1+1$ are linearly dependent, as we saw and thus any $(\nu_1+1)\times(\nu_1+1)$-submatrix $M_0$ of $M$ is such that $\det(M_0)=0$. We will choose the matrix depending on the case we are in.

As noted in the proof of Proposition $5.2$ of \cite{davidg} each of the entries of the vectors $\vec{v}_{j,i}$ are $\hat{F}$-linear combinations of the $v$-adic values $i_v(\tilde{y}_{l,j}^{(t)}(\xi))$ of our new G-functions at $\xi=x(s)$ and thus also of the $v$-adic values of our original G-functions $y_{l,j}^{(t)}$ evaluated at the same $\xi$. Therefore $\det(M_0)=0$ corresponds to a homogeneous polynomial $R_1^{(t)}\in \bar{\Q}[X_{i,j,t}]$, of degree $\nu_1+1$, such that $R_1^{(t)}(i_v(y^{(t)}_{i,j}(\xi)))=0$. Furthermore, as in case $1$ above, this polynomial is independent of the choice of place $v\in \Sigma(s,t)$ and once again the relation it defines among the values $i_v(y^{(t)}_{i,j}(\xi))$ holds for all places in the set $\Sigma(s,t)$.

Now let us choose the rows of the matrix $M_0$ in each of the cases as follows:
\begin{enumerate}
	\item for the family \eqref{eq:functcasei} we choose the first $\nu_1+1$ rows of the matrix $M$, and we order these as in the claim above.
	
	\item for the family \eqref{eq:functcaseiv1} we choose the rows corresponding to the re-ordering $\tilde{w}_j$, ordered as in the claim, given by $\tilde{w}_{2(k-1)e_0+i}$ and $\tilde{w}_{(2(k-1)+1)e_0+i}$, where $1\leq k\leq h_1$, and the last row corresponding to $\tilde{w}_{2(h_1)e_0+1}$, and
	
	\item for the family \eqref{eq:functcaseiv2} we choose the rows corresponding to the re-ordering $\tilde{w}_j$, ordered as in the claim, given by $\tilde{w}_{2(k-1)e_0+i}$ and $\tilde{w}_{(2(k-1)+1)e_0+i}$, where $1\leq k\leq h_1$ and $1\leq i\leq e_0$,  followed by $\tilde{w}_{2(h_1)e_0+i}$, where $1\leq i\leq e_0$, and the last row is given by $\tilde{w}_{(2(h_1)+1)e_0+1}$.

\end{enumerate}Note that the re-ordering we did does not alter the relation $\det(M_0)=0$, since we are changing signs by permuting the rows.\\

Let us now abandon the assumption that $L=\hat{L}$. Then we have to consider the base change $V_{dR}\otimes_{L}\hat{L}$ and repeat the above process. 

In order to define $R_{fin}$ we combine the above two cases. Since by assumption we have that \begin{center}
	$\Sigma(s):=\{v\in \Sigma_{L,f}: s$ is $v$-adically close to $0\}\neq \emptyset$,
\end{center}we may write $\Sigma(s)$ as a disjoint union $\Sigma(s)=\Sigma_1\sqcup \Sigma_2$ where $\Sigma_1$ is the disjoint union of the sets $\Sigma(s,t)$, where $1\leq t\leq t_0$, for those $s_t$ for which for some $v\in \Sigma(s,t)$, and hence for all such $v$, case $1$ above holds, and similarly for $\Sigma_2$. 

We then define \begin{equation}\label{eq:finiteplacespoly}
	R_{s,\fin}=\prod_{\Sigma(s,t)\subset \Sigma_1}^{} R^{(t)}_{0}(X_{i,j,t})\cdot \prod_{\Sigma(s,t)\subset \Sigma_2}^{} R^{(t)}_{1}(X_{i,j,t}).
\end{equation}We note that this polynomial is indeed homogeneous of degree $\deg(R_{s,\fin})\leq t_0(\nu_1+1)$  and so by our exposition so far we get that  conclusions \ref{condition1} and \ref{condition3} of our proposition hold. 

We are thus left with checking that the relation defined by $R_{s,\fin}$ among the values of our G-functions at $\xi$ is non-trivial, i.e. it does not hold on the functional level.\\

\textbf{Non-Triviality of the relation:} Again for simplicity of notation, we return to the case $\hat{L}=L$. 

By definition of $R_{s,\fin}$ as a product of the polynomials $R_0^{(t)}$ and $R_1^{(t)}$, we get that if $R_{s,\fin}(y^{(t)}_{i,j}(x))=0$ on the functional level we must have that there exists a point $s_t\in x^{-1}(0)$, $1\leq t\leq t_0$, such that either $R_0^{(t)}(y^{(t)}_{i,j})=0$, in case $\Sigma(s,t)\subset \Sigma_1$ in the above notation, or $R_1^{(t)}(y^{(t)}_{i,j})=0$, in case $\Sigma(s,t)\subset \Sigma_2$ in the above notation.

Note that in both of those cases we know that these polynomials define relations among the G-functions in the sub-family $\mathcal{G}^{(t)}$. But in this case we know that the functional relations are those corresponding to polynomials in the ideal $I_t$ of \Cref{desidealtriv}. Thus the former of the two cases, i.e. when $R_0^{(t)}(y^{(t)})=0$, is easily discarded since the polynomial $R_0^{(t)}$ is linear, while the polynomials defining the $I$ are homogeneous of degree $2$. 

Therefore, we are reduced to showing that $R_1^{(t)}\notin I_t$ for the polynomial $R_1^{(t)}$ constructed in case $2$ above. By our earlier remarks we may and do fix the point $s_t\in x^{-1}(0)$. Therefore, for notational simplicity, from now on we drop any mention of $t$ and write simply $y_{i,j}$ for our G-functions and $R:=R_1^{(t)}$ for our polynomial.\\

Since the $y_{i,j}(x)$ are $\hat{F}$-linear combination of the $\tilde{y}_{i,j}(x))$, and vice versa, any relation among the former induces one for the latter and vice versa, whether that is on the functional level or among the values of these functions at a point. Note furthermore that by construction of the basis $\varpi_i$ and the general exposition of \Cref{section:trivialrelations}, the trivial relations among the $\tilde{y}_{i,j}(x)$ are also defined by the same ideal as in \Cref{desidealtriv}. 

Therefore, writing $R'\in \bar{\Q}[Y_{i,j}]_{\underset{1\leq i\leq h}{1\leq j\leq 2\nu}}$ for the polynomial induced by $\det(M_0)=0$, in case $2$ above, on the values $i_v(\tilde{y}_{i,j}(\xi))$ we are reduced to showing that \begin{equation}
	R'\notin \langle P_{j,j'}: 1\leq j\leq j'\leq h\rangle,
\end{equation}where $P_{j,j'}=\sum_{l=1}^{\nu} Y_{l,j'}Y_{\nu+l,j}-\sum_{l=1}^{\nu} Y_{l,j}Y_{\nu+l,j'}$. From now on we assume the contrary.

By choice of our bases $w_i$ and $\varpi_i$ we have that the $l$-th row of $M_0$ will be of the form $[\alpha_{g(l)}^i\tilde{y}_{l,j}(\xi)]$ where $g(l)\in \{1,\ldots, e\}$ is such that $w_l\in W_l$ in the decomposition of \eqref{eq:derhamsplit}. 

Replacing the $\tilde{y}_{i,j}(\xi)$ by indeterminates $Y_{i,j}$ we get a matrix, that we also denote, by abuse of notation, by $M_0\in M_{\nu_1+1} (\bar{\Q}[Y_{s,t}])$, so that $\det(M_0)=R'$. By our earlier assumption we have that there exist $Q_{j,j'}\in \bar{\Q}[Y_{s,t}]$ such that \begin{equation}\label{eq:member}
	R'=\sum_{1\leq j\leq j'\leq h}^{} Q_{j,j'} P_{j,j'}.
\end{equation}

Roughly put, in order to get a contradiction to \eqref{eq:member}, we forcefully ``block-upper-triangularize'' the matrix $M_0$ to more easily compute $R'$, all the while taking advantage of our choices so far. Note that after the re-ordering, corresponding to the basis $\tilde{w}$, the elements of the $i$-th row of $M_0$ will look like $\lambda_{t}^q \cdot Y_{l_i,k}$, in other words we write $l_i$ for the original $w_{l_i}$ corresponding to the vector $\tilde{w}_i$.

Now consider the reduction map \begin{center}
	$\phi:\bar{\Q}[Y_{s,t}]\rightarrow \bar{\Q}[Y_{s,t}:(s,t)\in \Sigma_0]$,
\end{center}where $\Sigma_0= \{(i,j):1\leq i\leq \nu_1$, $\nu+1\leq i\leq \nu+\nu_1$, $1\leq j\leq h\}\backslash(\Sigma_1)$, with 
$\Sigma_1= \Bigcup{j=0}{h_t}\{(l_i,j): i>j\cdot e  \}$, where $h_t=h_0$ or $h_1$ depending on the case we are in, so that $\ker \phi  = \langle Y_{s,t}:(s,t)\notin \Sigma_0\rangle$. 

In other words, we ``kill'' the $Y_{i,j}$ that do not appear in $M_0$ and enough of those that appear so that $\phi(M_0)$ is block-upper-triangular. 

Write $B_j$ for the $j$-th block that appears in the diagonal of the block-upper triangular matrix $\phi(M_0)$. By construction we have $B_j\in M_{e} (\bar{\Q}[Y_{l,j}: 1\leq l\leq 2\nu])$ for $1\leq j\leq h_t$, and $B_{h_t}\in M_{1} (\bar{\Q}[Y_{l,j}: 1\leq l\leq 2\nu])$ in case we have \eqref{eq:functcasei}, or \eqref{eq:functcaseiv1}, while $B_{h_t}\in M_{e_0+1} (\bar{\Q}[Y_{l,j}: 1\leq l\leq 2\nu])$ in the final case \eqref{eq:functcaseiv2}. Note in particular that the  entries of the matrix $B_j$ all correspond to the G-functions $\tilde{y}_{l,j}$ that appear as a single column of the relative period matrix. 
    Also by construction we have that $B_j=(\lambda_t^{q-1}Y_{l_i,j})_{\underset{1\leq q\leq \alpha'}{1\leq i\leq \alpha' }}$, with $\alpha'\in \{e,e_0+1,1\}$, depending on the case, where $i$ is running through the rows of the matrix and $q$ through its columns, and $\lambda_t$ is the eigenvalue by which $\tau$ acts on $W_{t}$ with no $\lambda_t$ appearing twice here by virtue of the construction of basis $\tilde{w}_j$ and the choice of our matrix.

In particular, note that $\phi(R')=\prod_{j=1}^{h} \det(B_j)$. But $B_j=\diag(Y_{l_i,j})\cdot V(\lambda_1,\ldots,\lambda_{\alpha'})$, where $\diag(Y_{l_i,j})$ is a diagonal matrix with entries $Y_{l_i,j}$ and $V(\lambda_1,\ldots, \lambda_{\alpha'})$ is some Vandermonde matrix, so that $\det V(\lambda_1,\ldots, \lambda_{\alpha'})= \prod_{}^{} (\lambda_i-\lambda_{i'})=V_0\in \bar{\Q}$. Note, furthermore that $V_0\neq 0$ by virtue of the definition of the basis $\tilde{w}$ and since $\alpha\leq e$ by construction in any case. 

Thus by acting by $\phi$ on \eqref{eq:member} we get \begin{equation}\label{eq:member2}
	\phi(R')= \sum_{1\leq j<j'\leq h}\phi(Q_{j,j'}) \cdot \phi(P_{j,j'}).
\end{equation}From the above remarks we know $\phi(R')=A\cdot \prod_{}^{} Y_{l_i,j}$ is a product of $\nu_1+1$ of the $Y_{s,t}$ with $(s,t)\in \Sigma_0$ with some $A\in \bar{\Q}$ with $A\neq 0$. 

The contradiction to \eqref{eq:member} thus follows from the following claim.

    \begin{claim}[Claim 2]The ideal $I_0:=\langle\phi(P_{j,j}): 1\leq j<j'\leq h\rangle$ is prime and $Y_{s,t}\notin I_0$ for all $s$, $t$. \end{claim}
\begin{proof}[Proof of Claim 2]By definition of $\phi$, or rather its kernel, we have that
	\[	\bar{P}_{j,j'}:= \phi(P_{j,j'}) =\sideset{}{'}\sum_{l=1}^{\nu_1}Y_{l,j'} Y_{\nu+l,j}-\sideset{}{'}\sum_{l=1}^{\nu_1} Y_{l,j} Y_{\nu+l,j'},\tag{67}\label{eq:phipjj}
	\]where $\Sigma'$ denotes the sums in the above range accompanied with the restriction that $(l,j)$, $(\nu+l,j')\in \Sigma_0$.
	\setcounter{equation}{67}

By assumption, we have that $h\geq 2$, and hence also by assumption $\nu_1\geq 2 $. Note also that $j<j'$ and, by construction, if $Y_{l,j}\notin \Sigma_1$ then we get $Y_{l,j'}\notin \Sigma_1$.

We now claim that there are at least two nonzero summands in the two sums in \eqref{eq:phipjj}. If this were not the case, then for all $l$ we would have, from the above remark, that either $Y_{l,j}$ or $Y_{\nu+l,j}\in \Sigma_1$, which implies that $|(\Sigma_1)\cap (\N\times\{j\})|\geq \nu_1$. But $|(\Sigma_1)\cap (\N\times\{j\})|\leq \nu_1+1-e\leq \nu_1-1$ by construction and since as noted earlier $e\geq 2$ here.

Consider the order on the set $\{Y_{i,j}:(i,j)\in \Sigma_0\}$ given by the reverse lexicographic order on the pairs of indices $(i,j)\in \Sigma_0$, so that $(i,j)>(i',j')$ if $i<i'$ or $i=i'$ and $j<j'$. Considering the obvious graded lexicographic order on $\bar{\Q}[Y_{i,j}:(i,j)\in \Sigma_0]$, see \cite{clo} $\S$ $2.2$, induced by the above order on the $Y_{i,j}$, we get that the leading term $\LT(\bar{P}_{j,j'})$ of $\bar{P}_{j,j'}$ is given by \begin{equation}
	F_{j,j'}:=\LT(\bar{P}_{j,j'})=Y_{l(j),j}Y_{l(j'),j'},
\end{equation}where $l(j):=\min\{l:(l,j)\in \Sigma_0\}$ and $l(j')$ is given by $|l(j)-l(j')|=\nu$.

Let now $P$, $Q\in \bar{\Q}[Y_{i,j}:(i,j)\in \Sigma_0]$ be such that $P\cdot Q\in I_0$.  By the division algorithm in $\bar{\Q}[Y_{i,j}:(i,j)\in \Sigma_0]$, see \cite{clo} $\S$ $2.3$, we get that $P=\Sigma Q^{(1)}_{j,j'} \bar{P}_{j,j'}+R_1$ and $Q=\Sigma Q^{(2)}_{j,j'} \bar{P}_{j,j'}+R_2$ with $R_i$ being either $0$ or $\bar{\Q}$-linear combinations of the $F_{j,j'}$.
	
	From the above we get $PQ-R_1\cdot R_2\in I_0$ and, since $PQ\in I_0$, we have $R_1\cdot R_2\in I_0$. Write $R_1=\Sigma a_{j,j'}F_{j,j'}$ and $R_2=\Sigma b_{j,j'}F_{j,j'}$. Then there exist $Q^{(3)}_{j,j'}\in \bar{\Q}[Y_{i,j}:(i,j)\in \Sigma_0]$ such that \begin{equation}\label{eq:member3}
	(	\Sigma a_{j,j'}F_{j,j'}	)(\Sigma a_{j,j'}F_{j,j'})=\Sigma Q^{(3)}_{j,j'} \bar{P}_{j,j'}.		
	\end{equation}Since both sides of \eqref{eq:member3} are homogeneous degree $2$ polynomials we have $Q^{(3)}_{j,j'}\in \bar{\Q}$ for all $j<j'$. Assume $Q^{(3)}_{j,j'}\neq 0$ for some pair $(j,j')$. 

By our earlier remark, there exists a monomial of the form $Y_{l,j}Y_{l',j'}\neq F_{j,j'}$ that appears in the expression of $\bar{P}_{j,j'}$ as in \eqref{eq:phipjj} and by definition will not appear in any other $\bar{P}_{i,i'}$. So this monomial will have a non-zero contribution to the sum on the right hand side of \eqref{eq:member3}. But this monomial will not appear on the left hand side of \eqref{eq:member3}. Therefore $Q^{(3)}_{j,j'}=0$ for all $j<j'$ and thus either $R_1=0$ or $R_2=0$. So $I_0$ is indeed a prime ideal.

Finally, it is obvious that $Y_{s,t}\notin I_0$ for all $s,t$ by a similar argument as above.\end{proof}

This concludes the proof of \Cref{nonarchirel}.
\end{proof}

The necessity of the condition $(e_i-1)\cdot h\geq \nu_i \geq h$  of the \Cref{nonarchirel}, is evident in the proof. It can be reread as saying that we have a ``large'' Hodge substructure, relative to the number of G-functions, whose algebra of Hodge endomorphisms also has a large center. We return to this in \Cref{section:examples}.

A natural generalization of \Cref{nonarchirel} is the following proposition.

\begin{prop}\label{nonarchilcenter}Let $V_{\Q}$, $V_{dR}$, $D_i$, $m_i$, $F_i$, $e_i$ , and $L/\Q$ be as  above. We assume that the Hodge Conjecture holds for Hodge Endomorphism of $V_{\Q}$ and, as usual, that $h\geq 2$. If for some subset $J\subset \{1,\ldots,r\}$, of the number of non-isomorphic components of the decomposition of $V_{\Q}$ as in  \eqref{eq:decomprepeat}, there exist positive integers $m_i'\leq m_i$, where $i\in J$, such that \begin{equation}\label{eq:nonarchcondtion}
		(h(\Sum{i\in J}{} (e_i-1)m'_i))\geq (\Sum{i\in J}{}m'_i \nu_i)\geq h
	\end{equation}and $\{v\in \Sigma_{L,f}: s$ is $v$-adically close to $0\}\neq \emptyset$, then there exists a homogeneous polynomial \begin{center}
			$R_{s,\fin}\in \bar{\Q}[X_{i,j,t}; 1\leq i\leq \mu$, $1\leq h\leq h$, $1\leq t\leq t_0]$ such that
		\end{center}satisfying the same conditions as those in \Cref{nonarchirel}.
\end{prop}

In other words, \Cref{nonarchirel} corresponds to the case $|J|=1$ in the above.

\begin{proof}The proof largely follows that of \Cref{nonarchirel}. Assume without loss of generality that $J=\{1,\ldots, r_0\}$ where $r_0\leq r$. We start, as we did in that proof by assuming that $L=\hat{L}$ for the field $\hat{L}$ defined in \Cref{propendodr} to simplify our exposition. 

Consider the summand $W:=V_1^{m'_1}\oplus\ldots\oplus V_{r_0}^{m'_{r_0}}$ of $V_{\Q}$. We then write, following the same arguments as those in the proof of \Cref{nonarchirel}, $W_{dR}:= W_{1,dR}^{m'_1}\oplus\ldots\oplus W_{r_0,dR}^{m'_{r_0}}$, and $W_{\et}:= W_{1,\et}^{m'_1}\oplus\ldots\oplus W_{r_0,\et}^{m'_{r_0}}$, for the subspaces of $V_{dR}$ and $V_{\et}$ respectively that are mapped to $W$ under the comparison isomorphisms, either those of Grothendieck or Faltings respectively.
	
	We further fix an order for the components of the summands $V_i^{m'_i}$, respectively of the $W_{i,dR}^{m'_i}$ and $W_{i,\et}^{m'_i}$, denoted by $V_i^{(j)}$, respectively $W_{i,dR}^{(j)}$ and $W_{i,\et}^{(j)}$, so that these also are mapped to one another by the above comparison isomorphisms. 
	
	For each of the centers $F_i$ that appear, we may write $F_i=\Q(\tau_i)$ by \Cref{albert}. We write $\tau_{i,(j)}$ for all of the cohomological interpretations of $\tau_i$ considered as an endomorphism of $V_i^{(j)}$, or $W_{i,dR}^{(j)}$, or $W_{i,\et}^{(j)}$.
	
	As in the proof of \Cref{nonarchirel} we fix the point $s_t$ which $s$ is $v$-adically close to, and work with the non-empty set of finite places \begin{center}
		$\Sigma(s,t):=\{w\in \Sigma_{L,f}: s$ is $w$-adically close to $s_t\}$.
	\end{center}
	
	Consider now the functionals $\fgj$ of \Cref{padicrealization} acting on $W_{\et}$. As in the proof of \Cref{nonarchirel} we have two possible cases.\\
	
\textbf{Case $1$:} $\fgj|_{W^{(k)}_{i,\et}}=0$ for some $j$, $i$, $k$ as above.\\

This is dealt with as before and gives rise to a a linear polynomial $R_0^{(t)}(X_{i,j,t})\in \bar{\Q}[X_{i,j,t}]$.  The same arguments as in the previous proof show that this will satisfy the same conditions as the polynomial constructed there.\\

\textbf{Case 2:} $\fgj|_{W^{(k)}_{i,\et}}\neq 0$ for all $j$, $i$, $k$ as above.\\

Let us write $\nu_0:= \Sum{i=1}{r_0}m'_i\nu_i$, so that $h\leq \nu_0$ by assumption. 

Without loss of generality assume that $e_1\geq 2$, note that at least one of the $e_i$ has to be $\geq 2$ by the assumption above.

We then consider the family of non-zero functionals\begin{equation}\label{eq:functionalshard}
\Gamma(W):=\{\drfj{j}\circ \tau_{i,(k_i)}^{q_i}:1\leq j\leq h, 0\leq q_i\leq e_i-1, 1\leq k_i\leq m'_i \},
\end{equation}note here that $\drfj{j}\tau_{i,k_i}^0$ will just be the restriction of $\drfj{j}$ to the component $W^{(k_i)}_{i,dR}$ and not $\drfj{j}$ on the whole of $W_{dR}$!

The idea remains the same as earlier. Choose $\nu_0+1$ such functionals, which by the same argument as in \Cref{nonarchirel} will be given by extensions of scalars from vectors in $W^{*}_{\et,0}:=\homm(W_{\et},\Q_p(n))^{G_{L_v}}$. As in the proof of the previous lemma we get that $\dim_{\Q_p}W^{*}_{\et,0}\leq h^{n,0}(W_{\Q})\leq \nu_0$, and thus any $\nu_0+1$ vectors in the family will be linearly dependent.

As before we have a splitting given by the action of $\tau_{i,(j)}$
\begin{equation}\label{eq:derhamsplitgen}
	W^{(j)}_{i,dR}\otimes_{L}\hat{F}=W^{(j)}_{i,1}\oplus\ldots\oplus W^{(j)}_{i,e_i},
\end{equation}where $\hat{F}$ is the compositum of $L$ with the Galois closure of the compositum of the fields $F_i=Z(D_i)$, and each $W_i$ is such that $\tau_{i,(j)}$ acts on it by multiplication by $\lambda_{i,t}$, where $\lambda_{i,t}$ for $1\leq t\leq e_i$ are the different embeddings of $\tau_i$ in $\C$. As before, by \Cref{remedy2}, together with compatibility of Hodge endomorphisms with Grothendieck's comparison isomorphism, we know that $\dim_{\hat{F}}W^{(j)}_{i,t}=\frac{2\nu_i}{e_i}$ for all $j$ and all $1\leq t\leq e_i$.

Let us write $h_i$ to be either $\frac{\nu_i}{e_i}$ if $D_i$ is of type $I-III$ or $\frac{\nu_i}{e_{i,0}}$ if $D_i$ is of type $IV$ in \Cref{albert}, where $e_{i,0}=e_i/2$ in this case. Consider $w_1,\ldots, w_{\nu_0},w_{\nu+1},\ldots, w_{\nu+\nu_0}\in W_{dR,\hat{F}}$ such that the following hold: \begin{enumerate}
	\item $\Span_{\hat{F}}(w_1,\ldots, w_{\nu_0})$ and $\Span_{\hat{F}}(w_{\nu+1},\ldots, w_{\nu+\nu_0})$ have the same property as before, and 
	
	\item we may write $w_l=w^{(j(i))}_{(k-1)h_i+i'}$ where $w^{(j(i))}_{(k-1)h_i+i'}$ and $w^{(j(i))}_{(k-1)h_i+i'}$ give bases of the respective subspaces of $W^{(j(i))}_{i,dR}$ with ordering as that chosen in the proof of \Cref{nonarchirel}. The ordering of each such block will depend on the type of the algebra $D_i$.
\end{enumerate}In other words the basis is subordinate to the splitting $W_{dR}=W_{1,dR}^{m'_1}\oplus\ldots\oplus W_{r_0,dR}^{m'_{r_0}}$, the splitting \eqref{eq:derhamsplitgen}, and the ordering of the $W^{(j)}_{i,dR}$. Again this is possible by \Cref{section:involutions} and the aforementioned claim in the proof of \Cref{constpseudocm}. 

As in the proof of the previous lemma, extend the above $w_i$ to a full symplectic basis of $V_{dR}\otimes_L \hat{F}$ and write them as $\hat{F}$-linear combinations of the $(\omega_i)_s$. Again we let $\varpi_i$ be the of sections $\mathcal{H}(U')$ with the same definition as in the previous proof. We also write $\tilde{y}_{i,j}^{(t)}(x)$ for the ``new'' G-functions we obtain in the same way as in the previous proof.

We now want to order a set of $\nu+0_1$ vectors of the basis in a convenient way as we did in Claim $1$ of the previous proof. Let us write $\mathcal{B}_{k,i,j}$, where $1\leq k\leq h_i+\epsilon_i$, where $\epsilon_i\in \{0,1\}$, for the sets of vectors in the basis of $W^{(j)}_{i,dR}$  implied by Claim $1$ in the previous proof. In particular if $D_1$ is of type $I-III$ or of type $IV$ with $\frac{\nu_i}{e_{i,0}}=2h_i$, i.e. the case $\epsilon_i=0$, these sets are of size $e_i$ for $k\leq h_i$, while for type $IV$ with $\frac{\nu_i}{e_{i,0}}=2h_i+1$ these sets will be of size $e_i$ for $k\leq h_i$ and size $e_{i,0}$ for $k=h_{i}+1$.

  In more detail, the sets of size $e_i$, for $1\leq k\leq h_i$, in the case of type $I-III$ or of type $IV$ with $\frac{\nu_i}{e_{i,0}}=2h_i$, will look as follows:
\begin{center}
	$\mathcal{B}_{k,i,j}=\{\tilde{w}^{(j)}_{i,(k-1)e_i+l}:1\leq l\leq e_i\}$ if $D_i$ is of type $I-III$ or 
	
$\mathcal{B}_{k,i,j}=\{\tilde{w}^{(j)}_{i,2(k-1)e_{i,0}+l}, \tilde{w}^{(j)}_{i,(2(k-1)+1)e_{i,0}+l} :1\leq l\leq e_{i,0} \}$ if $D_i$ is of type $IV$.
\end{center}Similarly for the sets $\mathcal{B}_{i,h_i+1,j }$ in the type $IV$ case when $\frac{\nu_i}{e_{i,0}}=2h_i+1$.

We have $|\Bigcup{k,i,j}{}\mathcal{B}_{k,i,j}|=\nu_0$ by construction. To choose the $(\nu_0+1)$-th vector we proceed as earlier. In particular we may assume without loss of generality that $e_1>1$ and then choose this vector from those of $W^{(1)}_{1,dR}$ from those not already chosen in accordance to the type of $D_1$ in Albert's classification and whether $\epsilon_1=0$ or $1$, exactly as we did in the previous proof. For this new vector we either create a new block $\mathcal{B}_{i,h_i+1,j }$ if $e_1=0$, or add it last to the existing block of the same name. Finally, we order these blocks of vectors lexicographically on the triple $(k,i,j)$ and write $\mathcal{B}$ for this choice of basis elements.
	
Now consider the following families of functionals:\begin{center}
	 $\mathcal{F}_{k,i,j}:= \{\drfj{k}\circ\tau_{i,j}^q: 1\leq q\leq e_i \}$, or $F_{h_i+1,i,j}= \{ \drfj{h_i+1}\circ\tau_{i,j}^q: 1\leq q\leq e_{i,0}-1\}$
\end{center}where $k$ varies in the same sets as above depending on the algebra $D_i$, as does the existence of $F_{h_i+1,i,j}$. Note that these are just the $\nu_i$ first functionals chosen in the previous proof in each case.
	
We keep the same order as before for the elements of each $\mathcal{F}_{k,i,j}$, i.e. we order them on $q$, and order the families themselves lexicographicaly on the triples $(k,i,j)$. Again there will be $\nu_0$ elements in these sets in total. This all makes sense since for all $h_i$ we have $h_i<h$ by our  hypothesis that $(h(\Sum{i\in J}{} (e_i-1)m'_i))\geq \nu_0$ and the definition of the $h_i$. Finally, by our assumption that $e_1\geq 2$ earlier we can take an $(n_0+1)$-th functional of the form $\gamma_{h_1+\epsilon_1}\circ\tau_{1,(1)}^q$. Same as the $(\nu+1)$-th vector chosen for the blocks $\mathcal{B}_{k,i,j}$ we either add this to the block for $\mathcal{F}_{h_1+1,1,1}$ or create such a block with just one element if did not exist already. Finally, write $\mathcal{F}$ for this ordered family of size $\nu_0+1$.\\

As in the previous proof, consider the column vectors \begin{equation}\label{eq:vectorsgencase2}
	\vec{v}_{j,i,q,k}:=[\drfj{j}\circ\tau_{i,(k)}^{q}(w_l)]\in B_{dR}^{\mu},
\end{equation}where $\drfj{j}\circ\tau_{i,(k)}^{q}\in \mathcal{F}$.  As in the previous proof we then get a matrix $M=[\vec{v}_{j,i}] \in M_{\mu\times (\nu_1+1)}(B_{dR})$ with columns the above vectors, of rank $\leq \nu_0$

Consider the $(\nu_0+1)\times(\nu_0+1)$-submatrix $M_0$ of $M$ whose rows correspond to the elements of $\mathcal{B}$. This, as we have seen, gives a homogeneous polynomial $R_1^{(t)}\in \bar{\Q}[X_{i,j,t}]$, of degree $\nu_0+1$, satisfying the exact same, initial, properties as the polynomial denoted by the same notation in the previous proof.\\

Again, abandoning the assumption $L=\hat{L}$ we may define $R_{s,\fin}$ by the same formula as in \eqref{eq:finiteplacespoly}. Once again, this will satisfy by construction all the properties we want other than non-triviality which we have to check by hand. Once again arguing exactly as in the proof of the previous lemma we are reduced to checking the non-triviality for the polynomials induced by those constructed in case $2$ for the values of the ``new'' G-functions $\tilde{y}^{(t)}_{i,j}$ for a single $t$. We write $R'$ for those and thus check the following:\\

\textbf{Non-triviality of case $2$:} Assume that \begin{equation}
	R'\in \langle P_{j,j'}: 1\leq j\leq j'\leq h\rangle,
\end{equation}where $P_{j,j'}=\sum_{l=1}^{\nu} Y_{l,j'}Y_{\nu+l,j}-\sum_{l=1}^{\nu} Y_{l,j}Y_{\nu+l,j'}$ as before.

The construction giving our families $\mathcal{B}$ and $\mathcal{F}$ shows that the $l$-th row of $M_0$ will be of the form $[\delta_{g(l)}^{q}\tilde{y}_{l,j}(\xi)]$ where $\delta_{g(l)}$ is either equal to $\lambda_{g(l)}$ for some $g(l)\in \{1,\ldots, e_i\}$ if $w_l\in W^{(j')}_{i,g(l)}$ in the decomposition \eqref{eq:derhamsplitgen} and the element in question is also in the column given by the functional $\drfj{j}\circ\tau_{i,(j')}^q$ or $\delta_{g(l)}=0$ otherwise. For the last comment note that for $j\neq j'$ by construction we have that $\tau_{i,(j)}$ restricts to $0$ in $W^{(j')}_{i}$, note also our earlier comment that $\drfj{k}\circ\tau_{i,(j)}^{0}$ is just the restriction of $\drfj{k}$ on $W^{(j)}_{i,dR}$. 

The idea of the rest of the proof is the same as that of the previous one. Take a carefully defined reduction of polynomial rings by killing coordinates to block-upper-triangularize the matrix $M_0$ and then compute the determinants of the blocks. The nice thing about our ordering of $\mathcal{F}$ and $\mathcal{B}$ is that now these blocks are themselves ``block-lower-triangular'' and thus their determinants are easy to compute!

Before seeing the proof let us illustrate this by an example. Let $\mathcal{F}_1$ be the union of the $\mathcal{F}_{1,i,j}$ and let $N_0:=\Sigma e_im'_i$ and $M_0=\Sigma_{m'_i}$. Consider the $(N_0)\times(N_0)$-matrix that consists of the first $(N_0)$ rows and the first $(N_0)$ columns of $M_0$ after this re-ordering of the  rows. This matrix, or rather its functional analog by replacing $\tilde{y}_{l,j}(\xi)$ by $Y_{l,j}$ as in the previous proof, will look as follows:

\begin{center}
	$\mathcal{A}_1=\begin{pmatrix}A_1 &0   &0 &\cdots &0\\
		&A_2&0&\cdots&0\\
		&  & \cdots &    &0\\
		&&&&A_{M_0}
	\end{pmatrix}$,
\end{center}where $A_1=\diag(Y_{l,1})\cdot V(\lambda_{1,1},\ldots, \lambda_{1,e_1})$, $A_2= \diag(Y_{l,1})V(\lambda_{1,1}\ldots,\lambda_{1,e_i})$, where $i=1$ if $m'_1\geq 2$ and $i=2$ otherwise, and \begin{center}
$A_{M_0}= \diag(Y_{l,1})V(\lambda_{r_0,1}\ldots,\lambda_{r_0,e_{r_0}})$.
\end{center} In particular the determinant of this will be of the form $\beta_1\cdot\Pi Y_{l,1}^{p_l}$ where $\beta_1\in \bar{\Q}^{*}$ and $p_l$ are some non-negative integers, not all of whom are zero. The fact that $\beta_1\neq0$ follows from the choice of the basis of the block $\mathcal{B}_{k,i,j}$ as in the previous proof! Indeed, each of the different eigenvalues appears since none of the vectors in this block come from the same $W^{j}_{i,k}$!\\

To reduce to this we just have to define the reduction homomorphism. Then a variant of Claim $2$ of the previous proof kicks in to take care of non-triviality. 

Let us denote by $\mathcal{F}_k:=\cup\mathcal{F}_{k,i,j}$ and let $f_k=|\mathcal{F}_k|$. Let us also write $Y_{l_i,j}$ for the elements that appear as the indeterminates in the entries of the $i$-th row of $M_0$. In other words $w_{l_i}$ is the $i$-th element of $\mathcal{B}$. 

With that in mind we define \begin{center}
	$\phi:\bar{\Q}[Y_{s,t}]\rightarrow \bar{\Q}[Y_{s,t}:(s,t)\in \Sigma_0]$,
\end{center}where $\Sigma_0= \{(i,j):1\leq i\leq \nu_0$, $\nu+1\leq i\leq \nu+\nu_0$, $1\leq j\leq h\}\backslash(\Sigma_1)$, with 
$\Sigma_1= \Bigcup{j=1}{h_f}\{(l_i,k): i>f_1+\cdots+ f_k  \}$, where $h_f\leq \max{h_i}+1\leq h$ the final ''column of G-functions'' that will appear. In other words $\ker \phi  = \langle Y_{s,t}:(s,t)\notin \Sigma_0\rangle$.

Note that the $(\nu_0+1)\times (f_1)$ matrix of the first $f_1$ columns of $M_0$ will be of the form \begin{center}
	$\begin{pmatrix}
		\mathcal{A}_1\\
		0\\
	\end{pmatrix}$,
\end{center}where $\mathcal{A}_1$ is the above matrix. Similarly for the other columns corresponding to the other $\mathcal{F}_k$. Notice also that these matrices will, as in the previous proof, only have indeterminates of the form $Y_{i,k}$ with $k$ fixed, i.e. coming from only one original column in the relative period matrix of the variation of Hodge structures.

We are thus reduced to checking, as before, the following:
\begin{claim}The ideal $I_0:=\langle\phi(P_{j,j}): 1\leq j<j'\leq h\rangle$ is prime and $Y_{s,t}\notin J$ for all $s$, $t$. \end{claim}
\begin{proof}
	As before, let us write
	\[	\bar{P}_{j,j'}:= \phi(P_{j,j'}) =\sideset{}{'}\sum_{l=1}^{\nu_0}Y_{l,j'} Y_{\nu+l,j}-\sideset{}{'}\sum_{l=1}^{\nu_0} Y_{l,j} Y_{\nu+l,j'},\tag{75}\label{eq:phipjj2}
	\]where $\Sigma'$ denotes the extra restriction $(l,j)$, $(\nu+l,j')\in \Sigma_0$.
		\setcounter{equation}{75}
	
    As before we note that $h\geq 2$, and $\nu_0\geq 2 $ by assumption. Once again if for some $j<j'$ we have $Y_{l,j}\notin \Sigma_1$ then we get $Y_{l,j'}\notin \Sigma_1$ by construction. This is by the fact that $\phi(M_0)$ was constructed to be block-upper-triangular.
	
	Again there will be at least two non-zero summands in the two sums in \eqref{eq:phipjj2}. Otherwise as before we would get $|(\Sigma_1)\cap (\N\times\{j\})|\geq \nu_0$. But $|(\Sigma_1)\cap (\N\times\{j\})|\leq \nu_0+1-f_1\leq \nu_0-1$, the last inequality following from the fact that $f_1=\Sum{i=1}{r_0}e_im'_i$ and  thus $f_1\geq 2$, for example because we are forced to have by our hypothesis that at least one of the $e_i\geq 2$.
	
	The rest of the proof is now identical to that of Claim $2$ in \Cref{nonarchirel}.\end{proof}

This concludes the proof of \Cref{nonarchilcenter}.
\end{proof}

\subsection{Points with algebras with many splittings}

The other case we treat is that when the algebra $D$, see the beginning of this section for this and other notation, is ``large'' by nature of there being too many components of the Hodge structure $V_{\Q}$, or in other words it splits into too many smaller sub-Hodge structures.  We return to this naming choice of ours in \Cref{section:examples}.

\begin{prop}\label{nonarchsplittings} Let $V_{\Q}$, $V_{dR}$, $D_i$, $m_i$, $F_i$, $e_i$ , and $L/\Q$ be as  above. We assume that the Hodge Conjecture holds for Hodge Endomorphism of $V_{\Q}$ and, as usual, that $h\geq 2$. If for some subset $J\subset \{1,\ldots,r\}$, of the number of non-isomorphic components of the decomposition of $V_{\Q}$ as in  \eqref{eq:decomprepeat}, there exist positive integers $m_i'\leq m_i$, where $i\in J$, such that \begin{equation}\label{eq:nonarchcondtion2}
	 h \geq (\Sum{i\in J}{}m'_i \nu_i)+1 
	\end{equation}and $\{v\in \Sigma_{L,f}: s$ is $v$-adically close to $0\}\neq \emptyset$, then there exists a homogeneous polynomial \begin{center}
		$R_{s,\fin}\in \bar{\Q}[X_{i,j,t}; 1\leq i\leq \mu$, $1\leq h\leq h$, $1\leq t\leq t_0]$ such that
	\end{center}satisfying the same conditions as those in \Cref{nonarchirel}.
	
\end{prop}

\begin{proof}Let us introduce some notation. As usual we start by assuming $\hat{L}=L$ for the field defined in \Cref{propendodr}. 
	
	We assume without loss of generality that $J=\{1,\ldots,r_0\}$. As we did in the proof of \Cref{nonarchilcenter} we write $W:=V_1^{m'_1}\oplus\ldots\oplus V_{r_0}^{m'_{r_0}}$, and set $W_{dR}:= W_{1,dR}^{m'_1}\oplus\ldots\oplus W_{r_0,dR}^{m'_{r_0}}$, and $W_{\et}:= W_{1,\et}^{m'_1}\oplus\ldots\oplus W_{r_0,\et}^{m'_{r_0}}$, for the subspaces of $V_{dR}$ and $V_{\et}$. 
	
	We then consider the family of functionals \begin{equation}\label{eq:functsplit}
		\drfj{j}, \text{ where } 1\leq j\leq \nu_0+1.
	\end{equation}

As before the case $\fgj|_{W_{\et}}=0$ for some $j$ gives linear relations among periods and respective polynomials  $R^{(t)}_0$. 

Assume from now on that  $\fgj|_{W_{\et}}\neq 0$ for all $j$. This shows as discussed in the previous proofs, that the functionals \eqref{eq:functsplit} are $(\nu_0+1)$ non-zero vectors in $\homm(W_{dR}, B_{dR})$ given by extension of scalars from vectors in $W^{*}_{\et,0}:=\homm(W_{\et},\Q_p(n))^{G_{L_v}}$, the latter being a space of dimension $\leq \nu_0$. 

Proceeding as in the previous proofs we then define the matrix $M$ subordinate to a symplectic basis of $V_{dR}$. We order this basis so that $\{w_1,\ldots,w_{n_0}\}$ is a maximal Lagrangian of $W_{dR}$ and choose $w_{\nu+1}$ to be the first vector in a transverse Lagrangian. 

We choose the matrix $M_0$ to consist of the rows that correspond to these vectors. As before $\det(M_0)=0$ and we denote by $R{(t)}_1$ the corresponding homogeneous polynomial induced on the values $i_v(y_{i,j}^{(t)}(\xi))$.\\

The definition of $R_{s,\fin}$ will be the same as before. The properties we need are then straightforwardly checked as earlier. The only property that needs to be checked, as in all other proofs in the section, is that $R^{(t)}_1\notin I_t$, where $I_t$ is the ideal described in \Cref{desidealtriv}. Again as we have been doing so far, we may reduce this to the ``new'' family of G-functions $\tilde{y}^{(t)}_{i,j}(x)$. 

In what follows we write $R'$ for the polynomial obtained by this process on the values of the family $\tilde{y}^{(t)}_{i,j}(x)$ at $x=\xi$. As before, for convenience of notation, we drop mention of the point $s_t$ from our notation and write simply $\tilde{y}_{i,j}$ for this family of G-functions.\\

\textbf{Non-triviality of $R'$:} As before write $M_0$ for the matrix of indeterminates, where $\tilde{y}_{i,j}$ is replaced by $Y_{i,j}$. This matrix of indeterminates will then be \begin{center}
	$M_0=\begin{pmatrix}
	Y_{1,1}&\cdots&Y_{1,\nu_0+1}\\
	 Y_{2,1}&\cdots& Y_{2,\nu_0+1}\\
	 \vdots& & \vdots\\
	 Y_{\nu_0,1}&\cdots &Y_{\nu_0,\nu_0+1}\\	 
	 Y_{\nu+1,1}&\cdots &Y_{\nu+1,\nu_0+1}
\end{pmatrix}$
\end{center}

Consider the reduction homomorphism $\phi: \bar{\Q}[Y_{s,t}]\rightarrow \bar{\Q}[Y_{s,t}:(s,t)\notin \Sigma_1]$ where $\Sigma_1:= \{(\nu+1,t):  1\leq t\leq \nu_0  \}$ with $\ker{\phi}=\langle Y_{s,t}: (s,t)\in \Sigma_1\rangle$. In particular we have that  \begin{center}
	$M_0=\begin{pmatrix}
		Y_{1,1}&\cdots&Y_{1,\nu_0} &Y_{1,\nu_0+1}\\
		Y_{2,1}&\cdots& Y_{2,\nu_0}& Y_{2,\nu_0+1}\\
		\vdots& &\vdots & \vdots\\
		Y_{\nu_0,1}&\cdots &Y_{\nu_0,\nu_0} &Y_{\nu_0,\nu_0+1}\\	 
		0&\cdots &0 &Y_{\nu+1,\nu_0+1}
	\end{pmatrix}$
\end{center}So in particular we have \begin{center}
$R:=\det(\phi(M_0))=\phi(\det(M_0))= Y_{\nu+1,\nu_0+1} \Sigma (-1)^{\sigma} Y_{1,\sigma(1)}\cdots Y_{\nu_0,\sigma(\nu_0)}$. 
\end{center}

If we assume that $R':=\det(M_0)\in I_t$ we would have, as in the previous proofs, that there exist $Q_{j,j'}\in \bar{\Q}[Y_{s,t}:(s,t)\notin \Sigma_1]$ with \begin{equation}\label{eq:splitemember}
	R= \Sigma Q_{j,j'} \bar{P}_{j,j'}, 
\end{equation}where $\bar{P}_{j,j'}=\phi(P_{j,j'})$.

\begin{claim}The ideal $I_0:=\langle \bar{P}_{j,j'}: 1\leq j<j'\leq h\rangle$ is prime and we have that \begin{enumerate}
		\item $Y_{\nu+1,\nu_0+1} \notin I_0$, and 
		
		\item$\Sigma (-1)^{\sigma} Y_{1,\sigma(1)}\cdots Y_{\nu_0,\sigma(\nu_0)}\notin I_0$.
	\end{enumerate}
\end{claim}

\begin{proof}[Proof of claim]By definition of $\phi$ we have 
	\[	\bar{P}_{j,j'}=\sideset{}{'}\sum_{l=1}^{\nu}Y_{l,j'} Y_{\nu+l,j}-\sideset{}{'}\sum_{l=1}^{\nu} Y_{l,j} Y_{\nu+l,j'},\tag{79}\label{eq:phipjj3}
	\]where $\Sigma'$ denotes the sums in this range together with the restriction $(\nu+1,j)$, $(\nu+1,j')\notin \Sigma_1$.
	\setcounter{equation}{79}
	
	By assumption, we have that $h\geq 2$, and hence also by \Cref{maxisotropic} $\nu \geq 2 $.
	
	As before, there are at least two nonzero summands in the two sums in \eqref{eq:phipjj3}. In this setting this is trivial since $Y_{2,j}Y_{\nu+2,j'}$ and $Y_{2,j'}Y_{\nu+2,j}$ appear in these sums, by virtue of $\nu\geq 2$, and are not zero. 
	
	Consider the order on the set $\{Y_{i,j}:(i,j)\notin \Sigma_1\}$ given by the reverse lexicographic order on the pairs of indices $(i,j)\notin \Sigma_1$, so that $(i,j)>(i',j')$ if $i<i'$ or $i=i'$ and $j<j'$. As in the proof of Claim $2$ in \Cref{nonarchirel}, considering the graded lexicographic order on $\bar{\Q}[Y_{i,j}:(i,j)\notin \Sigma_1]$ induced by the above order on the $Y_{i,j}$, the leading term $\LT(\bar{P}_{j,j'})$ of $\bar{P}_{j,j'}$ is given by \begin{equation}
		F_{j,j'}:=\begin{cases}\LT(\bar{P}_{j,j'})=Y_{1,j}Y_{\nu+1,j'}&\text{, if } j'>\nu_0+1 \\
		\LT(\bar{P}_{j,j'})=Y_{2,j}Y_{\nu+2,j'}&\text{, if } j'\leq \nu_0+1.\end{cases}	\end{equation}
	
	Let now $P$, $Q\in \bar{\Q}[Y_{i,j}:(i,j)\in \Sigma_0]$ with $P\cdot Q\in I_0$.  The division algorithm in $\bar{\Q}[Y_{i,j}:(i,j)\in \Sigma_0]$, see \cite{clo} $\S$ $2.3$, gives $P=\Sigma Q^{(1)}_{j,j'} \bar{P}_{j,j'}+R_1$ and $Q=\Sigma Q^{(2)}_{j,j'} \bar{P}_{j,j'}+R_2$ with $R_i$ being either $0$ or $\bar{\Q}$-linear combinations of the $F_{j,j'}$.
	
	We then have $R_1\cdot R_2\in I_0$, so that upon writing $R_1=\Sigma a_{j,j'}F_{j,j'}$ and $R_2=\Sigma b_{j,j'}F_{j,j'}$, there exist $Q^{(3)}_{j,j'}\in \bar{\Q}[Y_{i,j}:(i,j)\in \Sigma_0]$ such that \begin{equation}\label{eq:member5}
		(	\Sigma a_{j,j'}F_{j,j'}	)(\Sigma a_{j,j'}F_{j,j'})=\Sigma Q^{(3)}_{j,j'} \bar{P}_{j,j'}.		
	\end{equation}Since both sides of \eqref{eq:member5} are homogeneous degree $2$ polynomials we have $Q^{(3)}_{j,j'}\in \bar{\Q}$ for all $j<j'$. Assume $Q^{(3)}_{j,j'}\neq 0$ for some pair $(j,j')$. 
	
	By our earlier remark, there exists a monomial of the form $Y_{l,j}Y_{l',j'}\neq F_{j,j'}$ that appears in the expression of $\bar{P}_{j,j'}$ as in \eqref{eq:phipjj3} and by definition will not appear in any other $\bar{P}_{i,i'}$. So $I_0$ is indeed a prime ideal by the same argument as in the proof of Claim $2$ of \Cref{nonarchirel}.
	
It is obvious that $Y_{s,t}\notin I_0$ for all $s,t$ by a similar argument as above. For the fact that $\Sigma (-1)^{\sigma} Y_{1,\sigma(1)}\cdots Y_{\nu_0,\sigma(\nu_0)}\notin I_0$, note that $I_0\leq J_0:=\langle Y_{\nu+l, j} : (\nu+l,j\notin \Sigma_1)\rangle$, this can be trivially seen by \eqref{eq:phipjj3}, while $\Sigma (-1)^{\sigma} Y_{1,\sigma(1)}\cdots Y_{\nu_0,\sigma(\nu_0)}\notin J_0$, again for trivial reasons.
\end{proof}

This concludes the proof of \Cref{nonarchsplittings}.
\end{proof}

   \begin{quest}It is the author's expectation that the weaker assumption that there exists $J$ as in \Cref{nonarchsplittings} for which \begin{equation}\label{eq:nonarchcondtion3}
		h\cdot (\Sum{i\in J}{}m'_i)\geq (\Sum{i\in J}{}m'_i \nu_i)+1 
	\end{equation}also leads to polynomials $R_{s,\fin}$. 

We expect that a similar strategy of proof will work in this case as well. One would want to order the family $\drfj{j}\circ \tau_{i,(j)}$, with $\tau_{i,(j)}$ denoting projections into various summands, and finding a convenient $(\nu_0+1)$-dimensional subspace of $W_{dR}$ as in the previous proof. 

The hard part, as in the earlier proofs, is doing this so that the polynomials we denote by $R^{(t)}_1$ in all previous proofs are not in the ideal $I_t$ of trivial relations of the family $\mathcal{G}^{(t)}$, or to more precise establishing this fact in some straightforward way.
\end{quest}
	
\part{The height bound and applications to unlikely intersections}

We are finally able to put everything together and prove \Cref{maintheorem}. After the proof we discuss the case $n=1$, i.e. the case of a one parameter family of curves of genus $g$ degenerating at one point.

\section{The height bound}\label{section:proofoftheresult}
Consider a morphism $f':X'\rightarrow S'$ satisfying the hypotheses of \Cref{maintheorem}, a good cover $C'$ of $S'$, as in \Cref{section:goodcovers}, and the family of G-admissible variations associated to this good cover.

Note that, by \Cref{goodcoverslemma} and the proof of \Cref{desidealtriv}, we know that if the original morphism $f':X'\rightarrow S'$ of our G-admissible variation of $\Q$-HS satisfies the hypotheses of \Cref{maintheorem} so will every G-admissible variation of the family of such variations associated to the good cover $C'$. From standard properties of the Weil height we are thus reduced to establishing \Cref{maintheorem} where we have replaced the curve $S'$ by its good cover $C'$.\\

Following the above remarks we fix throughout the proof a pair $(C',x)$, with $C'$ a curve over a number field and $x\in K(C')$ with only simple zeroes. 

We write $s_t$, $1\leq t\leq t_0$ for the roots of $x$. We assume that we have a  projective morphism $F':\mathcal{X}'\rightarrow C'$, whose only singular values are the ones in the set $x^{-1}(0)$, and let $F'_t:X'_t\rightarrow C'_t$ be defined as in \Cref{goodcovers}. We also write $F:\mathcal{X}\rightarrow C$, where $C:=C'\backslash x^{-1}(0)$, and assume that this morphism, paired with any of the morphisms $F'_t$, satisfies the hypotheses of \Cref{maintheorem}.

\begin{proof}[Proof of \Cref{maintheorem}:] We start with a short lemma that we write down in the form of a claim.
	\begin{claim}The points $P\in C(\bar{\Q})$ such that the set \begin{center}
			$\Sigma(P):=\{v\in \Sigma_{K(P)}:P\text{ is } v\text{-adically close to }0\}$
		\end{center}is nonempty have height bounded uniformly by a constant $C_4(F)$ depending only on $F$.
		
	\end{claim} 
	\begin{proof}[Proof of Claim] Let $P\in C(\bar{\Q})$ be such that $\Sigma(P)=\emptyset$, and let $\xi:=x(P)$. By \Cref{vadicprox} we will then have $|\xi|_v\geq \min\{1,R_v(\mathcal{G})\}$. This implies		
		 \begin{center}
			$h(\xi^{-1})  \leq \rho(\mathcal{G})$,
		\end{center}
		where $\rho(\mathcal{G})$ is the global radius of the collection of all of the power series $y^{(t)}_{i,j}$. From Lemma $2$, $a)$ of Chapter I.$\S 2.2$ of \cite{andre1989g} we have \begin{center}
			$\rho(\mathcal{G})=\max\{\rho(y^{(t)}_{i,h}): 1\leq i\leq \mu, 1\leq j\leq h,1\leq t\leq t_0 \}$.
		\end{center}The Corollary of Chapter VI.$\S 5$ of loc.cit., gives us that $\rho(y^{(t)}_{i,h})<\infty$. Putting these together we get that $h(\xi)=h(\xi^{-1}) \leq \rho(\mathcal{G})<\infty$. 
	
	The result follows by setting $C_4(F):=\rho(\mathcal{G})$.
	\end{proof}
	
	Let $s\in C(L)$ be a point satisfying the conditions in \Cref{maintheorem} for the variation $\V=R^nF^{an}_{*}\Q_{X^{an}_{\C}}$ where $L/K$ is some finite extension. 
	From now on we assume that the set of places $\Sigma(s)$ is non-empty. We let $\xi:=x(s)$, where $x\in K(C')$ is as above the rational function, that only vanishes at $s_t$, $1\leq t\leq t_0$, with respect to which the $y_i\in\mathcal{G}$ are written as power series.
	
	 By \Cref{propendodr} there exists a finite extension $\hat{L}/L$ such that $D_s$ acts on $H^n_{DR}(\mathcal{X}_{s}\times_{L}\hat{L}/\hat{L})$. From \Cref{propdegreebound} we also know that $\hat{L}$ may be chosen so that $[\hat{L}:L]$ is bounded only in terms of $\mu:=\dim_{\Q} H^n(X^{an}_{s},\Q)$. 
	
	Let $y^{(t)}_{i,j}$ be the G-functions that comprise the sub-family $\mathcal{G}^{(t)}$ of $\mathcal{G}$ that encodes the first $h$ columns of the relative $n$-period matrix associated to the morphism $F$ and the singular value $s_t$ of the morphism $F'$. 
	
	By our assumption that $\Sigma(s)\neq \emptyset$, we will then have that at least one of the products defining the polynomials $R_{s,\infty}$ of \eqref{eq:actualrelation} and $R_{s,\fin}$ of \Cref{nonarchirel}, \Cref{nonarchilcenter}, or \Cref{nonarchsplittings} respectively, is non-vacuous and thus defines a non-constant relation among the values of the G-functions of the family $\mathcal{G}$ at $\xi$. Note that both of these are homogeneous with coefficients in $\bar{\Q}$ and their product $R_s:=R_{s,\infty}\cdot R_{s,\fin}$ will thus be homogeneous with coefficients in $\bar{\Q}$ and, by \Cref{proofnottrivial} and \Cref{nonarchirel}, \Cref{nonarchilcenter}, or \Cref{nonarchsplittings} respectively, will have degree $\leq C_3(F)[L:\Q]$, where $C_3(F)$ is a positive constant depending on the morphism $F$.
	
Combining \Cref{proofnottrivial} and \Cref{nonarchirel}, \Cref{nonarchilcenter}, or \Cref{nonarchsplittings} respectively depending on the case, we know that these polynomials define relations among the values of the G-functions in question at $\xi$ that are non-trivial, i.e. they do not hold on the functional level. Therefore the relation on these values defined from $R_s$ is also non-trivial. 

Finally, we note that this relation is also global. Indeed, we have that for all $v\in \Sigma(s)$ the power series $i_v(y_{i,j}^{(t)})(x)$ converge at $i_v(\xi)$, by definition and $i_v(R_s(i_v(y_{i,j}^{(t)})(i_v(\xi))))=0$ by construction. 
	
	Since we know that this relation is both non-trivial and global we get from \Cref{hasse} that \begin{equation}\label{eq:heightboundprefinal}
		h(\xi)\leq c_1(\vec{y}) \delta^{3\mu h-1}(\log \delta +1),
	\end{equation}where $\delta$ is the degree of the polynomial \eqref{eq:actualrelation} in $\bar{\Q}[x_1,\ldots x_{h\mu}]$. 
	
	By our remarks above we know that $\delta\leq C_3(F)\cdot[L:\Q]$. Combining this with \eqref{eq:heightboundprefinal} we get that if the point satisfies the hypotheses of our theorem and $\Sigma(s)\neq\emptyset$ there exist positive constants $C_5$, $C_6$, independent of the point $s$, such that \begin{equation}\label{eq:heightboundfinal}
		h(\xi)\leq C_5 ([L:\Q]+1)^{C_6},
	\end{equation}as we wanted.\\
	
	By replacing $C_5$ in \eqref{eq:heightboundfinal} by $\max\{C_5, C_4(F)\}$, we find constants as we wanted such that \eqref{eq:heightboundfinal} holds irrespective of whether $\Sigma(s)$ is empty or not. Our result then follows by standard arguments\footnote{See \cite{hindrysilverman} Theorem B.$2.5$.} in the theory of heights comparing $h(s)$ with $h(x(s))=h(\xi)$.
\end{proof}
	
\section{Applications to the case n=1}\label{section:applications}

Here we present an application of \Cref{maintheorem} to the moduli space $\mathcal{M}_g$ of curves of genus $g$. We also compare this result with what was known using previous height bounds. We start with a short review of material we will need to pass from families of curves to families of Jacobians. In particular we will need some light input from variations of mixed $1$-motives and the role of local monodromy in determining the number $h$ of columns of G-functions in the relative period matrix.\\

Throughout this section we consider $S\hookrightarrow \mathcal{M}_g$ smooth irreducible Hodge generic curves defined over $k\subset \C$ algebraically closed, unless otherwise stated. We let $f:X\rightarrow S$ be the associated family of genus $g$ smooth projective curves obtained as the pullback of the universal family. Let us assume that the completion of $S$ in $\bar{\mathcal{M}}_g$ intersects the boundary $\bar{\mathcal{M}}_g\backslash \mathcal{M}_g$ at some point $s_0$ and let $S':=S\cup \{s_0\}$. With respect to \Cref{maintheorem} we are thus in the case where $n=1$, if $k=\bar{\Q}$.

We will write $\V:=R^1f^{an}_{*}\Q$ for the associated variation of polarized weight $1$ $\Q$-HS. Note that in this case the Hodge conjecture is in fact known classically from results of Lefschetz and that the Hodge genericity of the curve $S$ implies that the generic special Mumford-Tate group $G_{smt}(\V)=Sp(\mu,\Q)$. 

\subsubsection*{Some notes about families of 1-motives}

Associated to this family of curves we get an associated family of Jacobians which we denote by $F:\mathcal{A}\rightarrow S$. These Jacobians will be $g$-dimensional principally polarized abelian varieties of dimension $g$. Furthermore it is well known that the variations of weight $1$ polarized Hodge structures associated to $R^1F^{an}_{*}\Q$ and $\V$ coincide. 

The number $h$ in \Cref{maintheorem} then has a natural interpretation in terms of the degeneration of the family of Jacobians. Indeed, by \Cref{andresexistence} we also know that $h=\rank(N)$, where $N$ is the nilpotent endomorphism associated to the local monodromy around the degeneration of the family $\mathcal{A}$ at the point $s_0$. This number is also described by the toric rank of the connected component of the fiber $\tilde{\mathcal{A}}_0$ at $s_0$ of the N\'eron model $F':\tilde{\mathcal{A}}\rightarrow S'$ of the generic fiber $\mathcal{A}_{\eta}$ of $F$ over $S'$, see for example \cite{delignehodge3, jumpsmono}.

\subsection{The height bounds when g=h} 

Let us assume from now on that $g=h$, in other words the family of Jacobians defined has completely multiplicative degeneration at the point $s_0$. When studying this case Y. Andr\'e in \cite{andre1989g} gave the following definition:
\begin{definition}\label{exceptional}A point $s\in S(\C)$ will be called \textbf{exceptional} if the algebra $D_s$ is not equal to the generic algebra of Hodge endomorphisms of the variation.
\end{definition}

We note that Y. Andr\'e studied families where the generic endomorphism algebra could be a totally real field and not just $\Q$ as in our case. So for us ``exceptional points'' are just those for which $D_s\neq \Q$.

The archimedean relations known in the case $h=g$, for Hodge generic one-parameter families of abelian varieties with $g\geq 2$ have already been studied, as noted in the introduction, and are stronger than those in \Cref{constpseudocm}, at least when $g$ is odd. Indeed, there it is known by work of Y. Andr\'e, in the case $g$ is odd, and C. Daw and M. Orr, in the case $g$ is even, that the following holds:
\begin{theorem}[\cite{andre1989g}, \cite{daworr}]\label{andredaworr}Let $s\in S(\bar{\Q})$ be exceptional. Furthermore, if $g$ is even assume that $D_s\not\hookrightarrow M_g(\Q)$.
	
	Then there exists a polynomial $R_{s,\infty}$ satisfying the same properties as in \Cref{proofnottrivial}, assuming $S$ is defined over $\bar{\Q}$ here.
\end{theorem}
\begin{proof}\footnote{The author thanks C.Daw for noticing that the author had misquoted here the conclusion of his Theorem with M. Orr in an earlier version of this paper.}In the case $g$ is odd this is the original construction of Andr\'e, see $X.2.4.1$ of \cite{andre1989g}.
	
	In the case $g\geq2$ then this is either the construction in \Cref{proofnottrivial}, when $s\in\mathcal{E}$ in the notation of \Cref{stronglyexceptional}, or the analogue of \Cref{proofnottrivial} with \Cref{constpseudocm} replaced by the construction of Theorem $8.1$ of \cite{daworr} which assumes that $D_s\not\hookrightarrow M_g(\Q)$ to create said relations at archimedean places. 
\end{proof}
	
We are thus in position to prove \Cref{application1}.

\begin{proof}[Proof of \Cref{application1}] Let $s$ be a point in the set $\mathcal{E}$ in question and note that by the above remarks we are in the case $h=g\geq 2$. Let us write $L=K(s)$, and $\Sigma(s)=\Sigma(s,\infty)\cup \Sigma(s,\fin)$, where  \begin{center}
		$\Sigma(s, \infty)=\{ v\in \Sigma_{L,\infty} : s\text{ is } v\text{-adically close to }0\}$, and 
	
	$\Sigma(s, \fin)=\{ v\in \Sigma_{L,f} : s\text{ is } v\text{-adically close to }0\}$.
	\end{center}
	
	If $\Sigma(s)=\emptyset$ we proceed to bound $h(s)$ by an absolute constant, say $c_0$, as in the proof of \Cref{maintheorem}. If $\Sigma(s,\fin)=\emptyset$ using \Cref{andredaworr} we may apply \Cref{hasse} to find  constants $c_3$, $c_4$ in the place of $c_1$, $c_2$ above. Thus, from now on assume that $\Sigma(s,\fin)\neq\emptyset$.
	
	Let us write $\mathcal{A}_s\sim A_1^{m_1}\times \cdots \times A_r^{m_r}$ for the decomposition of $\mathcal{A}_s$ into powers of simple abelian varieties, with $A_i$ not isomorphic to $A_j$ for $i\neq j$. Let us also write $\nu_i:=\dim A_i$ and $D_i =\End^{0}_{\C}(A_i)$, $F_i$ for its center, and $e_i=[F_i:\Q]$.  
	
        For now assume that $g$ is odd. Then, by definition $g=h=\Sum{i=1}{r} \nu_i m_i$. If $r\geq 2$ we can thus find we must have that $h=g\geq \nu_1+1$. Here we proceed as in the proof of \Cref{maintheorem} using \Cref{andredaworr} and \Cref{nonarchsplittings}. This gives us $c_5$, $c_6$ in place of $c_1$ and $c_2$, independent of the point $s$ with these properties.
    
    If $r=1$ and $m_1\geq 2$ we proceed exactly as in the previous case. We thus get $c_7$, $c_8$ in place of $c_1$ and $c_2$, again independent of the point $s$ with these properties.
    
    Finally, assume $r=1$, $m_1=1$. Then $\mathcal{A}_s$ is simple with algebra $D\neq \Q$. If $e=[Z(D):\Q]\geq 2$ we then proceed as in \Cref{maintheorem} using \Cref{nonarchirel}, since $(e-1)h\geq g=\nu_1\geq h$ in this case. Again we obtain $c_9$, $c_10$ in place of $c_1$ and $c_2$, which will be independent of the point $s$ with these properties. 
    
    If $e=1$ we know that we must be in the case where $D$ is of either type $II$ or $III$, a quaternion algebra over $\Q$. This forces $2|g$, see for example the table in page $187$ of \cite{mumfordabelian}. Thus we have exhausted the case $g$ is odd. 
    
   Now assume that $g$ is even.  If $s\in \mathcal{E}$, as in \Cref{stronglyexceptional} we are done by \Cref{maintheorem}, getting constants $c_5$ and $c_6$ as above. On the other hand, by definition of $\mathcal{E}_g$ in this case, if $D_s\not\hookrightarrow M_g(\Q)$ we have $\Sigma(s,\fin)=\emptyset$ by Lemma $3.4$ of Chapter $X$ of \cite{andre1989g}, thus we are in the case at the beginning of the proof.
    
    Taking $c_1$ to be the maximum of $c_0$, and the $c_i$ with $i$ odd and $c_2$ to be the maximum of the above $c_i$ with $i$ even we are done.\end{proof}

\subsection{Large Galois orbits}

The most important application of the height bounds of \Cref{maintheorem} that we had in mind, in the context of the Pila-Zannier strategy towards the Zilber-Pink Conjecture, is their usage in establishing so the largeness of the Galois orbits of atypical points. 

Following \Cref{maintheorem}, or \Cref{application1} when $g=h$, and the techniques of \cite{daworr,daworr2,daworr3} one can prove: 

\begin{cor}\label{lgoforcurvesincurves}Let $f:X\rightarrow S$, defined over some number field $K$, be the morphism underlying a G-admissible variation of Hodge structures with fibers smooth irreducible curves of genus $g\geq 2$. 
	
	Consider the set of points $s\in S(\bar{\Q})\cap \mathcal{E}_h$, where $\mathcal{E}_h$ is defined to be the set $\mathcal{E}$ of \Cref{stronglyexceptional} if $h<g$ and $\mathcal{E}_h=\mathcal{E}_g$ as defined in \Cref{application1} when $h=g$. 
	
	Then there exist positive constants $c$, $c'$ such that for all of the above $s$ we have\begin{equation}
		c\cdot |\disc(R_s)|^{c'}\leq \aut (\C/K)\cdot s,	\end{equation}
where $R_s:=\End_{\bar{\Q}}(\mathcal{A}_s)$ is the ring of endomorphisms of the Jacobian $\mathcal{A}_s$ of the fiber $X_s$. 
\end{cor}

Note here that $R_s$ is an order of the semisimple algebra $D_s$ of endomorphisms of Hodge structures at the point.

In other words, the Galois orbit of any point with the above properties is bounded from below. In the same spirit we can also prove the following slight variation of the above corollary.

\begin{cor}\label{lgogeneral} Let $f:X\rightarrow S$ be as above. For $s\in \mathcal{E}_h$, defined as in \Cref{lgoforcurvesincurves}, let us write \begin{center}
		$\mathcal{A}_s\sim A_1^{m_1}\times \cdots \times A_r^{m_r}$
	\end{center} for the usual decomposition of the corresponding Jacobian $\mathcal{A}_s$.
	
	Let $R_i:=\End_{\bar{\Q}}(A_i)$ be the endomorphism ring of $A_i$. Then there exist positive constants $c$, $c'$, such that for all points in $\mathcal{E}_h\cap S(\bar{\Q})$ we have \begin{equation}
		c\cdot (\max|\disc(R_i)| )^{c'} \leq \aut (\C/K)\cdot s.
	\end{equation}
	
	\end{cor}

\begin{proof}Base changing the decomposition of $\mathcal{A}_s$ by the field $\widehat{K(s)}$ of \Cref{propendodr} we may assume from now on, without loss of generality, by using the analogue of \Cref{propdegreebound} for abelian varieties proven in \cite{silverberg}, $\widehat{K(s)}=K(s)$. In particular, we now have that $\End_{\bar{\Q}}\mathcal{A}_s=\End_{K(s)}\mathcal{A}_s$. From now on we let $d:=[K(s):K]$.
		
	By \cite{mwihes}, see the discussion on page $6$ and page $23$, we get that there exist positive constants $c_1$ and $\kappa_1$ depending only on $g$ such that 
	\begin{equation}\label{eq:mw1}\max\{h(A_i)\}\leq c_1 \max\{ d, h_F(\mathcal{A}_s)\}^{\kappa_1}.
			\end{equation}
		
		On the other hand, from \cite{mwendoesti} one knows that for all $i$ there exist constants $\alpha_i$, $\lambda_i$ that depend only on the dimension $g_i:=\dim(A_i)$ such that \begin{equation}\label{eq:mw2}
|\disc(R_i)|\leq \alpha_i \max\{d,h_F(A_i)\}^{\lambda_i}
		\end{equation}Taking $\kappa_2=\max\{\lambda_i\}$ and $c_2=\max\{\alpha_i\}$ we then get $(\max|\disc(R_i)| )\leq c_2 \max\{d,h_F(A_i)\}^{\kappa_2}$, which combined with \eqref{eq:mw1} gives \begin{equation}
(\max|\disc(R_i)| )\leq c_3 \max\{ d, h_F(\mathcal{A}_s)\}^{\kappa_3},	
\end{equation}the $c_3$ and $\kappa_3$ depending only on $g$. To see this last dependence we note that $g_i\leq g$ and all the constants that we use from \cite{mwendoesti} are increasing as functions in their dependence on $g_i$, see the bottom of page $650$ of \cite{mwendoesti} for this.

The conclusion follows by using the height bounds of \Cref{maintheorem}, or \Cref{application1} depending on whether $h=g$ or not, the exact same way as in the proof of Theorem $6.5$ in \cite{daworr2}.
\end{proof}

\subsection{Zilber-Pink type statements}\label{section:zpstatements}

Here we use results of Urbanik as well as C. Daw and M. Orr that reduce the Zilber-Pink conjecture, in the context of atypical points who owe their atypicallity to having large endomorphism algebras, to height bounds as those established here.\\

We start with the following:
\begin{theorem}\label{zpforcurvesincurvessplit}Let $S\hookrightarrow \mathcal{M}_g$, with $g\geq 3$, be a Hodge generic smooth irreducible curve defined over $\bar{\Q}$. Assume that the compactification $\bar{S}$ of $S$ intersects the boundary $\bar{\mathcal{M}}_g\backslash \mathcal{M}_g$ at a point $s_0$ and let $S':=S\cup \{s_0\}$. 
	
	Let $f:X\rightarrow S$ be the associated $1$-parameter family of smooth irreducible projective genus $g$ curves. Let  $h:=t_{\rank}\tilde{\mathcal{A}}'_{s_0}$ be the toric rank of $\tilde{\mathcal{A}}'_{s_0}$, i.e. the connected component of the identity of the fiber of the N\'eron model of the family of Jacobians at the degeneration $s_0$ as before.
	
Let $\mathcal{E}_{h}$ be the points in $\mathcal{M}_g(\C)$ defined by the same conditions as those for the set $\mathcal{E}$ in \Cref{stronglyexceptional} if $h<g$, and $\mathcal{E}_h$ be the set of points in $\mathcal{M}_g(\C)$ defined by the same conditions as those for the set $\mathcal{E}_g$ in \Cref{application1} when $h=g$.

 Consider the set $\mathcal{I}_h\subset\mathcal{E}_{h}$ whose points $s$ are either CM or such that the corresponding Jacobian $\mathcal{A}_s$ is isogenous to a non-simple abelian variety.
	
	Then $\mathcal{I}_g\cap S(\C)$ is finite.	
\end{theorem}

\begin{proof}The finiteness of CM-points follows from the Andr\'e-Oort Conjecture in $\mathcal{A}_g$, which is a theorem of J. Tsimerman \cite{tsimermanag}. So from now on we assume the points in question are not CM.
	
	
	Since the points of interest are atypical and are in the intersection of a special subvariety of $\mathcal{A}_g$, which is thus defined over $\bar{\Q}$, with our curve, which is also defined over $\bar{\Q}$, we get $\mathcal{I}_g\cap S(\C)\subset S(\bar{\Q})$. 
	
	Since $g\geq 3$ the result then follows by combining \Cref{maintheorem} with Corollary $7.15$ of \cite{davidg}, noting that here $H_S=G_S$ in the notation of loc. cit. by \Cref{monoatgeneric}, and the discussion at the end of $\S$ $8.1$ of loc.cit.. 
\end{proof}

Focusing on points $s\in \mathcal{E}_h$, the latter defined as in \Cref{zpforcurvesincurvessplit}, for which the algebra $D_s$ is simple of type $I$ or $II$ we can in fact prove similar results.

\begin{theorem}\label{zpsimplecurves}Let $S\hookrightarrow \mathcal{M}_g$ and $\mathcal{E}_{h}$ be as in \Cref{zpforcurvesincurvessplit} with $h\geq 2$.
	
	Consider the set $\mathcal{S}_{I,II}\subset \mathcal{E}_{\mathcal{M}_g}$ corresponding to points in $\mathcal{M}_g(\C)$ for which $D_s$ is simple of type $I$ or $II$.
	
	Then $\mathcal{S}_{I,II}\cap S(\C)$ is finite.
\end{theorem}

\begin{proof}As in the proof of \Cref{zpforcurvesincurvessplit}, we see that  $\mathcal{S}_{I,II}\cap S(\C)\subset S(\bar{\Q})$.
	
	Now we use Theorem $1.3$ of \cite{daworr3} for the associated family of Jacobians. Note that Conjecture $1.5$ of loc. cit. is then satisfied in this case by \Cref{lgoforcurvesincurves}. 
	

\end{proof}

Combining the above theorems together in the case $g=h$ we get \Cref{zpgish}.

\begin{remarks}1. For the case $g=2$, see page $50$ of \cite{davidg}, it is known that if the curve $S$ intersects the boundary of $\mathcal{M}_2$ in $\bar{\mathcal{M}}_2$ in the locus $\mathcal{B}$ described on page $1$ of loc. cit., then we will have $h=2$.
	
	As noted in the introduction, all that is needed to establish the Zilber-Pink conjecture in full, following \cite{daworr,daworr2}, for such curves is an analogue of \Cref{constpseudocm} for points with algebra of endomorphisms isomorphic to $M_2(\Q)$, i.e. points for which the Jacobian of $X_s$ is a product of isogenous non-CM elliptic curves.\\

	2. We expect that one can prove the analogous statement of Theorem $1.2$ of \cite{daworr3} for $D$ a division algebra of type $III$ or $IV$ in \Cref{albert}. The techniques of \cite{daworr3} should then be able to reduce the Zilber-Pink conjecture for intersections of our curve $S\subset \mathcal{M}_g$ with points whose algebras are simple and of the above types to a Large Galois orbits hypothesis. Our \Cref{lgoforcurvesincurves} should then kick in to deal with this hypothesis.
	
	This would for example prove the Zilber-Pink conjecture for all Hodge generic smooth curves completely for $g=h=3$. It would also give if $h=g=$odd, the following Zilber-Pink-type statement:
	
	\begin{center}
		The points of intersection of a smooth irreducible Hodge generic curve in $\mathcal{M}_g$ for which $h=g$ with the set of all special subvarieties $\mathcal{S}\subset \mathcal{M}_g$ which are such that $\End(H^1(X^{an}_s,\Q))^{G_{\mathcal{S}}}\neq \Q$ for any Hodge generic point $s\in \mathcal{S}(\C)$, where $G_{\mathcal{S}}$ is the generic Mumford-Tate group of $\mathcal{S}$, is finite.
	\end{center}
	
	In other words, we expect that the above results will establish the finiteness, in any such curve in $\mathcal{M}_g$ with $h=g=$odd, of points whose Hodge structure has non-trivial endomorphisms, i.e. all exceptional points in the original \Cref{exceptional} of Y. Andr\'e. 
	\end{remarks}

Another possible avenue to establish the Zilber-Pink Conjecture in this setting follows naturally from Corollary $7.14$ in \cite{davidg}, the discussion preceding it, and \Cref{lgogeneral}.

\begin{conj}\label{conjectureondet}Let $X$ be a simple $g$-dimensional abelian variety defined over $\bar{\Q}$ with ring of endomorphisms $R:=\End_{\bar{\Q}}(X)$.
	
	Let $\phi_1,\ldots,\phi_d$ be some integral basis of $R$ and let $D:=R\otimes_{\Z}\Q$ be the algebra of endomorphisms of $X$. Then there exist positive constants $c_1$ and $ c_2$ that depend only on $g$ and the type of $D$ in \Cref{albert}, such that

	\begin{equation}
c_1|\disc(R)|^{c_2}\geq \max\{\det(\phi_1),\ldots,\det(\phi_d)\},
	\end{equation}where the determinants of the $\phi_i$ are with respect to the natural embedding $D\hookrightarrow \GL_{2g}(\Q)$.
\end{conj}

We note that \Cref{conjectureondet} can also be naturally phrased without mention of $X$, but rather as a conjecture about bounds of the determinants of an integral basis of a maximal order of a division algebra $D$ that has the same properties as the algebras in \Cref{albert}. The determinants considered here will be those coming from an embedding $D\hookrightarrow \GL_{2g}(\Q)$ associated to some representation of the algebra $D$.

Combining the aforementioned corollary of Urbanik with \Cref{lgogeneral} it is easy to see that establishing \Cref{conjectureondet} would lead to Zilber-Pink type statements in our setting.	
	
\subsection{Examples}\label{section:examples}

We have already seen several examples of points in the sets $\mathcal{E}$ defined in \Cref{stronglyexceptional} in the case $n=1$ and $g=h$. Let us move away from the condition $h=g$, thus abandoning \Cref{andredaworr} for \Cref{proofnottrivial}.

The conditions imposed on $\mathcal{E}$ by the need to have archimedean relations are definitely stronger than those coming from the respective need for relations at finite places. We deal with some extremal cases that we think highlight some of the restrictions.\\

Let us write $V_s=V_1^{m_1}\oplus\ldots\oplus V_r^{m_r}$ for the decomposition of the Hodge structure at some point $s\in \mathcal{E}$, associated to some weight $1$ G-admissible variation of Hodge structures. Let us set $D_i:=\End_{HS}(V_i)$, $F_i=Z(D_i)$, $e_i=[F_i:\Q]$ and $2\nu_i=\dim V_i$. \\

\textbf{Case 1: $h=g-1\geq 2$} In this case, it is easy to see that every such point for which $e_i\geq 3$ for some $i$ will satisfy at least one of the first two conditions defining $\mathcal{E}$. Assume this holds for $i=1$ and consider the last two conditions of \Cref{stronglyexceptional}. If the last of those two fails we must have that $r=1$ and $\nu_1=g$. Therefore, in the previous step we would have that $i=1$ and $e_1\geq 3$ which obviously implies the third condition on \Cref{stronglyexceptional} in this case.

In conclusion, as long as there exists some $i$ for which $e_i\geq 3$ we get $s\in \mathcal{E}$!\\

\textbf{Case 2: $h=2$} If $r=m_1=1$ then the only points that satisfy the first two conditions are those for which $[F:\Q]\geq g$ and $D$ is of type $IV$.  Note that in this case the third condition is satisfied automatically.  CM-points are definitely contained in the above, but this set contains potentially more ''pseudo-CM points''! 

If we are allowed to work with $r\geq 2$ we can then get an abundance of examples. Take for instance $g\geq 3$ and assume that $V_s$ has a two-dimensional summand $V_1$ of CM-type. Then regardless of the rest of the summands $s\in \mathcal{E}$ since the second and fourth condition are satisfied! 

Similarly, assume that $V_s$ has a summand that is of type $IV$ with $D_1$ such that $[F_1:\Q]=\frac{(\dim_{\Q} V_1)}{2}\geq 2$, in particular not a CM-HS. Then again $s\in \mathcal{E}$ since the first and third conditions are satisfied in \Cref{stronglyexceptional}. Again this happens regardless of what the algebras of the rest of the summands look like!

Thus in the case $h=2$ for large enough $g$ our results go slightly beyond Theorem $1.1$ of \cite{davidg}, where we can now deal with points that have a proper summand that is pseudo-CM, i.e. of type $IV$ in Albert's classification with large center but not necessarily CM.

		\part*{Appendix}	
	\appendix
	\bookmarksetupnext{level=-1}	\addappheadtotoc

\section{Some notes on polarizations}\label{appendixpolarizations}

\subsection{The non-relative case}

\textbf{Notation:} Let $X/k$ be a smooth projective variety over a subfield $k$ of $\C$ and let $n=\dim_k X$.\\

\subsubsection*{Short review on polarizing forms}
For all $d\in \N$ there exist non-degenerate bilinear polarizing forms
\begin{center}$\langle ,\rangle_{DR} : H^d_{DR} (X/k)\otimes_k H^d_{DR} (X/k)\rightarrow k$, and 
	
\end{center}\begin{center}
$\langle , \rangle_{B} : H^d(X^{an}_{\C},\Q)\otimes_{\Q} H^d(X^{an}_{\C},\Q)\rightarrow (2\pi i)^{-d}\Q=\Q(-d)$,
\end{center}on de Rham cohomology and Betti cohomology respectively. We also write $\langle ,\rangle_{B}=(2\pi i)^{-d} \langle ,\rangle $, where $\langle ,\rangle $ has values in $\Q$ and is of the same type, i.e. symmetric or skew-symmetric, as $\langle ,\rangle_{B}$.

These two are the polarizing forms of the corresponding cohomology group. Their existence follows from the fact that $X$ is projective and smooth and they are constructed via a very ample line bundle \cite{delhodge2}. 

We also have that, via the two embeddings $k\hookrightarrow \C$ and $(2\pi i)^{-d} \Q\hookrightarrow \C$, and the comparison isomorphism 
\begin{center}
	$P^{d}_{X}: H^d(X/k)\otimes_{k} \C \rightarrow H^{d} (X^{an}_{\C} ,\Q)\otimes_{\Q} \C$,
\end{center}the two bilinear forms $\langle , \rangle_{DR}$ and $\langle , \rangle_{B}$ are compatible under $P^d_{X}$, meaning that 
\begin{equation}\label{eq:polarscomp}\langle v,w \rangle_{DR}=\langle P^d_{X}(v) ,P^d_{X}(w) \rangle_{B}, \forall v,w\in H^d_{DR} (X/k)\otimes_k\C.	
\end{equation}

\subsection*{Relations on periods-Notation}

From now on we assume that $d=n=\dim_kX$.  We can and do consider from now on the above polarizing forms $\langle , \rangle_{DR}$, $\langle , \rangle_{B}$, and the form $\langle , \rangle$ as vectors in the spaces $H^n_{DR} (X/k)^{*} \otimes_kH^n_{DR} (X/k)^{*}$, $(H^n(X^{an}_{\C},\Q)^{*} \otimes_{\Q} H^n(X^{an}_{\C},\Q)^{*})(-n)$, and $H^n(X^{an}_{\C},\Q)^{*} \otimes_{\Q} H^n(X^{an}_{\C},\Q)^{*}$ respectively. 

In this case, i.e. $d=n$, via Poincar\'e duality, these forms will correspond to elements $t_{DR} \in H^n_{DR}(X/k)\otimes_{k}H^n_{DR}(X/k)$, $t_B\in (H^n(X^{an}_{\C},\Q)\otimes_{\Q} H^n(X^{an}_{\C},\Q))(n)$, and $t\in H^n(X^{an}_{\C},\Q)\otimes_{\Q} H^n(X^{an}_{\C},\Q)$, respectively.

The compatibility of the comparison isomorphism $P^n_{X}$ with Poincar\'e duality implies that $P^n_X\otimes P^n_X (t_{DR}) =t_B=(2\pi i)^{n}t$. In particular $t_{DR}$ is a Hodge class defined over the field $k$. For cycles such as this it is known\footnote{See page $169$ of \cite{andre1989g}.} that they impose polynomial relations among the $n$-periods with coefficients in the field $k((2\pi i)^n)$.

In what follows we show that the aforementioned relations imposed by $t_{DR}$ are in fact the Riemann-relations, i.e. they are the equations imposed on the $n$-periods by \eqref{eq:polarscomp}. This is used without proof by Andr\'e in, essentially, the case where $n=1$. The author is sure that this part is known to experts in the field and includes it only for the sake of completeness of the exposition.\\

\textbf{Notation:} We consider from now on a fixed basis $\{ \gamma_i: 1\leq i\leq \mu:=\dim_\Q H^n(X^{an}_{\C},\Q)\}$ of $H_n(X^{an}_{\C},\Q)$ and we let $\gamma_i^{*}$ be the elements of its dual basis, which constitutes a basis of $H^n(X^{an}_{\C},\Q)$. We also consider $\omega_i$, $1\leq i\leq \mu$, a fixed $k$-basis of $H^n_{DR}(X/k)$.

With respect to these choices the isomorphism $P^n_X$ corresponds to the matrix $( \int_{\gamma_j} \omega_i )$. We denote this matrix also by $P^n_X$ so that the isomorphism is nothing but $P^n_X(v)=\prescript{t}{}{v} P^n_X$, where on the right we have the matrix acting on the right. Vectors in the various spaces will be considered as column vectors in the various bases. Finally, we denote the matrix of the $n$-periods by $P:= (2\pi i)^{-n} P^n_X$.\\

With the above notation fixed we let $\langle \omega_i,\omega_j\rangle_{DR}=d_{i,j}$ and let $M_{DR}=(d_{i,j})\in \GL_{\mu}(k)$, which will be the matrix corresponding to the form $\langle,\rangle_{DR}$, i.e. \begin{center}
	$\langle v,w\rangle_{DR}=\prescript{t}{}{v}M_{DR} w$.
\end{center}Considering, alternatively as above, $\langle,\rangle_{DR}$ as an element of the space $H^n_{DR} (X/k)^{*}\otimes_{k} H^n_{DR} (X/k)^{*}$, the above are equivalent to
\begin{center}
	$\langle ,\rangle_{DR} =\Sum{i,j=1}{\mu} d_{i,j} \omega^{*}_i\otimes \omega^{*}_j$.
\end{center}

Similarly we let $q_{i,j}=\langle \gamma^{*}_i,\gamma^{*}_j\rangle \in \Q$ and set $M_B=(q_{i,j})\in \GL_\mu(\Q)$. This implies that $\langle \gamma_i^{*} ,\gamma_{j}^{*}\rangle_B =(2\pi i)^{-n} q_{i,j}$. Same as above these relations can be rewritten as 
\begin{center}
	$\langle v,w\rangle=\prescript{t}{}{v} M_B w$ and $\langle v,w\rangle_{B}=\prescript{t}{}{v} ((2\pi i)^{-n} M_B)w$,
\end{center}for all $v,w \in H^n(X^{an}_{\C} ,\C)$. Alternatively, if we were to consider $\langle,\rangle $ and $\langle ,\rangle_{B}$ as elements of $H^n(X^{an}_{\C},\C)^{*}\otimes_{\C} H^n(X^{an}_{\C},\C)^{*}$ we can write these as $\langle ,\rangle =\sum_{i,j=1}^{\mu} q_{i,j} \gamma_i\otimes\gamma_j$, and $\langle ,\rangle_{B} =\sum_{i,j=1}^{\mu}(2\pi i)^{-n} q_{i,j} \gamma_i\otimes\gamma_j $ respectively.

We now consider the Poincar\'e duality isomorphisms $\Pi_{DR}:H^n_{DR} (X/k)\rightarrow H^n_{DR} (X/k)^{*}$ and $\Pi_B:H^n(X^{an}_{\C},\Q)\rightarrow H^n(X^{an}_{\C},\Q)^{*}$, which we have since $\dim_k X=n$. With respect to the bases $\{ \omega_i \}$ and $\{\omega^{*}_i \}$ the isomorphism $\Pi_{DR}$ corresponds to an invertible matrix which we denote by $A_{DR}\in\GL_{\mu}(k)$. Similarly, with respect to the bases $\{ \gamma_i\}$ and $\{\gamma^{*}_i\}$ we get the invertible matrix $A_B$ corresponding to $\Pi_B$.

Finally, let us write $t_{DR} =\sum_{i,j=1}^{\mu}\lambda_{i,j} \omega_i\otimes \omega_j$, $t=\sum_{i,j=1}^{\mu}\tau_{i,j}\gamma_i^{*} \otimes \gamma_{j}^{*}$ and $t_B=\sum_{i,j=1}^{\mu}(2\pi i)^{-n}\tau_{i,j}\gamma_i^{*} \otimes \gamma_{j}^{*}$, where $\lambda_{i,j}\in k$ and $\tau_{i,j}\in\Q$. We also define $\Lambda_{DR}=(\lambda_{i,j})$ and $\Lambda_{\Q}=(\tau_{i,j})$.
\subsection*{Classes and forms}

With the above notation fixed from now on we turn to describing the relation between the classes $t_{DR}$, and $t$ and the respective forms.\\

By definition we have $\Pi_{DR}^{\otimes 2}(t_{DR})=\langle ,\rangle_{DR}$. This implies that \begin{equation}\label{eq:basiscyclesdr}
	\sum_{i,j=1}^{\mu} \lambda_{i,j} \Pi_{DR}(\omega_i) \otimes \Pi_{DR}(\omega_j)=\sum_{i,j=1}^{\mu} d_{i,j} \omega_i^{*}\otimes \omega_j^{*}.
\end{equation}We know that $\Pi_{DR}(\omega_i)=\Sigma a_{i,j} \omega_j^{*} $, with $A_{DR} =(a_{i,j})$ . Applying this to \eqref{eq:basiscyclesdr} it is easy to see, with a few trivial computations, that $\prescript{t}{}{A}_{DR} \Lambda_{DR}A_{DR} =M_{DR}$, or equivalently we get the equality\begin{equation}\label{eq:matricesdr}
\Lambda_{DR}=\prescript{t}{}{A}^{-1}_{DR} M_{DR} A^{-1}_{DR}.
\end{equation}

Similarly for the pair $t$ and $\langle ,\rangle $ we find that\begin{equation}\label{eq:matricesbetti}
	\Lambda_{\Q}=\prescript{t}{}{A}_{B}^{-1} M_B A_B^{-1},
\end{equation}coming from the equality $\Pi_B^{\otimes 2} (t)=\langle ,\rangle$.\\

\begin{flushleft}
	\textbf{The relation given by $t_{DR}$.}
\end{flushleft}
We review how a relation on the $n$-periods is constructed from $t_{DR}$. We start from the equality $(2\pi i)^{-n} (P_X^n)^{\otimes 2} (t_{DR} )=t$. This in turn implies that for all $l,m$, with the notation as above, we have \begin{equation}
	\sum_{i,j=1}^{\mu} \lambda_{i,j} ((2\pi i)^{-n} \int_{\gamma_l}^{} \omega_i)   (   (2\pi i)^{-n} \int_{\gamma_m}^{} \omega_j ) =(2\pi i)^{-n} \tau_{l,m}.	
\end{equation}

These equations are the relations between $n$-periods that we eluded to earlier. Putting them altogether the previous relation is equivalent to the equality 
\begin{equation}\label{eq:relationsmatrix}
	\prescript{t}{}{P} \Lambda_{DR} P = (2\pi i )^{-n} \Lambda_{\Q}.
\end{equation}

\begin{flushleft}
	\textbf{Comparing the matrices $A_B$ and $A_{DR}$.}
\end{flushleft}

Earlier on we had the matrices $A_{DR}$ and $A_B$ corresponding to the respective Poincar\'e duality isomorphisms. We saw in \eqref{eq:matricesdr} and \eqref{eq:matricesbetti} how these matrices relate the ``$\Lambda$-matrices'' and ``$M$-matrices''. We would like to replace the ``$\Lambda$-matrices in \eqref{eq:relationsmatrix} by the corresponding ``$M$-matrices'', showing thus that the relations created are nothing but the Riemann-relations\footnote{See \Cref{section:subsectionnotationsnontrivial} for a definition and the reason of why we needed these.} . The first step is to describe how the matrices $A_{DR}$ and $A_B$ relate to one another.\\

We had the isomorphisms $\Pi_{DR}$ and $\Pi_B$ and the matrices $A_{DR}$ and $A_B$ that represented these with respect to the bases we have chosen. We know that the comparison isomorphism $P^n_X$ respects Poincar\'e duality, meaning that the following diagram commutes:\begin{center}
	$\begin{tikzcd}
	H^n_{DR} (X/k)\otimes_k \C\arrow[d, "\Pi_{DR}\otimes_{k}\C"'] \arrow[r, "P^n_X"] & H^n(X^{an}_{\C}, \Q)\otimes_{\Q} \C \arrow[d, "\Pi_B\otimes_{\Q}\C"] \\
	H^n_{DR} (X/k)^{*}\otimes_k\C                               & H^n(X^{an}_\C,\Q)\otimes_{\Q}\C \arrow[l, "(P^n_X)^{\vee}"]
\end{tikzcd}$\end{center}
where $(P^n_X)^{\vee} (f) =f\circ P^n_X$ for all $f\in H^n(X^{an}_\C, \Q)^{*}\otimes_{\Q}\C$.

Looking at what the relation of the above diagram, i.e. $\Pi_{DR}\otimes_{k}\C=(P_X^n)^{\vee}\circ (\Pi_B\otimes_{\Q}\C)\circ P^n_X$, does to the basis $\{\omega_i\}$, and using the fact that with respect to the bases $\{\gamma_j\}$ and $\{ \omega_i^{*}\}$ the matrix representing $(P^n_X)^{\vee}$ will be the matrix $\prescript{t}{}{(\int_{\gamma_j}^{}\omega_i )}$, i.e. the transpose of $P^n_X$, we conclude that \begin{equation}\label{eq:theamatrices}
	A_{DR} = (\int_{\gamma_j}^{}\omega_i )\cdot A_B \cdot \prescript{t}{}{(\int_{\gamma_j}^{}\omega_i )}.
\end{equation}

\begin{flushleft}
	\textbf{Conclusions}
\end{flushleft}

Combining \eqref{eq:relationsmatrix} with \eqref{eq:matricesdr} and \eqref{eq:matricesbetti} we get \begin{eqnarray}\label{eq:almosttheend}
	\prescript{t}{}{P} (\prescript{t}{}{A}^{-1}_{DR} M_{DR} A^{-1}_{DR}) P= (2\pi i)^{-n} (\prescript{t}{}{A}_B^{-1} M_B A_B^{-1}).
\end{eqnarray}
From \eqref{eq:theamatrices} we get\begin{equation}\label{eq:thematrices1}
	\prescript{t}{}{P} \prescript{t}{}{A}^{-1}_{DR} =\frac{1}{(2\pi i)^{n}}   \prescript{t}{}{A}_B^{-1} P, \text{ and}
\end{equation}
\begin{equation}\label{eq:theamatrices2}
	A^{-1}_{DR} P =\frac{1}{(2\pi i)^{n}}    \prescript{t}{}{P}^{-1} A_B^{-1}.
\end{equation}

Using \eqref{eq:thematrices1} and \eqref{eq:theamatrices2} together with \eqref{eq:almosttheend} we get\begin{equation}\label{eq:riemannrelationsmatrixorm}
	PM_B   \prescript{t}{}{P} =(2\pi i)^{-n} M_{DR}.
\end{equation}But, this is the relation we get between the above matrices by looking at the equation \eqref{eq:polarscomp} and translating it in terms of matrices. Indeed, \eqref{eq:polarscomp} translates to\begin{center}
	$\prescript{t}{}{v} P^n_X ((2\pi i)^{-n} M_B) { } \prescript{t}{}{(\prescript{t}{}{w} P^n_X)}=\prescript{t}{}{v}M_{DR}w$ for all $v,w\in H^n_{DR} (X/k)\otimes_k\C$.
\end{center}From this we recover \eqref{eq:riemannrelationsmatrixorm} by multiplying on both sides by $(2\pi i)^{-n}$ and noting that $P=(2\pi i)^{-n} P^n_X$.\\

What is actually of use to us is not exactly \eqref{eq:riemannrelationsmatrixorm} but rather the same relation for the transpose of $P$. To obtain this, first from \eqref{eq:riemannrelationsmatrixorm} we get trivially\begin{center}$(2\pi i)^{n} M_B  = P^{-1} M_{DR}\prescript{t}{}{P}^{-1}$,
\end{center}then taking inverses on both sides we get \begin{equation}\label{eq:riemannwewant}
\prescript{t}{}{P} M_{DR} ^{-1}  P =(2\pi i)^{-n} M_{B}^{-1}.
\end{equation}
\subsection{The relative case}

\textbf{Setting:} We consider $f:X\rightarrow S$ a smooth projective morphism of $k$-varieties itself defined over the same subfield $k$ of $\C$. We also assume that $S$ is a smooth connected curve which is not necessarily complete over $k$ and the dimension of the fibers of $f$ is $n$.

We then have, for all $d\in\N$, the relative version of the comparison isomorphism between the algebraic de Rham and the Betti cohomology\begin{equation}\label{eq:relativeisomorphism} P^d_{X/S}:H^d_{DR}(X/S)\otimes_{\mathcal{O}_S} \mathcal{O}_{S^{an}_{\C}}\rightarrow R^df^{an}_{*}\Q_{X^{an}_\C}\otimes_{\Q_{S^{an}_\C}}\mathcal{O}_{S^{an}_{\C}}.
\end{equation}Once again we let, in parallel to the non-relative case we studied earlier, $\mu$ denote the rank of these sheaves.

We once again have the same picture, as far as polarizing forms are concerned, as in the non-relative case. In other words we have $\langle,\rangle_{DR}$ a polarizing form of the de Rham cohomology sheaves $H^d_{DR}(X/S)$ which is defined over the field $k$, and a polarizing form $\langle,\rangle_B=(2\pi i)^{-n}\langle ,\rangle $ of the sheaves on the right of \eqref{eq:relativeisomorphism}. These two forms will be compatible with the relative isomorphism \eqref{eq:relativeisomorphism}, meaning that we have\begin{equation}\label{eq:relativecompatibility}
\langle	P^d_{X/S}(v),P^d_{X/S}(w)\rangle_{B}=\langle v,w\rangle_{DR},
\end{equation}holds for all sections $v,w$ of the sheaf on the right of \eqref{eq:relativeisomorphism}.\\

From now on we focus on the case $d=n$.
We choose $U\subset S$ a non-empty affine open subset. Then the form $\langle,\rangle_{DR}|_{U}$ will map, via the relative version of the Poincar\'e duality isomorphism, to a class $t_{DR} \in H^n_{DR}(X/S)\otimes_{\mathcal{O}_S}H^n_{DR}(X/S)|_{U}$. 

Similarly we repeat this process for the forms $\langle,\rangle$ and $\langle,\rangle_B$, over the analytification $U^{an}_{\C}$, to get elements $t\in (R^nf_{*}\Q\otimes_{\Q_{S^{an}_{\C}}}R^nf_{*}\Q)|_{U^{an}_{\C}}$ and $t_B\in(R^nf_{*}\Q\otimes_{\Q_{S^{an}_{\C}}}R^nf_{*}\Q)(n)|_{U^{an}_{\C}}$ with $t_B=(2\pi i)^{n}t$. 

Compatibility of Poincar\'e duality with the relative comparison isomorphism shows that $P^n_{X/S}\otimes P^n_{X/S}|_{U}(t_{DR})=t_B$. In other words the class $t_{DR}$ is a relative Hodge class thus defining polynomial relations among the relative $n$-periods.

Now we can repeat the arguments we made in the non-relative case. First, we choose $\{\omega_i\}$ a basis of section of $H^n_{DR}(X/S)$ over the affine open subset $U\subset S$ and $\{\gamma_j^{*}\}$ a frame of $R^nf_{*}^{an}\Q_{X^{an}_{\C}}|_{V}$, or equivalently a frame $\{\gamma_j\}$ of the relative homology $R_nf^{an}_{*}\Q_{X^{an}_{\C}}|_{V}$, where $V\subset U^{an}_\C$ is some open analytic subset. We get that the matrix $P_{X/S}=((2\pi i)^{-n} \int_{\gamma_j}\omega_i)$ satisfies \begin{equation}
	\label{eq:relativematrix1} P_{X/S} M_B \prescript{t}{}{P}_{X/S}=(2\pi i)^{-n} M_{DR},
\end{equation}where $M_B$ and $M_{DR}$ are the matrices of the forms $\langle,\rangle$ and $\langle,\rangle_{DR}$ with respect to the basis of section and the frame chosen above. 

The same process as before shows us that \eqref{eq:relativematrix1} is equivalent to the validity of the polynomial relations on the relative $n$-periods defined by the relative Hodge class $t_{DR}$. Finally, the same elementary argument as before shows the validity of the relative analogue of relation \eqref{eq:riemannwewant}, i.e. the Riemann relations that we use in \Cref{section:subsectionnotationsnontrivial}.

	\bibliographystyle{alpha}
	
	\bibliography{biblio}

\newcommand{\etalchar}[1]{$^{#1}$}
\begin{thebibliography}{BHMZ10}

\bibitem[And89]{andre1989g}
Y.~Andr\'{e}.
\newblock {\em {$G$}-functions and geometry}.
\newblock Aspects of Mathematics, E13. Friedr. Vieweg \& Sohn, Braunschweig,
  1989.

\bibitem[And92]{andrefixed}
Y.~Andr\'{e}.
\newblock Mumford-{T}ate groups of mixed {H}odge structures and the theorem of
  the fixed part.
\newblock {\em Compositio Math.}, 82(1):1--24, 1992.

\bibitem[BHMZ10]{bhmz}
E.~Bombieri, P.~Habegger, D.~Masser, and U.~Zannier.
\newblock A note on {M}aurin's theorem.
\newblock {\em Atti Accad. Naz. Lincei Rend. Lincei Mat. Appl.},
  21(3):251--260, 2010.

\bibitem[BKU21]{bkullmo}
G.~Baldi, B.~Klingler, and E.~Ullmo.
\newblock On the distribution of the {H}odge locus.
\newblock {\em arXiv preprint arXiv:2107.08838}, 2021.

\bibitem[BMZ99]{bmz1}
E.~Bombieri, D.~Masser, and U.~Zannier.
\newblock Intersecting a curve with algebraic subgroups of multiplicative
  groups.
\newblock {\em Internat. Math. Res. Notices}, (20):1119--1140, 1999.

\bibitem[BMZ08]{bmz2}
E.~Bombieri, D.~Masser, and U.~Zannier.
\newblock On unlikely intersections of complex varieties with tori.
\newblock {\em Acta Arith.}, 133(4):309--323, 2008.

\bibitem[Bom81]{bombg}
E.~Bombieri.
\newblock On {$G$}-functions.
\newblock In {\em Recent progress in analytic number theory, {V}ol. 2
  ({D}urham, 1979)}, pages 1--67. Academic Press, London-New York, 1981.

\bibitem[BPS10]{bpsc}
P.~Brosnan, G.~Pearlstein, and C.~Schnell.
\newblock The locus of {H}odge classes in an admissible variation of mixed
  {H}odge structure.
\newblock {\em C. R. Math. Acad. Sci. Paris}, 348(11-12):657--660, 2010.

\bibitem[CDK95]{cdk}
E.~Cattani, P.~Deligne, and A.~Kaplan.
\newblock On the locus of {H}odge classes.
\newblock {\em J. Amer. Math. Soc.}, 8(2):483--506, 1995.

\bibitem[CLO15]{clo}
D.~Cox, J.~Little, and D.~O'Shea.
\newblock {\em Ideals, varieties, and algorithms}.
\newblock Undergraduate Texts in Mathematics. Springer, Cham, fourth edition,
  2015.
\newblock An introduction to computational algebraic geometry and commutative
  algebra.

\bibitem[CS14]{charlesschnell}
F.~Charles and C.~Schnell.
\newblock Notes on absolute {H}odge classes.
\newblock In {\em Hodge theory}, volume~49 of {\em Math. Notes}, pages
  469--530. Princeton Univ. Press, Princeton, NJ, 2014.

\bibitem[Del71]{delhodge2}
P.~Deligne.
\newblock Th\'{e}orie de {H}odge. {II}.
\newblock {\em Inst. Hautes \'{E}tudes Sci. Publ. Math.}, (40):5--57, 1971.

\bibitem[Del74]{delignehodge3}
P.~Deligne.
\newblock Th\'{e}orie de {H}odge. {III}.
\newblock {\em Inst. Hautes \'{E}tudes Sci. Publ. Math.}, (44):5--77, 1974.

\bibitem[DGS94]{dwork}
B.~Dwork, G.~Gerotto, and F.~J. Sullivan.
\newblock {\em An introduction to {$G$}-functions}, volume 133 of {\em Annals
  of Mathematics Studies}.
\newblock Princeton University Press, Princeton, NJ, 1994.

\bibitem[DO21a]{daworr3}
C.~Daw and M.~Orr.
\newblock Lattices with skew-{H}ermitian forms over division algebras and
  unlikely intersections.
\newblock {\em arXiv preprint arXiv:2111.13056}, 2021.

\bibitem[DO21b]{daworr2}
C.~Daw and M.~Orr.
\newblock {Quantitative Reduction Theory and Unlikely Intersections}.
\newblock {\em International Mathematics Research Notices}, 07 2021.

\bibitem[DO21c]{daworr}
C.~Daw and M.~Orr.
\newblock Unlikely intersections with {$E\times{CM}$} curves in
  {$\mathcal{A}_2$}.
\newblock {\em Ann. Sc. Norm. Super. Pisa Cl. Sci. (5)}, 22(4):1705--1745,
  2021.

\bibitem[DO22]{daworr4}
C.~Daw and M.~Orr.
\newblock Zilber-{P}ink in a product of modular curves assuming multiplicative
  degeneration.
\newblock {\em arXiv preprint arXiv:2208.06338}, 2022.

\bibitem[Gao16]{gaoandreoort}
Z.~Gao.
\newblock About the mixed {A}ndr\'{e}-{O}ort conjecture: reduction to a lower
  bound for the pure case.
\newblock {\em C. R. Math. Acad. Sci. Paris}, 354(7):659--663, 2016.

\bibitem[GGK09]{ggkpaper}
M.~Green, P.~Griffiths, and M.~Kerr.
\newblock Some enumerative global properties of variations of {H}odge
  structures.
\newblock {\em Mosc. Math. J.}, 9(3):469--530, back matter, 2009.

\bibitem[GGK12]{ggkbook}
M.~Green, P.~Griffiths, and M.~Kerr.
\newblock {\em Mumford-{T}ate groups and domains}, volume 183 of {\em Annals of
  Mathematics Studies}.
\newblock Princeton University Press, Princeton, NJ, 2012.
\newblock Their geometry and arithmetic.

\bibitem[GL06]{guralorenz}
R.~Guralnick and M.~Lorenz.
\newblock Orders of finite groups of matrices.
\newblock In {\em Groups, rings and algebras}, volume 420 of {\em Contemp.
  Math.}, pages 141--161. Amer. Math. Soc., Providence, RI, 2006.

\bibitem[HN11]{jumpsmono}
L.~Halle and J.~Nicaise.
\newblock Jumps and monodromy of abelian varieties.
\newblock {\em Doc. Math.}, 16:937--968, 2011.

\bibitem[HP12]{habeggerpila1}
P.~Habegger and J.~Pila.
\newblock Some unlikely intersections beyond {A}ndr{\'e}--{O}ort.
\newblock {\em Compositio Mathematica}, 148(1):1--27, 2012.

\bibitem[HRS{\etalchar{+}}17]{around}
P.~Habegger, G.~R\'{e}mond, T.~Scanlon, E.~Ullmo, and A.~Yafaev.
\newblock {\em Around the {Z}ilber-{P}ink conjecture/{A}utour de la conjecture
  de {Z}ilber-{P}ink}, volume~52 of {\em Panoramas et Synth\`eses [Panoramas
  and Syntheses]}.
\newblock Soci\'{e}t\'{e} Math\'{e}matique de France, Paris, 2017.
\newblock Papers corresponding to courses from the conference ``Etats de la
  Recherche'' held at CIRM, Marseille, May 2011.

\bibitem[HS00]{hindrysilverman}
M.~Hindry and J.~Silverman.
\newblock {\em Diophantine geometry}, volume 201 of {\em Graduate Texts in
  Mathematics}.
\newblock Springer-Verlag, New York, 2000.
\newblock An introduction.

\bibitem[Inc44]{ince}
E.~L. Ince.
\newblock {\em Ordinary {D}ifferential {E}quations}.
\newblock Dover Publications, New York, 1944.

\bibitem[Kat70]{katznilpo}
N.~Katz.
\newblock Nilpotent connections and the monodromy theorem: {A}pplications of a
  result of {T}urrittin.
\newblock {\em Inst. Hautes \'{E}tudes Sci. Publ. Math.}, (39):175--232, 1970.

\bibitem[Kat71]{katzregularity}
N.~Katz.
\newblock The regularity theorem in algebraic geometry.
\newblock In {\em Actes du {C}ongr\`es {I}nternational des {M}ath\'{e}maticiens
  ({N}ice, 1970), {T}ome 1}, pages 437--443. 1971.

\bibitem[Kat72]{katzalgsol}
N.~Katz.
\newblock Algebraic solutions of differential equations ({$p$}-curvature and
  the {H}odge filtration).
\newblock {\em Invent. Math.}, 18:1--118, 1972.

\bibitem[Kli17]{klingler2017hodge}
B.~Klingler.
\newblock Hodge loci and atypical intersections: conjectures.
\newblock {\em arXiv preprint arXiv:1711.09387}, 2017.

\bibitem[KO68]{katzoda}
N.~Katz and T.~Oda.
\newblock On the differentiation of de {R}ham cohomology classes with respect
  to parameters.
\newblock {\em J. Math. Kyoto Univ.}, 8:199--213, 1968.

\bibitem[Mau08]{maurin}
G.~Maurin.
\newblock Courbes alg\'{e}briques et \'{e}quations multiplicatives.
\newblock {\em Math. Ann.}, 341(4):789--824, 2008.

\bibitem[Min87]{minkowski}
H.~Minkowski.
\newblock Zur {T}heorie der positiven quadratischen {F}ormen.
\newblock {\em J. Reine Angew. Math.}, 101:196--202, 1887.

\bibitem[Moo99]{moonennotes}
B.~Moonen.
\newblock Notes on {M}umford-{T}ate {G}roups.
\newblock {\em preprint}, 1999.

\bibitem[Moo04]{moonenintro}
B.~Moonen.
\newblock An introduction to {M}umford-{T}ate groups.
\newblock {\em unpublished lecture notes, Amsterdam}, 2004.

\bibitem[Mum08]{mumfordabelian}
D.~Mumford.
\newblock {\em Abelian varieties}, volume~5 of {\em Tata Institute of
  Fundamental Research Studies in Mathematics}.
\newblock Published for the Tata Institute of Fundamental Research, Bombay; by
  Hindustan Book Agency, New Delhi, 2008.
\newblock With appendices by C. P. Ramanujam and Yuri Manin, Corrected reprint
  of the second (1974) edition.

\bibitem[MW94]{mwendoesti}
D.~W. Masser and G.~W\"{u}stholz.
\newblock Endomorphism estimates for abelian varieties.
\newblock {\em Math. Z.}, 215(4):641--653, 1994.

\bibitem[MW95]{mwihes}
D.~W. Masser and G.~W\"{u}stholz.
\newblock Factorization estimates for abelian varieties.
\newblock {\em Inst. Hautes \'{E}tudes Sci. Publ. Math.}, (81):5--24, 1995.

\bibitem[Pap22]{papaszp}
G.~Papas.
\newblock Some cases of the {Z}ilber-{P}ink conjecture for curves in
  $\mathcal{A}_g$.
\newblock {\em arXiv preprint arXiv:2211.06763}, 2022.

\bibitem[Pil14]{pilaicm}
J.~Pila.
\newblock O-minimality and {D}iophantine geometry.
\newblock In {\em Proceedings of the {I}nternational {C}ongress of
  {M}athematicians---{S}eoul 2014. {V}ol. 1}, pages 547--572. Kyung Moon Sa,
  Seoul, 2014.

\bibitem[Pil22]{pilabook}
J.~Pila.
\newblock {\em Point-counting and the {Z}ilber-{P}ink conjecture}, volume 228
  of {\em Cambridge Tracts in Mathematics}.
\newblock Cambridge University Press, Cambridge, 2022.

\bibitem[Pin05]{pink}
R.~Pink.
\newblock A combination of the conjectures of {M}ordell-{L}ang and
  {A}ndr\'{e}-{O}ort.
\newblock In {\em Geometric methods in algebra and number theory}, volume 235
  of {\em Progr. Math.}, pages 251--282. Birkh\"{a}user Boston, Boston, MA,
  2005.

\bibitem[PS08]{peterssteenbrink}
C.~Peters and J.~Steenbrink.
\newblock {\em Mixed {H}odge structures}, volume~52 of {\em Ergebnisse der
  Mathematik und ihrer Grenzgebiete. 3. Folge. A Series of Modern Surveys in
  Mathematics [Results in Mathematics and Related Areas. 3rd Series. A Series
  of Modern Surveys in Mathematics]}.
\newblock Springer-Verlag, Berlin, 2008.

\bibitem[PST{\etalchar{+}}21]{pst}
J.~Pila, A.~Shankar, J.~Tsimerman, H.~Esnault, and M.~Groechenig.
\newblock Canonical {H}eights on {S}himura {V}arieties and the {A}ndr\'e-{O}ort
  {C}onjecture.
\newblock {\em arXiv preprint arXiv:2109.08788}, 2021.

\bibitem[Sch13]{scholzerigid1}
P.~Scholze.
\newblock p-adic {H}odge theory for rigid-analytic varieties.
\newblock In {\em Forum of Mathematics, Pi}, volume~1. Cambridge University
  Press, 2013.

\bibitem[Sie14]{siegel}
C.~L. Siegel.
\newblock \"{U}ber einige {A}nwendungen diophantischer {A}pproximationen
  [reprint of {A}bhandlungen der {P}reu\ss ischen {A}kademie der
  {W}issenschaften. {P}hysikalisch-mathematische {K}lasse 1929, {N}r. 1].
\newblock In {\em On some applications of {D}iophantine approximations},
  volume~2 of {\em Quad./Monogr.}, pages 81--138. Ed. Norm., Pisa, 2014.

\bibitem[Sil92]{silverberg}
A.~Silverberg.
\newblock Fields of definition for homomorphisms of abelian varieties.
\newblock {\em J. Pure Appl. Algebra}, 77(3):253--262, 1992.

\bibitem[Ste76]{steenbrink}
J.~Steenbrink.
\newblock Limits of {H}odge structures.
\newblock {\em Invent. Math.}, 31(3):229--257, 1975/76.

\bibitem[Tsi18]{tsimermanag}
J.~Tsimerman.
\newblock The {A}ndr\'{e}-{O}ort conjecture for {$\mathcal{A}_g$}.
\newblock {\em Ann. of Math. (2)}, 187(2):379--390, 2018.

\bibitem[Urb23]{davidg}
D.~Urbanik.
\newblock Geometric {G}-functions and {A}typicality.
\newblock {\em arXiv preprint arXiv:2301.01857}, 2023.

\bibitem[Voi07]{voisinhodge1}
C.~Voisin.
\newblock {\em Hodge theory and complex algebraic geometry. {I}}, volume~76 of
  {\em Cambridge Studies in Advanced Mathematics}.
\newblock Cambridge University Press, Cambridge, english edition, 2007.
\newblock Translated from the French by Leila Schneps.

\bibitem[Zan12]{zannier}
U.~Zannier.
\newblock {\em Some problems of unlikely intersections in arithmetic and
  geometry}, volume 181 of {\em Annals of Mathematics Studies}.
\newblock Princeton University Press, Princeton, NJ, 2012.
\newblock With appendixes by David Masser.

\bibitem[Zil02]{zilber}
B.~Zilber.
\newblock Exponential sums equations and the {S}chanuel conjecture.
\newblock {\em J. London Math. Soc. (2)}, 65(1):27--44, 2002.

\end{thebibliography}

\end{document}